\numberwithin{equation}{section}
\newcounter{enumx}
\newcommand{\C}{\mathbb{C}}
\newcommand{\Q}{\mathbb{Q}}
\newcommand{\Z}{\mathbb{Z}}
\newcommand{\BF}{\mathbb{F}}
\newcommand{\BH}{\mathbb{H}}
\newcommand{\BP}{\mathbb{P}}
\newcommand{\A}{\mathcal{A}}
\newcommand{\D}{\mathcal{D}}
\newcommand{\M}{\mathcal{M}}
\newcommand{\K}{\mathcal{K}}
\newcommand{\E}{\mathcal{E}}
\newcommand{\F}{\mathcal{F}}
\newcommand{\CC}{\mathcal{C}}
\newcommand{\CR}{\mathcal{R}}
\newcommand{\G}{\mathcal{G}}
\newcommand{\I}{\mathcal{I}}
\newcommand{\W}{\mathcal{W}}
\newcommand{\CQ}{\mathcal{Q}}
\newcommand{\IC}{\mathcal{IC}}
\newcommand{\RHom}{\mathcal{RH}om}
\newcommand{\CS}{\mathscr{S}}
\newcommand{\pH}{{}^pH}
\newcommand{\ptau}{{}^p\tau}
\newcommand{\bd}{\mathbf{d}}
\newcommand{\br}{\mathbf{r}}
\newcommand{\bs}{\mathbf{s}}
\newcommand{\bbinom}[2]{\genfrac{[}{]}{0pt}{}{\,#1\,}{\,#2\,}}
\newcommand{\mof}{\text{-mof}}
\newcommand{\pmof}{\text{-pmof}}
 \DeclareMathOperator{\Ext}{Ext}
 \DeclareMathOperator{\Hom}{Hom}
 \DeclareMathOperator{\Ker}{Ker}
 \DeclareMathOperator{\Img}{Img}
 \DeclareMathOperator{\Id}{Id}
 \DeclareMathOperator{\Res}{Res}
\newtheorem{thm}{Theorem}[subsection]
\newtheorem{lem}[thm]{Lemma}
\newtheorem{prop}[thm]{Proposition}
\newtheorem{cor}[thm]{Corollary}
\newtheorem{prop-defn}[thm]{Proposition-Definition}
\theoremstyle{definition}
\newtheorem{defn}[thm]{Definition}
\newtheorem{rem}[thm]{Remark}
\newtheorem{exam}[thm]{Example}
\theoremstyle{remark}
\begin{document}

\title[geometric categorification]
{A geometric categorification of tensor products of
$U_q(sl_2)$-modules}
\author[H. Zheng]{Hao Zheng}
\address{Department of Mathematics, Zhongshan University, Guangzhou, Guangdong 510275, China}
\thanks{Supported in part by the National Natural Science Foundation of China (NSFC)}
\email{zhenghao@mail.sysu.edu.cn}
\date{}
\maketitle

\begin{abstract}
We give a purely geometric categorification of tensor products of
finite-dimensional simple $U_q(sl_2)$-modules and $R$-matrices on
them. The work is developed in the framework of category of perverse
sheaves and the categorification theorems are understood as
consequences of Deligne's theory of weights.
\end{abstract}

\section{Introduction}

The term categorification in mathematics refers to the process of
lifting set-theoretic concepts to the level of categories. For
example, categorification of a module $M$ over an algebra $A$ means
lifting the module $M$ to an additive or abelian category $\CC$ and,
accordingly, lifting the algebra $A$ to a collection of endofunctors
of $\CC$ and functor isomorphisms among them; the lifts are done in
such a way that the Grothendieck group of $\CC$ recovers the module
$M$ and the endofunctors and the isomorphisms among them recover the
module structure of $M$ and the algebra structure of $A$.

Categorified theories have such advantages as reflecting explicitly
the integrity and positivity of the algebraic structures involved
and, more importantly, usually providing new insights into the
background theory.

Among various known categorifications till now (cf. the review
\cite{KMS07}), algebraic approaches are playing the dominant role,
partly because there are still lacking of systematic tools for
geometric treatment. We will demonstrate here how the profound
result in modern algebraic geometry, Deligne's theory of weights
\cite{De80}, may enter to change the situation.

In the present paper, we categorify tensor products of
$U_q(sl_2)$-modules, as well as $R$-matrices on them. The former
task is accomplished in Section \ref{sec:mod:cat} by using the
decomposition theorem of Beilinson-Bernstein-Deligne-Gabber
\cite{BBD82}, which is known to be one of the remarkable
consequences of Deligne's theory of weights. As another consequence
of the weight theory, we introduce in Section \ref{sec:R:mix}
the notion of pure resolution of mixed complexes
and establish a uniqueness theorem, then use them in Section
\ref{sec:R:R} and \ref{sec:R:cat} to categorify $R$-matrices. Thanks
to these powerful tools, our categorification is able to be
fulfilled in a very simple and elegant way.

The main part of the paper consists of Section \ref{sec:mod} and
Section \ref{sec:R}. Further remarks on the motivations and
expositions of this work will be given in the beginning of them.

\tableofcontents

\section{Preliminaries}

The references for Section \ref{sec:pre:u} are \cite{Kas95},
\cite{Lu93} and the references for Section \ref{sec:pre:perverse}
are \cite{BBD82}, \cite{Bor84}, \cite{KS90}.

\subsection{The algebra $U_\A$}\label{sec:pre:u}

Throughout this paper, $\A=\Z[q,q^{-1}]$ denotes the Laurent
polynomial ring and we set
$$[n]_q=\frac{q^n-q^{-n}}{q-q^{-1}}, \quad
  [n]_q!=[1]_q[2]_q\cdots[n]_q, \quad
  \bbinom{n}{r}_q=\prod_{t=1}^r\frac{[n-r+t]_q}{[t]_q}.
$$

The quantum enveloping algebra $U=U_q(sl_2)$ is the $\Q(q)$-algebra
defined by the generators $K, K^{-1}, E, F$ and the relations
\begin{equation}\label{eqn:r1}
\begin{split}
  & KK^{-1}=K^{-1}K=1, \\
  & KE=q^2EK, \quad KF=q^{-2}FK, \\
  & EF-FE=\frac{K-K^{-1}}{q-q^{-1}}.
\end{split}
\end{equation}
It is a Hopf algebra with comultiplication
\begin{equation}\label{eqn:comul}
\begin{split}
  & \Delta K = K \otimes K, \\
  & \Delta E = E \otimes 1 + K \otimes E, \\
  & \Delta F = F \otimes K^{-1} + 1 \otimes F,
\end{split}
\end{equation}
counit
\begin{equation}
  \varepsilon(K) = 1, \quad
  \varepsilon(E) = \varepsilon(F) = 0,
\end{equation}
and antipode
\begin{equation}
  S(K) = K^{-1}, \quad
  S(E) = -K^{-1}E, \quad
  S(F) = -FK.
\end{equation}

To emphasize the integrity of the finite-dimensional representations
of $U$, we work on an alternative algebra $U_\A$ which is defined as
the $\A$-subalgebra of $U$ generated by $K, K^{-1}, E^{(n)},
F^{(n)}$, $n\ge0$ where
\begin{equation}
  \label{eqn:r2}
  E^{(n)}=\frac{E^n}{[n]_q!}, \quad F^{(n)}=\frac{F^n}{[n]_q!}.
\end{equation}

\smallskip

For every integer $d\ge0$, there is a simple $U_\A$-module
\begin{equation}
  \Lambda_d=U_\A/(U_\A\cap\I_d)
\end{equation}
where $\I_d$ is the left ideal of $U$ generated by $E, K-q^d$ and
$F^{d+1}$. Tensoring with $\Q(q)$, they recover the
finite-dimensional simple $U$-modules. The elements
\begin{equation}\label{eqn:bld}
  v_r=\bar{F}^{(r)}, \quad r=0,1,\dots,d
\end{equation}
form a basis of $\Lambda_d$ and (we define $v_{-1}=v_{d+1}=0$)
\begin{equation}\label{eqn:mld}
\begin{split}
  & Kv_r = q^{d-2r}v_r, \\
  & Ev_r = [d-r+1]_qv_{r-1}, \\
  & Fv_r = [r+1]_qv_{r+1}.
\end{split}
\end{equation}

More generally, for a composition $\bd=(d_1,d_2,\dots,d_l)$ of $d$
(i.e. a sequence of nonnegative integers summing up to $d$), let
\begin{equation}
  \Lambda_\bd = \Lambda_{d_1} \otimes \Lambda_{d_1} \otimes \cdots \Lambda_{d_l}
\end{equation}
be the tensor product of $U_\A$-modules. It has a standard basis
\begin{equation}\label{eqn:vr}
  v_\br = v_{r_1} \otimes v_{r_2} \otimes \cdots \otimes v_{r_l}
\end{equation}
with $\br=(r_1,r_2,\dots,r_l)$ running over the compositions
satisfying $r_k\le d_k$, $k=1,2,\dots,l$.

\smallskip

Let $\varrho: U_\A\to U_\A^{op}$ be the $\A$-algebra isomorphism
defined on the generators by
\begin{equation}
  \varrho(K)=K, \quad \varrho(E)=qKF, \quad \varrho(F)=qK^{-1}E.
\end{equation}
By an inner product of a $U_\A$-module $M$ we mean a non-degenerate
symmetric bilinear form
\begin{equation*}
  (,): M\times M \to \A
\end{equation*}
satisfying
\begin{equation}
  (xu,w)=(u,\varrho(x)w) \quad \text{for $x\in U_\A$, $u,w\in M$}.
\end{equation}

Since $\varrho$ is compatible with the comultiplication of $U_\A$:
$$(\varrho\otimes\varrho)\Delta(x)=\Delta\varrho(x) \quad \text{for $x\in U_\A$},$$
inner products of $U_\A$-modules $M_1,M_2$ automatically give rise
to an inner product of the tensor product module $M_1 \otimes M_2$
such that
\begin{equation*}
  (u_1\otimes u_2,w_1\otimes w_2) = (u_1,w_1)(u_2,w_2) \quad
  \text{for $u_1,w_1\in M_1, u_2,w_2\in M_2$}.
\end{equation*}

The simple $U_\A$-module $\Lambda_d$ has a unique inner product up
to a constant, which we will normalize as
\begin{equation}\label{eqn:pld}
  (v_r,v_{r'})=\delta_{rr'}\bbinom{d}{r}_qq^{-r(d-r)}.
\end{equation}
They automatically extend to inner products of the tensor product
modules $\Lambda_\bd$.

\subsection{Perverse sheaves}\label{sec:pre:perverse}

Let $X$ be a complex algebraic variety. We denote by
$\D(X)=\D^b_c(X)$ the bounded derived category of constructible
$\C$-sheaves on $X$ and denote by $\M(X)$ the full subcategory
consisting of perverse sheaves. An object of $\D(X)$ is also
referred to as a complex. Given a connected algebraic group $G$
acting on $X$, let $\M_G(X)$ denote the full subcategory of $\M(X)$
whose objects are the $G$-equivariant perverse sheaves on $X$.

We denote by $D: \D(X)\to\D(X)^\circ$ the Verdier duality functor.
For an integer $n$, let $[n]: \D(X)\to\D(X)$ denote the shift
functor and let $\pH^n: \D(X)\to\M(X)$ denote the $n$-th perverse
cohomology functor. There are functor isomorphisms
$$D^2=\Id, \quad \pH^n[j] = \pH^{n+j}, \quad D[n]=[-n]D.$$

A complex $C\in\D(X)$ is said to be semisimple if
$C\cong\oplus_n\pH^n(C)[-n]$ and if $\pH^n(C)\in\M(X)$ is semisimple
for all $n$. A semisimple complex $C\in\D(X)$ is called
$G$-equivariant if $\pH^n(C)\in\M_G(X)$ for all $n$.

\smallskip

The Ext groups of $C,C'\in\D(X)$ are the $\C$-linear spaces
$$\Ext^n_{\D(X)}(C,C') = \Hom_{\D(X)}(C,C'[n]) = \BH^n\RHom(C,C');$$
they satisfy
\begin{enumerate}
\setlength{\itemsep}{.3ex}
\item
$\Ext^\bullet_{\D(X)} (C[n],C'[n']) = \Ext^{\bullet-n+n'}_{\D(X)}
(C,C')$.

\item
$\Ext^\bullet_{\D(X)} (C,DC') = \Ext^\bullet_{\D(X)} (C',DC) =
\BH^\bullet D(C\otimes C')$.

\item
$\Ext^n_{\D(X)}(C,C')=0$ for $C,C'\in\M(X)$ and $n<0$.

\item
For simple perverse sheaves $C,C'\in\M(X)$, $\Ext^0_{\D(X)}(C,C')$
is isomorphic to $\C$ if $C\cong C'$ and vanishes otherwise.

\setcounter{enumx}{\value{enumi}}
\end{enumerate}

Let $f: X \to Y$ be a morphism of algebraic varieties. There are
induced functors $f_!,f_*: \D(X)\to\D(Y)$ and $f^!,f^*:
\D(Y)\to\D(X)$. For reader's convenience we list some properties of
these functors as follows.
\begin{enumerate}
\setlength{\itemsep}{.3ex} \setcounter{enumi}{\value{enumx}}
\item
$Df^*=f^!D$ and $Df_!=f_*D$.

\item
$f^*$ is left adjoint to $f_*$ and $f_!$ is left adjoint to $f^!$.

\item
If $f$ is proper, then $f_!=f_*$.

\item
If $f$ is smooth with connected nonempty fibers of dimension $d$,
then $f^*[d]=f^![-d]$ which induces a fully faithful functor $\M(Y)
\to \M(X)$ and sends simple perverse sheaves to simple perverse
sheaves.

\item
There are natural isomorphisms for $C,C'\in\D(Y)$, $C''\in\D(X)$
\begin{align*}
  & f^*(C\otimes C') = f^*C\otimes f^*C', \\
  & f^!\RHom(C,C') = \RHom(f^*C,f^!C'), \\
  & f_!C''\otimes C = f_!(C''\otimes f^*C), \\
  & \RHom(f_!C'',C) = f_*\RHom(C'',f^!C).
\end{align*}
In particular,
\begin{align*}
  & f_!f^*C = f_!(\C_X\otimes f^*C) = f_!\C_X\otimes C, \\
  & f_*f^!C = f_*\RHom(\C_X,f^!C) = \RHom(f_!\C_X,C).
\end{align*}

\item
Assume $f: X\to Y$ is a $G$-equivariant morphism. If $C\in\M_G(X)$,
then $\pH^n(f_!C)\in\M_G(Y)$ for all $n$. If $C'\in\M_G(Y)$, then
$\pH^n(f^*C')\in\M_G(X)$ for all $n$.

\item
Assume $f: X\to Y$ is a (locally trivial) principal $G$-bundle. The
functor $f^*[\dim G]$ and the functor $f_\flat=\pH^{-\dim G}f_*$
define an equivalence of the categories $\M_G(X)$, $\M(Y)$.

\item
$(fg)^*=g^*f^*$ and $(fg)_!=f_!g_!$ for morphisms $f: X\to Y$, $g:
Y\to Z$.

\item
(Proper base change) $f^*g_!=g'_!f'^*$ holds for the cartesian
square
$$\xymatrix{
  X\times_YY' \ar[r]^-{g'} \ar[d]_{f'} & X \ar[d]^f \\
  Y' \ar[r]_g & Y
}
$$

\setcounter{enumx}{\value{enumi}}
\end{enumerate}

For a subvariety $S\subset X$ and a complex $C\in\D(X)$ we also
write $C|_S$ instead of $j_S^*C$ where $j_S: S\to X$ is the
inclusion.

For a locally closed irreducible smooth subvariety $S \subset X$, we
denote by $\IC(\overline{S}) \in \M(X)$ the simple perverse sheaf
(the intersection complex) defined as the intermediate extension of
the shifted constant sheaf $\C_S[\dim S]$. Below is a rather deep
result on the interplay between proper morphisms and perverse
sheaves.

\begin{enumerate}
\setlength{\itemsep}{.3ex} \setcounter{enumi}{\value{enumx}}
\item
(Decomposition theorem) If $f: X\to Y$ is a proper morphism, then
for every locally closed irreducible smooth subvariety $S \subset
X$, $f_!\IC(\overline{S}) \in \D(Y)$ is a semisimple complex.

\setcounter{enumx}{\value{enumi}}
\end{enumerate}

The following implications of the decomposition theorem will be used
in this paper.

\begin{enumerate}
\setlength{\itemsep}{.3ex} \setcounter{enumi}{\value{enumx}}
\item
If $f: X\to Y$ is a proper morphism with $X$ smooth, then
$f_!\C_X\in \D(Y)$ is a semisimple complex.

\item
Assume a connected algebraic group $G$ acts on a variety $X$, having
finitely many orbits. Then the $G$-equivariant simple perverse
sheaves on $X$ are exactly those $\IC(\overline{S})$ for various
$G$-orbits $S$. Therefore, by the decomposition theorem, if $f: X\to
Y$ is a proper morphism then $f_!$ sends $G$-equivariant semisimple
complexes to semisimple complexes.
\end{enumerate}

\subsection{Partial flag varieties}\label{sec:pre:flag}

Let $G \supset P \supset B$ be a connected reductive algebraic
group, a parabolic subgroup and a Borel subgroup of it,
respectively. We have a partial flag variety $X=G/P$. Let
$\W=N_G(T)/T$ be the Weyl group with respect to a fixed maximal
torus $T\subset B$, and for every element $w\in\W$ we fix a
representative $\dot{w}\in N_G(T)$.

We denote by $\W_P\subset\W$ the subgroup corresponding to $P$ and
denote by $\W^P$ the set of shortest representatives of the cosets
$\W/\W_P$. The $B$-orbits partition $X$ into a finite number of
affine cells (Bruhat decomposition)
\begin{equation}
  X=\bigsqcup_{w\in\W^P} X_w
\end{equation}
where $X_w = B\dot{w}P/P$. The subvarieties $X_w$ are referred to as
Schubert cells, and their closures are called Schubert varieties.

It follows that, up to isomorphism, the $B$-equivariant semisimple
complexes on $X$ are finite direct sums of $\IC(\overline{S})[j]$
for various Schubert cells $S$ and integers $j$.

\smallskip

The main concern of this paper is the case that $G=GL(W)$ is a
general linear group, where $W$ is a complex linear space of
dimension $d$, and that $B$ is the Borel subgroup preserving a fixed
complete flag
$$0=W_0\subset W_1\subset W_2\subset\cdots\subset W_d=W.$$
Given an ascending sequence of integers $0\le r_1\le r_2\le
\cdots\le r_n\le d$, there is a partial flag variety
\begin{equation*}
  X = G/P = \{ 0\subset V_1\subset V_2\subset\cdots\subset V_n\subset W \mid
  \dim V_i=r_i, \; i=1,2,\dots,n \},
\end{equation*}
where $P$ is the parabolic subgroup preserving the subspaces
$W_{r_i}$, $i=1,2,\dots,n$.

\smallskip

The following lemma is proved by Kazhdan-Lusztig \cite{KL80}.

\begin{lem}\label{lem:flag2}
For each pair of Schubert cells $X_w,X_v$ of $X$,
$\pH^n(\IC(\overline{X}_w)|_{X_v})=0$ unless $n \equiv \dim X_w+\dim
X_v \pmod{2}$.
\end{lem}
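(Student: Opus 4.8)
The plan is to reduce the asserted parity to the statement that, along each Schubert cell, the stalk cohomology of the intersection complex $\IC(\overline{X}_w)$ is concentrated in a single parity of degree, and then to prove that stalk statement by resolving $\overline{X}_w$ with a Bott--Samelson variety and feeding the result into the decomposition theorem.

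First I would carry out the elementary bookkeeping that turns perverse cohomology of a restriction into ordinary stalk cohomology. Fix Schubert cells $X_w, X_v$ and set $C=\IC(\overline{X}_w)|_{X_v}$, a $B$-equivariant complex on the $B$-orbit $X_v$. Since $X_v$ is a single $B$-orbit, the cohomology sheaves $\mathcal{H}^m(C)$ are locally constant, and as $X_v$ is isomorphic to an affine space they are constant; moreover $H^{>0}$ of an affine space vanishes, so there are no nontrivial extensions among constant sheaves and $C\cong\bigoplus_m\mathcal{H}^m(C)[-m]$. A constant sheaf $\mathcal{L}$ on the smooth variety $X_v$ satisfies $\mathcal{L}[\dim X_v]\in\M(X_v)$, so $\mathcal{L}[-m]$ contributes only to $\pH^{m+\dim X_v}$. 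Hence $\pH^n(\IC(\overline{X}_w)|_{X_v})\neq0$ exactly when $\mathcal{H}^{n-\dim X_v}(\IC(\overline{X}_w))_x\neq0$ for $x\in X_v$, and it remains to show this stalk vanishes unless $n-\dim X_v\equiv\dim X_w\pmod2$.

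Next I would set up the resolution. Pick a reduced expression for $w$ and let $\pi\colon Z\to\overline{X}_w$ be the associated Bott--Samelson variety: $Z$ is smooth and projective of dimension $\ell=\dim X_w$ (an iterated $\BP^1$-bundle), $\pi$ is proper and birational, and the whole picture is $B$-equivariant. By the decomposition theorem, in the form that $\pi_!\C_Z$ is a $B$-equivariant semisimple complex when $\pi$ is proper with $Z$ smooth, we may write $\pi_!\C_Z[\ell]\cong\bigoplus_j P_j[-j]$ with $P_j$ semisimple perverse; restricting to the dense open locus where $\pi$ is an isomorphism shows that $P_0$ contains $\IC(\overline{X}_w)$ as a direct summand. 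Therefore, for $x\in X_v$, proper base change gives that $\mathcal{H}^i(\IC(\overline{X}_w))_x$ is a direct summand of $\mathcal{H}^{i+\ell}(\pi_!\C_Z)_x=H^{i+\ell}(\pi^{-1}(x);\C)$. Combined with the previous paragraph, the Lemma follows as soon as $H^\bullet(\pi^{-1}(x);\C)$ is concentrated in even degrees, since then $\mathcal{H}^i(\IC(\overline{X}_w))_x=0$ unless $i\equiv\dim X_w\pmod2$.

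The hard part will be this last geometric input: that the fibers of a Bott--Samelson resolution are paved by affine spaces, hence have free cohomology in even degrees only. I would prove it using the cell structure of $Z$ itself: writing each factor as $\BP^1=\mathbb{A}^1\sqcup\{\mathrm{pt}\}$ exhibits $Z$ as a disjoint union of locally closed affine cells indexed by the subwords of the chosen reduced expression, and from the inductive $\BP^1$-bundle description one checks that $\pi$ carries each such cell onto a Schubert cell by an affine-space bundle map, so that intersecting with $\pi^{-1}(x)$ paves the fiber by affine spaces; equivalently one may run Bialynicki--Birula on $Z$ with a generic one-parameter subgroup of the torus having finite fixed locus. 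Granting this, the evenness of $H^\bullet(\pi^{-1}(x);\C)$ is immediate and the argument closes. The delicate point is precisely checking that the cell-wise restrictions of $\pi$ are affine bundles and that their fibers assemble into a paving of $\pi^{-1}(x)$; this is where the combinatorics of reduced words and the $B$-action on Bott--Samelson varieties enter, and it is in essence the computation of Kazhdan--Lusztig in \cite{KL80}.
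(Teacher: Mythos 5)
Your reduction is fine as far as it goes: the splitting of $\IC(\overline{X}_w)|_{X_v}$ into shifted constant sheaves on the affine cell $X_v$, the translation of the perverse-degree parity into parity of the stalk cohomology $\mathcal{H}^{i}(\IC(\overline{X}_w))_x$, and the extraction of that stalk as a direct summand of $H^{i+\dim X_w}(\pi^{-1}(x);\C)$ via the decomposition theorem for a Bott--Samelson resolution are all correct. (For what it is worth, the paper offers no argument at all here --- it simply cites \cite{KL80} --- so any complete proof is ``different from the paper's''.) The problem is the step you yourself flag as delicate, and the specific mechanism you propose for it is wrong. It is not true that $\pi$ carries each subword cell of $Z$ onto a single Schubert cell by an affine-space bundle. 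Already for $G=GL_3$ and the reduced word $w=s_1s_2s_1$, the cell of $Z$ indexed by the subword $(s_1,\,\cdot\,,s_1)$, namely $\{[x_1,1,x_3]: x_i\in B\dot s_1B\}\cong\C^2$, has image $B\dot s_1B\dot s_1B/B=X_e\sqcup X_{s_1}$, two Schubert cells; and its intersection with the fiber over the point $\dot s_1B\in X_{s_1}$ is isomorphic to $\mathbb{G}_m$ (parametrize the cell by $[u_{\alpha_1}(a)\dot s_1,1,u_{\alpha_1}(c)\dot s_1]$ and compute the product), not an affine space. So ``intersect the subword cells with $\pi^{-1}(x)$'' does not produce an affine paving; in general these intersections are products of copies of $\mathbb{A}^1$ and $\mathbb{G}_m$ (a Deodhar-type decomposition), and regrouping them into genuine affine cells of the fiber is a substantial theorem (Gaussent, H\"arterich), not a routine check from the iterated $\BP^1$-bundle structure.

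The fallback you mention does not close the gap either: a Bialynicki--Birula decomposition of $Z$ paves $Z$, not the fibers, and applying Bialynicki--Birula to $\pi^{-1}(x)$ itself (say at the $T$-fixed point $x=\dot vB$, which suffices by $B$-equivariance) is not legitimate, because fibers of Bott--Samelson resolutions are in general singular and even reducible, and for singular projective $T$-varieties the plus-cells need not be affine spaces. So as written the proof has a genuine hole exactly at its geometric core. To repair it you must either invoke the affine-paving theorem for Bott--Samelson fibers as a citation, or abandon the fiber-paving route and argue as Kazhdan--Lusztig do in \cite{KL80} (the reference the paper relies on), where the parity vanishing of the stalks is obtained by an inductive weight/Frobenius-purity argument rather than by exhibiting cell decompositions of the fibers; the weight formalism recalled in Section \ref{sec:R:mix} of the paper is well suited to that second route.
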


The proof of the following lemma is borrowed from \cite[3.4]{BGS96}.

\begin{lem}\label{lem:flag3}
Let $S\subset X$ be a subvariety consisting of Schubert cells
$X_1\sqcup X_2\sqcup\cdots\sqcup X_k$. Then, for $B$-equivariant
semisimple complexes $C,C'\in\D(X)$, we have
\begin{equation*}
  \Ext^\bullet_{\D(S)}(C|_S,D(C'|_S)) \cong
  \oplus_{i=1}^k \Ext^\bullet_{\D(X_i)}(C|_{X_i},D(C'|_{X_i})).
\end{equation*}
\end{lem}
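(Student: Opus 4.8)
The plan is to filter $S$ by a chain of open/closed subvarieties adapted to the Bruhat order, peel off one Schubert cell at a time, and run an induction on $k$ using the long exact sequence attached to an open–closed decomposition together with the vanishing statement of Lemma~\ref{lem:flag2}.

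First I would choose an ordering $X_1,X_2,\dots,X_k$ of the Schubert cells in $S$ that refines the closure order, so that $U=X_1$ (a minimal cell) is open in $S$ and $Z=X_2\sqcup\cdots\sqcup X_k$ is closed in $S$; write $j:U\hookrightarrow S$ for the open inclusion and $i:Z\hookrightarrow S$ for the closed one. For any $A,B\in\D(S)$ there is the standard distinguished triangle $i_!i^!A\to A\to Rj_*j^*A\to$, hence, applying $\RHom(-,B)$ (or equivalently exploiting $j_!j^*\to\Id\to i_*i^*\to$ on the other variable), a long exact sequence relating $\Ext^\bullet_{\D(S)}(A,B)$ to $\Ext^\bullet_{\D(U)}(A|_U,B|_U)$ and $\Ext^\bullet_{\D(Z)}(i^!A,i^*B)$. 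Taking $A=C|_S$ and $B=D(C'|_S)$, and using $Di^*=i^!D$ together with $D(C'|_Z)=(DC')|_Z$ up to the shifts in the inclusion of a smooth subvariety, the three-term sequence becomes
\begin{equation*}
  \cdots\to \Ext^\bullet_{\D(X_1)}(C|_{X_1},D(C'|_{X_1})) \to
  \Ext^\bullet_{\D(S)}(C|_S,D(C'|_S)) \to
  \Ext^\bullet_{\D(Z)}(C|_Z,D(C'|_Z)) \to \cdots,
\end{equation*}
and by the inductive hypothesis the third term is $\oplus_{i=2}^k\Ext^\bullet_{\D(X_i)}(C|_{X_i},D(C'|_{X_i}))$. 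So it suffices to show this long exact sequence breaks into short exact (indeed split) pieces, i.e. that the connecting maps vanish.

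The parity argument supplies exactly that. Since $C,C'$ are $B$-equivariant semisimple complexes, each is a direct sum of shifts $\IC(\overline{X}_w)[j]$, so by Lemma~\ref{lem:flag2} the restriction $C|_{X_v}$ has perverse cohomology concentrated in degrees congruent to $\dim X_w+\dim X_v \pmod 2$ — and after passing to $D(C'|_{X_v})$ and using property~(1)–(2) of the $\Ext$ groups together with $\Ext^n=0$ in negative degrees for perverse sheaves, one checks that $\Ext^\bullet_{\D(X_v)}(C|_{X_v},D(C'|_{X_v}))$ is concentrated in a single parity of degrees (namely $\equiv \dim X_w+\dim X_{w'}\pmod 2$ on the summand coming from $\IC(\overline X_w),\IC(\overline X_{w'})$; note $X_v$ being affine of some fixed dimension contributes a constant that cancels). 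Hence on each isotypic block the first and third terms of the long exact sequence live in degrees of the \emph{same} fixed parity, while the connecting homomorphism shifts degree by $1$, so it must be zero; the sequence splits and the $\Ext$ of $C|_S$ is the direct sum of the $\Ext$'s over the cells. Iterating over $k$ gives the claim.

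The main obstacle is bookkeeping the shifts and the parity: one must be careful that the affine cell $X_v\cong\mathbb A^{m_v}$ contributes via $\Ext^\bullet_{\D(\mathbb A^{m})}$ in a way that, after Verdier duality, still leaves a \emph{single} parity — this uses that on an affine space $\Ext^\bullet(\C[a],D\C[b])=\BH^\bullet(\mathbb A^m)[\text{shift}]$ is one-dimensional in a single degree — and that the parity labels of the two $\IC$-summands are compatible with the closure order so that the gluing triangle really does respect the grading mod~$2$. Once the parity is pinned down correctly, the vanishing of the connecting maps, and hence the splitting, is automatic; everything else is the routine formalism of recollement for the open–closed pair $(U,Z)$ together with the adjunctions and duality identities recalled in Section~\ref{sec:pre:perverse}.
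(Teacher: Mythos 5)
Your proof is correct and is essentially the paper's own argument: the paper filters $S$ by the unions of cells of equal dimension and notes that the spectral sequence computing $\Ext^\bullet_{\D(S)}(C|_S,D(C'|_S))=\BH^\bullet D(C\otimes C'|_S)$ degenerates at $E_1$ because of the parity vanishing of Lemma~\ref{lem:flag2} --- which is exactly your iterated open--closed long exact sequence with connecting maps killed by parity, just packaged as one spectral sequence instead of an induction on $k$. One small slip: the cell you peel off as the \emph{open} stratum must be one of maximal dimension among the cells of $S$ (a minimal cell is closed in $S$, so with your choice the roles of $U$ and $Z$ should be exchanged), but either orientation of the recollement triangle feeds the same parity argument and the proof goes through.
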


\begin{proof}
We may assume $C=\IC(\overline{X}_w)$, $C'=\IC(\overline{X}_v)$
where $X_w,X_v$ are Schubert cells. Setting
$$S_p=\sqcup_{\dim X_i=\dim S-p}X_i,$$
we get a filtration of closed subvarieties
$$S\supset S\setminus S_0\supset S\setminus (S_0\sqcup S_1)\supset\cdots\supset\emptyset.$$
Then $\Ext^\bullet_{\D(S)}(C|_S,D(C'|_S)) = \BH^\bullet D(C\otimes
C'|_S)$ is the limit of a spectral sequence with $E_1$-term
\begin{align*}
  E_1^{p,q} & = \BH^{p+q} D(C\otimes C'|_{S_p})
  = \oplus_{\dim X_i=\dim S-p} \BH^{p+q} D(\IC(\overline{X}_w) \otimes \IC(\overline{X}_v)|_{X_i}).
\end{align*}
Since both $\IC(\overline{X}_w)|_{X_i},\IC(\overline{X}_v)|_{X_i}$
are direct sums of shifted constant sheaves, by Lemma
\ref{lem:flag2} $\BH^n D(\IC(\overline{X}_w) \otimes
\IC(\overline{X}_v)|_{X_i}) = 0$ unless $n \equiv \dim X_w+\dim X_v
\pmod{2}$. The $E_1$-term therefore ``vanishes like a chess-board''.
It follows that the spectral sequence degenerates at the $E_1$-term
and we deduce that
$$\Ext^n_{\D(S)}(C|_S,D(C'|_S))
  \cong \oplus_{p+q=n} E_1^{p,q}
  = \oplus_{i=1}^k \Ext^n_{\D(X_i)}(C|_{X_i},D(C'|_{X_i})).
  \eqno\qed
$$
\renewcommand{\qed}{}
\end{proof}

\begin{cor}\label{cor:flag3}
Let $S = S_1\sqcup S_2\sqcup\cdots\sqcup S_k \subset X$ be a
subvariety with each $S_i$ being a union of Schubert cells. Then,
for $B$-equivariant semisimple complexes $C,C'\in\D(X)$, we have
\begin{equation*}
  \Ext^\bullet_{\D(S)}(C|_S,D(C'|_S)) \cong
  \oplus_{i=1}^k \Ext^\bullet_{\D(S_i)}(C|_{S_i},D(C'|_{S_i})).
\end{equation*}
\end{cor}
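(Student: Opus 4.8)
The plan is to deduce this formally from Lemma~\ref{lem:flag3} by applying that lemma twice: once to the whole subvariety $S$, and once to each piece $S_i$. The point is that Lemma~\ref{lem:flag3}, applied to any subvariety $T\subset X$ which is a disjoint union of Schubert cells, expresses $\Ext^\bullet_{\D(T)}(C|_T,D(C'|_T))$ as the direct sum of the terms $\Ext^\bullet_{\D(X_a)}(C|_{X_a},D(C'|_{X_a}))$ over the Schubert cells $X_a$ occurring in $T$, and these individual terms depend only on the cell $X_a$ and on $C,C'\in\D(X)$, not on the ambient $T$. Hence both sides of the asserted isomorphism are canonically identified with a direct sum of the same ``atomic'' pieces, indexed by the Schubert cells of $S$, and the only thing to verify is that the two index sets agree.

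Concretely, first I would note that each $S_i$, being by hypothesis a subvariety of $X$ that is a union of Schubert cells, satisfies the hypotheses of Lemma~\ref{lem:flag3}; applying that lemma with $S$ replaced by $S_i$ gives
\[
  \Ext^\bullet_{\D(S_i)}(C|_{S_i},D(C'|_{S_i}))
  \cong \bigoplus_{X_a\subset S_i}\Ext^\bullet_{\D(X_a)}(C|_{X_a},D(C'|_{X_a})),
\]
the sum running over the Schubert cells contained in $S_i$. There is no ambiguity in the restricted complexes here: for a cell $X_a\subset S_i\subset X$ one has $(C|_{S_i})|_{X_a}=C|_{X_a}$ by functoriality of pullback, and likewise for $C'$, so these are literally the same atomic terms that Lemma~\ref{lem:flag3} produces starting from any subvariety containing $X_a$. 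Summing over $i$ and using that $S=S_1\sqcup\cdots\sqcup S_k$ decomposes $S$ into the Schubert cells occurring in the various $S_i$, with each cell lying in exactly one $S_i$, yields
\[
  \bigoplus_{i=1}^k\Ext^\bullet_{\D(S_i)}(C|_{S_i},D(C'|_{S_i}))
  \cong \bigoplus_{X_a\subset S}\Ext^\bullet_{\D(X_a)}(C|_{X_a},D(C'|_{X_a})).
\]
On the other hand, Lemma~\ref{lem:flag3} applied directly to $S$ identifies the right-hand side above with $\Ext^\bullet_{\D(S)}(C|_S,D(C'|_S))$, and combining the two isomorphisms gives the claim.

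The main obstacle: there isn't really a substantial one here — the corollary is a bookkeeping consequence of Lemma~\ref{lem:flag3} together with the combinatorics of the Bruhat decomposition. The only point requiring a little care is the (implicit) hypothesis that each $S_i$ is itself a subvariety of $X$: this is needed even to make sense of $\D(S_i)$, and it is exactly what licenses invoking Lemma~\ref{lem:flag3} on $S_i$; together with it goes the harmless check that the complexes compared on a cell $X_a$ are the honest restrictions of $C,C'$ from $X$, independent of which stratum $X_a$ is viewed inside. Alternatively, one could reprove the statement from scratch by the same filtration-and-spectral-sequence argument as in Lemma~\ref{lem:flag3} — filtering $S$ by the closed subvarieties obtained by successively removing the cells of top dimension and invoking the ``chess-board'' vanishing of Lemma~\ref{lem:flag2} — but routing everything through Lemma~\ref{lem:flag3} is the cleanest route.
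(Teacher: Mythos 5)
Your argument is correct and is exactly the intended one: the paper states this as an immediate corollary of Lemma~\ref{lem:flag3}, and applying that lemma to $S$ and to each $S_i$ and matching the resulting cell-by-cell summands (which depend only on $C,C'\in\D(X)$ and the cell, not the ambient stratum) is the same bookkeeping deduction. Your side remark that each $S_i$ must itself be a (locally closed) subvariety so that $\D(S_i)$ and the lemma apply is the only hypothesis to check, and it holds in the paper's applications.
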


\smallskip

Next, we recall the Bott-Samelson resolution of a given Schubert
variety $\overline{X}_w$. Fix a reduced word $w=s_{i_1}s_{i_2}\cdots
s_{i_t}$ and set
$$Z = \overline{B\dot{s}_{i_1}B} \times \overline{B\dot{s}_{i_2}B} \times \cdots \times \overline{B\dot{s}_{i_t}B} / B^t$$
in which $B^t$ acts by the equation
$$(x_1,x_2,\dots,x_t) \cdot (g_1,g_2,\dots,g_t) = (x_1g_1,g_1^{-1}x_2g_2,\dots,g_{t-1}^{-1}x_tg_t).$$
The projection
$$\pi: Z\to\overline{X}_w, \quad [x_1,x_2,\dots,x_t]\mapsto [x_1x_2\cdots x_t]$$
then gives rise to a resolution of singularities. Indeed, $Z$ is an
iterated $\BP^1$-bundle.

The following variation of Bott-Samelson resolution for
Grassmannians will be used in Section \ref{sec:mod:res}.

\begin{lem}\label{lem:desingular}
Given a Grassmannian variety
$$X = GL(W)/P = \{V\subset W \mid \dim V=r\},$$
for each Schubert variety $\overline{X}_w \subset X$ and for each
integer $0\le d'\le d$, there exists a resolution of singularities
$\pi: Z\to \overline{X}_w$ such that the preimage of each
$$Y_{r'}=\{ V\in \overline{X}_w \mid \dim(V\cap W_{d'})=r' \}$$
is a smooth subvariety of $Z$.
\end{lem}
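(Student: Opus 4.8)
The plan is to build $Z$ by induction on $r=\dim V$, as a tower of relative Grassmannian (projective) bundles, arranging at each step that the subspace we peel off sits inside $W_{d'}$ whenever possible. Write $\overline{X}_w=\overline{X}_I$ with $I=\{i_1<i_2<\dots<i_r\}$, so that $\overline{X}_I=\{V:\dim(V\cap W_{i_k})\ge k\text{ for all }k\}$; the case $r=1$ is immediate, since then $\overline{X}_I=\BP(W_{i_1})$ and each $Y_{r'}$ is a linear subspace ($r'=1$) or the complement of one ($r'=0$), so $\pi=\mathrm{id}$ works. For the inductive step, I will peel off a line from $V$, realize the resulting space as a Schubert subvariety of a relative Grassmannian over $\BP(W_{i_1})$, resolve fibrewise by induction, and translate the stratification.

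Main case $i_1\le d'$. Then $W_{i_1}\subseteq W_{d'}$ and $\dim(V\cap W_{i_1})\ge1$ for every $V\in\overline{X}_I$, so I form $\widehat{X}=\{(L,V):L\subseteq W_{i_1},\ \dim L=1,\ L\subseteq V\in\overline{X}_I\}$ with the proper birational map $\rho\colon\widehat{X}\to\overline{X}_I$ (an isomorphism over $\{\dim(V\cap W_{i_1})=1\}$) and the projection $q\colon\widehat{X}\to\BP(W_{i_1})$. Passing to $\overline{V}=V/L$, the fibre of $q$ over $L$ is the Schubert variety in $\mathrm{Gr}(r-1,W/L)$ attached to the flag $W_{i_2}/L\subseteq\dots\subseteq W_{i_r}/L$ (the condition at $k=1$ being absorbed into $L\subseteq V$); thus $\widehat{X}$ is a Schubert subvariety of a relative $\mathrm{Gr}(r-1,-)$-bundle over the smooth base $\BP(W_{i_1})$. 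Since $L\subseteq W_{d'}$, one has $\dim(V\cap W_{d'})=1+\dim(\overline{V}\cap(W_{d'}/L))$, so $\rho^{-1}(Y_{r'})$ is, fibrewise, the stratum of the fibre attached to the subspace $W_{d'}/L$ (which is comparable to all the $W_{i_k}/L$, hence lies in a refining complete flag of $W/L$) at level $r'-1$. Applying the inductive resolution fibrewise — the whole construction being a tower of relative projective bundles, it globalizes over $\BP(W_{i_1})$ — gives $\sigma\colon\widetilde{X}\to\widehat{X}$ with $\widetilde{X}$ smooth and $\sigma^{-1}$ of each fibrewise stratum smooth; then $\pi=\rho\circ\sigma$ is proper birational with $\widetilde{X}$ smooth, and $\pi^{-1}(Y_{r'})=\sigma^{-1}(\rho^{-1}(Y_{r'}))$ is smooth.

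Case $i_1>d'$, and the main obstacle. Here $\dim(V\cap W_{d'})$ is generically $0$ on $\overline{X}_I$, so a line $L\subseteq V\cap W_{i_1}$ need not lie in $W_{d'}$: I still form $\widehat{X}$ and $q\colon\widehat{X}\to\BP(W_{i_1})$ as above, but now split the base into the closed $\BP(W_{d'})=\{L\subseteq W_{d'}\}$ and its open complement. Over the closed part the translation is as before (subspace $W_{d'}/L$, level $r'-1$); over the open part, where $L\cap W_{d'}=0$, one has $\dim(V\cap W_{d'})=\dim(\overline{V}\cap((W_{d'}+L)/L))$, so the translation uses $(W_{d'}+L)/L$ at level $r'$ — and in both cases the relevant subspace is comparable to all the $W_{i_k}/L$, so the inductive hypothesis applies over each of the two pieces of the base. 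The hard part is that these two pieces must be glued: the distinguished subspaces $W_{d'}/L$ (rank $d'-1$) and $(W_{d'}+L)/L$ (rank $d'$) do not fit into one flat family of subspaces over all of $\BP(W_{i_1})$, so independent fibrewise choices over the open and closed loci need not produce a smooth total space or smooth strata preimages — and indeed small examples show that a careless resolution here fails. The remedy I envisage is to run the whole induction relative to a flag sub-bundle of the relative Grassmannian that interpolates between the two behaviours, so that the tower of bundles is automatically compatible with the specialization from the open part of the base to $\BP(W_{d'})$; once this compatibility is arranged, smoothness of $\widetilde{X}$ and of each $\pi^{-1}(Y_{r'})$ follows from smoothness of the fibres over the smooth base $\BP(W_{i_1})$, and $\pi\colon\widetilde{X}\to\overline{X}_I$ is the required resolution, completing the induction.
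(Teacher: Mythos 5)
Your induction only goes through in the case $i_1\le d'$, where the peeled-off line $L$ automatically lies in $W_{d'}$ and the stratum condition descends fibrewise to the quotient. The crucial case $i_1>d'$ is not actually proved: as you yourself observe, the two distinguished subspaces $W_{d'}/L$ and $(W_{d'}+L)/L$ do not fit into one family over $\BP(W_{i_1})$, and the proposed remedy (``run the whole induction relative to a flag sub-bundle that interpolates between the two behaviours'') is never constructed, nor is the compatibility it is supposed to guarantee established. This is a genuine gap rather than a routine verification: to make the induction close you would have to strengthen the statement to a relative one, about Schubert varieties in Grassmannian bundles equipped with a distinguished subspace whose rank jumps along a closed subvariety of the base, and it is exactly this jumping behaviour that your argument does not control. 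A secondary issue is that even in the case $i_1\le d'$ the inductive hypothesis you invoke is the lemma for a fixed flag in a fixed space, whereas what you apply is a version for the varying flags $W_{i_2}/L\subset\cdots\subset W_{i_r}/L$ and varying subspace $W_{d'}/L$ over $\BP(W_{i_1})$; the claimed globalization should itself be the induction hypothesis, with the bundle structure made explicit.

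For comparison, the paper avoids the problematic locus altogether: it factors $w=s_{i_1}s_{i_2}\cdots s_{i_t}w'$ with $\ell(w)=t+\ell(w')$, all simple reflections $s_{i_j}$ stabilizing $W_{d'}$, and $\ell(w')$ minimal; the remainder $\overline{X}_{w'}$ is then a smooth sub-Grassmannian in which the strata $\{\dim(V\cap W_{d'})=r'\}$ are already smooth, and the partial Bott--Samelson space $Z=\overline{B\dot{s}_{i_1}B}\times\cdots\times\overline{B\dot{s}_{i_t}B}\times\overline{B\dot{w}'P}/B^t\times P$ resolves $\overline{X}_w$. Because every factor $\overline{B\dot{s}_{i_j}B}$ stabilizes $W_{d'}$, the condition $\dim(x_1\cdots x_tx'W_r\cap W_{d'})=r'$ depends on $x'$ alone, so $\pi^{-1}(Y_{r'})$ is a $Y'_{r'}$-bundle over an iterated $\BP^1$-bundle, hence smooth. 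If you wish to salvage your peeling strategy, you need an analogous mechanism confining all choices that interact with $W_{d'}$ to a smooth piece on which the stratification is already smooth; without it the case $i_1>d'$ remains open.
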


\begin{proof}
Choose a decomposition $w=s_{i_1}s_{i_2}\cdots s_{i_t}w'$ such that
$\ell(w)=t+\ell(w')$, $s_{i_j}$ are simple reflections satisfying
$\dot{s}_{i_j}W_{d'} = W_{d'}$ and $\ell(w')$ is minimal in
possible. It is straightforward to check that
$$\overline{X}_{w'} = \overline{B\dot{w}'P/P}$$
is a Grassmannian variety and has each
$$Y'_{r'} = \{ V\in \overline{X}_{w'} \mid \dim(V\cap W_{d'})=r' \}$$
as a smooth subvariety. Following the spirit of Bott-Samelson
resolution, we can form a resolution of singularities $\pi: Z\to
\overline{X}_w$ with
$$Z = \overline{B\dot{s}_{i_1}B} \times \overline{B\dot{s}_{i_2}B} \times \cdots \times
  \overline{B\dot{s}_{i_t}B} \times \overline{B\dot{w}'P} / B^t\times P.
$$
Moreover, each
$$\pi^{-1}(Y_{r'}) = \{ [x_1,x_2,\dots,x_t,x'] \in Z \mid \dim(x'W_r \cap W_{d'})=r' \}$$
is a $Y'_{r'}$-bundle over an iterated $\BP^1$-bundle, hence is a
smooth subvariety of $Z$. This completes the proof.
\end{proof}

\section{Categorification of $U_\A$-modules}\label{sec:mod}

Categorification of representations of quantum groups is a fairly
new topic. For the simplest cases, the irreducible representations
of $U_q(sl_2)$ and the tensor products of the fundamental
representation of $U_q(sl_2)$, the picture has been fairly clear;
categorifications are implemented via both algebraic and geometric
approaches (cf. for instance \cite{BFK99}, \cite{CR04}).

The next development along this direction is the very recent work by
Frenkel-Khovanov-Stroppel \cite{FKS05}, in which the authors
succeeded in categorifying tensor products of general
$U_q(sl_2)$-modules. The work used at full length many deep results
on representations of Lie algebras.

\smallskip

In this section, we do the same as \cite{FKS05}, but in quite a
different way. The categorification is fulfilled in the framework of
perverse sheaves on Grassmannians. Moreover, it is tailored to set
up an initial stage for the categorification of representations of
general quantum groups via the geometry of Nakajima's quiver
varieties \cite{Na94}.

Quiver varieties are very natural and successful tools in the study
of representations of Kac-Moody algebras \cite{Na01} and their
quantum analogues \cite{Lu91}, \cite{KSa97}. Naturally the same is
expected for categorification. As will be justified below,
microlocal perverse sheaves \cite{KS90} \cite{Wa04} \cite{GMV05} on
them (rather than homology groups or perverse sheaves as usually
treated) turn out to provide the appropriate setting for this goal.

Notice that the tensor product varieties \cite{Na01} \cite{Ma03}
associated to tensor products of $U_q(sl_2)$-modules are conic
Lagrangian subvarieties of the cotangent bundles of Grassmiannians.
A standard result then states the categories of perverse sheaves we
use in this section are equivalent via microlocalization functor to
the categories of microlocal perverse sheaves supported on these
varieties. That being said, our categorification is a priori able to
be achieved alternatively in the framework of microlocal perverse
sheaves.

Further examination by examples reveals that microlocal perverse
sheaves on Nakajima's quiver varieties do carry the right
information necessary for extending the present work to general
quantum groups. Actually, it was this observation that motivated the
present paper.

Nevertheless, carrying the full plan out needs, that will be our
next concern, substantial developments on many aspects of the theory
of microlocal perverse sheaves. A version for schemes, which is
still vacant from the literature, is especially welcome.

\smallskip

The section is organized as follows. We establish the
categorification theorem in the first three subsections. The
construction is straightforward and elementary. The only nontrivial
tool used is the decomposition theorem.

In Section \ref{sec:mod:ip} and \ref{sec:mod:d}, we realize inner
product of $U_\A$-modules and the bar involution of $U_\A$ via
certain functors.

In Section \ref{sec:mod:abel}, we translate the categorification
into an abelian version. This abelian version exhibits many
resemblances with the work \cite{FKS05}, but at this moment we have
no proof to their equivalence.

The last three subsections are devoted to identify the standard
$U_\A$-modules with what we have categorified. The task can be done
in more elementary ways, but we stick to our treatment because of
its advantage of being less dependent on the algebraic knowledge of
$U_\A$-modules. This is important when we are confronting with other
quantum groups.

\smallskip

Notice the resemblance of this work with Lusztig's treatment
\cite{Lu91}\cite{Lu93} for canonical basis of quantum groups, for
example, canonical bases being constructed explicitly from simple
perverse sheaves, and the usage of inner product.

\subsection{The category $\CQ_\bd$}\label{sec:mod:Q}

Let $W$ be a complex linear space of dimension $d$ and fix a
complete flag
\begin{equation}\label{eqn:Q1}
  0=W_0\subset W_1\subset W_2\subset\cdots\subset W_d=W.
\end{equation}
We have for each integer $0\le r\le d$ a Grassmannian variety
\begin{equation}
  X_d^r = \{ V\subset W \mid \dim V=r \}.
\end{equation}
It is convenient to set $X_d^r=\emptyset$ for $r<0$ or $r>d$.

Given a composition $\bd=(d_1,d_2,\dots,d_l)$ of $d$, let $P_\bd$
denote the parabolic subgroup of $G=GL(W)$ preserving the subspaces
$W_{d_1+d_2+\cdots+d_k}$, $k=1,2,\dots,l$. We denote the set of the
$P_\bd$-orbits of $X_d^r$ as $\CS_\bd^r$. It is indexed by the
compositions $\br=(r_1,r_2,\dots,r_l)$ of $r$ satisfying $r_k\le
d_k$, $k=1,2,\dots,l$; associated to $\br=(r_1,r_2,\dots,r_l)$ is
the orbit
\begin{equation}
  X_\br = \{ V\in X_d^r \mid \dim(V\cap W_{d_1+\dots+d_k})=r_1+\dots+r_k , \; k=1,2,\dots,l \}.
\end{equation}

In the specific case $\bd = (1,1,\dots,1)$, $P_\bd$ is the Borel
subgroup $B\subset G$ preserving the complete flag \eqref{eqn:Q1},
hence the $P_\bd$-orbits coincide with the Schubert cells of
$X_d^r$.

\begin{rem}
The conormal variety to the $P_\bd$-orbits of $\sqcup_rX_d^r$ is
precisely the tensor product varieties \cite{Na01} \cite{Ma03}
associated to the $U$-module $\Q(q)\times_\A\Lambda_\bd$.
\end{rem}

Let $\CQ_\bd^r$ be the full subcategory of $\D(X_d^r)$ consisting of
the $P_\bd$-equivariant semisimple complexes. Up to isomorphism the
objects from $\CQ_\bd^r$ are finite direct sums of
$\IC(\overline{X}_\br)[j]$ for various $P_\bd$-orbits $X_\br$ and
integers $j$.

The categories $\CQ_\bd^r$ are additive (but neither abelian nor
triangulated in general). We set
\begin{equation}
  \CQ_\bd = \bigoplus_r \CQ_\bd^r.
\end{equation}
By the Grothendieck group $Q_\bd$ of $\CQ_\bd$ we mean the free
$\A$-module defined by the generators each for an isomorphism class
of objects from $\CQ_\bd$ and the relations
\begin{enumerate}
\renewcommand{\theenumi}{\roman{enumi}}
\setlength{\itemsep}{.3ex}
  \item $[C\oplus C']=[C]+[C']$, for $C,C'\in\CQ_\bd$;
  \item $[C[1]]=q^{-1}[C]$, for $C\in\CQ_\bd$.
\end{enumerate}
It has a canonical basis
\begin{equation}\label{eqn:cb}
  b_\br = [\IC(\overline{X}_\br)], \quad X_\br\in\sqcup_r\CS_\bd^r.
\end{equation}

\begin{exam}
For $\bd=(d)$ and $0\le r\le d$, $X_d^r$ consists of a single
$P_\bd$-orbit: $\CS_\bd^r = \{ X_{(r)}=X_d^r \}$. Therefore, $Q_\bd$
has a canonical basis
\begin{equation}
  b_{(r)}=[\IC(X_d^r)], \quad 0\le r\le d.
\end{equation}
Thus $Q_\bd \cong \oplus_{0\le r\le d} \A$.
\end{exam}

\begin{exam}
For $\bd=(2,2)$ and $r=2$, we have $\CS_\bd^r = \{ X_{(2,0)},
X_{(1,1)}, X_{(0,2)} \}$ where $X_{(2,0)}$ is a point, $X_{(1,1)}$
consists of four Schubert cells and $X_{(0,2)}\cong\C^4$ is the top
Schubert cell. We have met a singular Schubert variety
$\overline{X}_{(1,1)} = X_{(1,1)} \sqcup X_{(2,0)}$.
\end{exam}

\subsection{The functors $\K, \E^{(n)}, \F^{(n)}$}\label{sec:mod:kef}

For every integer $n\ge0$, we have a diagram
\begin{equation}\label{eqn:ef}
\xymatrix@1{
  X_d^r & \ar[l]_{p} X_d^{r,r+n} \ar[r]^-----{p'} & X_d^{r+n}
}
\end{equation}
where
\begin{equation}
  X_d^{r,r+n} = \{ V\subset V'\subset W \mid \dim V=r, \; \dim V'=r+n \},
\end{equation}
and $p(V,V')=V$, $p'(V,V')=V'$. Note that $p$ (resp. $p'$) is an
$X_{r+n}^n$-bundle (resp. $X_{d-r}^n$-bundle) and that
\begin{align*}
  & \dim X_d^{r,r+n} - \dim X_d^{r+n} = nr, \\
  & \dim X_d^{r,r+n} - \dim X_d^r = n(d-n-r).
\end{align*}

Define functors
\begin{equation}
\begin{array}{lcl}
  \K_{r} = [2r-d] & : & \D(X_d^r) \to \D(X_d^r), \\
  \E^{(n)}_{r+n} = p_!p'^*[nr] & : & \D(X_d^{r+n}) \to \D(X_d^r), \\
  \F^{(n)}_{r} = p'_!p^*[n(d-n-r)] & : & \D(X_d^r) \to \D(X_d^{r+n}),
\end{array}
\end{equation}
and assemble them into endofunctors of $\oplus_r \D(X_d^r)$
\begin{equation}
  \K = \oplus_r \K_r, \quad
  \E^{(n)} = \oplus_r \E^{(n)}_r, \quad
  \F^{(n)} = \oplus_r \F^{(n)}_r.
\end{equation}
We also abbreviate $\E_r^{(1)}, \F_r^{(1)}, \E^{(1)}, \F^{(1)}$ to
$\E_r, \F_r, \E, \F$, respectively.

\smallskip

The following proposition states that these functors induce
endofunctors of $\CQ_\bd$.

\begin{prop}
We have $\K_rC \in \CQ_\bd^r$, $\E^{(n)}_rC \in \CQ_\bd^{r-n}$ and
$\F^{(n)}_rC \in \CQ_\bd^{r+n}$ for $C \in \CQ_\bd^r$.
\end{prop}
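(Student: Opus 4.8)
The plan is to verify each of the three claims by tracking equivariance and semisimplicity separately through the functors $\K_r$, $\E_r^{(n)}$, $\F_r^{(n)}$, using only the listed properties of the six-functor formalism together with the decomposition theorem. For $\K_r = [2r-d]$ the assertion is immediate: shifting a $P_\bd$-equivariant semisimple complex by an integer again yields a $P_\bd$-equivariant semisimple complex, since $\pH^n(C[j]) = \pH^{n+j}(C)$ and semisimplicity of a complex together with semisimplicity of all its perverse cohomology objects is manifestly shift-invariant.

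For $\E_r^{(n)}$ and $\F_r^{(n)}$, first note that both maps $p\colon X_d^{r,r+n}\to X_d^r$ and $p'\colon X_d^{r,r+n}\to X_d^{r+n}$ in \eqref{eqn:ef} are $P_\bd$-equivariant: $P_\bd$ acts on all three Grassmannians in a compatible way, since it acts by linear automorphisms of $W$ preserving the flag. Moreover $p$ is an $X_{r+n}^n$-bundle and $p'$ is an $X_{d-r}^n$-bundle; in particular both are smooth proper morphisms with connected (nonempty, where the source is nonempty) fibers, and $X_d^{r,r+n}$ itself is smooth. I would then argue in two steps. First, pullback: for $C\in\CQ_\bd^{r+n}$, write $C = \oplus_n \pH^n(C)[-n]$ with each $\pH^n(C)$ a $P_\bd$-equivariant semisimple perverse sheaf; by property (ix), $p'^*[d']$ for the appropriate fiber dimension $d'$ is $t$-exact and sends simple perverse sheaves to simple perverse sheaves, so $p'^*C$ (up to a shift) is again a direct sum of shifted $P_\bd$-equivariant simple perverse sheaves, hence semisimple; equivariance of the perverse cohomology is also covered by property (xii). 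Second, pushforward: since $p$ is proper, $p_! = p_*$ by property (vii), and now we apply the decomposition theorem in its equivariant form, property (xvi): because $P_\bd$ is a connected algebraic group acting on $X_d^r$ with finitely many orbits (the $P_\bd$-orbits $X_\br$), a $P_\bd$-equivariant semisimple complex on the source is sent by $p_!$ to a semisimple complex on $X_d^r$, and property (xv) ensures the perverse cohomology objects land in $\M_{P_\bd}(X_d^r)$. Composing, $\E_r^{(n)}C = p_!p'^*C[nr]$ lies in $\CQ_\bd^{r-n}$ after relabeling $r\mapsto r-n$; the case of $\F_r^{(n)}$ is entirely symmetric with the roles of $p$ and $p'$ interchanged.

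The only point requiring a little care, and the place I would expect the main (mild) obstacle, is the bookkeeping of which combination of shifts makes $p'^*$ perverse-$t$-exact: property (ix) gives $f^*[d] = f^![-d]$ for $f$ smooth with connected fibers of dimension $d$, and this shifted functor is the one preserving perversity and simplicity, so one must check that the fiber dimensions of $p$ and $p'$ — namely $\dim X_{r+n}^n = n r$ and $\dim X_{d-r}^n = n(d-n-r)$, matching the displayed dimension computations — are exactly the shifts $[nr]$ and $[n(d-n-r)]$ built into the definitions of $\E_r^{(n)}$ and $\F_r^{(n)}$. Once this matching is observed, semisimplicity propagates through $p'^*$ without further shift adjustment, and the decomposition theorem handles $p_!$, so the proof is a direct concatenation of the quoted properties with no genuine difficulty beyond this indexing check.
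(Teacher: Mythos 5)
Your argument is correct and follows essentially the same route as the paper's own proof: the shift $\K_r$ is trivial, the smooth $P_\bd$-equivariant Grassmannian-bundle pullback preserves equivariant semisimplicity, and the proper pushforward is handled by the equivariant form of the decomposition theorem. Two harmless slips worth noting: your fiber-dimension bookkeeping swaps $p$ and $p'$ (the relative dimension of $p'$ is $nr$ and that of $p$ is $n(d-n-r)$, so the shifts in $\E^{(n)}_r$, $\F^{(n)}_r$ do match the respective pullbacks; in any case $\CQ_\bd^r$ is stable under shifts, so no exact $t$-exactness check is needed for this statement), and the finitely-many-orbits hypothesis in the equivariant decomposition theorem must be verified on the source $X_d^{r,r+n}$ (true, by the Bruhat decomposition for the parabolic $P_\bd$), not on the target $X_d^r$.
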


\begin{proof}
The statement for $\K$ is trivial. We prove the proposition for $\F$
and a similar argument applies to $\E$. Since $p$ is a Grassmannian
bundle and is $P_\bd$-equivariant, $p^*C$ is a $P_\bd$-equivariant
semisimple complex. Since $p'$ is proper and is also
$P_\bd$-equivariant, by the decomposition theorem $p'_!p^*C$, and
therefore $\F^{(n)}_rC$, is a $P_\bd$-equivariant semisimple
complex.
\end{proof}

\begin{exam}\label{exam:d1}
For $\bd=(d)$, we have
$$\F^{(r)}\IC(X_d^0) = \IC(X_d^r)$$
and
\begin{align*}
  & \K\IC(X_d^r) = \IC(X_d^r)[2r-d], \\
  & \E\IC(X_d^r) = \oplus_{j=0}^{d-r} \IC(X_d^{r-1})[d-r-2j], \\
  & \F\IC(X_d^r) = \oplus_{j=0}^r \IC(X_d^{r+1})[r-2j],
\end{align*}
in agreement with \eqref{eqn:bld}, \eqref{eqn:mld}.
\end{exam}

\subsection{Categorification theorem}\label{sec:mod:cat}

We shall show that the endofunctors $\K$, $\E^{(n)}$, $\F^{(n)}$
categorify the generators $K, E^{(n)}, F^{(n)}$ of $U_\A$. Our first
two propositions are obvious.

\begin{prop}
The functor $\K$ is an autoequivalence.
\end{prop}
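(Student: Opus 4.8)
The plan is to observe that $\K$ is assembled from shift functors, which are invertible on the derived category, and then to check that both $\K$ and its inverse preserve the subcategory $\CQ_\bd$.

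First I would recall that each component $\K_r = [2r-d] \colon \D(X_d^r) \to \D(X_d^r)$ is simply the $(2r-d)$-fold shift functor. On any $\D(X)$ the shift $[n]$ is an autoequivalence with two-sided inverse $[-n]$, since $[n][-n] = [-n][n] = \Id$. Hence $\K_r$ is an autoequivalence of $\D(X_d^r)$ with inverse $[d-2r]$. Next I would check that this inverse restricts to $\CQ_\bd^r$: a shift of a $P_\bd$-equivariant semisimple complex is again a $P_\bd$-equivariant semisimple complex, because shifting merely relabels the perverse cohomology sheaves and preserves the splitting $C \cong \oplus_n \pH^n(C)[-n]$. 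Thus $[d-2r]$ carries $\CQ_\bd^r$ into itself, while the preceding proposition already records that $\K_r$ does. Assembling over $r$, the endofunctor $\K = \oplus_r \K_r$ of $\CQ_\bd = \oplus_r \CQ_\bd^r$ has two-sided inverse $\oplus_r [d-2r]$, hence is an autoequivalence.

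There is essentially no obstacle here: once one notes that shift functors are invertible and that the relevant subcategory is stable under shifts, the statement is immediate. The only point worth spelling out is that invertibility must be verified at the level of $\CQ_\bd$ rather than of $\D(X_d^r)$, i.e. that the candidate inverse also lands in the subcategory — which it manifestly does.
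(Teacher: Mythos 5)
Your argument is correct and matches the paper's intent exactly: the paper simply declares this proposition obvious, the underlying reason being precisely what you spell out, namely that each $\K_r=[2r-d]$ is a shift functor with inverse $[d-2r]$ and that $\CQ_\bd$ is stable under shifts. No gap; your write-up is just a fuller version of the paper's implicit argument.
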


\begin{prop}
We have functor isomorphisms
\begin{align*}
  & \K\E = \E\K[-2], \quad \K\F = \F\K[2].
\end{align*}
\end{prop}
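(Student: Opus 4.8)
The plan is to verify the two isomorphisms $\K\E = \E\K[-2]$ and $\K\F = \F\K[2]$ by tracking what each side does on the graded pieces $\D(X_d^r)$ and unwinding the definitions of the functors. Both sides are built out of shift functors and the pull-push functors along the diagram \eqref{eqn:ef}, so the content is purely bookkeeping of shift amounts; no geometry beyond the definitions is required.

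First I would compute $\K\E$ restricted to $\D(X_d^r)$. By definition $\E_r = p_!p'^*[n r]\big|_{n=1} = p_!p'^*[r]$ maps $\D(X_d^r)$ to $\D(X_d^{r-1})$, and then $\K_{r-1} = [2(r-1)-d] = [2r-d-2]$ is applied, so $\K\E$ on $\D(X_d^r)$ equals $p_!p'^*[r][2r-d-2] = p_!p'^*[3r-d-2]$. On the other side, $\E\K$ on $\D(X_d^r)$ first applies $\K_r = [2r-d]$ staying in $\D(X_d^r)$, then $\E_r = p_!p'^*[r]$, giving $p_!p'^*[r][2r-d] = p_!p'^*[3r-d]$; composing with the extra $[-2]$ yields $p_!p'^*[3r-d-2]$, which agrees. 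The argument for $\F$ is identical in spirit: $\F_r = p'_!p^*[d-1-r]$ sends $\D(X_d^r)$ to $\D(X_d^{r+1})$, so $\K\F$ on $\D(X_d^r)$ is $p'_!p^*[d-1-r][2(r+1)-d] = p'_!p^*[r+1]$, while $\F\K$ on $\D(X_d^r)$ is $p'_!p^*[d-1-r][2r-d] = p'_!p^*[r-1]$, and adding the $[2]$ gives $p'_!p^*[r+1]$, matching. In both cases the underlying functor isomorphism is the trivial one coming from the fact that the shift functors $[m]$ are central (they commute with everything and composition of shifts adds exponents), so once the exponents are checked to coincide the natural isomorphism is canonical.

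The main point to be careful about is the indexing convention: in the assembled functors $\E = \oplus_r \E_r$ and $\F = \oplus_r \F_r$, the summand $\E_r$ is the one landing in $\D(X_d^{r-1})$ (equivalently $\E^{(1)}_{r+n}$ with $n=1$ in the notation of the definition is $\E_{r+1}$ going to $\D(X_d^r)$), and similarly for $\F$; one has to feed the output of the first functor into the correctly-indexed component of the second. I expect this is the only place an error could creep in, and it is resolved simply by writing out the source and target of each composite on a fixed $\D(X_d^r)$ before comparing shift amounts. There is no genuine obstacle here — the proposition is, as the authors say, obvious — so the proof is just the two short shift computations above, possibly abbreviated to "follows directly from the definitions and the fact that shift functors are central."
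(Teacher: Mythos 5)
Your proof is correct and takes essentially the approach the paper intends: the paper offers no argument beyond calling the proposition obvious, and the content is exactly the shift bookkeeping you carry out, since $\K$ shifts by $[2r-d]$ on the level-$r$ summand while $\E$ (resp.\ $\F$) lowers (resp.\ raises) the level by one. One small slip: by the paper's indexing $\E_r=\E^{(1)}_r=p_!p'^*[r-1]$ rather than $p_!p'^*[r]$, but since this shift enters both composites identically it cancels in the comparison and your conclusion stands.
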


\begin{prop}
We have functor isomorphisms
\begin{align*}
  & \E^{(n-1)} \E \cong \bigoplus\limits_{j=0}^{n-1} \E^{(n)} [n-1-2j], \\
  & \F^{(n-1)} \F \cong \bigoplus\limits_{j=0}^{n-1} \F^{(n)} [n-1-2j].
\end{align*}
\end{prop}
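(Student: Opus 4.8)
The plan is to prove the identity for $\F^{(n-1)}\F\cong\bigoplus_{j=0}^{n-1}\F^{(n)}[n-1-2j]$; the case of $\E$ follows by the entirely symmetric construction using the other projection in the diagram \eqref{eqn:ef}, or alternatively by applying Verdier duality. First I would unwind the definitions. Acting on $\D(X_d^r)$, the functor $\F^{(n-1)}\F$ is $p''_!p'^*p'_!p^*$ up to a shift, where the relevant maps come from the two-step correspondences. More precisely, I would assemble the composite $\F^{(n-1)}_{r+1}\circ\F^{(1)}_r$ as a single pull-push through the variety of triples
\begin{equation*}
  X_d^{r,r+1,r+n} = \{\, V\subset V''\subset V'\subset W \mid \dim V=r,\ \dim V''=r+1,\ \dim V'=r+n \,\},
\end{equation*}
using proper base change (item (14) in the list) to rewrite $p'^*p'_!$ of the middle stage as a pull-push along the fiber product
\begin{equation*}
  X_d^{r,r+1} \times_{X_d^{r+1}} X_d^{r+1,r+n} = X_d^{r,r+1,r+n},
\end{equation*}
which is transverse because $p'$ is a Grassmannian bundle. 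So $\F^{(n-1)}\F$ is, up to an overall shift that I would compute from the dimension formulas preceding the Proposition, the functor $a_!b^*$ where $a,b$ are the two projections $X_d^{r,r+1,r+n}\to X_d^{r+n}$ and $\to X_d^r$ respectively.

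Next I would compare this with $\F^{(n)}$, which is $c_!e^*$ for the projections of $X_d^{r,r+n}$. The map $X_d^{r,r+1,r+n}\to X_d^{r,r+n}$ that forgets $V''$ is a $\mathbb{P}^{n-1}$-bundle: for fixed $V\subset V'$ with $\dim V'/V=n$, the choice of $V''$ with $V\subset V''\subset V'$ and $\dim V''/V=1$ is exactly a point of $\mathbb{P}(V'/V)\cong\mathbb{P}^{n-1}$. Factoring $a$ and $b$ through this bundle and using the projection formula together with the fact that pushforward along a $\mathbb{P}^{n-1}$-bundle of the constant sheaf gives $\bigoplus_{j=0}^{n-1}\C[-2j]$ (the cohomology of $\mathbb{P}^{n-1}$), I would obtain
\begin{equation*}
  \F^{(n-1)}\F \cong \F^{(n)}\otimes H^\bullet(\mathbb{P}^{n-1}) \cong \bigoplus_{j=0}^{n-1}\F^{(n)}[?-2j],
\end{equation*}
and a bookkeeping check of the shifts — matching $n(d-n-r)+(\text{shift from }\F^{(1)})$ against $n(d-n-(r+1))+\cdots$ plus the $\mathbb{P}^{n-1}$-bundle contribution, which is symmetric about its center — pins the shift down to $n-1-2j$, giving the symmetric range claimed.

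The main obstacle I expect is the careful handling of the base-change/transversality step and, more tediously, getting all the cohomological shifts to come out exactly right: there are shifts hidden in the definitions of $\E^{(n)},\F^{(n)}$ (the $[nr]$ and $[n(d-n-r)]$), shifts from each $\mathbb{P}^1$- or $\mathbb{P}^{n-1}$-bundle, and one must verify that the bundle $X_d^{r,r+1,r+n}\to X_d^{r,r+n}$ really is a projective bundle rather than merely a fibration, so that the decomposition of its pushforward is the naive $\bigoplus_j \C[-2j]$ with no twisting (which holds here since the bundle is Zariski-locally trivial). Once the geometry is set up, the identity is forced; the content is entirely in the clean statement of the correspondence and the projective bundle formula. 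As a sanity check one can specialize to $\bd=(d)$ and compare with Example \ref{exam:d1}, or pass to Grothendieck groups and recover the classical identity $E^{(n-1)}E=[n]_q E^{(n)}$ in $U_\A$.
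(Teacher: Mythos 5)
Your proposal is correct and follows essentially the same route as the paper: the paper also composes the two correspondences through the triple-flag variety $Y=\{V_1\subset V_2\subset V_3\}$, applies proper base change to collapse $p'^*p'_!$, and uses that the forgetful map to $X_d^{r,r+n}$ is a $\BP^{n-1}$-bundle so that $(p_{13})_!(p_{13})^*\cong\bigoplus_{j=0}^{n-1}[-2j]$, with the shift bookkeeping (the paper's $k=(d-r-1)+(n-1)(d-n-r)=n(d-n-r)+(n-1)$) yielding exactly $[n-1-2j]$. The only cosmetic difference is that you justify the splitting of the pushforward by Zariski-local triviality of the projective bundle, whereas the paper cites the decomposition theorem; both are fine.
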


\begin{proof}
We prove the second isomorphism. Consider the commutative diagram
$$\xymatrix{
  X^{r,r+n}_d \ar[rr]^{p'_3} \ar[dd]_{p_3} & & X^{r+n}_d \\
  & Y \ar[lu]_{p_{13}} \ar[r]^{p_{23}} \ar[d]^{p_{12}} & X^{r+1,r+n}_d \ar[u]_{p'_2} \ar[d]^{p_2} \\
  X^r_d & \ar[l]_{p_1} X^{r,r+1}_d \ar[r]^{p'_1} & X^{r+1}_d
}
$$
where
\begin{equation*}
  Y = \{ V_1\subset V_2\subset V_3\subset W \mid \dim V_1=r, \; \dim V_2=r+1, \; \dim V_3=r+n \},
\end{equation*}
$p_{ij}(V_1,V_2,V_3)=(V_i,V_j)$ and $p_i,p'_i$, $i=1,2,3$ are given
as \eqref{eqn:ef}. We have
\begin{align*}
  \F^{(n-1)}_{r+1} \F_r
  & = (p'_2)_! (p_2)^* (p'_1)_! (p_1)^*[k]
  = (p'_2)_! (p_{23})_! (p_{12})^* (p_1)^*[k] \\
  & \quad \quad \quad \quad = (p'_3)_! (p_{13})_! (p_{13})^* (p_3)^*[k]
\end{align*}
in which we set
$$k=(d-r-1)+(n-1)(d-n-r)$$
and the second equality is by proper base change. Since $p_{13}$ is
a $\BP^{n-1}$-bundle,
$$(p_{13})_!\C_Y \cong \bigoplus_{j=0}^{n-1}\C_{X^{r,r+n}_d}[-2j]$$
by the decomposition theorem. Therefore,
$$(p_{13})_! (p_{13})^* = (p_{13})_!\C_Y\otimes- \cong \bigoplus_{j=0}^{n-1}[-2j].$$
It follows that
\begin{equation*}
  \F^{(n-1)}_{r+1} \F_r
  \cong \bigoplus\limits_{j=0}^{n-1} (p'_3)_! (p_3)^* [k-2j]
  = \bigoplus\limits_{j=0}^{n-1} \F^{(n)}_{r} [n-1-2j].
\end{equation*}
Assembling the isomorphism for various $r$, we prove the
proposition.
\end{proof}

\begin{prop}
There is a functor isomorphism
\begin{align*}
  & \E_{r+1}\F_r \oplus \bigoplus\limits_{0\le j<2r-d} \Id[(2r-d)-1-2j] \\
  & \cong \F_{r-1}\E_r \oplus \bigoplus\limits_{0\le j<d-2r} \Id[(d-2r)-1-2j].
\end{align*}
\end{prop}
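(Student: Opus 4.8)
The plan is to express both $\E_{r+1}\F_r$ and $\F_{r-1}\E_r$ as convolution against a kernel on one fixed variety, and to compare the two kernels by means of the decomposition theorem. Set
$$T=\{(V_1,V_2)\in X_d^r\times X_d^r\mid\dim(V_1\cap V_2)\ge r-1\},$$
let $q_1,q_2\colon T\to X_d^r$ be the two projections, let $\Delta\cong X_d^r$ be the diagonal and $M=T\setminus\Delta$ its complement; $\Delta$ and $M$ are the only two $GL(W)$-orbits of $T$, and $M$ is open and dense. Unwinding the definitions of $\E_{r+1}=p_!p'^*[r]$ and $\F_r=p'_!p^*[d-1-r]$ and applying proper base change to $p'^*p'_!$ over $X_d^{r+1}$, one obtains $\E_{r+1}\F_r(C)\cong(q_1)_!\big(\sigma_!\C_Z\otimes q_2^*C\big)[d-1]$, where $Z=\{(V_1,V_2,V')\mid V_1,V_2\subset V',\ \dim V_i=r,\ \dim V'=r+1\}$ and $\sigma\colon Z\to T$ forgets $V'$; symmetrically, applying proper base change to $p^*p_!$ over $X_d^{r-1}$ gives $\F_{r-1}\E_r(C)\cong(q_1)_!\big(\sigma'_!\C_{Z'}\otimes q_2^*C\big)[d-1]$ with $Z'=\{(U,V_1,V_2)\mid U\subset V_1\cap V_2,\ \dim U=r-1,\ \dim V_i=r\}$ and $\sigma'\colon Z'\to T$ forgetting $U$. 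The problem is thereby reduced to comparing the two complexes $\sigma_!\C_Z$ and $\sigma'_!\C_{Z'}$ on $T$.

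Both $Z$ and $Z'$ are smooth and projective (each is a bundle of products of two projective spaces over a Grassmannian), and $\sigma,\sigma'$ are proper; they are two resolutions of singularities of $T$. Each is an isomorphism over $M$ (there $V'=V_1+V_2$, resp.\ $U=V_1\cap V_2$, is forced), while over $\Delta$ the map $\sigma$ restricts to the $\BP^{d-r-1}$-bundle $X_d^{r,r+1}\to X_d^r$ and $\sigma'$ to the $\BP^{r-1}$-bundle $X_d^{r-1,r}\to X_d^r$. By the decomposition theorem, and since $T$ has only the two $GL(W)$-orbits $M,\Delta$ — so the $GL(W)$-equivariant simple perverse sheaves on $T$ are exactly $\IC(T)$ and $\IC(\Delta)=(i_\Delta)_*\C_\Delta[\dim\Delta]$ — both $\sigma_!\C_Z$ and $\sigma'_!\C_{Z'}$ are finite direct sums of shifts of $\IC(T)$ and $\IC(\Delta)$. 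Comparing their restrictions to $M$ (each is $\C_M$ in degree $0$, whereas $\IC(T)|_M=\C_M[\dim T]$ and $\IC(\Delta)|_M=0$) forces each of them to contain $\IC(T)[-\dim T]$ with multiplicity one, so
$$\sigma_!\C_Z\cong\IC(T)[-\dim T]\oplus P,\qquad\sigma'_!\C_{Z'}\cong\IC(T)[-\dim T]\oplus P',$$
where $P$ and $P'$ are sums of shifts of $\IC(\Delta)$; in particular $P=(i_\Delta)_*(P|_\Delta)$ and $P'=(i_\Delta)_*(P'|_\Delta)$.

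Next I would restrict the two kernels to $\Delta$. By proper base change and the projective bundle formula, $\sigma_!\C_Z|_\Delta\cong\bigoplus_{j=0}^{d-r-1}\C_{X_d^r}[-2j]$ and $\sigma'_!\C_{Z'}|_\Delta\cong\bigoplus_{j=0}^{r-1}\C_{X_d^r}[-2j]$. Cancelling the common summand $\IC(T)[-\dim T]|_\Delta$ (whose actual value is never needed) gives $P|_\Delta\oplus\bigoplus_{j=0}^{r-1}\C_{X_d^r}[-2j]\cong P'|_\Delta\oplus\bigoplus_{j=0}^{d-r-1}\C_{X_d^r}[-2j]$ on $\Delta$, and applying $(i_\Delta)_*$ turns this into $P\oplus\bigoplus_{j=0}^{r-1}\IC(\Delta)[-\dim\Delta-2j]\cong P'\oplus\bigoplus_{j=0}^{d-r-1}\IC(\Delta)[-\dim\Delta-2j]$ on $T$. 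Applying the additive functor $A\mapsto(q_1)_!(A\otimes q_2^*C)[d-1]$ — which sends $\IC(T)[-\dim T]$ to a fixed functor $\Psi$ and each $\IC(\Delta)[-\dim\Delta-2j]$ to $\Id[d-1-2j]$, since $q_1|_\Delta=q_2|_\Delta$ is the identification $\Delta\cong X_d^r$ — and then adding $\Psi$ to both sides yields
$$\E_{r+1}\F_r\oplus\bigoplus_{j=0}^{r-1}\Id[d-1-2j]\ \cong\ \F_{r-1}\E_r\oplus\bigoplus_{j=0}^{d-r-1}\Id[d-1-2j].$$
Cancelling the $\min(r,d-r)$ summands $\Id[d-1-2j]$, $0\le j<\min(r,d-r)$, common to both sides (Krull--Schmidt holds for semisimple complexes) and reindexing the survivors by $j\mapsto\min(r,d-r)+j$ converts them precisely into $\bigoplus_{0\le j<2r-d}\Id[(2r-d)-1-2j]$ on the left when $d<2r$, into $\bigoplus_{0\le j<d-2r}\Id[(d-2r)-1-2j]$ on the right when $d>2r$, and into nothing when $d=2r$ — which is the asserted isomorphism.

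The step I expect to be the crux is the middle one: recognizing $Z$ and $Z'$ as two \emph{distinct} resolutions of the \emph{same} singular variety $T$, and identifying their exceptional loci over $\Delta$ as projective bundles of the two \emph{different} dimensions $d-r-1$ and $r-1$. This asymmetry is exactly the geometric source of the ``$\tfrac{K-K^{-1}}{q-q^{-1}}$''-correction, and the accompanying shift bookkeeping must be carried out carefully so that the surplus copies of $\Id$ come out with the exponents $(2r-d)-1-2j$ resp.\ $(d-2r)-1-2j$. By contrast, the uses of proper base change, of the decomposition theorem, and of Krull--Schmidt cancellation are routine with the tools already at hand; and it is a pleasant feature that the stalks of $\IC(T)$ along $\Delta$ never have to be computed, since that term drops out of the comparison.
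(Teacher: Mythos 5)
Your proposal is correct and follows essentially the same route as the paper: the paper also rewrites $\E_{r+1}\F_r$ and $\F_{r-1}\E_r$ via base change as kernel transforms along the two small resolutions $Y'\to Y$ and $Y''\to Y$ of the incidence variety $Y=\{\dim(V_1\cap V_2)\ge r-1\}$ (your $Z,Z'\to T$), computes their fibers over the diagonal as $\BP^{d-r-1}$- and $\BP^{r-1}$-bundles, and invokes the decomposition theorem to split off $\IC(Y)[-\dim Y]$ plus shifted constant sheaves on $\Delta$. The only cosmetic difference is that you cancel the common $\Id$-summands at the end, whereas the paper writes the already-balanced kernel identity directly.
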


\begin{proof}
We start with the commutative diagrams
$$\xymatrix{
  Y \ar[rr]^{p'_3} \ar[dd]_{p_3} & & X^r_d \\
  & Y' \ar[lu]_{p'_{12}} \ar[r]^{p'_{23}} \ar[d]^{p'_{13}} & X^{r,r+1}_d \ar[u]_{p_1} \ar[d]^{p'_1} \\
  X^r_d & \ar[l]_{p_1} X^{r,r+1}_d \ar[r]^{p'_1} & X^{r+1}_d
} \quad \quad \xymatrix{
  Y \ar[rr]^{p'_3} \ar[dd]_{p_3} & & X^r_d \\
  & Y'' \ar[lu]_{p''_{23}} \ar[r]^{p''_{13}} \ar[d]^{p''_{12}} & X^{r-1,r}_d \ar[u]_{p'_2} \ar[d]^{p_2} \\
  X^r_d & \ar[l]_{p'_2} X^{r-1,r}_d \ar[r]^{p_2} & X^{r-1}_d
}
$$
where
\begin{align*}
  Y & = \{ V,V'\in X_d^r \mid \dim(V+V') \le r+1 \} \\
  & = \{ V,V'\in X_d^r \mid \dim(V\cap V') \ge r-1 \}, \\
  Y' & = \{ V_1,V_2\subset V_3\subset W \mid \dim V_1=\dim V_2=r, \; \dim V_3=r+1 \}, \\
  Y'' & = \{ V_1\subset V_2,V_3\subset W \mid \dim V_1=r-1, \; \dim V_2=\dim V_3=r \},
\end{align*}
and the morphisms are the obvious ones as before. The bottom right
corners of the diagrams are cartesian squares. As in the previous
proposition, we have
\begin{align*}
  & \E_{r+1}\F_r = (p'_3)_! (p'_{12})_! (p'_{12})^* (p_3)^* [d-1], \\
  & \F_{r-1}\E_r = (p'_3)_! (p''_{23})_! (p''_{23})^* (p_3)^* [d-1].
\end{align*}

Let $i: \Delta\to Y$ be the inclusion of the diagonal. We have
functor isomorphisms
\begin{equation}\label{eqn:effe1}
\begin{split}
  & \Id = (p'_3)_! i_! i^* (p_3)^* = p'_{3!} (i_!\C_\Delta \otimes p_3^* -), \\
  & \E_{r+1}\F_r = p'_{3!} (p'_{12!}\C_{Y'}[d-1] \otimes p_3^* -), \\
  & \F_{r-1}\E_r = p'_{3!} (p''_{23!}\C_{Y''}[d-1] \otimes p_3^* -).
\end{split}
\end{equation}

Note that the varieties $Y',Y''$ are smooth, but $Y$ may be singular
at the diagonal $\Delta$. Indeed, the morphisms $p'_{12}$,
$p''_{23}$ are isomorphisms away from $\Delta$ and are $\BP^{d-r-1}$
respectively $\BP^{r-1}$ fibrations over $\Delta$. By proper base
change we have
\begin{equation*}
  p'_{12!}\C_{Y'}|_{Y\setminus\Delta} \cong p''_{23!}\C_{Y''}|_{Y\setminus\Delta} \cong \C_{Y\setminus\Delta},
\end{equation*}
and
\begin{equation*}
  p'_{12!}\C_{Y'}|_\Delta \cong \bigoplus\limits_{j=0}^{d-r-1} \C_\Delta[-2j], \quad \quad
  p''_{23!}\C_{Y''}|_\Delta \cong \bigoplus\limits_{j=0}^{r-1} \C_\Delta[-2j].
\end{equation*}
On the other hand, $p'_{12!}\C_{Y'}$ and $p''_{23!}\C_{Y''}$ are
both semisimple complexes by the decomposition theorem. Therefore,
by the classification of simple perverse sheaves, both has
$\IC(Y)[-\dim Y]$ as a direct summand and
\begin{equation}\label{eqn:effe2}
  p'_{12!}\C_{Y'} \oplus \bigoplus\limits_{d-r\le j<r} i_!\C_\Delta[-2j]
  \cong p''_{23!}\C_{Y''} \oplus \bigoplus\limits_{r\le j<d-r} i_!\C_\Delta[-2j].
\end{equation}
Combining isomorphisms \eqref{eqn:effe1} and \eqref{eqn:effe2}, we
prove the proposition.
\end{proof}

\begin{exam}
Revisit the case $\bd=(2,2)$ and $r=2$. We may derive the
isomorphism $\E_{r+1}\F_r \IC(\overline{X}_{(2,0)}) \cong
\IC(\overline{X}_{(1,1)})$ as follows. $X_{(2,0)}$ is a point and
$p_3^{-1}(X_{(2,0)})$ is isomorphic to the singular Schubert variety
$\overline{X}_{(1,1)}$. Therefore,
$(p_3)^*\IC(\overline{X}_{(2,0)})[3]$ is a shifted constant sheaf
supported on $p_3^{-1}(X_{(2,0)})$ thus of course not semisimple.
But $p'_{12}$ resolves the singularity of $p_3^{-1}(X_{(2,0)})$.
Thus $(p'_{12})_! (p'_{12})^* (p_3)^*\IC(\overline{X}_{(2,0)})[3]$
is the simple perverse sheaf supported on $p_3^{-1}(X_{(2,0)})$,
sent by $(p'_3)_!$ to $\IC(\overline{X}_{(1,1)})$. Indeed, in this
case $p'_{12!}\C_{Y'} \cong p''_{23!}\C_{Y''} \cong \IC(Y)[-\dim
Y]$.
\end{exam}

Comparing the above propositions with the defining relations
\eqref{eqn:r1}, \eqref{eqn:r2} of $U_\A$, we obtain the
categorification theorem.

\begin{thm}
The endofunctors $\K, \E^{(n)}, \F^{(n)}$ categorify the generators
$K$, $E^{(n)}$, $F^{(n)}$ of $U_\A$, thus endow the Grothendieck
group $Q_\bd$ with a $U_\A$-module structure. More precisely, the
followings hold for $C \in \CQ_\bd$.
\begin{align*}
  & [\K\E C]=q^2[\E\K C], \quad [\K\F C]=q^{-2}[\F\K C], \\
  & [\E\F C]-[\F\E C]=\frac{[\K C]-[\K^{-1}C]}{q-q^{-1}}, \\
  & [\E^{(n)}C]=\frac{[\E^nC]}{[n]_q!}, \quad [\F^{(n)}C]=\frac{[\F^nC]}{[n]_q!}.
\end{align*}
\end{thm}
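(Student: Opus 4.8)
The plan is to derive each displayed identity in the Grothendieck group directly from the corresponding functor isomorphism established in the preceding propositions, using that the passage to $Q_\bd$ is additive and turns the shift $[1]$ into multiplication by $q^{-1}$. First I would record the elementary bookkeeping: for a complex $C\in\CQ_\bd$ one has $[C[n]]=q^{-n}[C]$, and a finite direct sum $\bigoplus_{j=0}^{m-1}C[m-1-2j]$ descends to $(q^{m-1}+q^{m-3}+\cdots+q^{-(m-1)})[C]=[m]_q\,[C]$; this single observation will be applied repeatedly.

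Next I would treat the relations one at a time. The commutation relations $[\K\E C]=q^2[\E\K C]$ and $[\K\F C]=q^{-2}[\F\K C]$ are immediate from the isomorphisms $\K\E=\E\K[-2]$ and $\K\F=\F\K[2]$ after applying $[\;\cdot\;]$. For the divided-power relations, applying $[\;\cdot\;]$ to $\E^{(n-1)}\E\cong\bigoplus_{j=0}^{n-1}\E^{(n)}[n-1-2j]$ gives $[\E^{(n-1)}\E C]=[n]_q[\E^{(n)}C]$, whence by induction on $n$ (with base case $\E^{(1)}=\E$) one obtains $[\E^{(n)}C]=[\E^nC]/[n]_q!$; the argument for $\F^{(n)}$ is identical using the second isomorphism of that proposition. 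For the $EF-FE$ relation I would apply $[\;\cdot\;]$ to the isomorphism of the last proposition: the left-hand side becomes $[\E_{r+1}\F_r C]+\sum_{0\le j<2r-d}q^{(2r-d)-1-2j}[C]$ and the right-hand side becomes $[\F_{r-1}\E_r C]+\sum_{0\le j<d-2r}q^{(d-2r)-1-2j}[C]$; assembling over all $r$ so that $[C]$ is $\K$-homogeneous with $[\K C]=q^{2r-d}[C]$, the two correction sums combine into $\frac{q^{2r-d}-q^{-(2r-d)}}{q-q^{-1}}[C]=\frac{[\K C]-[\K^{-1}C]}{q-q^{-1}}$ (the case distinction $2r-d\ge 0$ versus $<0$ is absorbed by the usual telescoping identity $\sum_{0\le j<m}q^{m-1-2j}=\frac{q^m-q^{-m}}{q-q^{-1}}$ valid for any integer $m$ once one reads an empty or ``negative'' sum as its negative), giving $[\E\F C]-[\F\E C]=\frac{[\K C]-[\K^{-1}C]}{q-q^{-1}}$.

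Having verified that the functors satisfy, on the level of $Q_\bd$, exactly the defining relations \eqref{eqn:r1} and \eqref{eqn:r2} of $U_\A$, I would conclude that sending $K\mapsto[\K\;\cdot\;]$, $E^{(n)}\mapsto[\E^{(n)}\;\cdot\;]$, $F^{(n)}\mapsto[\F^{(n)}\;\cdot\;]$ defines an action of $U_\A$ on the $\A$-module $Q_\bd$, which is the assertion of the theorem. I do not anticipate a serious obstacle here: every hard geometric input (the decomposition theorem, proper base change, the resolution of the singular locus $\Delta$) has already been spent in proving the functor isomorphisms, and what remains is purely the formal passage to the Grothendieck group. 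The only point requiring a little care is the sign/telescoping bookkeeping in the $EF-FE$ relation when $2r-d$ and $d-2r$ have opposite signs, so that the two ``correction'' sums on the two sides do not both vanish; this is handled by the elementary identity $\sum_{0\le j<m}q^{m-1-2j}-\sum_{0\le j<-m}q^{-m-1-2j}=[m]_q$ for every $m\in\Z$, and is the one place I would write the computation out in full.
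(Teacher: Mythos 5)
Your proposal is correct and is essentially the paper's own argument: the paper deduces the theorem immediately from the preceding functor isomorphisms (``Comparing the above propositions with the defining relations \eqref{eqn:r1}, \eqref{eqn:r2} of $U_\A$''), which is exactly the decategorification bookkeeping you spell out, including the induction for the divided powers and the case analysis in the $EF-FE$ relation. One minor internal slip: with the paper's conventions $[C[1]]=q^{-1}[C]$ and $\K_r=[2r-d]$ one has $[\K C]=q^{d-2r}[C]$ for $C\in\CQ_\bd^r$, and the two correction sums combine to $[d-2r]_q\,[C]$ rather than $q^{2r-d}$ and $[2r-d]_q$ as you wrote---these two sign reversals cancel each other, so the identity you end up with is the correct one stated in the theorem.
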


\begin{rem}
Our categorification described above is a rather rough one, but has
the advantage of being clear and simple. Indeed this has been enough
if we are only concerned with the representations of $U_q(sl_2)$. To
give a more rigorous treatment, one may consider instead the algebra
$\dot{U}_\A$ (cf. \cite{Lu93}) which is a free $\A$-module generated
by the symbols
$$F^{(n)}E^{(m)}1_r, \quad m,n\ge0, \; r\in\Z$$
and subjects to the multiplication
\begin{equation*}
\begin{split}
  & F^{(n)}E^{(m)}1_r \cdot F^{(k)}E^{(l)}1_s
  \\ & \quad
  = \delta_{r,s+2l-2k} \sum_{0\le t\le m,k}
  \bbinom{2l+s}{t}_q \bbinom{n+k-t}{n}_q \bbinom{m+l-t}{l}_q
  F^{(n+k-t)}E^{(m+l-t)}1_s.
\end{split}
\end{equation*}
What follows then is straightforward: associate to
$F^{(n)}E^{(m)}1_r$ the functor $\F^{(n)}\E^{(m)}_\frac{d-r}{2}$ and
use the decomposition theorem to establish functor isomorphisms
mimicing the above multiplication, for example, as we have done for
the following fundamental cases
\begin{equation*}
\begin{split}
  & F^{(0)}E^{(n-1)}1_{r+2} \cdot F^{(0)}E^{(1)}1_r
  = [n]_q F^{(0)}E^{(n)}1_r, \\
  & F^{(n-1)}E^{(0)}1_{r-2} \cdot F^{(1)}E^{(0)}1_r
  = [n]_q F^{(n)}E^{(0)}1_r, \\
  & F^{(0)}E^{(1)}1_{r-2} \cdot F^{(1)}E^{(0)}1_r
  = F^{(1)}E^{(1)}1_r + [r]_q F^{(0)}E^{(0)}1_r.
\end{split}
\end{equation*}
\end{rem}

\subsection{Inner product}\label{sec:mod:ip}

In this subsection we endow the $U_\A$-module $Q_\bd$ with an inner
product by using the bifunctor $\Ext^\bullet_\D(-,D-)$ where
\begin{equation}
  \D = \D(\sqcup_r X_d^r) = \oplus_r \D(X_d^r).
\end{equation}

By \ref{sec:pre:perverse}.(1)(2), there exists a unique symmetric
bilinear form
\begin{equation}
  (,): Q_\bd\times Q_\bd\to\A
\end{equation}
such that
\begin{equation}
  \big([C],[C']\big) = \sum_k \dim\Ext^k_\D(C,DC') \cdot q^{-k}
\end{equation}
for $C,C'\in\CQ_\bd$.

\begin{rem}
Note the identities
$$\big([C],[C']\big) = \sum_k \dim\BH^kD(C\otimes C') \cdot q^{-k} = \sum_k \dim\BH_c^k(C\otimes C') \cdot q^k.$$
\end{rem}

\begin{prop}
For $b_\br=[\IC(\overline{X}_\br)]$,
$b_\bs=[\IC(\overline{X}_{\bs})]$,
$X_\br,X_{\bs}\in\sqcup_r\CS_\bd^r$ we have
\begin{equation}
  (b_\br,b_\bs) \in \delta_{\br,\bs}+q^{-1}\Z_{\ge0}[q^{-1}].
\end{equation}
In particular, the bilinear form $(,)$ on $Q_\bd$ is non-degenerate.
\end{prop}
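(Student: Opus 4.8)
The plan is to compute $(b_\br,b_\bs)$ directly from the definition $(b_\br,b_\bs)=\sum_k\dim\Ext^k_\D(\IC(\overline{X}_\br),D\IC(\overline{X}_\bs))\,q^{-k}$, exploiting the fact that the $P_\bd$-orbit stratification of $X_d^r$ refines to the Schubert stratification and that Corollary \ref{cor:flag3} lets us localize the Ext-computation to individual Schubert cells. Since $X_\br$ and $X_\bs$ lie in the same $X_d^r$ (otherwise the Ext group vanishes and $\delta_{\br,\bs}=0$ trivially), and $\IC(\overline{X}_\br)$, $\IC(\overline{X}_\bs)$ are $B$-equivariant semisimple complexes on $X_d^r$, Corollary \ref{cor:flag3} gives
\[
  \Ext^\bullet_\D(\IC(\overline{X}_\br),D\IC(\overline{X}_\bs)) \cong \bigoplus_{X_w\subset X_d^r}\Ext^\bullet_{\D(X_w)}\big(\IC(\overline{X}_\br)|_{X_w},D(\IC(\overline{X}_\bs)|_{X_w})\big),
\]
the sum over the Schubert cells $X_w$ of $X_d^r$. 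On each affine cell $X_w\cong\C^{\dim X_w}$ the restrictions $\IC(\overline{X}_\br)|_{X_w}$ and $\IC(\overline{X}_\bs)|_{X_w}$ are direct sums of shifted constant sheaves (this is the content underlying Lemma \ref{lem:flag2}), so each local Ext group is a direct sum of copies of $\Ext^\bullet_{\D(\C^m)}(\C_{\C^m},\C_{\C^m})=\C$ in degree $0$, with multiplicities recording how many shifted summands occur.

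The key positivity and normalization input is then the following: because $\overline{X}_\br$ is itself a union of $P_\bd$-orbits and hence (being $B$-stable) a union of Schubert cells, the open orbit $X_\br$ is one of the Schubert cells appearing in $X_d^r$; on that cell $\IC(\overline{X}_\br)|_{X_\br}=\C_{X_\br}[\dim X_\br]$, a single shifted constant sheaf with no shift ambiguity. First I would isolate the contribution of the stratum $X_w=X_\br$ (when $\br=\bs$): here both $\IC(\overline{X}_\br)|_{X_\br}$ and $\IC(\overline{X}_\bs)|_{X_\br}=\IC(\overline{X}_\br)|_{X_\br}$ equal $\C_{X_\br}[\dim X_\br]$, contributing a single $\Ext^0$, i.e. the summand $q^0=1$. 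When $\br\ne\bs$, I would check that $X_\br$ is \emph{not} contained in $\overline{X}_\bs$ whenever $X_\bs$ does not contain $X_\br$ — using the partial order on $P_\bd$-orbits — so that at least one of the two $\IC$'s restricts to zero on $X_\br$, or the open stratum simply does not produce a degree-$0$ contribution with coefficient $1$; in any case the diagonal term $\delta_{\br,\bs}$ is accounted for exactly once.

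For the remaining terms I would argue that every other stratum $X_w$ with $X_w\ne X_\br$ (in the case $\br=\bs$) or every stratum at all (in the case $\br\ne\bs$) contributes only to \emph{strictly positive} powers of $q^{-1}$. This is where Lemma \ref{lem:flag2} does the real work: on a cell $X_w$ with $X_w\subsetneq\overline{X}_\br$, the stalk cohomology $\pH^n(\IC(\overline{X}_\br)|_{X_w})$ is concentrated in degrees $n\equiv\dim X_\br+\dim X_w\pmod 2$ and, by the support condition for intermediate extensions, in the range $-\dim X_\br<n<\dim X_w$ — strictly, since $X_w$ is not open in $\overline{X}_\br$; the analogous statement holds for $\IC(\overline{X}_\bs)$. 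Combining the two shift-degree constraints, every contribution to $\Ext^k_{\D(X_w)}(\IC(\overline{X}_\br)|_{X_w},D(\IC(\overline{X}_\bs)|_{X_w}))$ occurs with $k>0$ strictly; the dimensions are manifestly nonnegative integers, so each such stratum contributes an element of $q^{-1}\Z_{\ge0}[q^{-1}]$. Summing over strata gives $(b_\br,b_\bs)\in\delta_{\br,\bs}+q^{-1}\Z_{\ge0}[q^{-1}]$, and non-degeneracy of $(,)$ follows because the Gram matrix of the canonical basis is unitriangular (with respect to any linear order refining the closure order) over $\Z[q^{-1}]$, hence invertible over $\A$.

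\textbf{Main obstacle.} The delicate point is the strictness of the inequalities in the support condition and the bookkeeping of the open stratum: one must be careful that the degree-$0$ contribution with coefficient exactly $1$ arises from precisely one stratum (namely $X_\br$, when $\br=\bs$) and from none when $\br\ne\bs$. This requires knowing that $\IC(\overline{X}_\br)$ restricted to the open cell $X_\br$ is the constant sheaf placed in exactly the middle degree $-\dim X_\br$, which is immediate from the normalization of the intermediate extension, and that on any deeper stratum the stalk lives in \emph{strictly} higher perverse degree — this is the standard strict support condition for $\IC$, combined with parity from Lemma \ref{lem:flag2} to rule out the boundary case. A secondary subtlety is verifying that, for $\br\ne\bs$, no stratum accidentally contributes a $q^0$ term; this is handled by observing that such a term would force $X_w\subset\overline{X}_\br\cap\overline{X}_\bs$ to be open in both closures, forcing $X_w=X_\br=X_\bs$, a contradiction.
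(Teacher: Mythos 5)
Your strategy is sound, but it is a genuinely different and much heavier route than the paper's. The paper's proof is essentially two lines: $\IC(\overline{X}_\br)$ and $\IC(\overline{X}_\bs)$ are self-dual simple perverse sheaves, so $\Ext^k_\D(\IC(\overline{X}_\br),D\IC(\overline{X}_\bs))=\Ext^k_\D(\IC(\overline{X}_\br),\IC(\overline{X}_\bs))$ vanishes for $k<0$ by \ref{sec:pre:perverse}.(3) and has dimension $\delta_{\br,\bs}$ for $k=0$ by Schur's lemma \ref{sec:pre:perverse}.(4); every remaining term then automatically lies in $q^{-1}\Z_{\ge0}[q^{-1}]$. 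Your cell-by-cell computation via Corollary \ref{cor:flag3} and the strict support condition re-derives exactly these two facts for the particular sheaves at hand; it is not wrong, and it yields finer information (which Schubert cells contribute to which power of $q^{-1}$, i.e.\ Kazhdan--Lusztig stalk data, in the spirit of Proposition \ref{prop:basis}), but none of that is needed for the statement, and the parity Lemma \ref{lem:flag2} enters only indirectly through Corollary \ref{cor:flag3}, not as the engine of positivity.

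Two details in your write-up do need repair. First, the open $P_\bd$-orbit $X_\br$ is in general a union of several Schubert cells, not a single one (the paper's own example for $\bd=(2,2)$ has $X_{(1,1)}$ consisting of four cells), so in your Schubert-cell decomposition the stratum producing the $q^0$-term must be the unique dense Schubert cell of $\overline{X}_\br$. The correct local statement is: on a non-dense cell $X_w\subset\overline{X}_\br$ the restriction $\IC(\overline{X}_\br)|_{X_w}$ is a sum of $\C_{X_w}[a]$ with $a>\dim X_w$ (your displayed degree range ``$-\dim X_\br<n<\dim X_w$'' is misstated), while on the dense cell it is $\C_{X_w}[\dim X_w]$; since each such pair of summands contributes to $\Ext^k$ exactly in degree $k=a+b-2\dim X_w$, non-dense cells give $k>0$ and a $q^0$-term forces $a=b=\dim X_w$, i.e.\ $X_w$ dense in both closures, i.e.\ $\br=\bs$ — which is the repaired form of your bookkeeping. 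Second, the non-degeneracy argument: the Gram matrix is symmetric, so it is not unitriangular, and it is not invertible over $\A$ in general (already $(b_{(r)},b_{(r)})=\bbinom{d}{r}_qq^{-r(d-r)}$ is not a unit); what you actually need, and what your computation gives, is that the Gram matrix is the identity modulo $q^{-1}$, hence has nonzero determinant, hence the form is non-degenerate — which is exactly the paper's closing remark.
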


\begin{proof}
Since $\IC(\overline{X}_\br), \IC(\overline{X}_{\bs})$ are self dual
simple perverse sheaves, by \ref{sec:pre:perverse}.(3)(4)
$\Ext^k_\D(\IC(\overline{X}_\br),D\IC(\overline{X}_{\bs}))$ vanishes
for $k<0$ and has dimension $\delta_{\br,\bs}$ for $k=0$. This
proves the main claim of the proposition. The non-degeneracy follows
from the observation that the bilinear form on the canonical basis
\eqref{eqn:cb} produces a unit matrix modulo $q^{-1}$.
\end{proof}

\begin{cor}\label{cor:iso}
The following conditions are equivalent for $C,C'\in\CQ_\bd$.
\begin{enumerate}
\renewcommand{\theenumi}{\roman{enumi}}
\setlength{\itemsep}{.3ex}
  \item $C\cong C'$.
  \item $\big([C],[C'']\big) = \big([C'],[C'']\big)$ for all $C''\in\CQ_\bd$.
\end{enumerate}
\end{cor}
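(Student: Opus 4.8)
The plan is to prove the equivalence of (i) and (ii) in Corollary \ref{cor:iso} by leveraging the previous proposition, which asserts that the canonical basis $\{b_\br\}$ is \emph{almost orthonormal}: $(b_\br,b_\bs)\in\delta_{\br,\bs}+q^{-1}\Z_{\ge0}[q^{-1}]$. The direction (i)$\Rightarrow$(ii) is immediate, since isomorphic objects have equal classes in the Grothendieck group $Q_\bd$ and the pairing $([C],[C''])$ depends only on $[C]$. So the real content is (ii)$\Rightarrow$(i).

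First I would reduce the problem to an algebraic statement about $Q_\bd$. Write $[C]=\sum_\br m_\br b_\br$ and $[C']=\sum_\br m'_\br b_\br$ in the canonical basis; by the defining relation (i) of the Grothendieck group and the fact that objects of $\CQ_\bd$ are sums of shifts $\IC(\overline{X}_\br)[j]$, the coefficients $m_\br,m'_\br$ lie in $\Z_{\ge0}[q,q^{-1}]$, and moreover two objects of $\CQ_\bd$ are isomorphic if and only if their classes in $Q_\bd$ coincide (the Krull-Schmidt property for the additive category $\CQ_\bd$, whose indecomposables are exactly the $\IC(\overline{X}_\br)[j]$ by \ref{sec:pre:perverse}.(4)). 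Thus it suffices to show: if $([C],-)=([C'],-)$ as linear functionals on $Q_\bd$, then $m_\br=m'_\br$ for all $\br$. Since $(,)$ is non-degenerate (last sentence of the proposition), this is equivalent to $[C]=[C']$ in $Q_\bd\otimes_\A\Q(q)$, hence in $Q_\bd$ because $Q_\bd$ is a free $\A$-module.

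The key step is to extract the equality of the individual coefficients from the almost-orthonormality. I would argue by descending induction on the length $r=|\br|$, or equivalently order the indices $\br$ so that the Gram matrix $G=\big((b_\br,b_\bs)\big)$ is \emph{unitriangular modulo $q^{-1}$}: one has $G=I+N$ with every entry of $N$ in $q^{-1}\Z_{\ge0}[q^{-1}]$. Condition (ii) says $G\bm=G\bm'$ where $\bm,\bm'$ are the coefficient vectors; since $G$ is invertible over $\A$ (its determinant is $1+q^{-1}(\cdots)\in 1+q^{-1}\Z[[q^{-1}]]$, in particular a nonzero element of $\A$, so $G$ is invertible over $\Q(q)$ and by the unitriangular-mod-$q^{-1}$ shape actually over $\A$ — explicitly $G^{-1}=\sum_{k\ge0}(-N)^k$ makes sense $q$-adically and lies in $\mathrm{Mat}(\A)$ since the sum is finite in each entry), we conclude $\bm=\bm'$.

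The main obstacle I anticipate is the bookkeeping needed to justify that $G$, and hence $G^{-1}$, has entries in $\A=\Z[q,q^{-1}]$ rather than merely in $\Q(q)$ — i.e. making the "unitriangular modulo $q^{-1}$" argument precise enough that the Neumann series $\sum(-N)^k$ terminates in each matrix entry. This hinges on the fact that $N$ is strictly lower-triangular with respect to a suitable partial order on the set $\sqcup_r\CS_\bd^r$ of $P_\bd$-orbits (refining closure order), which follows because $(b_\br,b_\bs)$ with $\br\ne\bs$ can be nonzero only when the Schubert-type strata interact, and the $q$-degree bound $q^{-1}\Z_{\ge0}[q^{-1}]$ forces nilpotency of $N$ once combined with the order. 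Everything else — Krull-Schmidt for $\CQ_\bd$, freeness of $Q_\bd$, the trivial direction — is routine given the results already established in the excerpt.
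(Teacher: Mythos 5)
Your essential argument is exactly the one the paper intends (the corollary is stated without proof, as an immediate consequence of the preceding proposition): (i)$\Rightarrow$(ii) is formal; for (ii)$\Rightarrow$(i) you use that every object of $\CQ_\bd$ is a finite direct sum of shifts $\IC(\overline{X}_\br)[j]$, so that $C\cong C'$ if and only if $[C]=[C']$ in the free $\A$-module $Q_\bd$, and then non-degeneracy of $(,)$ forces $[C]=[C']$. Through the end of your second paragraph the proof is complete and correct.

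The embellishments in your last two paragraphs, however, are false, though harmlessly so. The Gram matrix $G=I+N$ is symmetric, and $N$ has nonzero \emph{diagonal} entries in general, because $(b_\br,b_\br)$ itself has a nontrivial tail in $q^{-1}\Z_{\ge0}[q^{-1}]$: for $\bd=(2)$, Example \ref{exam:d2} gives $(b_{(1)},b_{(1)})=\bbinom{2}{1}_q q^{-1}=1+q^{-2}$. Hence $N$ is not strictly triangular with respect to any ordering, the series $\sum_{k\ge0}(-N)^k$ does not terminate entrywise, and $G$ need not be invertible over $\A$: in the same example $G$ is diagonal with entries $1,\,1+q^{-2},\,1$, so $\det G=1+q^{-2}$, which is not a unit of $\Z[q,q^{-1}]$. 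So the step you flagged as ``the main obstacle'' cannot be carried out as you describe --- but it is also not needed. Since $\det G\in 1+q^{-1}\Z[q^{-1}]$ is nonzero, $G$ is invertible over $\Q(q)$, hence injective on coefficient vectors with entries in $\A\subset\Q(q)$; this is precisely the non-degeneracy recorded in the proposition and is all the corollary requires. I recommend deleting the claims of $\A$-invertibility and strict triangularity and quoting non-degeneracy directly.
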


\begin{prop}\label{prop:ext}
There are bifunctor isomorphisms
\begin{align*}
  & \Ext^\bullet_\D(\K-,D-) = \Ext^\bullet_\D(-,D\K-), \\
  & \Ext^\bullet_\D(\E^{(n)}-,D-) = \Ext^\bullet_\D(-,D\K^n\F^{(n)}[-n^2]-), \\
  & \Ext^\bullet_\D(\F^{(n)}-,D-) = \Ext^\bullet_\D(-,D\K^{-n}\E^{(n)}[-n^2]-).
\end{align*}
\end{prop}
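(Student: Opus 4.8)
The plan is to verify the three bifunctor isomorphisms by unwinding the definitions of $\K,\E^{(n)},\F^{(n)}$ and applying the adjunction and projection-formula identities collected in \ref{sec:pre:perverse}, together with the self-duality $D\,\IC(\overline{X}_\br)=\IC(\overline{X}_\br)$ and property \ref{sec:pre:perverse}.(2), namely $\Ext^\bullet_\D(C,DC')=\BH^\bullet D(C\otimes C')$. The first identity is essentially trivial: $\K_r=[2r-d]$ is a shift, and from \ref{sec:pre:perverse}.(1) together with $D[n]=[-n]D$ one gets $\Ext^\bullet_\D(\K C,DC')=\Ext^{\bullet}_\D(C,D\K C')$ directly, the point being that on $X_d^r$ the shift $[2r-d]$ matches between the two sides because $\K$ is applied in the same degree slot on both. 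So the real content is the second and third identities, which I expect to be dual to each other; I would prove the one for $\F^{(n)}$ and remark that the $\E^{(n)}$ case follows by an entirely symmetric computation (or by applying $D$).

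For the $\F^{(n)}$ identity, fix $C\in\CQ^r_\bd$ and $C'\in\CQ^{r+n}_\bd$, and recall $\F^{(n)}_r=p'_!p^*[n(d-n-r)]$ from diagram \eqref{eqn:ef}, where $p:X_d^{r,r+n}\to X_d^r$ and $p':X_d^{r,r+n}\to X_d^{r+n}$. The strategy is to move everything across the correspondence using the adjunctions $(p'_!\dashv p'^!)$, $(p^*\dashv p_*)$ from \ref{sec:pre:perverse}.(6), the properness of $p'$ (so $p'_!=p'_*$), and the smoothness of both $p$ and $p'$ as Grassmannian bundles, which converts $p^!$ into $p^*$ up to an explicit shift (by $2\dim$ of fiber) via \ref{sec:pre:perverse}.(8). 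Concretely:
\begin{align*}
  \Ext^\bullet_\D(\F^{(n)}_r C, D C')
  &= \Ext^\bullet_\D(p'_!p^*C[n(d-n-r)],\, DC') \\
  &= \Ext^{\bullet}_\D(p^*C[n(d-n-r)],\, p'^!DC') \\
  &= \Ext^{\bullet}_\D(p^*C[n(d-n-r)],\, D p'^*C'[-2\dim_{p'}]) \\
  &= \Ext^{\bullet}_\D(C[n(d-n-r)],\, p_*D p'^*C'[-2\dim_{p'}]) \\
  &= \Ext^{\bullet}_\D(C[n(d-n-r)],\, D p_!p'^*C'[-2\dim_{p'}]),
\end{align*}
where $\dim_{p'}=\dim X_{d-r}^n=n(d-n-r)$ is the fiber dimension of $p'$, and $Dp'^*=p'^!D$, $Dp_!=p_*D$ are \ref{sec:pre:perverse}.(5). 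Now $p_!p'^*$ is (up to shift) the functor $\E^{(n)}_{r+n}=p_!p'^*[nr]$ defined in Section \ref{sec:mod:kef}, so the right-hand side becomes $\Ext^\bullet_\D(C,\,D\,\E^{(n)}_{r+n}C'[\text{shift}])$ for an explicit integer shift. It remains to identify that shift with the composite $\K^{-n}[-n^2]$ claimed in the statement: tracking the degrees, $n(d-n-r)$ enters on the left, $-2n(d-n-r)$ from $p'^!$ versus $p'^*$, and $-nr$ from rewriting $p_!p'^*$ as $\E^{(n)}_{r+n}[-nr]$; collecting these and comparing with $\K^{-n}=[-(2(r+n)-d)]\cdots$ applied in degree $r+n$ should yield exactly $\K^{-n}[-n^2]$.

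The main obstacle I anticipate is purely bookkeeping rather than conceptual: getting every shift right, in particular making sure the shifts are attributed to the correct graded piece (the functor $\K$ acts by a degree depending on which $X_d^r$ it is applied to, so "$\K^{-n}$" on $C'\in\CQ^{r+n}_\bd$ is the shift $[-n(2(r+n)-d)\pm\cdots]$ — one must be careful whether it is $\K_{r+n}$ iterated or the single shift matching the source vs.\ target degree). I would therefore do the degree accounting once, carefully, on a general object $\IC(\overline{X}_\br)\in\CQ^r_\bd$, and then invoke additivity of $\Ext^\bullet_\D(-,D-)$ and of all the functors involved to pass to arbitrary objects of $\CQ_\bd$. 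A secondary point worth a sentence is that all the complexes in sight are semisimple and the functors $p_!,p'^*$ preserve this (by the decomposition theorem and because $p,p'$ are Grassmannian bundles), so every $\Ext^\bullet_\D(-,D-)$ in the chain is the cohomology of a genuine complex of $\C$-vector spaces and the manipulations are legitimate; I do not expect any convergence or boundedness subtlety.
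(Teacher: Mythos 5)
Your route is the same as the paper's: move $\F^{(n)}_r=p'_!p^*[n(d-n-r)]$ across the correspondence \eqref{eqn:ef} with the adjunctions $(p'_!\dashv p'^!)$, $(p^*\dashv p_*)$ and the duality exchanges of \ref{sec:pre:perverse}.(5), then match shifts; your treatment of the $\K$-identity and the reduction of the $\E^{(n)}$-case to the $\F^{(n)}$-case by symmetry are fine. But the central computation contains a genuine error. In your third displayed line you replace $p'^!DC'$ by $Dp'^*C'[-2\dim_{p'}]$. By \ref{sec:pre:perverse}.(5) one has $Dp'^*=p'^!D$ on the nose, so $p'^!DC'=Dp'^*C'$ with \emph{no} shift: the smoothness relation $p'^!=p'^*[2\dim_{p'}]$ and the shift produced by commuting $D$ past $p'^*$ cancel exactly, and you have kept only one of the two. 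Because of this spurious term, the bookkeeping you defer (``should yield exactly $\K^{-n}[-n^2]$'') cannot close: your listed contributions $n(d-n-r)$, $-2n(d-n-r)$ and $-nr$ give a total off by $2n(d-n-r)$ from what is needed. The correct count has no ``$p'^!$ versus $p'^*$'' term at all: the two adjunctions give
$\Ext^\bullet_\D(\F^{(n)}_rC,DC')=\Ext^\bullet_\D(C,p_*p'^![-n(d-n-r)]DC')=\Ext^\bullet_\D(C,Dp_!p'^*[n(d-n-r)]C')$,
and since in the composite $\K^{-n}\E^{(n)}$ the shift $\K^{-n}$ is applied in degree $r$ (after $\E^{(n)}_{r+n}$), i.e. $\K^{-n}_r=[n(d-2r)]$, one checks $\K^{-n}_r\E^{(n)}_{r+n}[-n^2]=p_!p'^*[nr+n(d-2r)-n^2]=p_!p'^*[n(d-n-r)]$, which is exactly what the adjunction produces.

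Two smaller points. The fiber of $p'$ is $X^n_{r+n}$, of dimension $nr$, not $X^n_{d-r}$ of dimension $n(d-n-r)$ (that is the fiber of $p$; the dimension identities displayed after \eqref{eqn:ef} are the reliable ones) --- though once the duality step is done correctly no fiber dimension enters this argument at all. Also, the final reduction to objects $\IC(\overline{X}_\br)$ and the appeal to semisimplicity are unnecessary: the displayed identities are natural isomorphisms of bifunctors on all of $\D$, obtained purely from adjunction and duality, which is how the paper states and proves them.
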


\begin{proof}
The first isomorphism is obvious and the proofs of the next two are
similar. The third one follows from the natural isomorphisms for
$C\in\D(X_d^r), C'\in\D(X_d^{r+n})$
\begin{align*}
  & \Ext^\bullet_{\D(X_d^{r+n})} (\F^{(n)}_rC,DC') \\
  = \;& \Ext^\bullet_{\D(X_d^{r+n})} (p'_!p^*[n(d-n-r)]C,DC') \\
  = \;& \Ext^\bullet_{\D(X_d^r)} (C,p_*p'^![-n(d-n-r)]DC') \\
  = \;& \Ext^\bullet_{\D(X_d^r)} (C,Dp_!p'^*[n(d-n-r)]C') \\
  = \;& \Ext^\bullet_{\D(X_d^r)} (C,D\K^{-n}_r\E^{(n)}_{r+n}[-n^2]C')
  \tag*\qed
\end{align*}
\renewcommand{\qed}{}
\end{proof}

Summarizing, we obtain

\begin{thm}
The bilinear form $(,)$ is an inner product of the $U_\A$-module
$Q_\bd$.
\end{thm}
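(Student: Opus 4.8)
The plan is to verify that the symmetric bilinear form $(,)$ on $Q_\bd$ satisfies the defining axiom of an inner product, namely $(xu,w)=(u,\varrho(x)w)$ for all $x\in U_\A$ and $u,w\in Q_\bd$. Since $U_\A$ is generated by $K$, $E^{(n)}$, $F^{(n)}$, it suffices to check the identity when $x$ runs over these generators, and since the canonical basis elements $b_\br=[\IC(\overline{X}_\br)]$ span $Q_\bd$ over $\A$, it suffices to check it on pairs of basis elements (or, more conveniently, on classes $[C],[C']$ for arbitrary $C,C'\in\CQ_\bd$). So the statement reduces to three identities:
\begin{align*}
  & \big([\K C],[C']\big) = \big([C],[\K C']\big), \\
  & \big([\E^{(n)}C],[C']\big) = \big([C],[\varrho(E^{(n)})C']\big), \\
  & \big([\F^{(n)}C],[C']\big) = \big([C],[\varrho(F^{(n)})C']\big).
\end{align*}

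Next I would translate $\varrho$ applied to the generators into functors. From $\varrho(K)=K$, $\varrho(E)=qKF$, $\varrho(F)=qK^{-1}E$ one computes $\varrho(E^{(n)})$ and $\varrho(F^{(n)})$; using the commutation relations $KF=q^{-2}FK$ etc. and the divided-power normalization, one finds (this is the routine algebra I would not grind through) that $\varrho(E^{(n)})$ acts on the weight space $Q_\bd^{r}$ (where $K$ acts by $q^{2r-d}$, i.e. the piece coming from $X_d^{r}$ under the weight convention $d-r$... here indexed so that $\K_r=[2r-d]$) as the operator categorified by $\K^{n}\F^{(n)}[-n^2]$ up to the appropriate power of $q$, and similarly $\varrho(F^{(n)})$ corresponds to $\K^{-n}\E^{(n)}[-n^2]$. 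The precise bookkeeping of $q$-powers is exactly what is encoded in Proposition \ref{prop:ext}: the bifunctor identities there say
$$\Ext^\bullet_\D(\E^{(n)}-,D-) = \Ext^\bullet_\D(-,D\K^n\F^{(n)}[-n^2]-), \qquad
  \Ext^\bullet_\D(\F^{(n)}-,D-) = \Ext^\bullet_\D(-,D\K^{-n}\E^{(n)}[-n^2]-),$$
together with the trivial $\Ext^\bullet_\D(\K-,D-)=\Ext^\bullet_\D(-,D\K-)$. Taking graded dimensions (the Poincaré polynomial in $q^{-1}$, via the definition $([C],[C'])=\sum_k\dim\Ext^k_\D(C,DC')q^{-k}$), the shift $[-n^2]$ contributes a factor $q^{-n^2}$... wait, $[1]$ contributes $q^{-1}$, so $[-n^2]$ contributes $q^{n^2}$; and $\K^{\pm n}$ on the relevant weight space contributes the remaining power of $q$. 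I would then check that the scalar $q^{n^2}\cdot(\text{power of }q\text{ from }\K^{\pm n})$ matches exactly the scalar relating $\varrho(E^{(n)})$ (resp. $\varrho(F^{(n)})$) to $\K^{n}\F^{(n)}$ (resp. $\K^{-n}\E^{(n)}$) in $U_\A$.

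The cleanest way to organize the conclusion: from Proposition \ref{prop:ext}, applying $\sum_k(\dim-)q^{-k}$ to both sides immediately yields $([\E^{(n)}C],[C'])=([C],[\K^n\F^{(n)}[-n^2]C'])$ and $([\F^{(n)}C],[C'])=([C],[\K^{-n}\E^{(n)}[-n^2]C'])$ in $Q_\bd$, which by the categorification theorem equal $([C],q^{n^2}[\K]^n[\F^{(n)}][C'])$... (being slightly careful that $[\K]$ is the operator $q^{2r-d}$ depending on the weight space, so $[\K^n\F^{(n)}[-n^2]C']$ is an honest element of $Q_\bd$). It then remains to identify $q^{n^2}K^n F^{(n)}$ with $\varrho(E^{(n)})$ and $q^{n^2}K^{-n}E^{(n)}$ with $\varrho(F^{(n)})$ as operators on each $\Lambda_\bd$-weight space — a purely algebraic identity in $U_\A$ following from $\varrho(E)=qKF$, $\varrho(F)=qK^{-1}E$, the relation $EF-FE=(K-K^{-1})/(q-q^{-1})$ is not even needed here, just $\varrho$'s multiplicativity and $KE=q^2EK$, $KF=q^{-2}FK$. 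The main obstacle is thus not conceptual but this normalization check: making sure the $q$-power $q^{n^2}$ together with the weight-dependent powers of $q$ coming from $\K^{\pm n}$ exactly reproduces the divided-power expansion of $\varrho(E^{(n)})=\varrho(E)^n/[n]_q!$ and $\varrho(F^{(n)})=\varrho(F)^n/[n]_q!$. Once that is confirmed, the three identities hold on all of $\CQ_\bd$, hence on $Q_\bd$, so $(xu,w)=(u,\varrho(x)w)$ holds for the generators and therefore for all $x\in U_\A$, which is precisely the assertion that $(,)$ is an inner product of the $U_\A$-module $Q_\bd$.
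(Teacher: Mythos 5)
Your proposal is correct and follows essentially the same route as the paper: the theorem there is just a summary of the preceding results, namely the non-degeneracy proposition together with Proposition \ref{prop:ext}, passed to Grothendieck groups exactly as you describe, using the algebraic identities $\varrho(E^{(n)})=q^{n^2}K^nF^{(n)}$ and $\varrho(F^{(n)})=q^{n^2}K^{-n}E^{(n)}$ (which indeed need only $KE=q^2EK$, $KF=q^{-2}FK$ and the divided-power normalization). Your bookkeeping is right as well: the shift $[-n^2]$ contributes $q^{n^2}$ under the convention $[C[1]]=q^{-1}[C]$, and checking the module axiom on the generators suffices because $\varrho$ is an isomorphism onto $U_\A^{op}$.
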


\begin{exam}\label{exam:d2}
For $\bd=(d)$, we have
\begin{align*}
  & \big([\IC(X_d^r)],[\IC(X_d^r)]\big)
  = \sum_k \dim\Ext^k_{\D(X_d^r)} (\IC(X_d^r),D\IC(X_d^r)) \cdot q^{-k} \\
  & \quad \quad \quad \quad \quad = \sum_k \dim H^k(X_d^r,\C) \cdot q^{-k}
  = \bbinom{d}{r}_qq^{-r(d-r)},
\end{align*}
which agrees with \eqref{eqn:pld}.
\end{exam}

\subsection{Verdier duality and bar involution}\label{sec:mod:d}

Let $\bar{\;}: U_\A \to U_\A$ denote the $\Z$-algebra isomorphism
determined by
\begin{equation}
  \bar{q}=q^{-1}, \quad
  \bar{K}=K^{-1}, \quad
  \bar{E}=E, \quad
  \bar{F}=F.
\end{equation}
The following proposition shows that the Verdier duality functor
categorifies the bar involution of $U_\A$.

\begin{prop}\label{prop:verdier}
We have functor isomorphisms
\begin{equation*}
  D[-1]=[1]D, \quad D\K=\K^{-1}D, \quad D\E^{(n)}=\E^{(n)}D, \quad D\F^{(n)}=\F^{(n)}D.
\end{equation*}
\end{prop}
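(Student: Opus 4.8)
The plan is to verify each isomorphism directly from the definitions of $\K$, $\E^{(n)}$, $\F^{(n)}$ together with the standard compatibilities of Verdier duality with the shift functor and with the six operations, all collected in \ref{sec:pre:perverse}.(1)(5)(7). The first identity $D[-1]=[1]D$ is simply the relation $D[n]=[-n]D$ recorded after the definition of the perverse cohomology functors. The identity $D\K=\K^{-1}D$ follows from this, since $\K_r=[2r-d]$ and hence $D\K_r=D[2r-d]=[d-2r]D=\K_r^{-1}D$; assembling over $r$ gives $D\K=\K^{-1}D$.

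For the assertions on $\E^{(n)}$ and $\F^{(n)}$ the key inputs are $Df^*=f^!D$ and $Df_!=f_*D$ from \ref{sec:pre:perverse}.(5), together with the fact that the maps $p$ and $p'$ in the diagram \eqref{eqn:ef} are \emph{smooth} with connected fibers — $p$ being an $X_{r+n}^n$-bundle and $p'$ an $X_{d-r}^n$-bundle — so that by \ref{sec:pre:perverse}.(8) we have $p^![-\dim] = p^*[\dim]$ and likewise for $p'$, converting $!$-pullbacks back into $*$-pullbacks at the cost of a shift. Concretely, for $\F^{(n)}_r = p'_!p^*[n(d-n-r)]$ one computes
\begin{equation*}
  D\F^{(n)}_r = D\,p'_!\,p^*[n(d-n-r)] = p'_*\,p^![-n(d-n-r)]\,D,
\end{equation*}
and since $p'$ is proper (being a Grassmannian bundle over a projective base, or directly an iterated $\BP^1$-type bundle) one has $p'_* = p'_!$ by \ref{sec:pre:perverse}.(7); then one rewrites $p^!$ in terms of $p^*$ using that $p$ is smooth of relative dimension $n(r+n)$, and checks that the shifts assemble exactly to $\F^{(n)}$ on the right-hand side — there is no extra twist because $\F^{(n)}$ is self-Verdier-dual up to the bookkeeping built into the shift $[n(d-n-r)]$, which is precisely half the difference of the relevant fiber dimensions. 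The computation for $\E^{(n)}_{r+n} = p_!p'^*[nr]$ is entirely parallel, with the roles of $p$ and $p'$ interchanged.

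The main obstacle, such as it is, is purely the shift bookkeeping: one must confirm that the integer shifts appearing when one trades $p^!$ for $p^*[\dim]$ (and $p'^!$ for $p'^*[\dim']$) combine with the defining shifts $[n(d-n-r)]$ and $[nr]$ so that the final answer is $\F^{(n)}D$ on the nose, with the index $r$ shifting correctly across the correspondence. Using the fiber-dimension identities recorded just before the definition of the functors, namely $\dim X_d^{r,r+n}-\dim X_d^{r+n}=nr$ and $\dim X_d^{r,r+n}-\dim X_d^r=n(d-n-r)$, one sees that the defining shifts are chosen exactly so that $\E^{(n)}$ and $\F^{(n)}$ are ``balanced'' correspondence functors, which makes them commute with $D$ without any residual shift. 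One should also note that these are a priori identities of functors on $\D(\sqcup_r X_d^r)$; since everything in sight preserves the subcategories $\CQ_\bd$ (by the Proposition of \ref{sec:mod:kef} and the fact that $D$ preserves semisimplicity and $P_\bd$-equivariance), they restrict to the claimed functor isomorphisms on $\CQ_\bd$.
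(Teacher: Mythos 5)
Your argument is correct and is essentially the paper's own proof: the paper likewise reduces everything to $Dp'_!=p'_!D$ (properness of $p'$) and $Dp^*[n(d-n-r)]=p^*[n(d-n-r)]D$ (smoothness of $p$, via \ref{sec:pre:perverse}.(5)(7)(8)), the point being exactly your observation that the defining shift equals the relative dimension of the smooth leg of the correspondence. One small slip: $p$ is not smooth of relative dimension $n(r+n)$ — its fiber is $X_{d-r}^n$, of dimension $n(d-n-r)$, which is precisely the shift in $\F^{(n)}_r$ and is what makes the bookkeeping close up (with the stated value $n(r+n)$ the shifts would not cancel); the correct numbers are the dimension identities you quote at the end, and note that the paper's sentence after \eqref{eqn:ef} interchanges the fibers of $p$ and $p'$, which may be the source of the confusion.
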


\begin{proof}
We only prove the last isomorphism. Keep the notation
\eqref{eqn:ef}. Since $p: X_d^{r,r+n} \to X_d^r$ is an
$X_{d-r}^n$-bundle and $p': X_d^{r,r+n} \to X_d^{r+n}$ is proper, we
have
$$Dp^*[n(d-n-r)] = p^*[n(d-n-r)]D, \quad Dp'_! = p'_!D.$$
The isomorphism $D\F^{(n)}_r = \F^{(n)}_rD$ then follows.
\end{proof}

This gives us immediately

\begin{thm}\label{thm:psi}
The Verdier duality functor $D$ induces an anti-$\A$-linear
isomorphism $\Psi: Q_\bd \to Q_\bd$, which satisfies
\begin{enumerate}
\setlength{\itemsep}{.3ex}
  \item $\Psi(b_\br) = b_\br$, for $b_\br=[\IC(\overline{X}_\br)]$,
  $X_\br\in\sqcup_r\CS_\bd^r$;
  \item $\Psi^2=\Id$;
  \item $\Psi(xu) = \bar{x}\Psi(u)$, for $x\in U_\A$, $u \in Q_\bd$.
\end{enumerate}
\end{thm}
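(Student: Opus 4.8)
The plan is to deduce Theorem~\ref{thm:psi} directly from Proposition~\ref{prop:verdier}, the structural facts about $\CQ_\bd$ recorded in Section~\ref{sec:mod:Q}, and the categorification/inner product results already established. First I would observe that the Verdier duality functor $D$ sends $P_\bd$-equivariant semisimple complexes to $P_\bd$-equivariant semisimple complexes, so $D$ restricts to a (contravariant, but on objects it behaves like a covariant) self-equivalence of $\CQ_\bd$. Since $D$ interchanges $[1]$ and $[-1]$ (the isomorphism $D[-1]=[1]D$ of Proposition~\ref{prop:verdier}), the induced map on the Grothendieck group $Q_\bd$, which by the relation $[C[1]]=q^{-1}[C]$ is built from $\A$ with $q$ acting by the shift, satisfies $\Psi(q^{-1}[C]) = \Psi([C[1]]) = [D(C[1])] = [(DC)[-1]] = q[DC] = q\Psi([C])$; hence $\Psi$ is anti-$\A$-linear. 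This is the routine bookkeeping step and should present no obstacle.

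Next I would verify the three listed properties. For (i): each $\IC(\overline{X}_\br)$ is the intersection complex of (the closure of) a smooth irreducible locally closed subvariety, hence is Verdier self-dual, so $D\IC(\overline{X}_\br)\cong\IC(\overline{X}_\br)$ and therefore $\Psi(b_\br)=b_\br$. For (ii): $D^2=\Id$ as functors (recorded in Section~\ref{sec:pre:perverse}), so $\Psi^2=\Id$ on $Q_\bd$. For (iii): it suffices to check the identity $\Psi(xu)=\bar x\,\Psi(u)$ on the algebra generators $x=K,K^{-1},E^{(n)},F^{(n)}$ of $U_\A$ and on a spanning set of $u\in Q_\bd$, namely the classes $[C]$ for $C\in\CQ_\bd$. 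Here I invoke the functor isomorphisms of Proposition~\ref{prop:verdier}: from $D\K=\K^{-1}D$ we get $\Psi(K[C])=[D\K C]=[\K^{-1}DC]=\overline{K}\,\Psi([C])$ (using $\overline{K}=K^{-1}$ and that $\K$ categorifies $K$), and similarly $D\K^{-1}=\K D$ handles $K^{-1}$; from $D\E^{(n)}=\E^{(n)}D$ we get $\Psi(E^{(n)}[C])=[D\E^{(n)}C]=[\E^{(n)}DC]=E^{(n)}\Psi([C])=\overline{E^{(n)}}\,\Psi([C])$ since $\overline{E^{(n)}}=E^{(n)}$ (because $\bar E=E$ and $\overline{[n]_q!}=[n]_{q^{-1}}!=[n]_q!$); the case $F^{(n)}$ is identical using $\bar F=F$. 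Since $K,K^{-1},E^{(n)},F^{(n)}$ generate $U_\A$ as an $\A$-algebra and $\Psi$ is additive and anti-$\A$-linear, property (iii) extends from generators to all of $U_\A$ by an easy induction on word length.

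The only point that needs a little care — and the step I would flag as the main (though still minor) obstacle — is checking that $\Psi$ is well-defined on $Q_\bd$, i.e.\ that $D$ respects the defining relations of the Grothendieck group. The relation $[C\oplus C']=[C]+[C']$ is immediate from $D(C\oplus C')=DC\oplus DC'$, and the relation $[C[1]]=q^{-1}[C]$ is exactly what was used above to get anti-linearity, so no inconsistency arises; one also needs that $D$ preserves isomorphism classes, which is clear. A cleaner alternative, which I would present if space permits, is to avoid generator-by-generator checking in (iii) by noting that Corollary~\ref{cor:iso} already gives a criterion for isomorphism in $\CQ_\bd$ in terms of the inner product, so one could instead verify $\bigl(\Psi([C]),[C'']\bigr)$-type identities; but the direct route via Proposition~\ref{prop:verdier} is shorter and self-contained, so that is the one I would write up.
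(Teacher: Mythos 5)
Your proposal is correct and follows essentially the paper's own route: the paper derives Theorem~\ref{thm:psi} immediately from the functor isomorphisms of Proposition~\ref{prop:verdier} (together with $D^2=\Id$ and the self-duality of the $\IC(\overline{X}_\br)$), exactly as you do. The details you supply — well-definedness of $\Psi$ on the Grothendieck group, anti-$\A$-linearity from $D[-1]=[1]D$, and the generator-by-generator check of $\Psi(xu)=\bar{x}\Psi(u)$ — are precisely the routine verifications the paper leaves implicit.
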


Combining Proposition \ref{prop:ext} and Proposition
\ref{prop:verdier} we also obtain

\begin{prop}\label{prop:adjoint}
The functors $\K, \E^{(n)}, \F^{(n)}$ have the functors
$$\K^{-1}, \quad \K^n\F^{(n)}[-n^2], \quad \K^{-n}\E^{(n)}[-n^2]$$
as left adjoints and have the functors
$$\K^{-1}, \quad \K^{-n}\F^{(n)}[n^2], \quad \K^n\E^{(n)}[n^2]$$
as right adjoints, respectively.
\end{prop}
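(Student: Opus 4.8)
The plan is to derive Proposition \ref{prop:adjoint} purely formally from Proposition \ref{prop:ext} and Proposition \ref{prop:verdier}, together with the standard fact that $\Ext^\bullet_\D(A,DB)$ is a ``balanced'' pairing symmetric in $A$ and $B$ (cf. \ref{sec:pre:perverse}.(2)). The first observation is that the left-adjoint relations are already encoded in Proposition \ref{prop:ext}: the identity $\Ext^\bullet_\D(\E^{(n)}-,D-)=\Ext^\bullet_\D(-,D\K^n\F^{(n)}[-n^2]-)$, read with the roles of the two variables exchanged via \ref{sec:pre:perverse}.(2), says exactly that on the level of graded $\Hom$-spaces $\Hom^\bullet_\D(\E^{(n)}C,C')\cong\Hom^\bullet_\D(C,\K^n\F^{(n)}[-n^2]C')$ for all $C,C'$; since both sides are the total $\Ext$ between semisimple complexes in the category $\CQ_\bd$ where every object is a direct sum of shifted $\IC$-sheaves, the degree-$0$ part gives the adjunction $\Hom_\D(\E^{(n)}C,C')\cong\Hom_\D(C,\K^n\F^{(n)}[-n^2]C')$, and naturality of the isomorphisms in Proposition \ref{prop:ext} (they come from the functorial isomorphisms of \ref{sec:pre:perverse}.(10) and proper base change) makes this an adjunction of functors. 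The same reading of the third identity gives $\K^{-n}\E^{(n)}[-n^2]$ as a left adjoint of $\F^{(n)}$, and the first identity (trivially, since $\K$ is an autoequivalence with inverse $\K^{-1}=[-2r+d]$ on $X_d^r$, which is manifestly two-sided adjoint to $\K$) gives $\K^{-1}$ as left adjoint of $\K$.

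The second step is to pass from left adjoints to right adjoints using Verdier duality. The key formal fact is that if $L$ is left adjoint to $R$ as functors $\D\to\D$, then $DLD$ is right adjoint to $DRD$: indeed $\Hom_\D(DRD\,C,C')=\Hom_\D(DC',RDC)=\Hom_\D(LDC',DC)=\Hom_\D(C,DLDC')$, using $D^2=\Id$ and that $D$ is an anti-equivalence. Applying this with $L=\E^{(n)}$ (left adjoint of $R=\K^n\F^{(n)}[-n^2]$) shows $D\E^{(n)}D$ is right adjoint to $D\K^n\F^{(n)}[-n^2]D$. Now Proposition \ref{prop:verdier} lets me simplify: $D\E^{(n)}D=\E^{(n)}$, and $D\K^n\F^{(n)}[-n^2]D = D\K^nD\cdot D\F^{(n)}D\cdot D[-n^2]D = \K^{-n}\F^{(n)}[n^2]$ (each conjugation handled by the corresponding clause of Proposition \ref{prop:verdier}, using that $D$ is an anti-automorphism so conjugation reverses composition but here the factors all commute up to the shift bookkeeping already recorded in the earlier propositions). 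Hence $\E^{(n)}$ is right adjoint to $\K^{-n}\F^{(n)}[n^2]$, equivalently $\K^{-n}\F^{(n)}[n^2]$ is right adjoint to... no: I want the right adjoint \emph{of} the original functors, so I instead apply the formal fact directly to each of $\K$, $\E^{(n)}$, $\F^{(n)}$: since $\K^{-1}$ is left adjoint to $\K$, the functor $D\K^{-1}D=\K$ is... here one must be careful and instead argue that $\K$ has right adjoint $R$ where $DRD$ is \emph{left} adjoint to $D\K D=\K^{-1}$; but the left adjoint of $\K^{-1}$ is $\K$, so $DRD=\K$, i.e. $R=D\K D=\K^{-1}$. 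Likewise, writing $L_{\E}=\K^{-n}\E^{(n)}[-n^2]$ for the left adjoint of $\F^{(n)}$ found in step one, the right adjoint $R_{\F}$ of $\F^{(n)}$ satisfies $DR_{\F}D=$ (left adjoint of $D\F^{(n)}D=\F^{(n)}$) $=$ the functor I called $\K^{-n}\E^{(n)}[-n^2]$, whence $R_{\F}=D\K^{-n}\E^{(n)}[-n^2]D=\K^n\E^{(n)}[n^2]$ by Proposition \ref{prop:verdier}; symmetrically $R_{\E}=\K^{-n}\F^{(n)}[n^2]$. This reproduces the list claimed.

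The step I expect to require the most care — the ``main obstacle,'' such as it is — is the bookkeeping of shifts and of the $r$-dependent normalizations when conjugating by $D$ and when composing $\K^{\pm n}$ with $\F^{(n)}$ or $\E^{(n)}$: the functors are assembled from pieces $\K_r=[2r-d]$, $\E^{(n)}_{r+n}$, $\F^{(n)}_r$ living on different $X_d^r$, and $D$ intertwines $\K_r$ with $\K_r^{-1}$ only after matching up the correct component, so one must check that $D\K^n\F^{(n)}D$ genuinely equals $\K^{-n}\F^{(n)}[n^2]$ component-by-component rather than merely up to an undetermined shift. This is exactly the computation already carried out in the proof of Proposition \ref{prop:ext} (the chain of four displayed natural isomorphisms there, with the explicit shift $[-n^2]$ appearing), so in practice I would simply cite Proposition \ref{prop:ext} and Proposition \ref{prop:verdier} as a black box and present the argument above as a two-line formal deduction, remarking that the verification of the shifts is contained in those two proofs.
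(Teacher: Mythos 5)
Your overall strategy is exactly the paper's: Proposition \ref{prop:adjoint} is obtained there with no argument beyond combining Proposition \ref{prop:ext} (whose proof already contains the relevant $(p'_!,p'^!)$, $(p^*,p_*)$ adjunctions and the shift $[-n^2]$) with Proposition \ref{prop:verdier}, and your formal fact that $D$ converts a left-adjoint pair into a right-adjoint pair is correct. The problem is that the central formula of your first step is not. Unwinding the second identity of Proposition \ref{prop:ext} by the substitution $C'\mapsto DC'$, using $D^2=\Id$ and Proposition \ref{prop:verdier}, gives
\[
\Hom_\D(\E^{(n)}C,C')\;\cong\;\Hom_\D(C,\K^{-n}\F^{(n)}[n^2]C'),
\]
which is the right-adjoint statement; exchanging the two variables through $\Ext^\bullet_\D(C,DC')=\Ext^\bullet_\D(C',DC)$ and using $D\E^{(n)}D=\E^{(n)}$ gives
\[
\Hom_\D(\K^{n}\F^{(n)}[-n^2]C,C')\;\cong\;\Hom_\D(C,\E^{(n)}C'),
\]
which is the left-adjoint statement. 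The isomorphism you display, $\Hom_\D(\E^{(n)}C,C')\cong\Hom_\D(C,\K^{n}\F^{(n)}[-n^2]C')$, is neither of these and is in fact false: on the summand $\D(X_d^r)$ the functors $\K^{n}\F^{(n)}[-n^2]$ and $\K^{-n}\F^{(n)}[n^2]$ are $\F^{(n)}_r[2nr+n^2-nd]$ and $\F^{(n)}_r[nd-2nr-n^2]$ respectively, differing by the nontrivial shift $[2n(2r+n-d)]$, so this is not a convention issue but precisely the shift-and-direction bookkeeping you yourself flag as the main obstacle.

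The same inversion then infects the first run of your duality step: the premise ``$\E^{(n)}$ is a left adjoint of $\K^n\F^{(n)}[-n^2]$'' and the intermediate conclusion ``$\E^{(n)}$ is right adjoint to $\K^{-n}\F^{(n)}[n^2]$'' are both backwards (the true statements are their mirrors), and you only recover by abandoning that line; the second run, computing $R_{\F}=D(\K^{-n}\E^{(n)}[-n^2])D=\K^{n}\E^{(n)}[n^2]$ from the left adjoint of $\F^{(n)}$, is sound. So your final lists agree with the proposition and the method coincides with the paper's, but as written the key displayed step is wrong. Replace it by the two isomorphisms above (note that then both the left and the right adjoints fall out of Proposition \ref{prop:ext} directly, making the $D$-conjugation step optional), with naturality supplied by the bifunctoriality asserted in Proposition \ref{prop:ext} and of the pairing $\Ext^\bullet_\D(-,D-)$; the restriction to degree $0$ and to $\CQ_\bd$ that you invoke is unnecessary, since the identities hold on all of $\oplus_r\D(X_d^r)$.
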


\subsection{Abelian categorification}\label{sec:mod:abel}

In this subsection, we categorify the $U$-modules $\Q(q)\otimes_\A
Q_\bd$ via abelian categories.

First, we realize the additive category $\CQ_\bd$ as a full
subcategory of an abelian category. We have a finite-dimensional
graded $\C$-algebra
\begin{equation}
  A^\bullet = \Ext^\bullet_\D(L,L) = \oplus_r \Ext^\bullet_\D(L^r,L^r)
\end{equation}
where
\begin{equation}
  \D = \D(\sqcup_r X_d^r) = \oplus_r \D(X_d^r)
\end{equation}
and
\begin{equation}
  L = \oplus_r \oplus_{X_\br\in\CS_\bd^r} \IC(\overline{X}_\br),
  \quad L^r = \oplus_{X_\br\in\CS_\bd^r} \IC(\overline{X}_\br).
\end{equation}
The complex $L$ is by definition the direct sum of the simple
perverse sheaves (up to isomorphism) from $\CQ_\bd$. The
multiplication of $A^\bullet$ is given by
\begin{equation*}
\begin{split}
  & \Ext^n_\D(L,L) \otimes \Ext^m_\D(L,L)
  = \Hom_\D(L,L[n]) \otimes \Hom_\D(L[n],L[n+m]) \\
  & \to \Hom_\D(L,L[n+m]) = \Ext^{n+m}_\D(L,L).
\end{split}
\end{equation*}
Then for every complex $C\in\CQ_\bd$,
$$\Ext^\bullet_\D(L,C) \quad \text{(resp. $\Ext^\bullet_\D(C,L)$)}$$
defines a graded left (resp. right) $A^\bullet$-module.

Let $A^\bullet\mof$ denote the category of finite-dimensional graded
left $A^\bullet$-modules and let $A^\bullet\pmof$ denote the full
subcategory consisting of the projectives. By
\ref{sec:pre:perverse}.(3)(4) we have
\begin{enumerate}
\setlength{\itemsep}{.3ex}
\item
$A^\bullet$ is $\Z_{\ge0}$-graded.

\item
$A^0 = \oplus_r \oplus_{X_\br\in\CS_\bd^r}
\Hom_\D(\IC(\overline{X}_\br),\IC(\overline{X}_\br))$, with each
summand isomorphic to $\C$.

\setcounter{enumx}{\value{enumi}}
\end{enumerate}
These further imply
\begin{enumerate}
\setlength{\itemsep}{.3ex} \setcounter{enumi}{\value{enumx}}
\item
The units of the $\C$-summands of $A^0$ are the indecomposable
idempotents of $A^\bullet$.

\item
The $\C$-summands of $A^0$ enumerate the simple left
$A^\bullet$-modules.

\item
$\Ext^\bullet_\D(L,\IC(\overline{X}_\br))$,
$X_\br\in\sqcup_r\CS_\bd^r$ enumerate the indecomposable projective
left $A^\bullet$-modules.

\item
$\Hom_{\A^\bullet\mof}(\Ext^\bullet_\D(L,C), \Ext^\bullet_\D(L,C'))
= \Hom_\D(C,C')$ for $C,C'\in\CQ_\bd$.

\setcounter{enumx}{\value{enumi}}
\end{enumerate}
Therefore, we obtain

\begin{prop}\label{prop:Amof}
The obvious functor $\CQ_{\bd} \to A^\bullet\pmof$ is an equivalence
of categories. Moreover, the equivalence identifies the Grothendieck
group of $A^\bullet\mof$ with $\Q(q)\otimes_\A Q_\bd$.
\end{prop}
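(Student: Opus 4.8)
The plan is to establish the equivalence $\CQ_\bd \simeq A^\bullet\pmof$ via the functor $C \mapsto \Ext^\bullet_\D(L,C)$, and then compute the Grothendieck group. First I would check that the functor lands in $A^\bullet\pmof$: the perverse sheaf $L$ is semisimple, so $\Ext^\bullet_\D(L,-)$ is exact on direct sums and sends $\IC(\overline{X}_\br)$ to an indecomposable projective by item (v), whence every object of $\CQ_\bd$ (a finite sum of shifts of such $\IC$'s) goes to a finite sum of shifts of indecomposable projectives, i.e.\ into $A^\bullet\pmof$. Full faithfulness is exactly item (vi), which identifies $\Hom$-spaces in $\CQ_\bd$ with $\Hom$-spaces in $A^\bullet\mof$ between the images; note one must also track the grading shift $[1] \leftrightarrow$ internal degree shift, but this is built into the convention $\Ext^\bullet_\D(L,C[1]) = \Ext^{\bullet+1}_\D(L,C)$. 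Essential surjectivity onto $A^\bullet\pmof$ follows from item (v): by the Krull--Schmidt theorem every finite-dimensional graded projective $A^\bullet$-module is, up to grading shift, a direct sum of the indecomposable projectives $\Ext^\bullet_\D(L,\IC(\overline{X}_\br))$, each of which is in the essential image.

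For the Grothendieck group statement, I would argue as follows. Since $A^\bullet$ is $\Z_{\ge0}$-graded with $A^0$ a product of copies of $\C$ (items (i),(ii)), it is a (graded) finite-dimensional algebra whose graded simple modules are the one-dimensional modules $S_\br$ concentrated in degree $0$, indexed by $X_\br \in \sqcup_r\CS_\bd^r$ (item (iv)). The Grothendieck group $K_0(A^\bullet\mof)$ is then the free $\A$-module on the classes $[S_\br]$, with $q$ acting by the grading shift $[1]$; here one uses that $A^\bullet$ has finite global dimension, or more simply that every object of $A^\bullet\mof$ has a finite filtration by graded shifts of the simples, giving a short-exact-sequence relation that reduces everything to the $[S_\br]$. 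Under the equivalence $\CQ_\bd \simeq A^\bullet\pmof$, the indecomposable projective $P_\br := \Ext^\bullet_\D(L,\IC(\overline{X}_\br))$ corresponds to $\IC(\overline{X}_\br)$, and its class in $K_0(A^\bullet\mof)$ is $\sum_\bs (P_\br : S_\bs)_q [S_\bs]$ where the graded multiplicity $(P_\br:S_\bs)_q = \sum_k \dim \Ext^k_\D(\IC(\overline{X}_\bs),\IC(\overline{X}_\br))\,q^{k}$ (up to a sign/normalization in the exponent of $q$). By items (iii),(iv) of \ref{sec:pre:perverse} this matrix is unitriangular with respect to the partial order on orbit closures, hence invertible over $\A$, so the $[P_\br]$ also form an $\A$-basis of $K_0(A^\bullet\mof)$; this identifies $K_0(A^\bullet\mof)$ with the free $\A$-module on the $[\IC(\overline{X}_\br)]$, which after tensoring with $\Q(q)$ is precisely $\Q(q)\otimes_\A Q_\bd$ by definition of $Q_\bd$ and its canonical basis \eqref{eqn:cb}. (The passage to $\Q(q)$ is needed because $K_0(A^\bullet\mof)$ sees the simples while $Q_\bd$ is defined via the projectives/indecomposables, and the change-of-basis matrix, though unitriangular over $\A$, is inverted only to express one basis in terms of the other --- both are $\A$-bases, so actually the identification already holds over $\A$, and tensoring with $\Q(q)$ is the harmless final step matching the statement.)

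The main obstacle I anticipate is bookkeeping the grading conventions consistently: the paper's Grothendieck group $Q_\bd$ is defined with $[C[1]] = q^{-1}[C]$, the $\Ext$-algebra $A^\bullet$ is positively graded with the grading coming from the cohomological degree, and $K_0(A^\bullet\mof)$ acquires its $\A$-module structure from the internal grading shift; one must verify that the equivalence $\CQ_\bd \simeq A^\bullet\pmof$ intertwines ``$[1]$ on complexes'' with ``shift the internal grading by $1$ on modules'' and that $q^{-1}$ on one side really matches the grading-shift generator on the other. Once the conventions are pinned down, the rest is the standard representation theory of positively graded algebras with semisimple degree-zero part (basic algebras), combined with the already-established unitriangularity from Deligne's weight theory / purity of intersection cohomology implicit in \ref{sec:pre:perverse}.(3)(4).
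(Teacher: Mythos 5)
Your first half (the equivalence $\CQ_\bd\simeq A^\bullet\pmof$ via $C\mapsto\Ext^\bullet_\D(L,C)$, using items (5) and (6) for essential surjectivity and full faithfulness, plus Krull--Schmidt and the shift bookkeeping) is correct and is exactly the argument the paper leaves implicit after listing (1)--(6).

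The gap is in your Grothendieck-group half. The graded Cartan matrix $(P_\br:S_\bs)_q=\sum_k\dim\Ext^k_\D(\IC(\overline{X}_\bs),\IC(\overline{X}_\br))\,q^k$ is \emph{not} unitriangular for the closure order: since the $\IC$'s are self-dual, $\Ext^k_\D(\IC(\overline{X}_\bs),\IC(\overline{X}_\br))\cong\Ext^k_\D(\IC(\overline{X}_\br),\IC(\overline{X}_\bs))$, so the matrix is symmetric, and its diagonal entries are full Poincar\'e-type polynomials rather than $1$ (for $\bd=(d)$ one has $A^\bullet=\oplus_r H^\bullet(X_d^r,\C)$, so the diagonal entry at level $r$ is the Poincar\'e polynomial of the Grassmannian, e.g.\ $1+q^2$ for $X_2^1\cong\BP^1$). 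What \ref{sec:pre:perverse}.(3)(4) actually give is that the matrix is congruent to the identity modulo $q$, with entries in $\Z_{\ge0}[q]$; its determinant therefore lies in $1+q\Z[q]$, which is nonzero, hence the matrix is invertible over $\Q(q)$ --- but in general \emph{not} over $\A=\Z[q,q^{-1}]$ (again $1+q^2$ is not a unit). Consequently your parenthetical claim that $\{[P_\br]\}$ and $\{[S_\br]\}$ are both $\A$-bases and that ``the identification already holds over $\A$'' is false; this failure is precisely why the proposition asserts an identification with $\Q(q)\otimes_\A Q_\bd$ rather than with $Q_\bd$ itself. With the corrected statement (Cartan matrix $\equiv I \bmod q$, hence invertible over $\Q(q)$), the rest of your argument --- $K_0(A^\bullet\mof)$ spanned by the graded simples, the projectives then forming a $\Q(q)$-basis matching the canonical basis \eqref{eqn:cb} of $Q_\bd$ --- goes through and yields the proposition as stated.
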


In the proposition, the Grothendieck group of the abelian category
$A^\bullet\mof$ means the $\Q(q)$-linear space defined by the
generators each for an isomorphism class of objects from
$A^\bullet\mof$ and the relations
\begin{enumerate}
\renewcommand{\theenumi}{\roman{enumi}}
\setlength{\itemsep}{.3ex}
  \item $[M^\bullet]=[M'^\bullet]+[M''^\bullet]$,
  for exact sequence $M'^\bullet \hookrightarrow M^\bullet \twoheadrightarrow M''^\bullet$;
  \item $[M^{\bullet+1}]=q^{-1}[M^\bullet]$, for $M^\bullet\in A^\bullet\mof$.
\end{enumerate}

Next, we translate the endofunctors $\K, \K^{-1}, \E, \F$ into exact
endofunctors of $A^\bullet\mof$. Recall that every graded
$A^\bullet$-bimodule defines an endofunctor of $A^\bullet\mof$ by
tensoring on the left.

For an additive endofunctor $\G: \CQ_\bd \to \CQ_\bd$, compatible
with the shift functor, there is a well-defined graded
$A^\bullet$-bimodule
\begin{equation}
  \G^\bullet = \Ext^\bullet_\D(L,\G L)
\end{equation}
of which the bimodule structure is given by
$$a\cdot x\cdot b = ax\G(b) \quad \text{for $x\in\G^\bullet$, $a,b\in A^\bullet$}.$$
The followings are easy to verify.
\begin{enumerate}
\setlength{\itemsep}{.3ex} \setcounter{enumi}{\value{enumx}}
\item
$\G^\bullet \otimes_{A^\bullet} \Ext^\bullet_\D(L,C) =
\Ext^\bullet_\D(L,\G C)$ for $C \in \CQ_\bd$. In particular,
$\G^\bullet$ induces an endofunctor of $A^\bullet\pmof$.

\item
If $\G$ has a left adjoint then the $A^\bullet$-bimodule
$\G^\bullet$ is (left and right) projective, hence is flat and
defines an exact endofunctor of $A^\bullet\mof$.
\end{enumerate}

As endofunctors of $\CQ_\bd$, $\K, \K^{-1}, \E, \F$ are compatible
with the shift functor and have left adjoints (Proposition
\ref{prop:adjoint}), therefore they define projective graded
$A^\bullet$-bimodules $\K^\bullet, \K^{-1\bullet}, \E^\bullet,
\F^\bullet$ and hence exact endofunctors of $A^\bullet\mof$. The
results from Section \ref{sec:mod:cat} are then translated to

\begin{thm}
We have isomorphisms of projective graded $A^\bullet$-bimodules
\begin{align*}
  & \K^\bullet \otimes \K^{-1\bullet}
  \cong \K^{-1\bullet} \otimes \K^\bullet \cong A^\bullet, \\
  & \K^\bullet \otimes \E^\bullet
  \cong A^{\bullet-2} \otimes \E^\bullet \otimes \K^\bullet, \\
  & \K^\bullet \otimes \F^\bullet
  \cong A^{\bullet+2} \otimes \F^\bullet \otimes \K^\bullet,
\end{align*}
and
\begin{align*}
  & \E^\bullet \otimes \F^\bullet \oplus \bigoplus\limits_{r>d/2}
  \bigoplus\limits_{j=0}^{(2r-d)-1} \Ext^{\bullet+(2r-d)-1-2j}_\D(L^r,L^r) \\
  & \cong \F^\bullet \otimes \E^\bullet \oplus \bigoplus\limits_{r<d/2}
  \bigoplus\limits_{j=0}^{(d-2r)-1} \Ext^{\bullet+(d-2r)-1-2j}_\D(L^r,L^r).
\end{align*}
Therefore, the abelian category $A^\bullet\mof$ together with the
exact endofunctors $\K^\bullet$, $\K^{-1\bullet}$, $\E^\bullet$,
$\F^\bullet$ categorifies the $U$-module $\Q(q)\otimes_\A Q_\bd$.
\end{thm}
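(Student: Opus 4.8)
The plan is to transport the functor isomorphisms of Section \ref{sec:mod:cat} through the dictionary $\G \leadsto \G^\bullet = \Ext^\bullet_\D(L,\G L)$ introduced above, using that this dictionary is essentially monoidal. The one structural fact I need is that, for additive endofunctors $\G_1,\G_2$ of $\CQ_\bd$ that commute with the shift and admit left adjoints, the identity $\G_1^\bullet \otimes_{A^\bullet}\Ext^\bullet_\D(L,C) = \Ext^\bullet_\D(L,\G_1 C)$ applied to $C=\G_2 L$ gives
$$\G_1^\bullet \otimes_{A^\bullet}\G_2^\bullet \;=\; \Ext^\bullet_\D(L,\G_1\G_2 L) \;=\; (\G_1\G_2)^\bullet,$$
while $[n]^\bullet = \Ext^{\bullet+n}_\D(L,L) = A^{\bullet+n}$, so precomposing with the shift $[n]$ corresponds to tensoring with the degree shift $A^{\bullet+n}$; and the assignment visibly sends finite direct sums of functors to direct sums of bimodules. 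First I would note that $\K,\K^{-1},\E,\F$ all meet the hypotheses: compatibility with the shift is built into their definitions, and left adjoints exist by Proposition \ref{prop:adjoint} (with $\K$ serving as the left adjoint of $\K^{-1}$). Consequently $\K^\bullet,\K^{-1\bullet},\E^\bullet,\F^\bullet$, and every tensor product of them occurring below, are projective $A^\bullet$-bimodules defining exact endofunctors of $A^\bullet\mof$.

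Next I would feed the propositions of Section \ref{sec:mod:cat} into this dictionary term by term. The relations $\K\K^{-1}=\K^{-1}\K=\Id$, $\K\E=\E\K[-2]$ and $\K\F=\F\K[2]$ become the first three displayed bimodule isomorphisms at once, the grading-shift factors $A^{\bullet\mp2}$ being insertable in any position since tensoring with $A^{\bullet+m}$ is just the degree shift. For the commutator relation I would apply $\G\leadsto\G^\bullet$ to
$$\E_{r+1}\F_r \oplus \bigoplus_{0\le j<2r-d}\Id[(2r-d)-1-2j] \;\cong\; \F_{r-1}\E_r \oplus \bigoplus_{0\le j<d-2r}\Id[(d-2r)-1-2j],$$
noting that on the $r$-th block $\Id^\bullet = \Ext^\bullet_\D(L^r,L^r)$, hence $\Id[m]^\bullet = \Ext^{\bullet+m}_\D(L^r,L^r)$, and then assembling over all $r$; the left-hand correction terms are nonempty only for $r>d/2$ and the right-hand ones only for $r<d/2$, producing exactly the last displayed isomorphism.

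Finally I would deduce the categorification claim. By Proposition \ref{prop:Amof} the Grothendieck group of $A^\bullet\mof$ is $\Q(q)\otimes_\A Q_\bd$, with the classes of the indecomposable projectives $\Ext^\bullet_\D(L,\IC(\overline{X}_\br))$ corresponding to the basis $b_\br$ of $Q_\bd$; these classes span the Grothendieck group over $\Q(q)$ since the graded Cartan matrix of $A^\bullet$ is invertible over $\Q(q)$ (it equals the identity modulo positive powers of $q$). Under the equivalence of Proposition \ref{prop:Amof} the exact functors $\K^\bullet,\K^{-1\bullet},\E^\bullet,\F^\bullet$ restrict on $A^\bullet\pmof$ to $\K,\K^{-1},\E,\F$ on $\CQ_\bd$ (again by $\G^\bullet\otimes_{A^\bullet}\Ext^\bullet_\D(L,C)=\Ext^\bullet_\D(L,\G C)$), so their action on the $b_\br$ is the $U_\A$-action categorified in Section \ref{sec:mod:cat}. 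Equivalently, pushing the bimodule isomorphisms to Grothendieck groups recovers the defining relations \eqref{eqn:r1} of $U$: the factors $A^{\bullet\mp2}$ give $KE=q^2EK$ and $KF=q^{-2}FK$, while the two correction sums (each a geometric sum of grading shifts, contributing $[2r-d]_q$ times the identity on the $r$-block when $r>d/2$, and $[d-2r]_q$ times it when $r<d/2$) force the operator $[\E^\bullet\otimes\F^\bullet]-[\F^\bullet\otimes\E^\bullet]$ to act on the $r$-block as multiplication by $[d-2r]_q=(K-K^{-1})/(q-q^{-1})$, for every $r$ including $r=d/2$. Hence $A^\bullet\mof$ together with the four exact endofunctors categorifies $\Q(q)\otimes_\A Q_\bd$ as a $U$-module, as asserted. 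I anticipate no genuine obstacle: all the geometry has already been used up in Section \ref{sec:mod:cat} and in the two ``easy to verify'' properties of $\G^\bullet$, and the only thing demanding care is the bookkeeping of grading shifts and of which weight blocks carry the correction terms.
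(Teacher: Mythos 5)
Your proposal is correct and follows essentially the same route as the paper: the paper proves this theorem simply by translating the functor isomorphisms of Section \ref{sec:mod:cat} through the dictionary $\G\mapsto\G^\bullet=\Ext^\bullet_\D(L,\G L)$, using exactly the properties $\G^\bullet\otimes_{A^\bullet}\Ext^\bullet_\D(L,C)=\Ext^\bullet_\D(L,\G C)$ and the projectivity/exactness coming from the existence of left adjoints (Proposition \ref{prop:adjoint}), together with Proposition \ref{prop:Amof} for the Grothendieck group identification. Your write-up merely makes explicit the bookkeeping (shifts $[n]$ becoming $A^{\bullet+n}$, the $r$-blockwise correction terms) that the paper leaves implicit, so there is nothing to correct.
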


\begin{exam}
For $\bd=(d)$, we have $A^\bullet=\oplus_{0\le r\le d}
H^\bullet(X_d^r,\C)$ and
\begin{align*}
  \K^\bullet & = \oplus_r H^{\bullet-d+2r}(X_d^r,\C), \\
  \E^\bullet & = \oplus_r H^{\bullet+d-r-1}(X_d^{r,r+1},\C), \\
  \F^\bullet & = \oplus_r H^{\bullet+r}(X_d^{r,r+1},\C),
\end{align*}
in which $H^\bullet(X_d^{r,r+1},\C)$ is regarded as a graded
$H^\bullet(X_d^r,\C)$-$H^\bullet(X_d^{r+1},\C)$-bimodule for
$\E^\bullet$ and a graded
$H^\bullet(X_d^{r+1},\C)$-$H^\bullet(X_d^r,\C)$-bimodule for
$\F^\bullet$.
\end{exam}

\subsection{The functor Res}\label{sec:mod:res}

Keep the notations of Section \ref{sec:mod:Q}. We split the
composition $\bd=(d_1,d_2,\dots,d_l)$ into a couple of compositions
\begin{equation}
  \bd'=(d_1,d_2,\dots,d_{l'}), \quad \bd''=(d_{l'+1},d_{l'+2},\dots,d_l)
\end{equation}
of $d'=\sum_{i=1}^{l'}d_i$, $d''=\sum_{i=l'+1}^{l}d_i$ respectively.

Set $W'=W_{d'}$ and $W''=W/W_{d'}$, either inheriting a complete
flag from $W$:
\begin{align}
  & 0=W_0\subset W_1\subset\cdots\subset W_{d'}=W', \\
  & 0=W_{d'}/W_{d'}\subset W_{d'+1}/W_{d'}\subset\cdots\subset W_d/W_{d'}=W''.
\end{align}
We associate a collection of data $X_{d'}^r, P_{\bd'}, \CS_{\bd'}^r$
resp. $X_{d''}^r, P_{\bd''}, \CS_{\bd''}^r$ to $W'$ resp. $W''$ as
in Section \ref{sec:mod:Q}. In this way, $P_{\bd'}\times P_{\bd''}$
is regarded as a quotient of $P_\bd$ and we have
$$\M_{P_\bd} (X_{d'}^{r'}\times X_{d''}^{r''})
  = \M_{P_{\bd'}\times P_{\bd''}} (X_{d'}^{r'}\times X_{d''}^{r''}).
$$

Let $\CQ_{\bd',\bd''}^{r',r''}$ be the full subcategory of
$\D(X_{d'}^{r'}\times X_{d''}^{r''})$ whose objects are the
$P_{\bd'}\times P_{\bd''}$-equivariant semisimple complexes. Up to
isomorphism the objects from $\CQ_{\bd',\bd''}^{r',r''}$ are finite
direct sums of $\IC(\overline{X_{\br'} \times X_{\br''}})[j]$ for
various $P_{\bd'}\times P_{\bd''}$-orbits $X_{\br'}\times X_{\br''}$
and integers $j$. Then set
\begin{equation}
  \CQ_{\bd',\bd''} = \bigoplus_{r',r''} \CQ_{\bd',\bd''}^{r',r''}.
\end{equation}

In the same way as we have done for $\CQ_\bd$, we can define
endofunctors
$$\K', \E'^{(n)}, \F'^{(n)} \quad\text{and}\quad \K'', \E''^{(n)}, \F''^{(n)}$$
of $\CQ_{\bd',\bd''}$ which categorify the generators
$$K\otimes1, E^{(n)}\otimes1, F^{(n)}\otimes1 \quad\text{and}\quad 1\otimes K, 1\otimes E^{(n)}, 1\otimes F^{(n)}$$
of $U_\A \otimes U_\A$, so as to endow the Grothendieck group
$Q_{\bd',\bd''}$ of $\CQ_{\bd',\bd''}$ with a $U_\A$-module
structure. We can also define an inner product in terms of Ext
groups. The $U_\A$-module $Q_{\bd',\bd''}$ has a basis
\begin{equation}
\begin{split}
  & b_{\br',\br''}
  = [\IC(\overline{X_{\br'}\times X_{\br''}})]
  = [\IC(\overline{X}_{\br'}) \boxtimes \IC(\overline{X}_{\br''})], \\
  & \quad \quad \quad \quad \quad
  X_{\br'}\in\sqcup_r\CS_{\bd'}^{r}, \; X_{\br''}\in\sqcup_r\CS_{\bd''}^r.
\end{split}
\end{equation}

Notice the equivalence of categories
\begin{equation}
  \CQ_{\bd'} \times \CQ_{\bd''} \to \CQ_{\bd',\bd''}, \quad (C',C'') \mapsto C'\boxtimes C''.
\end{equation}
It follows that $Q_{\bd'} \otimes Q_{\bd''}$ together with all
algebraic structures defined on it is naturally identified with
$Q_{\bd',\bd''}$. In particular, the basis element $b_{\br'}\otimes
b_{\br''}$ is identified with $b_{\br',\br''}$.

\smallskip

Now we define the functor Res by using the diagram
\begin{equation}\label{eqn:res}
  \xymatrix@1{
  X_d^{r'+r''} & \ar[l]_{\;\;\;\iota} Y^{r',r''} \ar[r]^-\pi & X_{d'}^{r'}\times X_{d''}^{r''}
  }
\end{equation}
where
$$Y^{r',r''} = \{ V\in X_d^{r'+r''} \mid \dim(V\cap W')=r' \},$$
and $\iota$ is the inclusion, $\pi(V)=(V\cap W',V/(V\cap W'))$.
Define for each pair of integers $r',r''$ a functor
\begin{equation}
  \Res_{d',d''}^{r',r''} = \pi_!\iota^*[(d'-r')r'']: \D(X_d^{r'+r''}) \to \D(X_{d'}^{r'}\times X_{d''}^{r''})
\end{equation}
and assemble them together
\begin{equation}
  \Res_{d',d''} = \oplus_{r',r''}\Res_{d',d''}^{r',r''} :
  \oplus_r \D(X_d^r) \to \oplus_{r',r''} \D(X_{d'}^{r'}\times X_{d''}^{r''}).
\end{equation}

The following proposition states that $\Res_{d',d''}$ induces a
functor $\CQ_\bd \to \CQ_{\bd',\bd''}$, hence induces an $\A$-linear
map
\begin{equation}
  \Upsilon_{d',d''}: Q_\bd \to Q_{\bd'}\otimes Q_{\bd''}.
\end{equation}

\begin{prop}
We have $\Res_{d',d''}^{r',r''} C \in \CQ_{\bd',\bd''}^{r',r''}$ for
$C \in \CQ_\bd^{r'+r''}$.
\end{prop}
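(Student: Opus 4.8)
The plan is to mimic the proof of the analogous statement for $\F^{(n)}$: exhibit $\iota^*$ as (up to shift) a restriction to a union of $P_\bd$-orbits followed by a smooth pullback, and exhibit $\pi$ as a proper map, so that the decomposition theorem applies. First I would observe that $Y^{r',r''}$ is a locally closed $P_\bd$-invariant subvariety of $X_d^{r'+r''}$: indeed it is the union of exactly those $P_\bd$-orbits $X_\br$ for which $r_1+\dots+r_{l'}=r'$ (equivalently $\dim(V\cap W')=r'$). So I would factor $\iota$ through the (smooth, locally closed) inclusion of this union of orbits. The key point is that for $C\in\CQ_\bd^{r'+r''}$, which is a direct sum of shifted $\IC(\overline X_\br)$, the restriction $C|_{Y^{r',r''}}$ need not be semisimple on $Y^{r',r''}$, since restriction of an $\IC$ sheaf to a non-open subvariety is generally not semisimple. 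This is the main obstacle, and it is exactly where Lemma \ref{lem:desingular} enters.

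To handle it I would argue orbit by orbit: it suffices to show $\Res_{d',d''}^{r',r''}\IC(\overline X_\br)\in\CQ_{\bd',\bd''}^{r',r''}$ for each $P_\bd$-orbit $X_\br\subset X_d^{r'+r''}$. Here $\IC(\overline X_\br)$ is (up to shift) $\pi_Z{}_!\C_Z$ for a resolution of singularities $\pi_Z\colon Z\to\overline X_\br$; applying Lemma \ref{lem:desingular} with $W'=W_{d'}$ in the role of $W_{d'}$ there (after reducing $\overline X_\br$ to a Grassmannian Schubert variety by forgetting the intermediate flag spaces), I can choose $\pi_Z$ so that each stratum $\{V\in\overline X_\br:\dim(V\cap W')=s\}$ has smooth preimage in $Z$. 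In particular $\pi_Z^{-1}(Y^{r',r''}\cap\overline X_\br)$ is a smooth variety $Z'$, and the diagram
\begin{equation*}
\xymatrix@1{
 Z' \ar[r] \ar[d]_{\pi_Z|_{Z'}} & Z \ar[d]^{\pi_Z} \\
 Y^{r',r''}\cap\overline X_\br \ar[r] & \overline X_\br
}
\end{equation*}
is cartesian, so by proper base change $(\IC(\overline X_\br))|_{Y^{r',r''}}$ is, up to shift, $(\pi_Z|_{Z'})_!\C_{Z'}$ with $Z'$ smooth and $\pi_Z|_{Z'}$ proper. By \ref{sec:pre:perverse}.(12) this is a semisimple complex on $Y^{r',r''}\cap\overline X_\br$, hence $\iota^*\IC(\overline X_\br)$ is a semisimple complex on $Y^{r',r''}$.

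Finally I would check that $\pi\colon Y^{r',r''}\to X_{d'}^{r'}\times X_{d''}^{r''}$, $\pi(V)=(V\cap W',\,V/(V\cap W'))$, is proper: its fibers are the spaces of $V\subset W$ with prescribed $V\cap W'$ and prescribed image in $W''$, which is an affine space of extension data; more to the point, $\pi$ factors as a closed embedding of $Y^{r',r''}$ into a suitable vector bundle over $X_{d'}^{r'}\times X_{d''}^{r''}$ followed by... no — rather, one notes $\pi$ has a natural compactification and $Y^{r',r''}$ is closed in $X_d^{r'+r''}$-bundle terms; cleanly, one can see $\pi$ is proper because it is the composition of the proper projection from a flag-type variety. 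Granting properness, and granting $P_\bd$-equivariance of $\pi$ (clear, since $P_\bd$ preserves $W'$ and acts compatibly on $W',W''$), \ref{sec:pre:perverse}.(15) gives that $\pi_!$ sends the $P_\bd$-equivariant semisimple complex $\iota^*\IC(\overline X_\br)[(d'-r')r'']$ to a $P_{\bd'}\times P_{\bd''}$-equivariant semisimple complex, i.e.\ an object of $\CQ_{\bd',\bd''}^{r',r''}$. Assembling over the orbits $X_\br$ with $r_1+\dots+r_{l'}=r'$ and using that $\CQ_\bd^{r'+r''}$ is generated by their shifted $\IC$'s proves the proposition. The main obstacle, as indicated, is the non-semisimplicity of $\iota^*$ on a non-open stratum, resolved precisely by the adapted Bott–Samelson resolution of Lemma \ref{lem:desingular}.
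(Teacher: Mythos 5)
The first half of your argument --- reducing to $C=\IC(\overline{X}_\br)$, invoking Lemma \ref{lem:desingular} to get a resolution $f\colon Z\to\overline{X}_\br$ with $Z'=f^{-1}(Y^{r',r''})$ smooth, and using proper base change plus the decomposition theorem to conclude that $\iota^*C$ is a $P_\bd$-equivariant semisimple complex --- is exactly the paper's Lemma \ref{lem:pi2}. (One small slip: $\IC(\overline{X}_\br)$ is only a \emph{direct summand} of $f_!\C_Z[\dim Z]$, not equal to it up to shift, unless the resolution is small; but since a direct summand of a semisimple complex is semisimple, this does not harm the argument.)

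The genuine gap is your treatment of $\pi$. The map $\pi\colon Y^{r',r''}\to X_{d'}^{r'}\times X_{d''}^{r''}$, $V\mapsto(V\cap W',V/(V\cap W'))$, is \emph{not} proper: its fibers are affine spaces (torsors over $\Hom(V/(V\cap W'),\,W'/(V\cap W'))\cong\C^{(d'-r')r''}$), as your own description of the "extension data" shows, and $Y^{r',r''}$ is only locally closed in $X_d^{r'+r''}$ (the closed condition is $\dim(V\cap W')\ge r'$). So the appeal to \ref{sec:pre:perverse}.(15), which requires a proper morphism, is invalid, and with it the final step asserting that $\pi_!$ carries semisimple complexes to semisimple complexes. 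The paper closes this gap differently (Lemma \ref{lem:pi1}): since $\pi$ is a $\C^{(d'-r')r''}$-bundle and the kernel of the surjection $P_\bd\to P_{\bd'}\times P_{\bd''}$ acts transitively on each fiber, the functor $\pi^*[(d'-r')r'']$ is an equivalence $\M_{P_{\bd'}\times P_{\bd''}}(X_{d'}^{r'}\times X_{d''}^{r''})\to\M_{P_\bd}(Y^{r',r''})$, and because $\pi_!\pi^*=[-2(d'-r')r'']$ the functor $\pi_![(d'-r')r'']$ is its inverse; hence $\pi_![(d'-r')r'']$ sends the $P_\bd$-equivariant semisimple complex $\iota^*C$ to a $P_{\bd'}\times P_{\bd''}$-equivariant semisimple complex. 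You need this equivariant-descent argument (or some substitute exploiting that $\iota^*C$ is equivariant, hence pulled back from the base up to shift); semisimplicity of $\pi_!$ of an arbitrary semisimple complex on $Y^{r',r''}$ would be false in general for a non-proper map.
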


The proof is immediate from the next two lemmas.

\begin{lem}\label{lem:pi1}
The functor $\pi_![(d'-r')r'']$ and the functor $\pi^*[(d'-r')r'']$
define an equivalence of the categories $\M_{P_\bd} (Y^{r',r''})$,
$\M_{P_{\bd'}\times P_{\bd''}} (X_{d'}^{r'}\times X_{d''}^{r''})$.
\end{lem}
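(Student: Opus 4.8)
The plan is to analyze the geometry of the map $\pi: Y^{r',r''} \to X_{d'}^{r'}\times X_{d''}^{r''}$ and show it is, up to the relevant twist, a fibration to which the machinery of \ref{sec:pre:perverse}.(9) (equivalence induced by a smooth morphism with connected fibers) applies, possibly after a further reduction via \ref{sec:pre:perverse}.(14) (the $G$-bundle statement). First I would fix a point $(V',V'') \in X_{d'}^{r'}\times X_{d''}^{r''}$, so $V'\subset W'$ with $\dim V'=r'$ and $V''\subset W''=W/W'$ with $\dim V''=r''$, and describe the fiber $\pi^{-1}(V',V'')$. A subspace $V\in Y^{r',r''}$ lying over $(V',V'')$ is exactly a subspace $V\subset W$ with $V\cap W'=V'$ and $(V+W')/W'=V''$; equivalently, writing $\tilde V''\subset W$ for the preimage of $V''$, one needs $V'\subset V\subset \tilde V''$ with $V$ a linear complement-type lift of $V''$ over $V'$, i.e. $V/V'$ maps isomorphically onto $\tilde V''/W' \cap(\cdots)$ — concretely $V$ is the graph of a linear map $V''\to W'/V'$. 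Hence the fiber is an affine space $\mathrm{Hom}(V'', W'/V')$ of dimension $r''(d'-r')$, which is smooth, connected, nonempty, and of the expected dimension $(d'-r')r''$ appearing in the shift. So $\pi$ is a smooth morphism with connected nonempty fibers of dimension $(d'-r')r''$.

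Next I would invoke \ref{sec:pre:perverse}.(8), which says that for such a morphism $f$ one has $f^*[\dim] = f^![-\dim]$ inducing a fully faithful exact functor $\M(Y)\to\M(X)$ sending simples to simples; combined with \ref{sec:pre:perverse}.(13) ($\pi_![(d'-r')r'']$ being a left adjoint, hence yielding a counit), this already gives that $\pi^*[(d'-r')r'']$ is fully faithful at the level of perverse categories. To upgrade ``fully faithful'' to ``equivalence'' I would check essential surjectivity: every simple object of $\M_{P_{\bd'}\times P_{\bd''}}(X_{d'}^{r'}\times X_{d''}^{r''})$ is an $\IC(\overline{X_{\br'}\times X_{\br''}})$ for an orbit $X_{\br'}\times X_{\br''}$ (by \ref{sec:pre:perverse}.(15), since there are finitely many orbits), and I would show that its preimage under $\pi$ is (the closure of) a single $P_\bd$-orbit in $Y^{r',r''}$ — this uses that $\pi$ is $P_\bd$-equivariant and that the $P_\bd$-orbits on $Y^{r',r''}$ map onto the $P_{\bd'}\times P_{\bd''}$-orbits with the fibration structure above restricting to each orbit. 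Then $\pi^*[(d'-r')r'']$ carries that simple IC to the simple IC of the preimage orbit closure, and conversely $\pi_![(d'-r')r'']=\pi_\flat$-type functor inverts it; matching up simples and using that both categories are built from their simples (finite-length abelian categories with no higher self-Ext obstruction, cf. \ref{sec:pre:perverse}.(3)(4)) gives the equivalence. Finally I would confirm $\pi_![(d'-r')r'']$ lands in perverse sheaves and is the quasi-inverse, either directly from $\pi$ being a smooth affine-space bundle (so $\pi_![2\dim]=\pi_*$ up to the twist on the constant-sheaf coefficients, making $\pi_!\pi^*$ computable) or by adjunction once full faithfulness and essential surjectivity of $\pi^*[(d'-r')r'']$ are in hand.

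The main obstacle I anticipate is \emph{not} the local fibration analysis — that is routine linear algebra — but rather the bookkeeping needed to see that $\pi$ restricts to an honest (Zariski-locally trivial) affine-space bundle compatibly with the $P_\bd$-orbit stratification, so that $\pi^*$ really does match $P_\bd$-equivariant simples on $Y^{r',r''}$ bijectively with $P_{\bd'}\times P_{\bd''}$-equivariant simples on the product. One has to be careful that $Y^{r',r''}$ is the \emph{locally closed} locus $\{\dim(V\cap W')=r'\}$ inside $X_d^{r'+r''}$ (an open condition within the closed condition $\dim(V\cap W')\ge r'$), and that $\pi$ is well-defined and smooth precisely there; and one must track that the shift $[(d'-r')r'']$ is exactly the fiber dimension so that $\pi^*[(d'-r')r'']$ is t-exact for the perverse t-structures. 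Once these compatibilities are pinned down, the statement follows formally from \ref{sec:pre:perverse}.(8) together with the orbit classification \ref{sec:pre:perverse}.(15).
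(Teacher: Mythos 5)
Your geometric analysis agrees with the paper's: the fiber of $\pi$ over $(V',V'')$ is $\Hom(V'',W'/V')\cong\C^{(d'-r')r''}$, so $\pi$ is an affine-space bundle, $\pi^*[(d'-r')r'']$ is perverse t-exact and fully faithful by the smooth-morphism property, and $\pi_!\pi^*=[-2(d'-r')r'']$ makes $\pi_![(d'-r')r'']$ a one-sided inverse. The genuine gap is in the step from full faithfulness to an equivalence of the \emph{equivariant} categories. You argue: the simples downstairs are the ICs of orbit closures, their pullbacks are exactly the ICs of the $P_\bd$-orbit closures in $Y^{r',r''}$ (true: preimages of orbits are single orbits), and then conclude because ``both categories are built from their simples,'' citing the properties on negative Ext vanishing and Homs between simples. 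But a fully faithful exact functor between finite-length abelian categories that hits every simple object need not be essentially surjective: one also needs its essential image to be closed under extensions, i.e.\ surjectivity of the induced maps on $\Ext^1$, and nothing you cite provides this. (Toy example: the full exact embedding of vector spaces into finite-dimensional $\C[x]/(x^2)$-modules as the modules with $x=0$ contains the unique simple but is far from essentially surjective.) So as written the essential-surjectivity step does not follow.

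The ingredient the paper supplies at exactly this point is that the kernel of the surjection $P_\bd\to P_{\bd'}\times P_{\bd''}$ (a unipotent group) acts transitively on each fiber of $\pi$; hence a $P_\bd$-equivariant perverse sheaf on $Y^{r',r''}$ is constant along the fibers and descends, which is precisely essential surjectivity onto $\M_{P_\bd}(Y^{r',r''})$ --- an equivariant-descent argument in the spirit of the principal-bundle property, though $\pi$ is not itself a principal bundle (the kernel acts with nontrivial stabilizers), so that property cannot be quoted verbatim either. You float this reduction at the outset (``possibly after a further reduction via the $G$-bundle statement'') but never carry it out. Alternatively, your route can be repaired without the group-theoretic input: since $\pi_*\C_{Y^{r',r''}}\cong\C_{X_{d'}^{r'}\times X_{d''}^{r''}}$, adjunction gives $\Ext^1_{\D(Y^{r',r''})}(\pi^*C[\,\cdot\,],\pi^*C'[\,\cdot\,])\cong\Ext^1(C,C')$, and induction on length (using that subquotients of equivariant perverse sheaves under a connected group are again equivariant) then shows every $P_\bd$-equivariant perverse sheaf descends. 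Either supplement closes the gap; without one of them the proof is incomplete. (Minor: several of your numbered references to Section 2.2 are off --- the principal-bundle statement is (11), the orbit classification (16), the adjunctions (6) --- though the intended content is clear.)
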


\begin{proof}
Since $\pi$ is a $\C^{(d'-r')r''}$-bundle, $\pi^*[(d'-r')r'']$
induces a fully faithful functor from $\M(X_{d'}^{r'}\times
X_{d''}^{r''})$ to $\M(Y^{r',r''})$. Moreover, the kernel of the
group homomorphism $P_\bd \to P_{\bd'}\times P_{\bd''}$ acts
transitively on each fiber of $\pi$. Hence $\pi^*[(d'-r')r'']$
defines an equivalence of the categories $\M_{P_{\bd'}\times
P_{\bd''}} (X_{d'}^{r'}\times X_{d''}^{r''})$, $\M_{P_\bd}
(Y^{r',r''})$.

On the other hand, $\pi_!\pi^* = [-2(d'-r')r'']$. Thus the functor
$\pi_![(d'-r')r'']$ gives an inverse for the equivalence.
\end{proof}

\begin{lem}\label{lem:pi2}
$\iota^*C$ is a $P_\bd$-equivariant semisimple complex for $C \in
\CQ_\bd^{r'+r''}$.
\end{lem}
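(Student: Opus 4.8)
The plan is to reduce the statement to showing that $\iota^*\IC(\overline{X}_w)$ is a semisimple complex for \emph{every} Schubert variety $\overline{X}_w\subset X_d^{r'+r''}$, and to prove this by induction on $\dim\overline{X}_w$ using a resolution of singularities that is compatible, via Lemma \ref{lem:desingular}, with the stratification of $X_d^{r'+r''}$ by the function $V\mapsto\dim(V\cap W')$. The reduction is legitimate: since $B\subset P_\bd$, every $P_\bd$-orbit $X_\br$ of $X_d^{r'+r''}$ contains a dense $B$-orbit, so each $\overline{X}_\br$ is a Schubert variety, and a general object of $\CQ_\bd^{r'+r''}$ is a finite direct sum of shifts of such $\IC(\overline{X}_\br)$. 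Moreover, once $\iota^*C$ is known to be semisimple, its $P_\bd$-equivariance is automatic: $\iota$ is $P_\bd$-equivariant (as $P_\bd$ preserves $W'$, hence $Y^{r',r''}$), and for $C\cong\oplus_n\pH^n(C)[-n]$ with $\pH^n(C)\in\M_{P_\bd}$ one gets $\iota^*C\cong\oplus_n\iota^*(\pH^n C)[-n]$ where each $\iota^*(\pH^n C)$ has $P_\bd$-equivariant perverse cohomology by \ref{sec:pre:perverse}.(10), so $\pH^m(\iota^*C)\in\M_{P_\bd}(Y^{r',r''})$ for all $m$.

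For the inductive step, apply Lemma \ref{lem:desingular} with the integer $d'$ (note $W'=W_{d'}$) to obtain a $B$-equivariant resolution $\pi\colon Z\to\overline{X}_w$ such that $\pi^{-1}(\{V\in\overline{X}_w\mid\dim(V\cap W')=\rho\})$ is smooth for every $\rho$. As $Z$ is smooth and $\pi$ is proper and $B$-equivariant, $\pi_!\C_Z[\dim Z]$ is a $B$-equivariant semisimple complex by \ref{sec:pre:perverse}.(10),(15); since $B$ has finitely many orbits on $X_d^{r'+r''}$ (the Schubert cells), it is then a finite direct sum of shifts of $\IC(\overline{X}_v)$ for Schubert cells $X_v\subset\overline{X}_w$ (cf. \ref{sec:pre:flag}). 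A stalk computation at a generic point of the dense open locus where $\pi$ is an isomorphism shows that $\IC(\overline{X}_w)$ occurs exactly once and without shift, so $\pi_!\C_Z[\dim Z]\cong\IC(\overline{X}_w)\oplus R$ with every summand of $R$ a shift of some $\IC(\overline{X}_v)$ with $\overline{X}_v\subsetneq\overline{X}_w$, in particular $\dim\overline{X}_v<\dim\overline{X}_w$. By the induction hypothesis $\iota^*R$ is semisimple; hence, granting that $\iota^*(\pi_!\C_Z[\dim Z])$ is semisimple, $\iota^*\IC(\overline{X}_w)$ is a direct summand of a semisimple complex and is therefore semisimple (a direct summand of a semisimple complex is again semisimple). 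The base case $\dim\overline{X}_w=0$ is trivial.

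It remains to verify that $\iota^*(\pi_!\C_Z[\dim Z])$ is semisimple. Set $T=Y^{r',r''}\cap\overline{X}_w$ and $S=\pi^{-1}(T)$, and let $j_T\colon T\hookrightarrow Y^{r',r''}$ be the inclusion, which is a closed immersion because $\overline{X}_w$ is closed in $X_d^{r'+r''}$. Applying proper base change \ref{sec:pre:perverse}.(13) first along $\overline{X}_w\hookrightarrow X_d^{r'+r''}$ and then along $\pi$ yields $\iota^*(\pi_!\C_Z[\dim Z])\cong (j_T)_*\bigl((\pi|_S)_!\C_S[\dim Z]\bigr)$. Now $S$ is smooth by the choice of $\pi$ and $\pi|_S\colon S\to T$ is proper (a base change of $\pi$), so $(\pi|_S)_!\C_S[\dim Z]$ is semisimple on $T$ by \ref{sec:pre:perverse}.(15), and pushing forward along the closed immersion $j_T$ preserves semisimplicity.

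The main obstacle is the correction term $R$: it is precisely what forces the inductive claim to be proved for every Schubert variety of $X_d^{r'+r''}$ at once, rather than only for the $\overline{X}_\br$, and it is the reason one needs the full strength of Lemma \ref{lem:desingular}, whose resolution must be simultaneously compatible with all the strata of an arbitrary Schubert variety. The rest is routine bookkeeping of proper base change together with the form of the decomposition theorem recorded in \ref{sec:pre:perverse}.(15).
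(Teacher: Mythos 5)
Your proof is correct and is essentially the paper's own argument: reduce to $C=\IC(\overline{X}_w)$, take the resolution $\pi\colon Z\to\overline{X}_w$ from Lemma \ref{lem:desingular} so that the preimage of $Y^{r',r''}$ is smooth, and conclude by proper base change together with the decomposition theorem for the proper pushforward of the (shifted) constant sheaf, equivariance being clear. The only difference is that your induction on $\dim\overline{X}_w$ is superfluous: since you already invoke that a direct summand of a semisimple complex is semisimple, $\iota^*\IC(\overline{X}_w)$ is semisimple at once as a summand of $\iota^*(\pi_!\C_Z[\dim Z])$, so the correction term $R$ never needs to be controlled and the paper's proof indeed proceeds with no induction.
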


\begin{proof}
We may assume $C=\IC(\overline{X}_w)$ where $X_w$ is a Schubert cell
of $X_d^{r'+r''}$. By Lemma \ref{lem:desingular} there exists a
resolution of singularities $f: Z \to \overline{X}_w$ such that
$$Z'=f^{-1}(Y^{r',r''})$$
is a smooth variety. Then $C$ is a direct summand of $f_!\C_Z[\dim
Z]$ and $\iota^*C$ is therefore a direct summand of
$\iota^*f_!\C_Z[\dim Z]$. By the decomposition theorem and proper
base change
$$\iota^*f_!\C_Y[\dim Z] = (f|_{Z'})_!\C_{Z'}[\dim Z]$$
is a semisimple complex. It follows that $\iota^*C$ is a semisimple
complex, whose $P_\bd$-equivariance is obvious.
\end{proof}

\begin{exam}
For $\bd=(1,1,1)$, the $\A$-linear map $\Upsilon_{1,2}$ at level
$r'+r''=1$ is as follows.
\begin{align*}
  & b_{(1,0,0)} \mapsto b_{(1)} \otimes b_{(0,0)}, \\
  & b_{(0,1,0)} \mapsto b_{(0)} \otimes b_{(1,0)} + q^{-1} b_{(1)} \otimes b_{(0,0)}, \\
  & b_{(0,0,1)} \mapsto b_{(0)} \otimes b_{(0,1)} + q^{-2} b_{(1)} \otimes b_{(0,0)}.
\end{align*}
\end{exam}

\subsection{The isomorphism $Q_\bd \cong Q_{\bd'}\otimes Q_{\bd''}$}\label{sec:mod:split}

Recall that we split the composition $\bd=(d_1,d_2,\dots,d_l)$ into
$$\bd'=(d_1,d_2,\dots,d_{l'}), \quad \bd''=(d_{l'+1},d_{l'+2},\dots,d_l).$$
In the next proposition, we associate to each composition
$\br=(r_1,r_2,\dots,r_l)$ a pair of compositions
$$\br'=(r_1,r_2,\dots,r_{l'}), \quad \br''=(r_{l'+1},r_{l'+2},\dots,r_l).$$

\begin{prop}\label{prop:basis}
For $b_\br=[\IC(\overline{X}_\br)]$, $X_\br\in\CS_\bd^r$ we have
\begin{equation}
  \Upsilon_{d',d''}(b_\br)
  = b_{\br'} \otimes b_{\br''} +
  \sum_{X_\bs\in\CS_\bd^r: \; \overline{X}_\bs \subsetneqq \overline{X}_\br}
  c_{\br,\bs} \cdot b_{\bs'} \otimes b_{\bs''}
\end{equation}
where $c_{\br,\bs} \in q^{-1}\Z_{\ge0}[q^{-1}]$. Therefore,
$\Upsilon_{d',d''}: Q_\bd \to Q_{\bd'}\otimes Q_{\bd''}$ is an
$\A$-linear isomorphism.
\end{prop}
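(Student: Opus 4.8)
The plan is to compute $\Upsilon_{d',d''}(b_\br)$ via the diagram \eqref{eqn:res} and read off the leading term together with the lower-order corrections. Since $\Upsilon_{d',d''}$ is induced by $\Res_{d',d''}=\pi_!\iota^*[(d'-r')r'']$, I would first restrict $\IC(\overline{X}_\br)$ to the locally closed subvariety $Y^{r',r''}\subset X_d^{r'+r''}$ and then push forward along $\pi$. The key geometric observation is that the $P_{\bd'}\times P_{\bd''}$-orbit $X_{\br'}\times X_{\br''}$ lifts under $\pi$ (up to the $\C^{(d'-r')r''}$-bundle of Lemma \ref{lem:pi1}) to the piece of $Y^{r',r''}$ lying over the $P_\bd$-orbit $X_\br$; more precisely, $\pi^{-1}(X_{\br'}\times X_{\br''})$ meets $\iota^{-1}(X_\br)$ in an open dense subset, and in fact $\iota^{-1}(X_\br)$ is a vector bundle over $X_{\br'}\times X_{\br''}$. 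Since $\IC(\overline{X}_\br)|_{X_\br}=\C_{X_\br}[\dim X_\br]$, the contribution of the open stratum to $\Res_{d',d''}(\IC(\overline{X}_\br))$ is, after the shift and the bundle corrections (using $\pi_!\pi^*=[-2(d'-r')r'']$), exactly $\IC(\overline{X}_{\br'})\boxtimes\IC(\overline{X}_{\br''})$ plus possibly shifted copies supported on smaller orbits, establishing the leading term $b_{\br'}\otimes b_{\br''}$.

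For the correction terms I would stratify $Y^{r',r''}$ by the $P_\bd$-orbits it meets, namely $Y^{r',r''}\cap X_\bs$ for $\bs$ with $X_\bs\subset\overline{X}_\br$, and use that $\pi$ maps each such piece to (a bundle over) $X_{\bs'}\times X_{\bs''}$. By Lemma \ref{lem:pi2}, $\iota^*\IC(\overline{X}_\br)$ is a $P_\bd$-equivariant semisimple complex on $Y^{r',r''}$, hence a direct sum of shifts of $\IC$'s of closures of the pieces $Y^{r',r''}\cap X_\bs$; applying $\pi_!$ (proper on these, up to the bundle) and the decomposition theorem gives a semisimple complex on $X_{d'}^{r'}\times X_{d''}^{r''}$, so $\Upsilon_{d',d''}(b_\br)$ is a $\Z_{\ge0}[q,q^{-1}]$-combination of the $b_{\bs'}\otimes b_{\bs''}$. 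To pin down that the coefficients $c_{\br,\bs}$ (for $\overline{X}_\bs\subsetneqq\overline{X}_\br$) lie in $q^{-1}\Z_{\ge0}[q^{-1}]$, I would invoke the parity/weight input: by Lemma \ref{lem:flag2} the stalks of $\IC(\overline{X}_\br)$ along $X_\bs$ are concentrated in degrees $\equiv\dim X_\br+\dim X_\bs\pmod 2$, and combined with the dimension shifts in the definition of $\Res$ (which involve $\dim X_{\br'}+\dim X_{\br''}$ versus $\dim X_\br$) this forces the off-diagonal contributions into strictly negative $q$-degree — the diagonal term $b_{\br'}\otimes b_{\br''}$ occurs exactly once and in degree $0$ because the open stratum of $\iota^{-1}(X_\br)$ contributes the constant sheaf with no extra shift.

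The main obstacle I anticipate is the bookkeeping in the second step: showing cleanly that $\pi$ restricted to $\iota^{-1}(X_\br)$ is an (affine or vector) bundle over $X_{\br'}\times X_{\br''}$ with the right fiber dimension, and that the combinatorics of the shift $(d'-r')r''$ together with the dimensions $\dim X_\br$, $\dim X_{\br'}$, $\dim X_{\br''}$ conspire so that the leading coefficient is precisely $1\cdot q^0$ rather than some power of $q$. Once the leading term is exactly $b_{\br'}\otimes b_{\br''}$ and all other terms are indexed by strictly smaller Schubert varieties with coefficients in $q^{-1}\Z_{\ge0}[q^{-1}]$, the matrix of $\Upsilon_{d',d''}$ in the canonical bases is unitriangular with respect to the closure partial order, hence invertible over $\A$; composed with the identification $Q_{\bd'}\otimes Q_{\bd''}=Q_{\bd',\bd''}$ from the equivalence $\CQ_{\bd'}\times\CQ_{\bd''}\to\CQ_{\bd',\bd''}$, this yields the asserted $\A$-linear isomorphism.
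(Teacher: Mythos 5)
Your overall strategy is the same as the paper's: compute $\Res_{d',d''}\IC(\overline{X}_\br)$ stratum by stratum using base change along $\pi$, observe that $X_\br\cap Y^{r',r''}$ is precisely $\pi^{-1}(X_{\br'}\times X_{\br''})$ and an affine bundle of rank $(d'-r')r''$ over it (so the normalization shift makes the leading term exactly $b_{\br'}\otimes b_{\br''}$), and conclude by unitriangularity with respect to the closure order. Up to the dimension bookkeeping you yourself flag, that part is fine and matches the paper.

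The genuine gap is in your justification of $c_{\br,\bs}\in q^{-1}\Z_{\ge0}[q^{-1}]$ for $\overline{X}_\bs\subsetneqq\overline{X}_\br$. You invoke Lemma \ref{lem:flag2} (parity of stalk degrees) ``combined with the dimension shifts'' to force the off-diagonal contributions into strictly negative $q$-degree. Parity only constrains degrees modulo $2$ and can never produce a strict inequality; in particular it permits degree-$0$ contributions whenever $\dim X_\br\equiv\dim X_\bs\pmod 2$, and the shifts in the definition of $\Res$ do not rule these out. What is actually needed — and what the paper uses — is the defining stalk (strict support) condition of the intermediate extension: $\pH^k\bigl(\IC(\overline{X}_\br)|_{X_\bs}\bigr)=0$ for $k\ge0$ whenever $\overline{X}_\bs\subsetneqq\overline{X}_\br$, together with $\IC(\overline{X}_\br)|_{X_\bs}=0$ when $\overline{X}_\bs\not\subset\overline{X}_\br$. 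Pushing these through the affine bundle $\pi$ (whose fiber shift is exactly cancelled by the $[(d'-r')r'']$ normalization) gives $\pH^k\bigl(\Res_{d',d''}\IC(\overline{X}_\br)|_{X_{\bs'}\times X_{\bs''}}\bigr)=0$ for $k\ge0$ on proper boundary strata and $=\C[\dim]$ on the open one; since by Lemma \ref{lem:pi2} and the decomposition theorem the result is a direct sum of shifted $\IC(\overline{X}_{\bs'})\boxtimes\IC(\overline{X}_{\bs''})$, a descending induction over the closure order then reads off $c_{\br,\br}=1$ and $c_{\br,\bs}\in q^{-1}\Z_{\ge0}[q^{-1}]$. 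Replace your parity argument by this support-condition argument and the proof closes; Lemma \ref{lem:flag2} plays no role in this proposition.
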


\begin{proof}
We set
$$c_{\br,\bs} = \sum_k n_{\br,\bs}^k \cdot q^k$$
where $n_{\br,\bs}^k$ are the multiplicities appearing in the
decomposition
$$\Res_{d',d''} \IC(\overline{X}_\br)
  \cong \bigoplus_k \bigoplus_{X_\bs\in\CS_\bd^r}
  n_{\br,\bs}^k \cdot \IC(\overline{X}_{\bs'})\boxtimes\IC(\overline{X}_{\bs''})[-k].
$$
Then
$$\Upsilon_{d',d''}(b_\br) = \sum_{X_\bs\in\CS_\bd^r} c_{\br,\bs} \cdot b_{\bs'} \otimes b_{\bs''}.$$
The simple perverse sheaf $\IC(\overline{X}_\br)$ is by definition
the intermediate extension of the shifted constant sheaf
$\C_{X_\br}[\dim X_\br]$. It follows that
\begin{enumerate}
\renewcommand{\theenumi}{\roman{enumi}}
\setlength{\itemsep}{.3ex}
\item
$\IC(\overline{X}_\br)|_{X_\br} = \C_{X_\br}[\dim X_\br]$;
\item
$\IC(\overline{X}_\br)|_{X_\bs} = 0$ if $\overline{X}_\bs
\not\subset \overline{X}_\br$; and
\item
$\pH^k(\IC(\overline{X}_\br)|_{X_\bs}) = 0$ for $k\ge0$ if
$\overline{X}_\bs \subsetneqq \overline{X}_\br$.
\end{enumerate}
Further, by proper base change,
\begin{enumerate}
\renewcommand{\theenumi}{\roman{enumi}}
\setlength{\itemsep}{.3ex}
\item
$\Res_{d',d''} \IC(\overline{X}_\br)|_{X_{\br'}\times X_{\br''}} =
\C_{X_{\br'}\times X_{\br''}}[\dim X_{\br'}\times X_{\br''}]$;
\item
$\Res_{d',d''} \IC(\overline{X}_\br)|_{X_{\bs'}\times X_{\bs''}}=0$
if $\overline{X}_\bs \not\subset \overline{X}_\br$; and
\item
$\pH^k(\Res_{d',d''} \IC(\overline{X}_\br)|_{X_{\bs'}\times
X_{\bs''}})=0$ for $k\ge0$ if $\overline{X}_\bs \subsetneqq
\overline{X}_\br$.
\end{enumerate}
This gives $c_{\br,\br}=1$; $c_{\br,\bs}=0$ if $\overline{X}_\bs
\not\subset \overline{X}_\br$; and $c_{\br,\bs} \in
q^{-1}\Z_{\ge0}[q^{-1}]$ if $\overline{X}_\bs \subsetneqq
\overline{X}_\br$.

The claim of isomorphism follows from that the $\A$-linear map
$\Upsilon_{d',d''}$ can be represented by a triangular $\A$-matrix
with unit diagonal.
\end{proof}

\begin{prop}
The $\A$-linear map $\Upsilon_{d',d''}: Q_\bd \to Q_{\bd'}\otimes
Q_{\bd''}$ preserves inner product.
\end{prop}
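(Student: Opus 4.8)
The plan is to reduce the assertion to an identity between Ext groups, using the adjunction formalism for the functors $\iota$, $\pi$ appearing in the definition of $\Res_{d',d''}$. Since the inner product on $Q_\bd$ and on $Q_{\bd'}\otimes Q_{\bd''} = Q_{\bd',\bd''}$ is defined by $([C],[C']) = \sum_k \dim\Ext^k_\D(C,DC')\cdot q^{-k}$, it suffices to show that for $C,C'\in\CQ_\bd$ there is an isomorphism of graded vector spaces
\begin{equation*}
  \Ext^\bullet_\D(\Res_{d',d''}C, D\Res_{d',d''}C') \cong \Ext^\bullet_\D(C,DC').
\end{equation*}
First I would observe, via \ref{sec:pre:perverse}.(5) and the fact that $\pi$ is a $\C^{(d'-r')r''}$-bundle (so $\pi^*[n] = \pi^![-n]$ with $n = (d'-r')r''$, by \ref{sec:pre:perverse}.(8)), that the shift $[(d'-r')r'']$ in the definition of $\Res$ is exactly the bookkeeping needed to make $\pi_!\iota^*[(d'-r')r'']$ and $\iota_!\pi^*[-(d'-r')r''] = \iota_*\pi^![(d'-r')r'']$ into a pair of (left and right) adjoints, at least after restricting to the relevant strata. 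Using the adjunctions and property \ref{sec:pre:perverse}.(6) ($Df^* = f^!D$, $Df_! = f_*D$), the left-hand Ext group unwinds to $\Ext^\bullet_\D(\iota_!\pi^*\pi_!\iota^*[\cdots]C, D C')$, and the task becomes computing $\pi_!\pi^*$ — which, since $\pi$ is an affine-space bundle, is just a shift by $[-2(d'-r')r'']$, canceling against the shifts — leaving $\iota_!\iota^*$.

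The substance of the argument is therefore to show $\Ext^\bullet_\D(\iota_!\iota^*C, DC') \cong \Ext^\bullet_\D(C,DC')$, i.e.\ that passing from a $P_\bd$-equivariant semisimple complex on $X_d^r$ to its restriction to the locally closed subvariety $Y^{r',r''}$ loses no Ext-information. This is precisely the kind of statement packaged in Corollary \ref{cor:flag3}: in the case $\bd = (1,1,\dots,1)$ one has $Y^{r',r''}$ a union of Schubert cells, and Corollary \ref{cor:flag3} (applied with $S$ the union of those cells lying in $Y^{r',r''}$ together with the complementary strata) says the Ext groups split stratum-by-stratum, with the contribution of strata outside $Y^{r',r''}$ accounted for separately. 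One must match the stratification of $X_d^r$ by $Y^{r',r''}$-membership against the chess-board parity argument of Lemma \ref{lem:flag2}. I would reduce to $\bd = (1,\dots,1)$ by the standard device: a $P_\bd$-equivariant semisimple complex pulls back from the full flag variety under the smooth projection (using \ref{sec:pre:perverse}.(7)(8)), and all the functors in sight commute with that pullback up to shift.

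The main obstacle I anticipate is the precise compatibility of $\iota_!\iota^*$ with Verdier duality inside the Ext pairing: $\iota$ is an open immersion onto $Y^{r',r''}$ inside its closure but $Y^{r',r''}$ is only locally closed in $X_d^r$, so $D\iota^* = \iota^!D$ rather than $\iota^*D$, and $\iota_!$ is not $\iota_*$. One cannot naively move $D$ past $\iota$. The resolution is to not insist on an honest isomorphism $\iota_!\iota^*C \cong$ (something self-dual), but rather to use the bilinearity: compute $\sum_k\dim\Ext^k_\D(\Res C,D\Res C')\,q^{-k}$ by the same spectral-sequence-degeneration mechanism as in Lemma \ref{lem:flag3}, stratifying the product $X_{d'}^{r'}\times X_{d''}^{r''}$ by $\pi$-image of the strata of $X_d^r$, invoking Lemma \ref{lem:flag2} on each factor to get the chess-board vanishing, and reading off that the answer agrees term-by-term with the analogous computation for $\Ext^\bullet_\D(C,DC')$. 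In effect, one shows both sides are computed by spectral sequences with identified $E_1$-pages — a bijection of Schubert strata $X_\bs \leftrightarrow X_{\bs'}\times X_{\bs''}$ together with the parity-induced degeneration — so equality of the resulting polynomials is automatic. Checking that the parity bookkeeping and the shift $[(d'-r')r'']$ line up exactly is the one genuinely delicate point, but it is forced: the map $\Upsilon_{d',d''}$ is already known to be an $\A$-linear isomorphism by Proposition \ref{prop:basis}, and one only needs that it intertwines the two $\A$-valued forms, which is a finite check on how dimensions of stalk cohomology transform under $\iota^*$ and the affine-bundle pullback $\pi^*$.
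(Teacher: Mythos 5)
Your first step --- reducing to the statement that $\Ext^\bullet_\D(\Res_{d',d''}C,D\Res_{d',d''}C')\cong\Ext^\bullet_\D(C,DC')$ and splitting the left-hand pairing over the strata via Corollary \ref{cor:flag3} and the parity Lemma \ref{lem:flag2} --- is exactly how the paper begins (it applies Corollary \ref{cor:flag3} to the decomposition $X_d^r=\sqcup_{r'+r''=r}Y^{r',r''}$; note that no reduction to $\bd=(1,\dots,1)$ is needed, since $B\subset P_\bd$ makes every $P_\bd$-equivariant complex $B$-equivariant, and your proposed ``pull back from the full flag variety'' does not really apply here: the complexes already live on the Grassmannian, it is only the group of equivariance that changes). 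The genuine gap is in the second step. After unwinding the adjunctions, the composite that appears is $\pi^*\pi_!$ applied to $\iota^*C$, not $\pi_!\pi^*$ applied to a complex on the base. For the affine bundle $\pi$ one has $\pi_!\pi^*\cong[-2(d'-r')r'']$, but $\pi^*\pi_!$ is \emph{not} a shift on an arbitrary complex on $Y^{r',r''}$; it is a shift on $\iota^*C$ precisely because $\iota^*C$ is a $P_\bd$-equivariant \emph{semisimple} complex on $Y^{r',r''}$ and hence, by the descent equivalence of Lemma \ref{lem:pi1}, is (up to shift) pulled back along $\pi$. That semisimplicity is Lemma \ref{lem:pi2}, which is nontrivial: it is proved with the tailor-made resolution of Lemma \ref{lem:desingular} and the decomposition theorem. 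Your proposal treats this as bookkeeping that ``cancels against the shifts,'' which silently assumes the key input; this is where the paper invokes Lemmas \ref{lem:pi1} and \ref{lem:pi2} to transport the Ext pairing from $Y^{r',r''}$ to $X_{d'}^{r'}\times X_{d''}^{r''}$.

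Your second, spectral-sequence route (stratify both sides, use the chess-board degeneration, and match $E_1$-pages cell by cell via base change along $\pi$ restricted to each cell, which is an affine bundle of rank $(d'-r')r''$) could in principle bypass Lemma \ref{lem:pi2}, and the numerology does work out: the shift $[(d'-r')r'']$ in $\Res$ together with the $[-2(d'-r')r'']$ coming from $\pi_!$ on the fibers exactly compensates the drop of $(d'-r')r''$ in cell dimension. But this matching is precisely the substance of the proof, and you do not carry it out; instead you declare it ``forced'' because $\Upsilon_{d',d''}$ is already an isomorphism by Proposition \ref{prop:basis}. That inference is invalid --- an $\A$-linear isomorphism need not intertwine the two bilinear forms --- so as written the argument stops exactly at the point where either the cell-by-cell stalk computation or the appeal to Lemmas \ref{lem:pi1}--\ref{lem:pi2} must be supplied.
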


\begin{proof}
For $C,C'\in\CQ_\bd^r$, keeping the notation \eqref{eqn:res} we have
\begin{align*}
  & \Ext^\bullet_{\D(X_d^r)}(C,DC') \\
  \cong \;& \oplus_{r'+r''=r} \Ext^\bullet_{\D(Y^{r',r''})}(\iota^*C,D\iota^*C') \\
  \cong \;& \oplus_{r'+r''=r} \Ext^\bullet_{\D(Y^{r',r''})}(\pi^*\pi_![2(d'-r')r'']\iota^*C,D\iota^*C') \\
  = \;& \oplus_{r'+r''=r} \Ext^\bullet_{\D(X_{d'}^{r'}\times X_{d''}^{r''})}
  (\pi_!\iota^*[(d'-r')r'']C,D\pi_!\iota^*[(d'-r')r'']C') \\
  = \;& \oplus_{r'+r''=r} \Ext^\bullet_{\D(X_{d'}^{r'}\times X_{d''}^{r''})}
  (\Res_{d',d''}^{r',r''}C,D\Res_{d',d''}^{r',r''}C').
\end{align*}
The first isomorphism is by applying Corollary \ref{cor:flag3} to
the decomposition
$$X_d^r=\sqcup_{r'+r''=r}Y^{r',r''};$$
the second is by Lemma \ref{lem:pi1} and Lemma \ref{lem:pi2}. This
gives
$$\big([C],[C']\big)=\big([\Res_{d',d''}C], [\Res_{d',d''}C']\big).$$
Thus the proposition follows.
\end{proof}

It remains to check the compatibility of $\Upsilon_{d',d''}$ with
the comultiplication \eqref{eqn:comul}.

\begin{prop}
For $C\in\CQ_\bd$ we have isomorphisms
\begin{align*}
  & \Res_{d',d''}\K C \cong \K'\K''\Res_{d',d''}C, \\
  & \Res_{d',d''}\E C \cong (\E'\oplus\K'\E'')\Res_{d',d''}C, \\
  & \Res_{d',d''}\F C \cong (\F'\K''^{-1}\oplus\F'')\Res_{d',d''}C.
\end{align*}
Therefore, the $\A$-linear map $\Upsilon_{d',d''}: Q_\bd \to
Q_{\bd'}\otimes Q_{\bd''}$ is a homomorphism of $U_\A$-modules.
\end{prop}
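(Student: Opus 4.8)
The plan is to verify the three functor isomorphisms one at a time, using proper base change to factor each composition $\Res_{d',d''}\circ(\text{functor})$ through an appropriate correspondence variety, and then matching shifts. For $\K$ the claim is essentially bookkeeping: since $\K_r=[2r-d]$ on $X_d^r$ and $\K'_{r'}\K''_{r''}=[2r'-d']\circ[2r''-d'']=[2r-d]$ on $X_{d'}^{r'}\times X_{d''}^{r''}$, and $\Res_{d',d''}^{r',r''}$ is a fixed composite of pull/push with a shift, the two sides differ only by shifts that cancel; I would write out $\Res_{d',d''}\K C=\pi_!\iota^*[(d'-r')r''][2r-d]C=[2r-d]\pi_!\iota^*[(d'-r')r'']C=\K'\K''\Res_{d',d''}C$ directly.

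For $\E$ (and symmetrically $\F$) the real content is a base-change computation. I would fix $C\in\CQ_\bd^{r'+r''+1}$ and consider the fibre product of the diagram \eqref{eqn:ef} defining $\E$ (at level $r'+r''$) with the diagram \eqref{eqn:res} defining $\Res_{d',d''}$. Concretely, over $Y^{r',r''}\subset X_d^{r'+r''}$ one has flags $V\subset V'\subset W$ with $\dim(V\cap W')=r'$ and $\dim V'=r'+r''+1$; the extra dimension of $V'$ over $V$ either lands in $W'$ or not, which stratifies the correspondence into two locally closed pieces — one giving $\E'\Res_{d',d''}C$ and the other giving $\K'\E''\Res_{d',d''}C$. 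The key point is that this stratification of the correspondence variety into two smooth pieces is \emph{clean}: the relevant pushforward splits as a direct sum of the contributions from the two strata. This splitting is exactly the geometric incarnation of the two terms $E\otimes 1+K\otimes E$ in the comultiplication \eqref{eqn:comul}, and the shift $[2r'-d']$ producing the factor $\K'$ arises because on the ``not in $W'$'' stratum the fibre dimensions of the $X$-bundles are shifted by the codimension of $W'$-type conditions. I would assemble the analogous cartesian diagrams for $\F$, where the correspondence is over $Y^{r',r''}$ with $V\supset V'$ and the extra dimension removed from $V$ lies in $W'$ or not, yielding $\F'\K''^{-1}\oplus\F''$; the twist $\K''^{-1}$ appears because removing a dimension outside $W'$ changes the second Grassmannian's dimension parameter.

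The hard part will be bookkeeping the shifts so that the factors $\K'$ and $\K''^{-1}$ come out with exactly the right powers, and making precise the ``clean stratification'' claim — i.e.\ that the pushforward along the correspondence splits strictly as a sum over the two strata rather than only up to a spectral sequence. For the latter I expect to invoke that $\pi$ and $\iota$ restricted to each stratum are again (shifted) affine-bundle projections and closed inclusions, so that the long exact sequence attached to the open–closed decomposition of the correspondence degenerates; one could alternatively phrase this via proper base change applied separately to each stratum and then argue the connecting maps vanish on semisimple complexes by \ref{sec:pre:perverse}.(3). Once the three isomorphisms are established, the final sentence follows formally: passing to Grothendieck groups, the isomorphisms translate into $\Upsilon_{d',d''}(Ku)=(K\otimes K)\Upsilon_{d',d''}(u)$, $\Upsilon_{d',d''}(Eu)=(E\otimes 1+K\otimes E)\Upsilon_{d',d''}(u)$, and $\Upsilon_{d',d''}(Fu)=(F\otimes K^{-1}+1\otimes F)\Upsilon_{d',d''}(u)$, which are precisely the conditions for $\Upsilon_{d',d''}$ to be a $U_\A$-module homomorphism under the comultiplication \eqref{eqn:comul}; combined with Proposition \ref{prop:basis} it is an isomorphism of $U_\A$-modules.
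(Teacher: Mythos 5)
Your geometric skeleton (factor $\Res_{d',d''}\circ\F$ through the correspondence $Z=\{(V_1,V_2)\mid \dim(V_2\cap W')=r'\}$ and stratify it into the two pieces $\dim(V_1\cap W')=r'-1$ or $r'$, matching the two terms of the comultiplication) is the same as in the paper, and your treatment of $\K$ is fine. But the step you yourself flag as the hard part --- that the pushforward ``splits strictly as a sum over the two strata'' --- is exactly where the proposal has a genuine gap, and the justification you sketch does not work. The connecting morphism in the open--closed triangle attached to $Z=Z_1\sqcup Z_2$ is a degree-one map between complexes that are in general neither perverse nor semisimple (the restriction of a semisimple complex to a closed stratum is typically not semisimple), so \ref{sec:pre:perverse}.(3), which only kills \emph{negative} Ext groups between perverse sheaves, says nothing about it; positive Ext groups between the relevant shifted simples are generally nonzero, and the triangle has no reason to split. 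Nor can you pass naively to Grothendieck groups: $Q_{\bd',\bd''}$ is the split Grothendieck group of an additive (not triangulated) category, so a non-split triangle does not by itself yield the needed identity of classes.

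The paper never splits this triangle. Instead it proves the isomorphism indirectly: by Corollary \ref{cor:iso} (non-degeneracy of the inner product on semisimple complexes) it suffices to check
$\bigl([\Res_{d',d''}\F C],[C']\bigr)=\bigl([(\F'\K''^{-1}\oplus\F'')\Res_{d',d''}C],[C']\bigr)$
for all $C'\in\CQ_{\bd',\bd''}$; the Ext group computing the left side is then split over the strata $Z_1\sqcup Z_2$ by Corollary \ref{cor:flag3}, whose proof rests on the Kazhdan--Lusztig parity vanishing (Lemma \ref{lem:flag2}) forcing the relevant spectral sequence to degenerate ``like a chess-board''; finally each stratum's contribution is identified with $\F'\K''^{-1}\Res_{d',d''}$ resp.\ $\F''\Res_{d',d''}$ by the base-change diagrams you anticipate. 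So the missing idea in your proposal is precisely this parity input: either import Lemma \ref{lem:flag2}/Corollary \ref{cor:flag3} and run the Ext-pairing argument (the paper's route), or find an independent argument that the connecting maps vanish --- weight or parity purity, not negative-Ext vanishing --- before claiming the clean splitting.
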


\begin{proof}
We prove the third isomorphism, which by Corollary \ref{cor:iso} is
equivalent to that the equality
\begin{equation}\label{eqn:iso0}
  \big([\Res_{d',d''}\F C],[C']\big)
  = \big([(\F'\K''^{-1}\oplus\F'')\Res_{d',d''}C], [C']\big)
\end{equation}
holds for all $C'\in\CQ_{\bd',\bd''}$

Suppose $C\in\CQ_\bd^r$ and $C'\in\CQ_{\bd',\bd''}^{r',r''}$. We
assume $r'+r''=r+1$ as well; otherwise both sides of the above
equality vanish. Our first commutative diagram is
$$\xymatrix{
  & & \ar[ld]_j Z \ar[rd]^\rho \\
  X^r_d & \ar[l]_{p} X^{r,r+1}_d \ar[r]^{p'} & X^{r+1}_d
  & \ar[l]_{\iota} Y^{r',r''} \ar[r]^--{\pi} & X_{d'}^{r'}\times X_{d''}^{r''}
}
$$
where
$$Z = \{ (V_1,V_2) \in X_d^{r,r+1} \mid \dim(V_2\cap W')=r' \},$$
$j$ is the inclusion, $\rho(V_1,V_2)=V_2$, and $p,p'$ are given as
\eqref{eqn:ef}, $\iota,\pi$ are given as \eqref{eqn:res}. The middle
part of the diagram is a cartesian square, thus by proper base
change we have
$$\Res_{d',d''}\F C
  = \pi_! \iota^* [(d'-r')r''] p'_! p^* [d-r-1] C
  = \pi_! \rho_! j^* p^* C[k]
$$
where we set $k=(d'-r')r''+(d-r-1)$.

Note that $\rho: Z\to Y^{r',r''}$ is a $\BP^r$-bundle,
$\rho^*\pi^*C'$ is therefore a semisimple complex. Since $Z$ is a
locally closed smooth subvariety of $X_d^{r,r+1}$, $\rho^*\pi^*C'$
is the restriction of a $P_\bd$-equivariant semisimple complex on
$X_d^{r,r+1}$. Applying Corollary \ref{cor:flag3} to the
decomposition $Z=Z_1\sqcup Z_2$ where
\begin{align*}
  & Z_1 = \{ (V_1,V_2) \in Z \mid \dim(V_1\cap W')=r'-1 \}, \\
  & Z_2 = \{ (V_1,V_2) \in Z \mid \dim(V_1\cap W')=r', \},
\end{align*}
we deduce that
\begin{equation}\label{eqn:iso1}
\begin{split}
  & \Ext^\bullet_{\D(X_{d'}^{r'}\times X_{d''}^{r''})} (\Res_{d',d''}\F C, DC') \\
  = & \Ext^\bullet_{\D(Z)} (j^*p^*C[k], D\rho^*\pi^*C') \\
  \cong & \Ext^\bullet_{\D(Z_1)} (j_1^*p^*C[k], D\rho_1^*\pi^*C')
  \oplus \Ext^\bullet_{\D(Z_2)} (j_2^*p^*C[k], D\rho_2^*\pi^*C') \\
  = & \Ext^\bullet_{\D(X_{d'}^{r'}\times X_{d''}^{r''})}
  (\pi_!\rho_{1!}j_1^*p^*C[k] \oplus \pi_!\rho_{2!}j_2^*p^*C[k], DC').
\end{split}
\end{equation}
In the above equation, we set $j_i=j|_{Z_i}$ and
$\rho_i=\rho|_{Z_i}$, $i=1,2$.

Then we consider the following commutative diagrams of which the top
left corners are cartesian squares.
$$\xymatrix{
  X_{d'}^{r'-1}\times X_{d''}^{r''}
  & \ar[l]_{p_1} X_{d'}^{r'-1,r'}\times X_{d''}^{r''} \ar[r]^----{p'_1}
  & X_{d'}^{r'}\times X_{d''}^{r''} \\
  Y^{r'-1,r''} \ar[u]^{\pi_1} \ar[d]_{\iota_1} & \ar[l]^{} Z'_1 \ar[u]^{} & Y^{r',r''} \ar[u]_{\pi} \\
  X^r_d & \ar[l]_{p} X^{r,r+1}_d & \ar[l]_{j_1} Z_1 \ar[u]_{\rho_1} \ar[lu]_{\tilde\rho_1}
}
$$
$$\xymatrix{
  X_{d'}^{r'}\times X_{d''}^{r''-1}
  & \ar[l]_{p_2} X_{d'}^{r'}\times X_{d''}^{r''-1,r''} \ar[r]^----{p'_2}
  & X_{d'}^{r'}\times X_{d''}^{r''} \\
  Y^{r',r''-1} \ar[u]^{\pi_2} \ar[d]_{\iota_2} & \ar[l]^{} Z'_2 \ar[u]^{} & Y^{r',r''} \ar[u]_{\pi} \\
  X^r_d & \ar[l]_{p} X^{r,r+1}_d & \ar[l]_{j_2} Z_2 \ar[u]_{\rho_2} \ar[lu]_{\tilde\rho_2}
}
$$
where
\begin{align*}
  & Z'_1 = \{ V_1\in Y^{r'-1,r''}, V_2\in X_{d'}^{r'} \mid V_1\cap W' \subset V_2 \}, \\
  & Z'_2 = \{ V_1\in Y^{r',r''-1}, V_2\in X_{d''}^{r''} \mid V_1/(V_1\cap W') \subset V_2 \},
\end{align*}
$\tilde\rho_1(V_1,V_2)=(V_1,V_2\cap W')$,
$\tilde\rho_2(V_1,V_2)=(V_1,V_2/(V_2\cap W'))$ and $p_i,p'_i$ are
given as \eqref{eqn:ef}, $\iota_i,\pi_i$ are given as
\eqref{eqn:res}, $i=1,2$. We have
\begin{equation}\label{eqn:iso2}
\begin{split}
  & \pi_! (\rho_1)_! (j_1)^* p^* [k]
  = (p'_1)_! (p_1)^* (\pi_1)_! (\iota_1)^* [k]
  = \F'_{r'-1}\K''^{-1}_{r''} \Res_{d',d''}^{r'-1,r''}, \\
  & \pi_! (\rho_2)_! (j_2)^* p^* [k]
  = (p'_2)_! (p_2)^* (\pi_2)_! (\iota_2)^* [k-2(d'-r')]
  = \F''_{r''-1} \Res_{d',d''}^{r',r''-1}.
\end{split}
\end{equation}
Here we used the isomorphisms
$$(\tilde\rho_1)_!(\tilde\rho_1)^* = \Id, \quad (\tilde\rho_2)_!(\tilde\rho_2)^* = [-2(d'-r')]$$
which follow from that $\tilde\rho_1$ is an isomorphism and
$\tilde\rho_2$ is a $\C^{d'-r'}$-bundle.

Assembling isomorphisms \eqref{eqn:iso1} and \eqref{eqn:iso2}, we obtain
\eqref{eqn:iso0}, hence prove our proposition.
\end{proof}

Above propositions are summarized to

\begin{thm}
The $\A$-linear map $\Upsilon_{d',d''}: Q_\bd \to Q_{\bd'}\otimes
Q_{\bd''}$ induced by the functor $\Res_{d',d''}: \CQ_\bd \to
\CQ_{\bd',\bd''}$ is an inner product preserving isomorphism of
$U_\A$-modules.
\end{thm}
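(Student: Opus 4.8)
The plan is to assemble the three preceding propositions of Section \ref{sec:mod:split}. The statement to be proved is a pure summary: once we know that $\Upsilon_{d',d''}$ is induced by $\Res_{d',d''}: \CQ_\bd \to \CQ_{\bd',\bd''}$, and that it is (a) an $\A$-linear isomorphism, (b) inner product preserving, and (c) a homomorphism of $U_\A$-modules, there is nothing further to do. So the proof proper is essentially a one-line appeal: combine Proposition \ref{prop:basis} (isomorphism), the proposition asserting $\big([C],[C']\big)=\big([\Res_{d',d''}C],[\Res_{d',d''}C']\big)$ (inner product preserving), and the proposition giving $\Res_{d',d''}\K C \cong \K'\K''\Res_{d',d''}C$ etc.\ together with Corollary \ref{cor:iso} (module homomorphism). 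In particular there is no genuine obstacle at the level of this theorem; the real content has already been discharged in the lemmas and propositions above it.

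If I were writing it out, I would first recall that $\Res_{d',d''}$ does land in $\CQ_{\bd',\bd''}$ by the proposition preceding Lemma \ref{lem:pi1} (itself immediate from Lemmas \ref{lem:pi1} and \ref{lem:pi2}), so that the induced $\A$-linear map $\Upsilon_{d',d''}: Q_\bd \to Q_{\bd'}\otimes Q_{\bd''}$ is well defined on Grothendieck groups, using the identification $Q_{\bd',\bd''}\cong Q_{\bd'}\otimes Q_{\bd''}$ coming from the equivalence $\CQ_{\bd'}\times\CQ_{\bd''}\to\CQ_{\bd',\bd''}$, $(C',C'')\mapsto C'\boxtimes C''$. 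Then I would invoke Proposition \ref{prop:basis}: on the canonical basis $b_\br$ the map $\Upsilon_{d',d''}$ is unitriangular with respect to the closure partial order on orbits, hence an isomorphism of free $\A$-modules. Next I would cite the inner-product-preservation proposition, whose proof rests on Corollary \ref{cor:flag3} applied to the stratification $X_d^r=\sqcup_{r'+r''=r}Y^{r',r''}$ together with the equivalence of Lemma \ref{lem:pi1}. Finally, the $U_\A$-linearity follows from the three functor isomorphisms $\Res_{d',d''}\K\cong\K'\K''\Res_{d',d''}$, $\Res_{d',d''}\E\cong(\E'\oplus\K'\E'')\Res_{d',d''}$, $\Res_{d',d''}\F\cong(\F'\K''^{-1}\oplus\F'')\Res_{d',d''}$, which on Grothendieck groups exactly reproduce the coproduct \eqref{eqn:comul} of $U_\A$ acting on $\Lambda_{\bd'}\otimes\Lambda_{\bd''}$; passing to classes via Corollary \ref{cor:iso} turns each functor isomorphism into the required identity of $\A$-linear operators.

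So the ``main obstacle'', such as it is, lies not in this theorem but in the harder of the three ingredients, namely the compatibility with comultiplication. There the subtlety is bookkeeping: one must set up the chain of cartesian squares relating the correspondence $Z=\{(V_1,V_2)\in X_d^{r,r+1}\mid \dim(V_2\cap W')=r'\}$ to the two correspondences computing $\F'\K''^{-1}$ and $\F''$, stratify $Z=Z_1\sqcup Z_2$ according to whether $\dim(V_1\cap W')$ equals $r'-1$ or $r'$, and verify via proper base change (items \ref{sec:pre:perverse}.(5)–(11)) and Corollary \ref{cor:flag3} that the Ext-pairing splits accordingly; the shift $[-2(d'-r')]$ coming from the $\C^{d'-r'}$-bundle $\tilde\rho_2$ is exactly the power of $\K''^{-1}$ predicted by the coproduct. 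But all of this is already carried out in the proposition above, so for the theorem itself I would simply write: ``This is the combination of the four preceding propositions of this subsection.''

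\begin{proof}
This is the combination of the four preceding propositions of this subsection: Proposition \ref{prop:basis} shows $\Upsilon_{d',d''}$ is an $\A$-linear isomorphism, the next proposition shows it preserves inner product, and the last proposition shows it is a homomorphism of $U_\A$-modules.
\end{proof}
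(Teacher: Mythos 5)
Your proposal is correct and matches the paper exactly: the theorem is stated there as a summary (``Above propositions are summarized to\dots'') of the three propositions of Section \ref{sec:mod:split} (unitriangularity/isomorphism, inner product preservation, and compatibility with the comultiplication), with well-definedness supplied by the proposition in Section \ref{sec:mod:res}. The only nitpick is the phrase ``four preceding propositions of this subsection'' — the subsection contains three, the fourth (that $\Res_{d',d''}$ lands in $\CQ_{\bd',\bd''}$) living in the previous one — but this is a labelling slip, not a mathematical gap.
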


\subsection{The isomorphism $Q_\bd \cong \Lambda_\bd$}\label{sec:mod:ql}

We have constructed in a purely geometric way various
finite-dimensional $U_\A$-modules and established isomorphisms among
them. Now we relate them to the $U_\A$-modules $\Lambda_\bd$
introduced in Section \ref{sec:pre:u}.

Recall that the $U_\A$-module $Q_{(d)}$ has a basis
$$b_{(r)}=[\IC(X_d^r)], \quad 0\le r\le d$$
and the $U_\A$-module $\Lambda_d$ has a basis
$$v_r=\bar{F}^{(r)}, \quad 0\le r\le d.$$
By Example \ref{exam:d1} and Example \ref{exam:d2}, the $\A$-linear
map
\begin{equation}
  \varphi_d: Q_{(d)}\to \Lambda_d, \quad b_{(r)} \mapsto v_r
\end{equation}
is an inner product preserving isomorphism of $U_\A$-modules.

For general cases, we apply the Res functor repeatedly to form an
inner product preserving isomorphism of $U_\A$-modules
\begin{equation}
  \Upsilon_{d_1,d_2,\dots,d_l}: Q_\bd \to
  Q_{(d_1)}\otimes Q_{(d_2)}\otimes\cdots\otimes Q_{(d_l)}.
\end{equation}
Followed by $\varphi_{d_1} \otimes \varphi_{d_2}
\otimes\cdots\otimes \varphi_{d_l}$, it gives rise to an inner
product preserving isomorphism of $U_\A$-modules
\begin{equation}
  \varphi_\bd =
  (\varphi_{d_1} \otimes \varphi_{d_2} \otimes\cdots\otimes \varphi_{d_l})
  \circ \Upsilon_{d_1,d_2,\dots,d_l}: Q_\bd\to \Lambda_\bd.
\end{equation}

In the rest of this subsection, we give a straightforward
description of this isomorphism.

For each $P_\bd$-orbit $X_\br \in \CS_\bd^r$ and for each complex
$C\in\CQ_\bd^r$, there are a set of integers $n_\br^k(C)$ appearing
as multiplicities in the isomorphism
\begin{equation*}
  \pH^k(C|_{X_\br}) \cong n_\br^k(C) \cdot \C_{X_\br}[\dim X_\br],
\end{equation*}
they forming a polynomial
\begin{equation*}
  n_\br(C) = \sum_k n_\br^k(C) \cdot q^k \in \Z_{\ge0}[q,q^{-1}].
\end{equation*}
In the above notations, the isomorphism $\varphi_\bd: Q_\bd\to
\Lambda_\bd$ is such that
\begin{equation}
  \varphi_\bd([C]) = \sum_{X_\br\in\CS_\bd^r} n_\br(C) \cdot v_\br
\end{equation}
for $C\in\CQ_\bd^r$, where $v_\br$ are the elements of $\Lambda_\bd$
defined in \eqref{eqn:vr}.

\begin{exam}
For $\bd=(2,2)$, the isomorphism $\varphi_\bd$ at level $r=2$ is as
follows.
\begin{align*}
  & b_{(2,0)} \mapsto v_2 \otimes v_0, \\
  & b_{(1,1)} \mapsto v_1 \otimes v_1 + (q^{-1}+q^{-3}) v_2 \otimes v_0, \\
  & b_{(0,2)} \mapsto v_0 \otimes v_2 + q^{-1} v_1 \otimes v_1 + q^{-4} v_2 \otimes v_0.
\end{align*}
\end{exam}

The following proposition is a specialization of Proposition
\ref{prop:basis}.

\begin{prop}
For $b_\br=[\IC(\overline{X}_\br)]$, $X_\br\in\CS_\bd^r$ we have
\begin{equation}
  \varphi_\bd(b_\br)
  = v_\br + \sum_{X_{\bs}\in\CS_\bd^r: \; \overline{X}_{\bs} \subsetneqq \overline{X}_\br}
  c_{\br,\bs} \cdot v_{\bs}
\end{equation}
where $c_{\br,\bs} \in q^{-1}\Z_{\ge0}[q^{-1}]$.
\end{prop}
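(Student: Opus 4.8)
The plan is to deduce this statement directly from Proposition \ref{prop:basis} by tracking how the basis vectors transform under the composite isomorphism $\varphi_\bd = (\varphi_{d_1}\otimes\cdots\otimes\varphi_{d_l})\circ\Upsilon_{d_1,\dots,d_l}$. The key point is that the triangularity established in Proposition \ref{prop:basis} for a single splitting $\bd=\bd'\sqcup\bd''$ is preserved under iteration, and that the maps $\varphi_{d_i}\colon Q_{(d_i)}\to\Lambda_{d_i}$ are honest isomorphisms sending the canonical basis element $b_{(r)}$ to $v_r$ (hence are ``triangular with unit diagonal'' in the trivial sense).

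First I would set up the order on the index set: for compositions $\br,\bs\in\CS_\bd^r$ declare $\bs\preceq\br$ if $\overline{X}_\bs\subseteq\overline{X}_\br$, and record that this is a partial order refining the closure order on $P_\bd$-orbits. Proposition \ref{prop:basis} applied to the splitting $\bd=(d_1)\sqcup(d_2,\dots,d_l)$ gives
\begin{equation*}
  \Upsilon_{d_1,(d_2,\dots,d_l)}(b_\br)
  = b_{r_1}\otimes b_{\br^{(1)}} + \sum_{\bs\subsetneqq\br} c_{\br,\bs}\cdot b_{s_1}\otimes b_{\bs^{(1)}},
  \qquad c_{\br,\bs}\in q^{-1}\Z_{\ge0}[q^{-1}],
\end{equation*}
where $\br^{(1)}=(r_2,\dots,r_l)$ and similarly for $\bs$. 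Here one must check that the closure order on $P_\bd$-orbits in $X_d^r$ restricts compatibly to the closure orders on $P_{(d_1)}$-orbits and $P_{(d_2,\dots,d_l)}$-orbits under the Res construction — that is, $\overline{X}_\bs\subseteq\overline{X}_\br$ forces $\overline{X}_{\bs'}\subseteq\overline{X}_{\br'}$ and $\overline{X}_{\bs''}\subseteq\overline{X}_{\br''}$; this is implicit already in Proposition \ref{prop:basis} via its statements (ii) and (iii) on restrictions.

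Next I would iterate: apply the same proposition to the second tensor factor $Q_{(d_2,\dots,d_l)}$ with the splitting $(d_2)\sqcup(d_3,\dots,d_l)$, and so on down the chain, at each stage expanding $b_{\br^{(k)}}$ as $b_{r_{k+1}}\otimes b_{\br^{(k+1)}}$ plus a $q^{-1}\Z_{\ge0}[q^{-1}]$-combination of strictly smaller terms. Since $q^{-1}\Z_{\ge0}[q^{-1}]$ is closed under multiplication and addition, and since products of strictly-lower terms remain strictly lower in the product order on $\prod_i\CS_{(d_i)}$, the composite $\Upsilon_{d_1,\dots,d_l}(b_\br)$ equals $b_{r_1}\otimes\cdots\otimes b_{r_l}$ plus a $q^{-1}\Z_{\ge0}[q^{-1}]$-combination of terms $b_{s_1}\otimes\cdots\otimes b_{s_l}$ with $\bs\subsetneqq\br$. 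Finally, applying $\varphi_{d_1}\otimes\cdots\otimes\varphi_{d_l}$ sends $b_{r_1}\otimes\cdots\otimes b_{r_l}$ to $v_{r_1}\otimes\cdots\otimes v_{r_l}=v_\br$ and each lower term to the corresponding $v_\bs$, yielding the claimed formula with the same coefficients $c_{\br,\bs}$.

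The main obstacle I anticipate is the bookkeeping of the partial order under iteration — specifically, verifying that ``$\overline{X}_\bs\subsetneqq\overline{X}_\br$'' is the correct transitive notion that survives the nested splittings, and that no spurious diagonal contributions appear at intermediate stages (which would spoil the unit-diagonal property). This is really a statement about the combinatorics of the orbit closures, and once one observes that each $\Upsilon$ in the chain is represented by a triangular $\A$-matrix with unit diagonal with respect to the relevant orders (exactly as in the last line of the proof of Proposition \ref{prop:basis}), the composite is again such a matrix, and the result follows. Everything else is a direct unwinding of definitions; no new geometric input beyond Proposition \ref{prop:basis} and the explicit form of $\varphi_d$ from Examples \ref{exam:d1} and \ref{exam:d2} is needed.
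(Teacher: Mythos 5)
Your argument is correct, but it is not the proof the paper gives. The paper first records an explicit stalk description of $\varphi_\bd$, namely $\varphi_\bd([C])=\sum_{\bs} n_{\bs}(C)\cdot v_{\bs}$ with $n_{\bs}(C)$ the multiplicities in $\pH^k(C|_{X_{\bs}})$, and then proves the proposition in one step: setting $c_{\br,\bs}=n_{\bs}(\IC(\overline{X}_\br))$, the defining properties of the intermediate extension (restriction to $X_\br$ is $\C_{X_\br}[\dim X_\br]$, vanishing off $\overline{X}_\br$, and $\pH^k=0$ for $k\ge0$ on strictly smaller strata) give the unit diagonal, the support condition $\overline{X}_{\bs}\subsetneqq\overline{X}_\br$, and $c_{\br,\bs}\in q^{-1}\Z_{\ge0}[q^{-1}]$ --- an argument literally parallel to the proof of Proposition \ref{prop:basis}. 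You instead iterate Proposition \ref{prop:basis} along the chain of splittings defining $\Upsilon_{d_1,\dots,d_l}$ and compose triangular matrices with unit diagonal, then apply $\varphi_{d_1}\otimes\cdots\otimes\varphi_{d_l}$. This matches the paper's own remark that the proposition ``is a specialization of Proposition \ref{prop:basis}'', and it has the merit of using only what is actually proved (Proposition \ref{prop:basis} and $\varphi_d(b_{(r)})=v_r$), rather than the stalk formula for $\varphi_\bd$, which the paper states without proof. What it gives up is the explicit identification $c_{\br,\bs}=n_{\bs}(\IC(\overline{X}_\br))$ as local intersection cohomology (parabolic Kazhdan--Lusztig) data, which the subsequent remark in the paper relies on; in your version the coefficients appear only as sums of products of Res-multiplicities.

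One small correction to your bookkeeping. The compatibility you single out --- that $\overline{X}_{\bs}\subseteq\overline{X}_\br$ forces $\overline{X}_{\bs'}\subseteq\overline{X}_{\br'}$ and $\overline{X}_{\bs''}\subseteq\overline{X}_{\br''}$ --- is not the implication the iteration needs, and it is not even well posed when $\bs'$ and $\br'$ have different weights, since the orbits then lie in different Grassmannians. What you need is the lifting in the other direction: if at a later stage $\overline{X}_{\mathbf{t}''}\subsetneqq\overline{X}_{\bs''}$, then the reassembled index satisfies $\overline{X}_{(s_1,\mathbf{t}'')}\subsetneqq\overline{X}_{(s_1,\bs'')}\subseteq\overline{X}_\br$, and no off-diagonal contribution can land back on $\br$. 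Both follow at once from the description of the closure order by dominance of partial sums, $\overline{X}_\br=\{V\mid\dim(V\cap W_{d_1+\cdots+d_k})\ge r_1+\cdots+r_k \text{ for all } k\}$, applied in $X_d^r$ and in the split factors; with that sentence supplied, your proof is complete.
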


\begin{proof}
Set $c_{\br,\bs}=n_{\bs}(\IC(\overline{X}_\br))$, then
$\varphi_\bd(b_\br) = \sum_{X_{\bs}\in\CS_\bd^r} c_{\br,\bs} \cdot
v_{\bs}$. The simple perverse sheaf $\IC(\overline{X}_\br)$ is by
definition the intermediate extension of the shifted constant sheaf
$\C_{X_\br}[\dim X_\br]$. It follows that
\begin{enumerate}
\renewcommand{\theenumi}{\roman{enumi}}
\setlength{\itemsep}{.3ex}
\item
$\IC(\overline{X}_\br)|_{X_{\br}}=\C_{X_\br}[\dim X_\br]$;
\item
$\IC(\overline{X}_\br)|_{X_{\bs}}=0$ if $\overline{X}_{\bs}
\not\subset \overline{X}_\br$; and
\item
$\pH^k(\IC(\overline{X}_\br)|_{X_{\bs}})=0$ for $k\ge0$ if
$\overline{X}_{\bs} \subsetneqq \overline{X}_\br$.
\end{enumerate}
Therefore, $c_{\br,\br}=1$; $c_{\br,\bs}=0$ if $\overline{X}_{\bs}
\not\subset \overline{X}_\br$; and $c_{\br,\bs} \in
q^{-1}\Z_{\ge0}[q^{-1}]$ if $\overline{X}_{\bs} \subsetneqq
\overline{X}_\br$.
\end{proof}

\begin{rem}
It is not difficult by interpreting the coefficients $c_{\br,\bs}$
as parabolic Kazhdan-Lusztig polynomials \cite{KL79}\cite{Deo87} to
identify the canonical basis \eqref{eqn:cb} of $Q_\bd$ with the one
introduced by Lusztig \cite{Lu93}. Cf. \cite{FKK98}. Moreover, since
the anti-$\A$-linear isomorphism $\Psi$ from Theorem \ref{thm:psi}
is uniquely determined by property (1) of the theorem, it is
therefore the same as the one from \cite{Lu93} defined by means of
quasi-universal $R$-matrix.
\end{rem}

\section{Categorification of $R$-matrices}\label{sec:R}

One remarkable achievement (and impetus) on the topic of
categorification is the discovery of Khovanov homology of knots and
links \cite{Kh00}, which has become of particular interest after
Rasmussen's elementary proof \cite{Ras03} of Milnor's conjecture. In
fact, the only solutions to the conjecture known before are using
gauge theory or Floer homology.

Khovanov homology is able to be realized in many different ways and
has been generalized to the categorification of several other
quantum invariants of knots and links. See for instance
\cite{Str05}, \cite{CK07}; \cite{Kh03}, \cite{KR04}. However, the
quantum invariants under consideration are still very limited, and
the machinery used is apparently hard to be applied to general
cases.

To give a uniform treatment for the categorification of general
quantum invariants, one possible approach is to follow
Reshetikhin-Turaev's principle \cite{RT90} for building tangle
invariants from representations of quantum groups. This means to
categorify, besides representations of quantum groups, $R$-matrices
and ``cup/cap'' homomorphisms among them.

In this section, we deal with the issue of $R$-matrices on
$U_q(sl_2)$-modules. This part of work is new in many aspects.

\smallskip

Formally speaking, a system of $R$-matrices on the $U_\A$-modules
$\Lambda_\bd$ is a collection of $U_\A$-module isomorphisms
$R(\bd,\sigma): \Lambda_\bd \to \Lambda_{\sigma(\bd)}$, each for a
composition $\bd=(d_1,d_2,\dots,d_l)$ and a permutation $\sigma\in
S_l$, such that
\begin{equation}\label{eqn:R:braid}
  R(\sigma_2(\bd),\sigma_1) \circ R(\bd,\sigma_2) = R(\bd,\sigma_1\sigma_2),
\end{equation}
whenever $\ell(\sigma_1\sigma_2) = \ell(\sigma_1) + \ell(\sigma_2)$.

The standard algebraic approach to the realization of $R$-matrices
is by Drinfeld's universal $R$-matrix (cf. \cite[XVII.4.2]{Kas95}
and formula \eqref{eqn:R:univ} below), which assigns to each pair of
$U_\A$-modules $\Lambda_{d_1},\Lambda_{d_2}$ an isomorphism
\begin{equation}
  R_{d_1,d_2}: \Lambda_{d_1}\otimes\Lambda_{d_2} \to \Lambda_{d_2}\otimes\Lambda_{d_1},
\end{equation}
then composes them in the obvious way to give the others.

\smallskip

The major difficulty underlying categorification of $R$-matrices is
the failure of their positivity over canonical basis; that is, a
canonical basis element may be sent to a linear combination in which
both positive and negative coefficients occur (cf. Example
\ref{exam:catR1}). This forces us to settle the categorification
problem by using complexes of functors rather than merely functors.

\smallskip

Section \ref{sec:R:mix} constitutes the heart
part of this section, in which we introduce the notion of pure
resolution of mixed complexes and establish a uniqueness theorem for
mixed perverse sheaves. Then, we categorify the braiding relation
\eqref{eqn:R:braid} in Section \ref{sec:R:R} and establish
categorification theorem in the reminder subsections.

\subsection{Some homological algebra}\label{sec:R:homology}

Below are some elementary facts that will be used in this section.

\begin{lem}\label{lem:homology2}
Suppose we are given a morphism of complexes forming by objects and
morphisms from a triangulated category.
$$\xymatrixrowsep{1.5pc}\xymatrix{
  \cdots \ar[r] & C^1 \ar[r] \ar@{=}[d] & C^2 \ar[r]^a \ar[d]_b & C^3 \ar[r] \ar[d]^c & C^4 \ar[r] \ar@{=}[d] & \cdots \\
  \cdots \ar[r] & C^1 \ar[r] & \tilde{C}^2 \ar[r]^d & \tilde{C}^3 \ar[r] & C^4 \ar[r] & \cdots
}
$$
If there is a triangle morphism
$$\xymatrixrowsep{1.5pc}\xymatrix{
  B \ar[r] \ar@{=}[d] & C^2 \ar[r]^b \ar[d]_a & \tilde{C}^2 \ar[r]^0 \ar[d]^d & \\
  B \ar[r] & C^3 \ar[r]^c & \tilde{C}^3 \ar[r]^0 &
}
$$
then the above complex morphism is a homotopy equivalence.
\end{lem}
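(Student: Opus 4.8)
The plan is to exhibit an explicit homotopy inverse to the displayed complex morphism, using the triangle morphism hypothesis to produce the contracting homotopies. Write the top complex as $C^\bullet$ and the bottom one as $\tilde C^\bullet$, so that the morphism $f\colon C^\bullet\to\tilde C^\bullet$ is the identity in all degrees except degrees $2$ and $3$, where it is $b$ and $c$ respectively. I would first record what the triangle morphism says: there is a distinguished triangle $B\to C^2\xrightarrow{b}\tilde C^2\xrightarrow{0}B[1]$ and another $B\to C^3\xrightarrow{c}\tilde C^3\xrightarrow{0}B[1]$, compatible via $a$ and $d$. The vanishing of the connecting maps $\tilde C^2\to B[1]$ and $\tilde C^3\to B[1]$ means each triangle splits: choose splittings $s_2\colon\tilde C^2\to C^2$ with $b s_2=\Id$ and a projection $t_2\colon C^2\to B$ with $C^2\cong B\oplus\tilde C^2$, and similarly $s_3, t_3$ for degree $3$. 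These choices can be made compatibly with $a$ and $d$ (that is the content of having a \emph{morphism} of triangles, not just two separate triangles), so that $a$ corresponds under the splittings to $\mathrm{diag}(\Id_B, d)$ up to a possible off-diagonal term; I would need to check whether that off-diagonal term can be normalized away or must be carried along as part of the homotopy data.

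Next I would define the candidate inverse $g\colon\tilde C^\bullet\to C^\bullet$ by $g^i=\Id$ for $i\ne 2,3$ and $g^2=s_2$, $g^3=s_3$. The composite $g f$ equals $\Id$ outside degrees $2,3$ and equals $s_2 b$, $s_3 c$ in those degrees; since $s_i b_i$ (resp.\ $s_i c_i$) differs from the identity precisely by the idempotent $e_i$ onto the $B$-summand, the task is to build a homotopy $h$ with $dh+hd = \Id - gf$, i.e.\ a homotopy killing the chain endomorphism supported on the $B$-summands in degrees $2$ and $3$. The key point is that the differential $a\colon C^2\to C^3$ restricts, under the splittings, to an \emph{isomorphism} $B\xrightarrow{\sim}B$ on the distinguished summands — this is forced by the triangle morphism being a morphism of triangles whose third terms are $0$, which makes the mapping cone of $f$ acyclic. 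Concretely, $B$ appears as a summand of $C^2$ and of $C^3$ linked by an isomorphism, so the subcomplex $(\cdots\to 0\to B\xrightarrow{\sim}B\to 0\to\cdots)$ is contractible, and $C^\bullet$ is the direct sum of $\tilde C^\bullet$ (embedded via the splittings) and this contractible subcomplex — at least after the bookkeeping is done. That direct-sum decomposition immediately gives that $f$ and $g$ are mutually inverse up to homotopy.

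So the logical skeleton is: (1) unwind the triangle morphism to split both triangles compatibly; (2) observe that the connecting map $a$ induces an isomorphism between the two $B$-summands; (3) conclude $C^\bullet\cong\tilde C^\bullet\oplus(\text{contractible})$ as complexes, hence $f$ is a homotopy equivalence. I expect the main obstacle to be step (1)–(2): making the splittings genuinely compatible with $a$ and $d$ — in a general triangulated category one cannot always rectify a morphism of split triangles to a direct sum of split triangles on the nose, because of the nonfunctoriality of cones — so I would likely not insist on a strict direct-sum decomposition but instead write down the homotopy $h$ by hand. The natural formula is $h^3\colon C^3\to C^2$ given by $t_3$ followed by the inverse isomorphism $B\xrightarrow{\sim}B\subset C^2$ (and $h=0$ in all other degrees), and then one checks directly that $dh+hd$ agrees with $\Id-gf$ in each degree; the verification in degrees $1,2,3,4$ is where one actually uses the commutativity of the original squares and the relation $ca = da$-type identities coming from the triangle morphism. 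That computation is routine once the isomorphism in (2) is pinned down, so the real content — and the only place genuine triangulated-category input enters — is establishing (2).
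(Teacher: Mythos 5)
This is essentially the paper's own argument: it too uses the vanishing of the two connecting morphisms to rewrite the triangle morphism with $C^2\cong B\oplus\tilde C^2$ and $C^3\cong B\oplus\tilde C^3$ (the triangle maps becoming the obvious inclusions and projections), observes that commutativity of the two squares forces $a$ to be unitriangular with $\Id_B$ and $d$ on the diagonal, and finishes with exactly the direct computation you outline. The one point you deferred does close, but it needs the correction you suspected: with arbitrary sections $s_2,s_3$ your candidate $g$ is not a chain map and $dh+hd=\Id-gf$ fails by precisely the off-diagonal entry $\epsilon=t_3\,a\,s_2\colon\tilde C^2\to B$; replacing $s_2$ by $s_2-u_2\epsilon$ (where $u_2\colon B\to C^2$ is the triangle map) kills $\epsilon$ and, using $a\circ(C^1\to C^2)=0$, also straightens the incoming differential, after which $C^\bullet\cong\tilde C^\bullet\oplus\bigl(B\xrightarrow{\Id}B\bigr)$ on the nose and your homotopy works verbatim.
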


\begin{proof}
Rewrite the triangle morphism as follows, in which $e,f$ and $b,c$
are the obvious inclusions and projections, respectively.
$$\xymatrixrowsep{1.5pc}\xymatrix{
  B \ar[r]^---{e} \ar@{=}[d] & B\oplus \tilde{C}^2 \ar[r]^---{b} \ar[d]_a & \tilde{C}^2 \ar[r]^0 \ar[d]^d & \\
  B \ar[r]^---{f} & B\oplus \tilde{C}^3 \ar[r]^---{c} & \tilde{C}^3 \ar[r]^0 &
}
$$
Then $a$ must be in the form \begin{scriptsize} $\begin{pmatrix} \Id
& 0 \\ * & d \end{pmatrix}$\end{scriptsize}. Then a direct
computation.
\end{proof}

\begin{lem}\label{lem:homology4}
Any sequence $C^{\le w-2} \to C^{\le w-1} \to \tilde{C}^{\le w-1}
\to C^{\le w}$ in a triangulated category extends to commutative
diagrams
$$\xymatrixrowsep{.5pc}\xymatrix{
  & C^w[-1] \ar[dr]^a & & C^{w-1} \\
  \ar[dr]^{d'} & & C^{\le w-1} \ar[ur]^b \ar[dr] \ar[dd]_e & & \\
  & C^{\le w-2} \ar[ur] \ar[dr] & & C^{\le w} \ar[ur]^{c'} \ar[dr]^{a'} \\
  \ar[ur]^{b'} & & \tilde{C}^{\le w-1} \ar[ur] \ar[dr]^d & & \\
  & \tilde{C}^w[-1] \ar[ur]^c & & \tilde{C}^{w-1}
}
$$
$$\xymatrixrowsep{1.5pc}\xymatrix{
  & B \ar@{=}[r] \ar[d] & B \ar@{=}[r] \ar[d] & B \ar[d] \\
  C^{\le w} \ar[r]^{a'} \ar@{=}[d] & C^w \ar[r]^--a \ar[d]
  & C^{\le w-1}[1] \ar[r]^b \ar[d]_e & C^{w-1}[1] \ar[d] \ar[r]^{b'} & C^{\le w-2}[2] \ar@{=}[d] \\
  C^{\le w} \ar[r]^{c'} & \tilde{C}^w \ar[r]^--c & \tilde{C}^{\le w-1}[1] \ar[r]^d
  & \tilde{C}^{w-1}[1] \ar[r]^{d'} & C^{\le w-2}[2]
}
$$
in which the vertical and the slash lines are exact triangles.
\end{lem}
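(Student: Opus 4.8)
The plan is to realise every object occurring in the two diagrams as a mapping cone of one of the three given arrows or of a composite of two consecutive ones, and then to read off the asserted exact triangles and all the commutativity from two applications of the octahedral axiom.

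Write the given sequence as $C^{\le w-2}\xrightarrow{\mu}C^{\le w-1}\xrightarrow{e}\tilde{C}^{\le w-1}\xrightarrow{\nu}C^{\le w}$. First I would fix, once and for all, mapping cones together with completing triangles for the maps $\mu$, $e$, $\nu$ and for the two composites $e\mu$ and $\nu e$; denote these cones $C^{w-1}$, $B$, $\tilde{C}^w$, $\tilde{C}^{w-1}$, $C^w$ in turn, so that there are exact triangles $C^{\le w-2}\xrightarrow{\mu}C^{\le w-1}\to C^{w-1}\xrightarrow{+1}$, $C^{\le w-1}\xrightarrow{e}\tilde{C}^{\le w-1}\to B\xrightarrow{+1}$, $\tilde{C}^{\le w-1}\xrightarrow{\nu}C^{\le w}\to\tilde{C}^w\xrightarrow{+1}$, $C^{\le w-2}\xrightarrow{e\mu}\tilde{C}^{\le w-1}\to\tilde{C}^{w-1}\xrightarrow{+1}$, and $C^{\le w-1}\xrightarrow{\nu e}C^{\le w}\to C^w\xrightarrow{+1}$. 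Suitably rotated, the first, third, fourth and fifth of these are precisely the four slash lines of the first diagram (the shifts $C^w[-1]$ and $\tilde{C}^w[-1]$ being the effect of the rotation), and their tautological compatibilities — $\nu e\mu=\nu(e\mu)=(\nu e)\mu$, $\nu e=\nu\circ e$, and so on — already give all the commutativity in the first diagram apart from the two ``petals'' $C^{w-1}\to\tilde{C}^{w-1}$ and $C^w\to\tilde{C}^w$.

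Next I would apply the octahedral axiom to the two composable pairs that overlap in the middle map $e$. For $C^{\le w-2}\xrightarrow{\mu}C^{\le w-1}\xrightarrow{e}\tilde{C}^{\le w-1}$, with the chosen cones $C^{w-1}$, $B$, $\tilde{C}^{w-1}$ of $\mu$, $e$, $e\mu$, it produces a commutative diagram linking the three completing triangles together with an exact triangle $C^{w-1}\to\tilde{C}^{w-1}\to B\xrightarrow{+1}$; for $C^{\le w-1}\xrightarrow{e}\tilde{C}^{\le w-1}\xrightarrow{\nu}C^{\le w}$, with the chosen cones $B$, $\tilde{C}^w$, $C^w$ of $e$, $\nu$, $\nu e$, it produces an exact triangle $B\to C^w\to\tilde{C}^w\xrightarrow{+1}$ and the accompanying commutative diagram. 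Crucially, both applications use the \emph{same} completing triangle $C^{\le w-1}\xrightarrow{e}\tilde{C}^{\le w-1}\to B\xrightarrow{+1}$ of the common map $e$, so the two octahedral diagrams share that face and paste together. The second diagram of the lemma is then assembled from this data: its second column is the triangle $B\to C^w\to\tilde{C}^w\xrightarrow{+1}$, its fourth column is the once-shifted rotation of $C^{w-1}\to\tilde{C}^{w-1}\to B\xrightarrow{+1}$, its third column is the once-shifted rotation of the defining triangle of $B$, its first and last columns are identities, and its middle (resp.\ bottom) row is obtained by splicing the rotation of the triangle on $C^w$ (resp.\ $\tilde{C}^w$) with the $[1]$-shifted rotation of the triangle on $C^{w-1}$ (resp.\ $\tilde{C}^{w-1}$) along $C^{\le w-1}[1]$ (resp.\ $\tilde{C}^{\le w-1}[1]$).

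Finally one must check that every square of the two diagrams commutes; this is the only step involving genuine labour, but it is bookkeeping, not ideas. Each square matches one of the commutativity relations packaged with the octahedral axiom — in the usual labelling of the octahedron, ``$f\circ i=k\circ v$'', ``$r\circ f=p$'', ``$g\circ k=j$'', ``$h=i[1]\circ q$'' — modulo keeping track of the shifts $[\pm 1]$; for instance ``$r\circ f=p$'' for the second octahedron, relating the connecting map $C^w\to C^{\le w-1}[1]$, the octahedral map $B\to C^w$ and the connecting map $B\to C^{\le w-1}[1]$, yields commutativity of the square joining the third and fourth columns of the second diagram, and the same relation for the first octahedron supplies the petal $C^{w-1}\to\tilde{C}^{w-1}$ in the first diagram. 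I would stress at the very outset that one must commit to a single choice of the cone $B$ of $e$ (together with its completing triangle) before invoking either octahedron: the octahedral axiom furnishes the octahedral data only non-canonically, and it is exactly the sharing of this one face that makes the two octahedral diagrams mutually compatible and hence yields the two diagrams of the lemma.
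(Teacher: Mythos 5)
Your proposal is correct and is essentially the paper's own (unwritten) argument: the paper disposes of the lemma with the remark that it follows from the defining axioms of a triangulated category, and your construction --- taking cones of $\mu$, $e$, $\nu$, $e\mu$, $\nu e$ and applying the octahedral axiom twice, glued along one fixed completing triangle for the common middle map $e$ --- is exactly that argument spelled out, with the commutativity of each square supplied by the standard octahedral compatibilities. The only blemishes are cosmetic bookkeeping slips (the ``petal'' maps $C^{w-1}\to\tilde{C}^{w-1}$ and $C^w\to\tilde{C}^w$ appear in the second diagram, not the first, and your sample relation, namely that the connecting map of the $\nu e$-triangle composed with the octahedral map $B\to C^w$ equals the connecting map of the $e$-triangle, governs the square between the second and third columns rather than the third and fourth), which do not affect the validity of the proof.
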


\begin{proof}
Follows directly from the defining axioms of triangulated category.
\end{proof}

\begin{lem}[Postnikov system]\label{lem:homology5}
Suppose we are given a system of exact triangles from a triangulated
category
\begin{equation}\label{eqn:ha1}
  C^{\le w-1} \to C^{\le w} \to C^w.
\end{equation}
Then the following sequence
\begin{equation*}
  \cdots \to C^{w+1}[-w-1] \to C^w[-w] \to C^{w-1}[-w+1] \to \cdots
\end{equation*}
in which the morphisms are the compositions
\begin{equation*}
  C^w[-w] \to C^{\le w-1}[-w+1] \to C^{w-1}[-w+1],
\end{equation*}
is a complex.
\end{lem}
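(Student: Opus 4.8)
The plan is to show that two consecutive morphisms in the claimed sequence compose to zero. Write $\delta_w\colon C^w[-w] \to C^{w-1}[-w+1]$ for the stated composition, factoring through the connecting map $\partial_w\colon C^w[-w] \to C^{\le w-1}[-w+1]$ of the triangle \eqref{eqn:ha1} followed by the map $\alpha_{w-1}\colon C^{\le w-1}[-w+1] \to C^{w-1}[-w+1]$ coming from the triangle $C^{\le w-2} \to C^{\le w-1} \to C^{w-1}$ (shifted). So $\delta_w = \alpha_{w-1}\circ\partial_w$ and likewise $\delta_{w+1} = \alpha_w \circ \partial_{w+1}$ where $\partial_{w+1}\colon C^{w+1}[-w-1] \to C^{\le w}[-w]$ and $\alpha_w\colon C^{\le w}[-w] \to C^w[-w]$.

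First I would observe that the map $\partial_w\circ\alpha_w\colon C^{\le w}[-w] \to C^{\le w-1}[-w+1]$ is, up to the shift, precisely the composition of the second and third morphisms in the exact triangle $C^{\le w-1} \to C^{\le w} \to C^w \xrightarrow{\partial_w[-1]} C^{\le w-1}[1]$, hence is zero by the defining property of a triangle (two consecutive arrows in a rotated triangle compose to zero). Therefore
\begin{equation*}
  \delta_w \circ \delta_{w+1} = \alpha_{w-1}\circ\partial_w\circ\alpha_w\circ\partial_{w+1} = \alpha_{w-1}\circ 0 \circ\partial_{w+1} = 0,
\end{equation*}
which is exactly the assertion that the sequence is a complex.

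The only point requiring care is the bookkeeping of shifts and signs: one must check that the "composition" described in the statement, namely $C^w[-w] \to C^{\le w-1}[-w+1] \to C^{w-1}[-w+1]$, really is $\delta_w$ as I have written it, i.e. that the first arrow is the appropriately shifted connecting morphism of \eqref{eqn:ha1} and the second is the appropriately shifted structure morphism of the triangle defining $C^{w-1}$. This is immediate from unwinding the indexing, and the sign conventions in a triangulated category do not affect the vanishing of a composite through a zero map.

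I expect no serious obstacle here; the main (and only) subtlety is making sure the factorization $\delta_w = \alpha_{w-1}\circ\partial_w$ is set up so that the middle composite $\partial_w\circ\alpha_w$ is visibly the pair of consecutive maps in a single (rotated) triangle. Once the triangles \eqref{eqn:ha1} for consecutive $w$ are aligned this way, the argument is the one-line observation above.
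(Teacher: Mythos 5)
Your proof is correct and is essentially the same argument as the paper's: both factor the composite $C^{w+1}[-w-1]\to C^w[-w]\to C^{w-1}[-w+1]$ through the chain $C^{w+1}[-w-1]\to C^{\le w}[-w]\to C^w[-w]\to C^{\le w-1}[-w+1]\to C^{w-1}[-w+1]$ and observe that the middle two arrows are consecutive maps of a (rotated, shifted) exact triangle, hence compose to zero. No issues.
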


\begin{proof}
Observe that the morphisms
$$C^{w+1}[-w-1] \to C^w[-w] \to C^{w-1}[-w+1]$$
are realized by the sequence
\begin{equation*}
   C^{w+1}[-w-1] \to C^{\le w}[-w] \to C^{w}[-w] \to C^{\le w-1}[-w+1] \to C^{w-1}[-w+1],
\end{equation*}
which composes to zero because its middle part is an exact triangle.
\end{proof}

\begin{lem}\label{lem:homology6}
Suppose in addition to the assumption of the above lemma, there are triangle
morphisms
$$\xymatrixrowsep{1.5pc}\xymatrix{
  B^{\le w-1} \ar[r] \ar[d] & C^{\le w-1} \ar[r] \ar[d] & C \ar@{=}[d] \\
  B^{\le w} \ar[r] & C^{\le w} \ar[r] & C
}
$$
Then they extend to give a system of exact triangles
\begin{equation}\label{eqn:ha3}
  B^{\le w-1} \to B^{\le w} \to C^w
\end{equation}
which induce the same complex as \eqref{eqn:ha1}.
\end{lem}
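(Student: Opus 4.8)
The plan is to obtain the triangles $B^{\le w-1}\to B^{\le w}\to C^w$ by completing each of the given morphisms $g_w:B^{\le w-1}\to B^{\le w}$ to an exact triangle and then identifying its third term with $C^w$. Write $h_w:C^{\le w-1}\to C^{\le w}$ and $\phi_w:B^{\le w}\to C^{\le w}$ for the other maps occurring in the given triangle morphisms, so that $\phi_wg_w=h_w\phi_{w-1}$; write $C^{\le w-1}\xrightarrow{h_w}C^{\le w}\xrightarrow{p_w}C^w\xrightarrow{\partial_w}C^{\le w-1}[1]$ for the triangles of \eqref{eqn:ha1}; and recall that the third vertical of each given triangle morphism is the identity of $C$, so that the cones of $\phi_{w-1}$ and of $\phi_w$ are identified with a single object $C$, compatibly with $g_w$ and $h_w$.

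The key point is that this last fact says precisely that the commutative square with edges $g_w,h_w,\phi_{w-1},\phi_w$ is homotopy cartesian. Fixing an exact triangle $B^{\le w-1}\xrightarrow{g_w}B^{\le w}\xrightarrow{q_w}B^w\xrightarrow{\epsilon_w}B^{\le w-1}[1]$, the nine-lemma for triangulated categories \cite[1.1.11]{BBD82} then shows that the square of the $\phi$'s extends to a morphism of triangles with vertical arrows $\phi_{w-1}$, $\phi_w$ and an isomorphism $\theta_w:B^w\to C^w$ satisfying the relations $\theta_wq_w=p_w\phi_w$ and $\partial_w\theta_w=\phi_{w-1}[1]\epsilon_w$. (Equivalently, one may run the octahedral axiom on the two factorisations $B^{\le w-1}\xrightarrow{g_w}B^{\le w}\xrightarrow{\phi_w}C^{\le w}$ and $B^{\le w-1}\xrightarrow{\phi_{w-1}}C^{\le w-1}\xrightarrow{h_w}C^{\le w}$ of the common morphism $\phi_wg_w=h_w\phi_{w-1}$: with $M$ its cone one obtains exact triangles $B^w\to M\to C\to B^w[1]$ and $C\to M\to C^w\to C[1]$, and the identity third component forces the induced endomorphism of $C$ to be the identity, so that $M\cong C\oplus B^w$ and $C^w\cong B^w$.) Transporting the triangle of $g_w$ along $\theta_w$ produces the required exact triangle $B^{\le w-1}\xrightarrow{g_w}B^{\le w}\xrightarrow{\theta_wq_w}C^w\xrightarrow{\epsilon_w\theta_w^{-1}}B^{\le w-1}[1]$.

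It then remains to verify that the complex associated by Lemma \ref{lem:homology5} to the system just built coincides with the one associated to \eqref{eqn:ha1}. Its $w$-th differential is the shift by $[-w]$ of the composite $C^w\xrightarrow{\epsilon_w\theta_w^{-1}}B^{\le w-1}[1]\xrightarrow{(\theta_{w-1}q_{w-1})[1]}C^{w-1}[1]$; by the two relations above (used at levels $w$ and $w-1$) one has $\phi_{w-1}[1]\circ\epsilon_w\theta_w^{-1}=\partial_w$ and $(\theta_{w-1}q_{w-1})[1]=p_{w-1}[1]\circ\phi_{w-1}[1]$, so this composite equals $p_{w-1}[1]\circ\partial_w$, which is exactly the corresponding composite for \eqref{eqn:ha1}. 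The only real obstacle is the isomorphism $B^w\cong C^w$: since the cone of a morphism is not functorial in a bare triangulated category, one must genuinely use the identity third component of the triangle morphism, i.e. homotopy cartesianness, and not merely the commuting square of the $\phi$'s. Should one prefer, this step becomes immediate upon passing to a dg enhancement of the ambient constructible derived categories (or by iterating Lemma \ref{lem:homology4} by hand), for then $B^{\le w}=\operatorname{fib}(C^{\le w}\to C)$ functorially in $w$, whence $\operatorname{cone}(B^{\le w-1}\to B^{\le w})\cong\operatorname{cone}(C^{\le w-1}\to C^{\le w})=C^w$ with matching connecting morphisms.
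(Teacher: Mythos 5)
Your proof is correct and follows essentially the same route as the paper: the paper likewise completes the given triangle morphisms, via the octahedral axiom (your homotopy-cartesian/nine-lemma step, in the spirit of \cite[1.1.11]{BBD82}), to a $3\times3$-type diagram yielding the exact triangles $B^{\le w-1}\to B^{\le w}\to C^w$ together with the two compatibilities $\theta_w q_w=p_w\phi_w$ and $\partial_w\theta_w=\phi_{w-1}[1]\epsilon_w$ that you use. Your explicit composition showing the differentials of the two induced complexes agree is just a spelled-out version of the paper's second commutative diagram.
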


\begin{proof}
By the octahedron axiom of triangulated category, the given triangle morphisms
extend to commutative diagrams with exact triangles on their rows and columns
$$\xymatrixrowsep{1.5pc}\xymatrix{
  B^{\le w-1} \ar[r] \ar[d] & C^{\le w-1} \ar[r] \ar[d] & C \ar@{=}[d] \\
  B^{\le w} \ar[r] \ar[d] & C^{\le w} \ar[d] \ar[r] & C \\
  C^w \ar@{=}[r] & C^w
}
$$
This gives the exact triangles \eqref{eqn:ha3} and, further, commutative
diagrams
$$\xymatrixrowsep{1.5pc}\xymatrix{
  C^w \ar[r] \ar@{=}[d] & B^{\le w-1}[1] \ar[r] \ar[d] & C^{w-1}[1] \ar@{=}[d] \\
  C^w \ar[r] & C^{\le w-1}[1] \ar[r] & C^{w-1}[1]
}
$$
saying that \eqref{eqn:ha3} induce the same complex as
\eqref{eqn:ha1}.
\end{proof}

\subsection{Pure resolution of mixed complexes}\label{sec:R:mix}

Let $\BF_q$ be a finite field with $q$ elements and $\BF$ be its
algebraic closure. Let $X_0$ be a scheme of finite type over $\BF_q$
and let $X$ be the scheme $X_0\times_{Spec(\BF_q)}Spec(\BF)$ over
$\BF$. We denote by $\D(X_0) = \D^b_c(X_0,\bar\Q_l)$ (resp. $\D(X) =
\D^b_c(X,\bar\Q_l)$) the triangulated category of $\bar\Q_l$-sheaves
\cite[2.2.18]{BBD82} on $X_0$ (resp. $X$), where $l$ is a prime
number invertible in $\BF_q$. For a complex $C_0\in\D(X_0)$ we
denote by $C\in\D(X)$ its pullback to $X$.

A complex from $\D(X_0)$ is called mixed if all its cohomology
sheaves are mixed $\bar\Q_l$-sheaves. Mixed complexes from $\D(X_0)$
form a full triangulated subcategory $\D_m(X_0)$. It inherits the
perverse t-structure from $\D(X_0)$ thus gives rise to the category
$\M_m(X_0)$ of mixed perverse sheaves.

We denote by $\D_{\le w}(X_0)$ the full subcategory of $\D_m(X_0)$
consisting of those mixed complexes whose $i$-th cohomology sheaf is
of weight $\le w+i$ for all $i$ and denote by $\D_{\ge w}(X_0)$ the
full subcategory consisting of those mixed complexes $C$ such that
$DC\in\D_{\le -w}(X_0)$. The complexes from $\D_{\le w}(X_0) \cap
\D_{\ge w}(X_0)$ are called pure of weight $w$. Be careful of that
the purity of a mixed $\bar\Q_l$-sheaf does not necessarily agree
with the one as a mixed complex.

\smallskip

Listed below are some properties of mixed complexes (cf.
\cite{BBD82}, \cite{KW01}). The key step to them is the
proof of (3) for $f_!$, which is the main result of \cite{De80}. Be aware of
that the decomposition theorem is immediate from (1)-(3).
\begin{enumerate}
\setlength{\itemsep}{.3ex}
\item
Simple mixed perverse sheaves are pure.

\item
If $C_0\in\D_m(X_0)$ is pure, then $C \cong \oplus_n\pH^n(C)[-n]$
and $\pH^n(C)$ is a semisimple perverse sheaf for all $n$.

\item
For a morphism $f: X_0 \to Y_0$, the functors $f_!,f^*$ preserve
$\D_{\le w}$ and the functors $f_*,f^!$ preserve $\D_{\ge w}$.

\item
For a smooth morphism $f: X_0 \to Y_0$ of relative dimension $d$, we
have $f^*[d]=f^![-d](-d)$, where $(-d)$ indicates the Tate twist
(increasing the weight by $2d$).

\item
The outer tensor product functor $\boxtimes$ sends $\D_{\le
w_1}\times\D_{\le w_2}$ to $\D_{\le w_1+w_2}$ and sends $\D_{\ge
w_1}\times\D_{\ge w_2}$ to $\D_{\ge w_1+w_2}$.

\item
There are exact sequences for $C_0,C'_0\in\D_m(X_0)$
$$\Ext^{n-1}_{\D(X)}(C,C')_F \hookrightarrow \Ext^n_{\D(X_0)}(C_0,C'_0) \twoheadrightarrow \Ext^n_{\D(X)}(C,C')^F$$
where $F$ is the geometric Frobenius.

\item
Assume $C_0\in\D_{\le w}(X_0)$, $C'_0\in\D_{> w}(X_0)$. We have
$\Ext^n_{\D(X)}(C,C')_F = \Ext^n_{\D(X)}(C,C')^F = 0$ for $n\ge0$.
Therefore, $\Ext^n_{\D(X_0)}(C_0,C'_0) = 0$ for $n>0$. Further,
$\Ext^\bullet_{\D(X_0)}(C_0,C'_0) = 0$ provided in addition that
$C_0,C'_0$ are mixed perverse sheaves.

\item
The pullback $\Ext^n_{\D(X_0)}(C_0,C'_0) \to \Ext^n_{\D(X)}(C,C')$
is the zero map for $C_0\in\D_{\le w}(X_0)$, $C'_0\in\D_{\ge
w}(X_0)$ and $n>0$.

\item
A subquotient of a mixed perverse sheaf of weight $\le w$ (resp.
$\ge w$) is of weight $\le w$ (resp. $\ge w$).

\item
A mixed perverse sheaf $C_0\in\D_m(X_0)$ admits a unique weight
filtration $W^\bullet C_0$ whose grade piece
$Gr_W^iC_0=W^iC_0/W^{i-1}C_0$ is pure of weight $i$. A morphism $C_0
\to C'_0$ of mixed perverse sheaves maps $W^iC_0$ to $W^iC'_0$.

\item
A mixed complex $C_0$ is of weight $\le w$ (resp. $\ge w$) if and
only if $\pH^i(C_0)$ is of weight $\le w+i$ (resp. $\ge w+i$) for
all $i$.
\end{enumerate}

\smallskip

Now we introduce the notion of pure resolution of mixed complexes.
Suppose we are given a system of exact triangles
\begin{equation}\label{eqn:mix1}
  C_0^{\le w-1} \to C_0^{\le w} \to C_0^w
\end{equation}
with $C_0^{\le w}\in\D_{\le w}(X_0)$ and $C_0^w$ being pure of
weight $w$. Assume further that the exact triangles \eqref{eqn:mix1}
are identical to $0\to0\to0$ for $w\ll0$ and are identical to
$C_0\xrightarrow{\Id} C_0\to0$ for $w\gg0$, where $C_0$ is a
prescribed mixed complex. Following Lemma \ref{lem:homology5} we
have a complex forming by objects and morphisms from $\D_m(X_0)$
\begin{equation}\label{eqn:mix2}
  \cdots \to C_0^{w+1}[-w-1] \to C_0^w[-w] \to C_0^{w-1}[-w+1] \to \cdots
\end{equation}
in which the differentials are the compositions
\begin{equation*}
  C_0^w[-w] \to C_0^{\le w-1}[-w+1] \to C_0^{w-1}[-w+1].
\end{equation*}

\begin{defn}
In the above notations, we assign degree $-w$ to $C_0^{w}[-w]$ and
call \eqref{eqn:mix2} a pure resolution of $C_0$.
\end{defn}

\smallskip

In particular, given a mixed perverse sheaf $C_0\in\M_m(X_0)$, the
unique weight filtration $W^\bullet C_0$ gives rise to a system of
exact sequences in $\M_m(X_0)$ (hence exact triangles in
$\D_m(X_0)$)
\begin{equation}
  W^{w-1}C_0 \hookrightarrow W^wC_0 \twoheadrightarrow Gr_W^wC_0,
\end{equation}
then a pure resolution of $C_0$
\begin{equation}\label{eqn:mix5}
  \cdots \to Gr_W^{w+1}C_0[-w-1] \to Gr_W^wC_0[-w] \to Gr_W^{w-1}C_0[-w+1] \to \cdots.
\end{equation}

\begin{defn}
We call \eqref{eqn:mix5} the canonical pure resolution of the mixed
perverse sheaf $C_0$. Moreover, when $C_0$ is clear from context, we
slightly abuse language to call the pullback of \eqref{eqn:mix5} the
canonical pure resolution of the perverse sheaf $C$.
\end{defn}

\begin{prop}\label{prop:mix}
We have the followings.
\begin{enumerate}
\setlength{\itemsep}{.3ex}
\item
If $f: X_0\to Y_0$ is a proper morphism, then $f_!$ transforms a
pure resolution of $C_0\in\D_m(X_0)$ into a pure resolution of
$f_!C_0\in\D_m(Y_0)$.

\item
If $f: X_0\to Y_0$ is a smooth morphism of relative dimension $d$,
then $f^*[d]$ transforms a pure resolution of $C_0\in\D_m(Y_0)$ into a pure
resolution of $f^*[d]C_0\in\D_m(X_0)$, up to a grade shifting.

\item
The Verdier duality functor $D$ transforms a pure resolution of
$C_0\in\D_m(X_0)$ into a pure resolution of $DC_0\in\D_m(X_0)$.

\item
The outer tensor product functor $\boxtimes$ transforms the
canonical pure resolutions of $C_0\in\M_m(X_0)$, $C'_0\in\M_m(Y_0)$
into the canonical pure resolution of $C_0\boxtimes
C'_0\in\M_m(X_0\times Y_0)$.
\end{enumerate}
Moreover, for a mixed perverse sheaf $C_0$, its canonical pure
resolution is transformed into canonical ones in (2) and
(3).
\end{prop}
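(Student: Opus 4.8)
The plan is to verify each of the four statements by tracking what the relevant functor does to the defining data of a pure resolution, namely the system of exact triangles \eqref{eqn:mix1} with $C_0^{\le w}\in\D_{\le w}(X_0)$, $C_0^w$ pure of weight $w$, stabilizing appropriately for $w\ll0$ and $w\gg0$. The crucial point in every case is that the functor in question takes this system of triangles to another system of triangles of the same shape, and then Lemma \ref{lem:homology5} (which manufactures the complex from the triangles) applies automatically. So each part reduces to two checks: (a) the functor is triangulated (sends exact triangles to exact triangles) and respects the stabilization conditions, and (b) the functor sends complexes that are pure of weight $w$ to complexes that are pure of some prescribed weight, while sending $\D_{\le w}$ into the corresponding $\D_{\le w'}$.

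For (1), $f_!$ is triangulated; by property (3) of mixed complexes it preserves $\D_{\le w}$, and since $f$ is proper $f_!=f_*$ also preserves $\D_{\ge w}$, so it sends pure weight $w$ to pure weight $w$. Applying $f_!$ to \eqref{eqn:mix1} gives the system $f_!C_0^{\le w-1}\to f_!C_0^{\le w}\to f_!C_0^w$, which is exactly a pure resolution of $f_!C_0$. For (2), $f^*$ is triangulated and by property (3) preserves $\D_{\le w}$; by property (4), $f^*[d]$ sends a complex pure of weight $w$ to one pure of weight $w+d$ (Tate twist and shift combining to move the weight by $d$). Thus applying $f^*[d]$ to \eqref{eqn:mix1}, after reindexing $w\mapsto w+d$, yields a pure resolution of $f^*[d]C_0$ — hence the ``up to a grade shifting'' qualification. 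For (3), $D$ is triangulated (it is an anti-equivalence, so it reverses triangles, but a reversed triangle is still a triangle), it interchanges $\D_{\le w}$ and $\D_{\ge -w}$ by definition, so it sends pure weight $w$ to pure weight $-w$; applying $D$ to \eqref{eqn:mix1} and reindexing $w\mapsto -w$ gives a pure resolution of $DC_0$. For (4), property (5) gives that $\boxtimes$ sends $\D_{\le w_1}\times\D_{\le w_2}$ to $\D_{\le w_1+w_2}$ and likewise for $\D_{\ge}$, hence pure $\boxtimes$ pure is pure with weights adding; the canonical pure resolution comes from the weight filtration, and one must see that $W^\bullet(C_0\boxtimes C'_0)$ is the convolution of $W^\bullet C_0$ and $W^\bullet C'_0$, so that $Gr_W^n(C_0\boxtimes C'_0)=\bigoplus_{i+j=n}Gr_W^iC_0\boxtimes Gr_W^jC'_0$, which is pure of weight $n$ by property (5); uniqueness of the weight filtration (property (10)) then forces the identification.

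For the final clause, in (2) and (3) applied to a mixed perverse sheaf $C_0$ one must check that the canonical pure resolution is sent to the canonical one. In case (3) this is clear: $D$ sends the weight filtration $W^\bullet C_0$ to a filtration of $DC_0$ whose graded pieces $D(Gr_W^wC_0)$ are pure of weight $-w$, and by uniqueness this is the weight filtration of $DC_0$. In case (2), $f^*[d]$ is exact for the perverse t-structure (property (4) of the preliminaries on perverse sheaves, since $f$ is smooth with connected fibers — though here I would just use that $f^*[d]$ is t-exact up to the shift), so it sends the exact sequences $W^{w-1}C_0\hookrightarrow W^wC_0\twoheadrightarrow Gr_W^wC_0$ to exact sequences in $\M_m(X_0)$ with graded pieces pure of weight $w+d$, which by uniqueness of the weight filtration is the weight filtration of $f^*[d]C_0$. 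The main obstacle I anticipate is part (4): verifying that the weight filtration of an outer tensor product is the convolution filtration is not purely formal, and requires knowing that the $Gr_W$ pieces of the tensor product have no weight contributions other than the expected ones — this uses property (5) in both directions together with the uniqueness statement (property (10)), and is the one spot where a small argument rather than a citation is needed.
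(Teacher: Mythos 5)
Your treatment of (1) and (2) coincides with the paper's: both follow immediately from properties (3) and (4) of \ref{sec:R:mix}, with the weight/grade shift by $d$ in (2) exactly as you describe. The problem is (3). A pure resolution is, by definition, a system of triangles $C_0^{\le w-1}\to C_0^{\le w}\to C_0^w$ with $C_0^{\le w}\in\D_{\le w}(X_0)$, i.e.\ the approximating objects are bounded \emph{above} in weight and stabilize to $C_0$ as $w\to+\infty$. Applying $D$ and ``reindexing $w\mapsto -w$'' does not produce a system of this shape: $DC_0^{\le w}$ lies in $\D_{\ge -w}(X_0)$ and in general in no $\D_{\le v}(X_0)$, and the dualized tower stabilizes to $DC_0$ at the wrong end (it equals $DC_0$ for $w\gg 0$, hence equals $0$ for large values of the reindexed parameter). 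What you obtain is a ``co-resolution'' with pure sub-objects and weight bounds from below, which is not a pure resolution in the sense of the definition; the definition is not manifestly self-dual, and if it were, the weight estimates and Lemma \ref{lem:homology6} would be superfluous. The paper's proof of (3) instead constructs compatible maps $d_w: C_0^{\le w}\to C_0$ and passes to the complementary system $B_0^{\le w}$ of their fibres, proves by induction using \ref{sec:R:mix}.(9)--(11) that $\Img\pH^i(d_w)=W^{w+i}\,\pH^i(C_0)$ and that $\Ker\pH^i(d_w)$ is pure of weight $w+i$, so that $B_0^{\le w}\in\D_{\ge w}(X_0)$, and then invokes Lemma \ref{lem:homology6} to see that the triangles $(DB_0^{\le w})[-1]\to(DB_0^{\le w-1})[-1]\to DC_0^w$ constitute a pure resolution inducing the Verdier dual complex. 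None of this appears in your proposal, and your one-line argument fails at the first weight check.

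Concerning (4), you rightly flag that an argument is needed, but identifying the graded pieces $Gr_W^\bullet(C_0\boxtimes C'_0)=\oplus_j Gr_W^jC_0\boxtimes Gr_W^{\bullet-j}C'_0$ is only half of the paper's proof. The statement concerns the resolution, i.e.\ a complex with differentials, not merely the filtration, so uniqueness of the weight filtration does not finish it: one must also show, via the K\"unneth formula and \ref{sec:R:mix}.(7), that the differentials of the canonical pure resolution of $C_0\boxtimes C'_0$ have no components $Gr_W^jC_0\boxtimes Gr_W^{w-j}C'_0\to Gr_W^kC_0\boxtimes Gr_W^{w-1-k}C'_0$ for $k\neq j,j-1$, and that the surviving components agree with the differentials of the canonical pure resolutions of the factors. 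This second step is missing from your outline.
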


\begin{proof}
Claim (1)(2) follow directly from \ref{sec:R:mix}.(3)(4).

We show then the third claim. Suppose $C_0^\bullet$ is a pure resolution of $C_0$
derived from a system of exact triangles
$$C_0^{\le w-1} \to C_0^{\le w} \to C_0^w.$$
We form triangle morphisms
$$\xymatrixrowsep{1.5pc}\xymatrix{
  B_0^{\le w-1} \ar[r] \ar[d] & C_0^{\le w-1} \ar[r]^---{d_{w-1}} \ar[d] & C_0 \ar@{=}[d] \\
  B_0^{\le w} \ar[r] & C_0^{\le w} \ar[r]^---{d_w} & C_0
}
$$
such that $d_w$ is the identity for large $w$.
By Lemma \ref{lem:homology6} there are exact triangles
$$B_0^{\le w-1} \to B_0^{\le w} \to C_0^w$$
inducing the given complex $C_0^\bullet$.
Then the exact triangles
\begin{equation}\label{eqn:mix1a}
  (DB_0^{\le w})[-1] \to (DB_0^{\le w-1})[-1] \to DC_0^w
\end{equation}
induce the Verdier dual of $C_0^\bullet$.

Using \ref{sec:R:mix}.(9)-(11) we can show by induction that
$\Img\pH^i(d_w) = W^{w+i}\,\pH^i(C_0)$ and that $\Ker\pH^i(d_w)$
is pure of weight $w+i$. Therefore, $\pH^i(B_0^{\le w})$ is of weight $\ge w+i$.
Hence $B_0^{\le w} \in \D_{\ge w}(X_0)$; $(DB_0^{\le w})[-1] \in \D_{\le -w-1}(X_0)$.
That being said, the exact triangles \eqref{eqn:mix1a} define a pure resolution
of $DC_0$. This proves Claim (3).

\smallskip

Below we prove Claim (4) by using \ref{sec:R:mix}.(5). First, we show
\begin{equation}\label{eqn:mix0a}
  Gr_W^\bullet(C_0\boxtimes C'_0) = \oplus_j Gr_W^jC_0\boxtimes Gr_W^{\bullet-j}C'_0.
\end{equation}
Assume $C_0$ is of weight $\le i$. Then by \ref{sec:R:mix}.(5) we
have exact sequences
\begin{equation}\label{eqn:mix0b}
  W^\bullet(W^{i-1}C_0\boxtimes C'_0) \hookrightarrow
  W^\bullet(C_0\boxtimes C'_0) \twoheadrightarrow
  W^\bullet(Gr_W^iC_0\boxtimes C'_0)
\end{equation}
and thus
\begin{equation*}
  Gr_W^\bullet(W^{i-1}C_0\boxtimes C'_0) \hookrightarrow
  Gr_W^\bullet(C_0\boxtimes C'_0) \twoheadrightarrow
  Gr_W^\bullet(Gr_W^iC_0\boxtimes C'_0).
\end{equation*}
Clearly
$$Gr_W^\bullet(Gr_W^iC_0\boxtimes C'_0) = Gr_W^iC_0\boxtimes Gr_W^{\bullet-i}C'_0.$$
By induction on weight we may assume further
$$Gr_W^\bullet(W^{i-1}C_0\boxtimes C'_0) = \oplus_{j<i} Gr_W^jC_0\boxtimes Gr_W^{\bullet-j}C'_0.$$
Moreover, from the K\"unneth formula
$$\Ext^\bullet_{\D(X_0\times Y_0)} (A_0\boxtimes B_0,A'_0\boxtimes B'_0)
  = \Ext^\bullet_{\D(X_0)} (A_0,A'_0) \otimes \Ext^\bullet_{\D(Y_0)} (B_0,B'_0)
$$
and \ref{sec:R:mix}.(7) we deduce that
$$\Ext^1_{\D(X_0\times Y_0)} (Gr_W^iC_0\boxtimes Gr_W^{\bullet-i}C'_0,\oplus_{j<i} Gr_W^jC_0\boxtimes Gr_W^{\bullet-j}C'_0) = 0.$$
Hence \eqref{eqn:mix0a} follows.

Next, we determine the differentials in the canonical pure
resolution of $C_0\boxtimes C'_0$
\begin{equation*}\label{eqn:mix0c}
  Gr_W^w(C_0\boxtimes C'_0)[-w] \to Gr_W^{w-1}(C_0\boxtimes C'_0)[-w+1].
\end{equation*}
By the the K\"unneth formula and \ref{sec:R:mix}.(7) again, the
morphisms by restriction
\begin{equation}\label{eqn:mix0d}
  Gr_W^jC_0\boxtimes Gr_W^{w-j}[-w] \to Gr_W^kC_0\boxtimes Gr_W^{w-1-k}C'_0[-w+1]
\end{equation}
must be zero unless $j=k$ or $k+1$. For $j=k$, by using the exact
sequences \eqref{eqn:mix0b} and by induction on weight, one verifies
that the induced morphisms \eqref{eqn:mix0d} coincide with the
differentials in the canonical pure resolutions of
$Gr_W^jC_0\boxtimes C'_0$. Similarly for $j=k+1$. This concludes
Claim (4).

The moreover part of the proposition is clear.
\end{proof}

The rest of this subsection is dedicated to the proof of the following theorem.

\begin{thm}\label{thm:mix}
Let $C_0\in\M_m(X_0)$ be a mixed perverse sheaf. Then the pullbacks
(to $X$) of the pure resolutions of $C_0$ are all homotopy
equivalent.
\end{thm}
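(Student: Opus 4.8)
The plan is to reduce, by transitivity of homotopy equivalence, to showing that the pullback of an arbitrary pure resolution of $C_0$ is homotopy equivalent to the pullback of the canonical pure resolution \eqref{eqn:mix5} attached to the weight filtration $W^\bullet C_0$. So fix a pure resolution arising from a system of exact triangles $C_0^{\le w-1}\to C_0^{\le w}\to C_0^w$, and set $C_0^{>w}=\mathrm{cone}(C_0^{\le w}\to C_0)$, which lies in $\D_{>w}(X_0)$ because it is assembled from the pure complexes $C_0^{w'}$ with $w'>w$.

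First I would produce, over $X_0$, a morphism between the two Postnikov systems lying over $\Id_{C_0}$. Since $W^wC_0\in\D_{\le w}(X_0)$ and $C_0^{>w}\in\D_{>w}(X_0)$, property \ref{sec:R:mix}.(7) gives $\Ext^1_{\D(X_0)}(W^wC_0,C_0^{>w})=0$, so the canonical inclusion $W^wC_0\hookrightarrow C_0$ lifts to a morphism $\phi_w\colon W^wC_0\to C_0^{\le w}$; using the vanishing of the analogous $\Ext^{>0}$-groups between the (perverse) graded pieces $Gr_W^\bullet C_0$ and the $\D_{>\bullet}$-objects, one arranges the $\phi_w$ to be compatible with the transition maps (and, by the stabilization hypotheses, equal to the identity for $w\gg0$ and to $0$ for $w\ll0$). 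Passing to the cones of the vertical triangles and invoking Lemmas \ref{lem:homology5} and \ref{lem:homology6} to see that the resulting maps do assemble into a chain map, one obtains a morphism of complexes $\Phi$ from \eqref{eqn:mix5} to the given pure resolution whose degree $-w$ component is the induced map $Gr_W^wC_0\to C_0^w$.

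It then remains to show that $\Phi$, pulled back to $X$, is a homotopy equivalence, i.e. that its mapping cone is contractible. That mapping cone is, up to homotopy, the complex associated as in Lemma \ref{lem:homology5} to the Postnikov system $\{\mathrm{cone}(\phi_w)\}$ lying over $\mathrm{cone}(\Id_{C_0})=0$, whose pieces are the complexes $\mathrm{cone}(Gr_W^wC\to C^w)$; over $X$ these are pure complexes, hence by \ref{sec:R:mix}.(2) split into shifted semisimple perverse sheaves. I would contract this complex one weight at a time: each differential that could obstruct peeling off a contractible summand is a class in an $\Ext^{>0}$-group between objects of weight $\le w$ and of weight $\ge w$, whose pullback to $X$ vanishes by \ref{sec:R:mix}.(8); this vanishing is precisely what is needed to put the relevant commutative squares into the shape of the triangle morphism in Lemma \ref{lem:homology2}, so that the cancellation step is a homotopy equivalence, while Lemma \ref{lem:homology4} records how the Postnikov data is reorganised from one cancellation to the next. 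An induction on the range of weights occurring in $C_0$ then finishes the argument.

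The main difficulty, and the reason one must descend from $X_0$ to the geometric fibre $X$, is exactly this last point: over $X_0$ a pure complex need not split (property \ref{sec:R:mix}.(2) is asserted only after pullback), and the off-diagonal parts of the differentials of a pure resolution are $\Ext^{>0}$-classes that are in general nonzero over $X_0$ yet die under pullback by \ref{sec:R:mix}.(8); only over $X$ is there enough room to build all the homotopies and split off the acyclic pieces. Beyond this, the chief technical burden is the bookkeeping of the successive Postnikov systems produced by Lemmas \ref{lem:homology4} and \ref{lem:homology6}, which is needed to guarantee that the cancellations are globally compatible and do not reintroduce terms already removed.
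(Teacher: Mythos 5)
Your architecture (construct a comparison chain map $\Phi$ from the canonical resolution to the given one, then contract its mapping cone after pullback) is genuinely different from the paper's, but as written it has a gap at the very first step. The obstruction to lifting $W^wC_0\hookrightarrow C_0$ through $C_0^{\le w}\to C_0$ is the composite $W^wC_0\to C_0\to C_0^{>w}$, which is a class in $\Hom_{\D(X_0)}(W^wC_0,C_0^{>w})$, not in $\Ext^1$; property \ref{sec:R:mix}.(7) kills $\Ext^n$ only for $n>0$, and kills $\Ext^0$ only when both arguments are perverse, which $C_0^{>w}$ is not. This degree-zero group is nonzero in general, and the lift can genuinely fail to exist over $X_0$: take $X_0=Spec(\BF_q)$, $C_0=\bar\Q_{l,X_0}$, and the pure resolution $\bar\Q_{l,X_0}\to A_0$ obtained from the triangles $0\to A_0\to A_0$ (at $w=0$) and $A_0\to C_0\to\bar\Q_{l,X_0}[1]$ (at $w=1$), where $A_0$ is the indecomposable self-extension of \cite[5.3.9]{BBD82} mentioned in the remark after the theorem; a lift $\phi_0$ of $\Id_{C_0}$ through $A_0\twoheadrightarrow C_0$ would split $A_0$. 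So $\Phi$ cannot in general be built over $X_0$. It can be built after pullback to $X$ (by \ref{sec:R:mix}.(6)(7) the pullback of any $X_0$-morphism from $\D_{\le w}$ to $\D_{>w}$ vanishes, since it lands in the Frobenius invariants of a group that has none), but then the second half of your argument loses its footing, because weights and purity are notions over $X_0$, not over $X$.

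The contraction step is also not right as stated. The pieces $\mathrm{cone}(Gr_W^wC\to C^w)$ need not be pure even when the map is defined over $X_0$ (a cone of a morphism of pure weight-$w$ complexes only has weights in $\{w,w+1\}$; think of the zero morphism), and over $X$, where your map actually lives, a cone of a morphism of split semisimple complexes need not be semisimple, so \ref{sec:R:mix}.(2) cannot be invoked. Moreover \ref{sec:R:mix}.(8) does not kill the classes you point at: the differentials of a pure resolution are $\Ext^1$-classes from a weight-$w$ object to a weight-$(w-1)$ object, which do not satisfy the hypotheses of (8) and indeed survive pullback (Example \ref{exam:R:2} has a nonzero differential over $\C$); the classes that do die under pullback are $\Ext^1$'s between pure complexes of the \emph{same} weight, and that is exactly what the paper uses. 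Note that the paper's proof avoids a comparison map altogether: it repeatedly replaces a non-perverse term $C_0^w$ of the given resolution by its perverse truncation $\ptau_{\ge0}C_0^w$ (dually $\ptau_{\le0}$), rebuilds the Postnikov data with Lemma \ref{lem:homology4}, uses (8) together with Lemma \ref{lem:homology2} to see that each replacement is a homotopy equivalence after pullback, and terminates because once all $C_0^w$ are perverse the uniqueness of the weight filtration (\ref{sec:R:mix}.(9)(10)) forces the resolution to be the canonical one. If you want to keep your strategy, you would have to construct $\Phi$ over $X$ by the Frobenius-invariance argument above and then prove contractibility of its cone by a mechanism other than purity; as proposed, the proof does not go through.
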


\begin{proof}
Suppose we are given a pure resolution of $C_0$ derived from a
system of exact triangles
\begin{equation}\label{eqn:unid}
  C_0^{\le w-1} \to C_0^{\le w} \to C_0^w.
\end{equation}
We shall show that its pullback to $X$ is homotopy equivalent to the
pullback of the canonical one.

Let $\ptau_{\le n}, \ptau_{\ge n}$ denote the truncation functors of
$\D(X_0)$ for the perverse t-structure.
If all $C_0^w$ are mixed perverse sheaves, an easy induction shows
that so are $C_0^{\le w}$. Property \ref{sec:R:mix}.(9)(10) then imply in this case
that the exact triangles \eqref{eqn:unid} must be the canonical ones
$W^{w-1}C_0 \to W^wC_0 \to Gr_W^wC_0$; we are done.
Otherwise, there exists $C_0^w$ such that $\ptau_{\le-1}C_0^w\neq0$
or $\ptau_{\ge1}C_0^w\neq0$. Without loss of generality we consider
the former case; the latter can be treated by passing to Verdier dual.

Let $w$ be maximal such that $\ptau_{\le-1}C_0^w\neq0$.
We form a triangle morphism
\begin{equation}\label{eqn:unie}
\xymatrixrowsep{1.5pc}\xymatrix{
  C_0^{\le w-1} \ar[d]_e \ar[r] & C_0^{\le w} \ar@{=}[d] \ar[r] & C_0^w \ar[d]^p \\
  \tilde{C}_0^{\le w-1} \ar[r] & C_0^{\le w} \ar[r] & \ptau_{\ge0}C_0^w
}
\end{equation}
where $p$ is the morphism in the exact triangle
$$\ptau_{\le-1}C_0^w \to C_0^{w} \xrightarrow{p} \ptau_{\ge0}C_0^w.$$
Applying Lemma \ref{lem:homology4} to the sequence
$$C_0^{\le w-2} \to C_0^{\le w-1} \xrightarrow{e} \tilde{C}_0^{\le w-1} \to C_0^{\le w},$$
then gives an exact triangle
\begin{equation}\label{eqn:unia}
  C_0^{\le w-2} \to \tilde{C}_0^{\le w-1} \to \tilde{C}_0^{w-1},
\end{equation}
a triangle morphism
\begin{equation}\label{eqn:unib}
\xymatrixrowsep{1.5pc}\xymatrix{
  \ptau_{\le-1}C_0^w \ar@{=}[d] \ar[r] & C_0^{w} \ar[d] \ar[r]^p & \ptau_{\ge0}C_0^w \ar[d] \ar[r]^----u & \\
  \ptau_{\le-1}C_0^w \ar[r] & C_0^{w-1}[1] \ar[r] & \tilde{C}_0^{w-1}[1] \ar[r]^----v &
}
\end{equation}
and a complex morphism
\begin{equation}\label{eqn:unic}
\xymatrixrowsep{1.5pc}\xymatrixcolsep{1pc}\xymatrix{
  \cdots \ar[r] & C_0^{w+1}[-1] \ar[r] \ar@{=}[d] & C_0^w \ar[r] \ar[d]
  & C_0^{w-1}[1] \ar[r] \ar[d] & C_0^{w-2}[2] \ar[r] \ar@{=}[d] & \cdots \\
  \cdots \ar[r] & C_0^{w+1}[-1] \ar[r] & \ptau_{\ge0}C_0^w \ar[r]
  & \tilde{C}_0^{w-1}[1] \ar[r] & C_0^{w-2}[2] \ar[r] & \cdots
}
\end{equation}
of which the bottom row is the one derived from the exact triangles
\eqref{eqn:unid} with $C_0^{w-1}$, $C_0^{\le w-1}$, $C_0^{w}$
replaced by $\tilde{C}_0^{w-1}$, $\tilde{C}_0^{\le w-1}$, $\ptau_{\ge0}C_0^w$,
respectively.

By the maximality of $w$, an inductive argument shows
$\ptau_{\le-1}C_0^{\le w}=0$.
Then from \ref{sec:R:mix}.(9)-(11) and from the long exact sequences associated to
the exact triangles in \eqref{eqn:unie}, we deduce that
\begin{equation*}
\pH^i(\tilde{C}_0^{\le w-1}) =
\begin{cases}
  \pH^i(C_0^{\le w-1}), & i>0, \\
  W^{w-1}\,\pH^0(C_0^{\le w}), & i=0, \\
  0, & i<0.
\end{cases}
\end{equation*}
Hence $\tilde{C}_0^{\le w-1}$ is of weight $\le w-1$.
Further, from \eqref{eqn:unia} and the bottom row of \eqref{eqn:unib}
we conclude that $\tilde{C}_0^{w-1}$ is pure of weight $w-1$.

It follows on the one hand that, up to a grade
shifting, the bottom row of \eqref{eqn:unic} is a pure resolution of
$C_0$; and on the other hand that the pullbacks of $u,v$ to $X$ are
zero by \ref{sec:R:mix}.(8), thus by Lemma \ref{lem:homology2} the
pullback of \eqref{eqn:unic} to $X$ is a homotopy equivalence.
Summarizing, we obtain a new pure resolution of $C_0$ whose pullback
to $X$ is homotopy equivalent to that of the original one.

Note that the above process remains all $C^i$ untouched
but truncates off nontrivial direct summands from $C^{w-1}$ and $C^w$.
Therefore, after finitely many repetitions, the original pure resolution can be
deformed to the canonical one. This completes the proof of our
theorem.
\end{proof}

\begin{rem}
The claim of the theorem may not be true if we do not pull back
pure resolutions to $X$. For example, let $X_0 = Spec(\BF_q)$ and,
accordingly, $X = Spec(\BF)$. We can form an exact sequence of pure
perverse sheaves (of weight 0)
$$\cdots \to 0 \to \bar\Q_{l,X_0} \to A_0 \to \bar\Q_{l,X_0} \to 0 \to \cdots$$
with $A_0$ indecomposable \cite[5.3.9]{BBD82}. It is easy to realize
the above sequence as a pure resolution of the zero mixed perverse
sheaf, which is, however, may not be homotopic to zero. Indeed, the
existence of such indecomposable $A_0$ is the obstacle preventing
the morphisms $u,v$ in \eqref{eqn:unib} from being zero. If we pull
back the above sequence to $X$, it yields now a complex homotopic to
zero
$$\cdots \to 0 \to \bar\Q_{l,X} \to \bar\Q_{l,X}\oplus\bar\Q_{l,X} \to \bar\Q_{l,X} \to 0 \to \cdots.$$
\end{rem}

\subsection{The complex $T^\bullet$}\label{sec:R:R}

First, let us remark that, according to the standard reduction
technique \cite[6.1]{BBD82} from the base field $\C$ to finite
fields, it makes sense to pull back a mixed complex to a complex
algebraic variety $X$ thus form a complex of $\C$-sheaves, as if $X$
is obtained from a scheme over a finite field by base field
extension.

Readers who are unsatisfactory with such reduction may simply bypass
it by transferring from the very beginning of this paper to the
setting of varieties over algebraic closures of finite fields and
categories of $\bar\Q_l$-sheaves.

\smallskip

Keep the notations of Section \ref{sec:pre:flag}. Let $X=G/P$ be a
partial flag variety. For each $w\in\W^P$ we set
\begin{equation}
  \Delta^+_w = j_{w!}\C_{X_w}[\dim X_w], \quad \Delta^-_w = D\Delta^+_w
\end{equation}
where $j_w: X_w\to X$ is the inclusion. Since $j_w$ is an affine
morphism, $\Delta^\pm_w$ are perverse sheaves on $X$.

By regarding $\C_{X_w}$ as the pullback of a constant
$\bar\Q_l$-sheaf (pure of weight 0) for each $w\in\W^P$, we are
clear from which mixed perverse sheaves $\Delta^\pm_w$ are pulled
back. Then we define $T^\bullet(P,\Delta^\pm_w)$ to be the canonical
pure resolutions of $\Delta^\pm_w\in\M_B(X)$.

The first properties of these complexes are as follows.

\begin{prop}
Let $P\subset G$ be parabolic subgroups containing $B$.
\begin{enumerate}
\setlength{\itemsep}{.3ex}
\item
$T^n(P,\Delta^\pm_w)[-n]$ are semisimple subquotients of
$\Delta^\pm_w$. In particular, they are self dual and
$B$-equivariant.

\item
$DT^\bullet(P,\Delta^\pm_w) = T^\bullet(P,\Delta^\mp_w)$.
\end{enumerate}
\end{prop}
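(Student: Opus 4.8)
The plan is to extract both assertions directly from the construction of the canonical pure resolution in Section \ref{sec:R:mix} and from Proposition \ref{prop:mix}. Write $\Delta^\pm_{w,0}$ for the mixed perverse sheaf on $X_0$ of which $\Delta^\pm_w$ is the pullback; it is a perverse sheaf because $j_w$ is affine and it is mixed because it is obtained from the (pure) constant sheaf by $j_{w!}$ and $D$. By definition $T^\bullet(P,\Delta^\pm_w)$ is the pullback to $X$ of the canonical pure resolution \eqref{eqn:mix5} of $\Delta^\pm_{w,0}$, whose term in degree $-v$ is $Gr_W^v\Delta^\pm_{w,0}[-v]$; after relabelling, $T^n(P,\Delta^\pm_w)[-n]$ is the pullback of the weight grade piece $Gr_W^{-n}\Delta^\pm_{w,0}$.

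For (1), I would first observe that $Gr_W^{-n}\Delta^\pm_{w,0}$ is a subquotient of $\Delta^\pm_{w,0}$ by construction and is a perverse sheaf pure of weight $-n$; hence by \ref{sec:R:mix}.(2) its pullback $T^n(P,\Delta^\pm_w)[-n]$ is a semisimple perverse sheaf on $X$, and it is a subquotient of $\Delta^\pm_w$. It then remains to identify its simple summands. Since $j_w$ is affine and $B$-equivariant, \ref{sec:pre:perverse}.(12) gives $\Delta^+_w=j_{w!}\C_{X_w}[\dim X_w]\in\M_B(X)$, whence $\Delta^-_w=D\Delta^+_w\in\M_B(X)$ as well. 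Therefore every composition factor of $\Delta^\pm_w$ is a $B$-equivariant simple perverse sheaf, hence equals $\IC(\overline{X}_v)$ for some Schubert cell $X_v$ by \ref{sec:pre:perverse}.(13); each such intersection complex is self dual and $B$-equivariant. Since $D$ commutes with finite direct sums, a semisimple perverse sheaf all of whose simple summands are self dual is itself self dual, and thus $T^n(P,\Delta^\pm_w)[-n]$ is self dual and $B$-equivariant.

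For (2), I would invoke Proposition \ref{prop:mix}.(3) together with the ``moreover'' clause of its statement: Verdier duality carries the canonical pure resolution of $\Delta^+_{w,0}$ onto the canonical pure resolution of $D\Delta^+_{w,0}=\Delta^-_{w,0}$. Since the pullback functor $\D_m(X_0)\to\D(X)$ commutes with $D$, this gives $DT^\bullet(P,\Delta^+_w)=T^\bullet(P,\Delta^-_w)$; interchanging the roles of $\Delta^+_w$ and $\Delta^-_w$ and using $D^2=\Id$ then yields the uniform statement $DT^\bullet(P,\Delta^\pm_w)=T^\bullet(P,\Delta^\mp_w)$.

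The main thing to be careful about is the grading bookkeeping in (2): one must check that $Gr_W^j(DC_0)\cong D(Gr_W^{-j}C_0)$, so that the degree-$n$ term of $DT^\bullet(P,\Delta^+_w)$ matches that of $T^\bullet(P,\Delta^-_w)$ term by term rather than only up to a shift. This is, however, already built into \ref{sec:R:mix}.(10) (uniqueness of the weight filtration) together with the weight-shifting behaviour of $D$ used in the proof of Proposition \ref{prop:mix}.(3), so I do not anticipate a real obstacle.
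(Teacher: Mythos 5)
Your proof is correct and follows essentially the same route as the paper, which simply invokes the definition of the canonical pure resolution together with \ref{sec:R:mix}.(2) for claim (1) and Proposition \ref{prop:mix}(3) (with its ``moreover'' clause) for claim (2); your extra details identifying the graded pieces $Gr_W^{-n}\Delta^\pm_{w,0}$ and checking self-duality and $B$-equivariance via the simple constituents $\IC(\overline{X}_v)$ just spell out what the paper leaves implicit. The only blemish is the misnumbered citations: $B$-equivariance of $\Delta^\pm_w$ comes from \ref{sec:pre:perverse}.(10), and the classification of $B$-equivariant simple perverse sheaves as intersection complexes of Schubert cells is \ref{sec:pre:perverse}.(16), not (12) and (13).
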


\begin{proof}
Claim (1) is immediate from the definition of canonical pure
resolution and \ref{sec:R:mix}.(2). Claim (2) follows from
Proposition \ref{prop:mix}(3).
\end{proof}

\begin{exam}
For the unit element $e\in\W^P$, $T^\bullet(P,\Delta^\pm_e)$ are
clearly the complex concentrated at degree $0$
$$\cdots \to 0 \to 0 \to \Delta^+_e \to 0 \to 0 \to \cdots.$$
\end{exam}

\begin{exam}\label{exam:R:2}
For a simple reflection $s\in\W^P$, note that $\overline{X}_{s} =
X_{s} \sqcup X_e \cong \BP^1$ and $\IC(\overline{X}_{s}) =
\C_{\overline{X}_{s}}[1]$. The complex $T^\bullet(P,\Delta^+_s)$ is
the one concentrated at degree $-1,0$
$$\cdots \to 0 \to \IC(\overline{X}_{s})[-1] \xrightarrow{a} \Delta^+_e \to 0 \to 0 \to \cdots$$
where $a$ is the adjunction morphism $\C_{\overline{X}_{s}} \to
j_{e*}j_e^*\C_{\overline{X}_{s}}$. More precisely,
$T^\bullet(P,\Delta^+_s)$ is the one derived from the exact
sequences
\begin{align*}
  & 0 \hookrightarrow \Delta^+_e \twoheadrightarrow \Delta^+_e, \\
  & \Delta^+_e \hookrightarrow \Delta^+_{s} \twoheadrightarrow \IC(\overline{X}_{s}),
\end{align*}
of which the latter is actually the adjunction triangle
$$j_{s!}j_{s}^!\IC(\overline{X}_{s})
  \to \IC(\overline{X}_{s})
  \to j_{e*}j_e^*\IC(\overline{X}_{s}).
$$
\end{exam}

\begin{exam}
In the notations of Section \ref{sec:mod:Q}, we have the followings,
where $\IC(\overline{X}_w)$ is abbreviated to $\IC_w$.
\begin{footnotesize}
\begin{equation*}
\begin{array}{l @{\;=\;\cdots\;\to\;} *{3}{c@{\;\to\;}} c @{\;\to\;\cdots}}
  \vspace{3pt}
  T^\bullet(P_{(1,2)},\Delta^+_{s_2s_1}) & 0 & 0 & \IC_{s_2s_1}[-2] & \IC_{s_1}[-1] \\
  \vspace{3pt}
  T^\bullet(P_{(1,3)},\Delta^+_{s_3s_2s_1}) & 0 & \IC_{s_3s_2s_1}[-3] & \IC_{s_2s_1}[-2] & 0 \\
  T^\bullet(P_{(2,2)},\Delta^+_{s_2s_1s_3s_2}) & \IC_{s_2s_1s_3s_2}[-4] & \IC_{s_1s_3s_2}[-3] & \IC_e[-2] & 0 \\
\end{array}
\end{equation*}
\end{footnotesize}
\end{exam}

For a collection of parabolic subgroups $P,P_1,\dots,P_k\subset G$
containing $B$, there is a principal $P$-bundle
\begin{align*}
  \mu^{P,P_1,\dots,P_k} : G\times G/P_1\times\cdots\times G/P_k
  & \; \to \; G/P\times G/P_1\times\cdots\times G/P_k \\
  (g,x_1,\dots,x_k) & \; \mapsto \; ([g],gx_1,\dots,gx_k).
\end{align*}
Recall that given a principal $P$-bundle $\mu: X\to Y$, the functor
$\mu^*[\dim P]$ is perverse t-exact and, moreover, together with the
functor $\mu_\flat=\pH^{-\dim P}\mu_*$ it defines an equivalence of
the categories $\M_P(X)$, $\M(Y)$. By abusing notations, when
$T^\bullet$ is the complex derived from a system of exact sequences
$C^{\le w-1} \hookrightarrow C^{\le w} \twoheadrightarrow C^w$ in
$\M_P(X)$ (cf. Lemma \ref{lem:homology5}), we denote by $\mu_\flat
T^\bullet$ the complex derived from the system of exact sequences
$\mu_\flat C^{\le w-1} \hookrightarrow \mu_\flat C^{\le w}
\twoheadrightarrow \mu_\flat C^w$ in $\M(Y)$.

\smallskip

Let $P,Q\subset G$ be parabolic subgroups containing $B$ and let $G$
acts on $G/P\times G/Q$ diagonally. Let $\W^{P,Q}$ be the set of
shortest representatives of the double cosets
$\W^P\backslash\W/\W^Q$. Then we have a decomposition by $G$-orbits
\begin{equation}
  G/P\times G/Q=\bigsqcup_{w\in\W^{P,Q}} O_w
\end{equation}
where $O_w$ is the $G$-orbit of $(P,\dot{w}Q)$.

Notice the one-to-one correspondence between the $G$-orbits of
$G/P\times G/Q$ and the $P$-orbits of $G/Q$
\begin{equation}\label{eqn:R:orbit}
  O_w \leftrightarrow P\dot{w}Q/Q.
\end{equation}

\smallskip

Assume $w\in\W^{P,Q}$ is such that $w_Pw=ww_Q$ where $w_P,w_Q$ are
the longest elements in $\W_P,\W_Q$, respectively. Then
$\Delta^\pm_w \in \M_P(G/Q)$. We define
$T^\bullet(P,Q,\Delta^\pm_w)$ to be the canonical pure resolutions
of
$$\mu^{P,Q}_\flat(\C_G[\dim G]\boxtimes\Delta^\pm_w) \in \M_G(G/P\times G/Q).$$
The following proposition follows easily from \ref{sec:R:mix}.(2)
and Proposition \ref{prop:mix}(2).

\begin{prop}\label{prop:R0b}
Let $P,Q\subset G$ be parabolic subgroups containing $B$.
\begin{enumerate}
\setlength{\itemsep}{.3ex}
\item
$T^n(P,Q,\Delta^\pm_w)[-n]$ are semisimple $G$-equivariant perverse
sheaves.

\item
$\tau^*T^\bullet(P,Q,\Delta^\pm_w) =
T^\bullet(Q,P,\Delta^\pm_{w^{-1}})$, where $\tau: G/P\times G/Q \to
G/Q\times G/P$ is the transposition.

\item
$T^{\bullet-\dim G/P} (P,Q,\Delta^\pm_w) = \mu^{P,Q}_\flat \big(
\C_G[\dim G]\boxtimes T^\bullet(Q,\Delta^\pm_w) \big) [-\dim G/P].$
\end{enumerate}
\end{prop}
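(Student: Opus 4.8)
The plan is to deduce all three assertions from Proposition \ref{prop:mix}(2) --- smooth pullback transforms canonical pure resolutions into canonical pure resolutions --- together with \ref{sec:R:mix}.(1)(2), by which a grade piece of a weight filtration is a pure perverse sheaf whose pullback to $X$ is semisimple. Throughout I abbreviate
$$\M^\pm_w = \mu^{P,Q}_\flat\big(\C_G[\dim G]\boxtimes\Delta^\pm_w\big),$$
a mixed perverse sheaf which, by the definition preceding the proposition, lies in $\M_G(G/P\times G/Q)$, and whose canonical pure resolution is by definition $T^\bullet(P,Q,\Delta^\pm_w)$, so that $T^n(P,Q,\Delta^\pm_w)[-n]=Gr_W^{-n}\M^\pm_w$. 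Claim (1) then comes essentially for free: by \ref{sec:R:mix}.(10) the weight filtration is functorial, hence each $Gr_W^m\M^\pm_w$ is a $G$-equivariant subquotient of $\M^\pm_w$, and by \ref{sec:R:mix}.(1)(2) it is pure, so its pullback is semisimple; here one uses that $G$ acts on $G/P\times G/Q$ with the finitely many orbits $O_w$, so that ``$G$-equivariant semisimple perverse sheaf'' carries the meaning of \ref{sec:pre:perverse}.(13).

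For Claim (2) I would first note that the flip $\tau$ is an isomorphism, hence smooth of relative dimension $0$, so Proposition \ref{prop:mix}(2) gives that $\tau^*$ sends $T^\bullet(P,Q,\Delta^\pm_w)$ to the canonical pure resolution of $\tau^*\M^\pm_w$, with no grade shift. It then remains to identify $\tau^*\M^\pm_w$ with $\mu^{Q,P}_\flat(\C_G[\dim G]\boxtimes\Delta^\pm_{w^{-1}})$. For the $+$ sign this is a purely geometric check: under the bijection \eqref{eqn:R:orbit} the complex $\M^+_w$ is, up to a shift, the extension by zero of the constant sheaf along the $G$-orbit $O_w\subset G/P\times G/Q$; the flip carries $O_w$ isomorphically onto $O_{w^{-1}}\subset G/Q\times G/P$ compatibly with these extensions; and the hypothesis $w_Pw=ww_Q$ yields $w_Qw^{-1}=w^{-1}w_P$, so $\Delta^\pm_{w^{-1}}\in\M_Q(G/P)$ and the right-hand side is legitimate. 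The $-$ case follows on applying $D$ and using $\tau^*D=D\tau^*$.

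For Claim (3) the argument is a two-step transport. Since the projection $\mathrm{pr}\colon G\times G/Q\to G/Q$ is smooth and $\mathrm{pr}^*[\dim G]\Delta^\pm_w=\C_G[\dim G]\boxtimes\Delta^\pm_w$, Proposition \ref{prop:mix}(2) identifies $\C_G[\dim G]\boxtimes T^\bullet(Q,\Delta^\pm_w)$, up to a grade shift, with the canonical pure resolution of $\C_G[\dim G]\boxtimes\Delta^\pm_w$. On the other hand $\mu^{P,Q}$ is a principal $P$-bundle, so $\mu^{P,Q*}[\dim P]$ is smooth pullback, and Proposition \ref{prop:mix}(2) applied to it sends the canonical pure resolution $T^\bullet(P,Q,\Delta^\pm_w)$ of $\M^\pm_w$ to the canonical pure resolution of $\mu^{P,Q*}[\dim P]\M^\pm_w=\C_G[\dim G]\boxtimes\Delta^\pm_w$, again up to a grade shift. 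Comparing these two canonical pure resolutions and applying the quasi-inverse equivalence $\mu^{P,Q}_\flat$ gives $T^\bullet(P,Q,\Delta^\pm_w)=\mu^{P,Q}_\flat(\C_G[\dim G]\boxtimes T^\bullet(Q,\Delta^\pm_w))$ up to a net grade shift. The step I expect to be the real work is pinning down that shift exactly: one must keep straight the degree at which the generic term $Gr_W^m(-)[-m]$ sits, the weight (hence degree) shifts induced by $\mathrm{pr}^*[\dim G]$ and by $\mu^{P,Q*}[\dim P]$ --- both governed by \ref{sec:R:mix}.(4), $f^*[d]=f^![-d](-d)$ --- and the identity $\dim G-\dim P=\dim G/P$; carrying this bookkeeping through produces precisely the index shift $\bullet-\dim G/P$ and the overall shift $[-\dim G/P]$, and everything else is formal.
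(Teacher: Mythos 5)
Your proposal is correct and is essentially the paper's own argument: the paper disposes of this proposition in one line by citing exactly the two ingredients you use, namely \ref{sec:R:mix}.(2) and Proposition \ref{prop:mix}(2), and your weight bookkeeping in (3) does land on the stated shifts, since $\mu^{P,Q*}[\dim P]$ and $\C_G[\dim G]\boxtimes(-)$ raise weights by $\dim P$ and $\dim G$ respectively. One small remark on (2): deducing the $-$ case by applying $D$ is legitimate here only because $w\in\W^{P,Q}$ with $w_Pw=ww_Q$ forces $\ell(w_P)=\ell(w_Q)$, hence $\dim G/P=\dim G/Q$, so the Tate-twist regradings that $D$ introduces on the two sides agree and cancel; the direct flip argument with $j_*$ in place of $j_!$, identical to your $+$ case, avoids having to notice this.
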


Next, let $X,Y,Z$ be algebraic varieties and recall the convolution
product of $C\in\D(X\times Y)$ and $C'\in\D(Y\times Z)$
\begin{equation}
  C*C' = \pi_{13!} (\pi_{12}^*C \otimes \pi_{23}^*C') \in\D(X\times Z)
\end{equation}
where $\pi_{ij}$ is the projection of $X\times Y\times Z$ onto the
$i,j$-th coordinates. In this way, each $C\in\D(X\times Y)$ gives
rise to a functor
\begin{equation}
  C*: \D(Y\times Z) \to \D(X\times Z).
\end{equation}
It is left adjoint to the functor
\begin{equation}
  D\circ\tau^*C*\circ D : \D(X\times Z) \to \D(Y\times Z)
\end{equation}
where $\tau: X\times Y\to Y\times X$ is the transposition.

\begin{lem}\label{lem:R2}
Let $P,Q,R\subset G$ be parabolic subgroups containing $B$. There
are natural isomorphisms for $C_1\in\M_P(G/Q)$, $C_2\in\M_Q(G/R)$,
\begin{align*}
  & \pi_{12}^* \mu^{P,Q}_\flat \big( \C_G[\dim G]\boxtimes C_1 \big) [-\dim G/P] \otimes
  \pi_{23}^* \mu^{Q,R}_\flat \big( \C_G[\dim G]\boxtimes C_2 \big) [-\dim G/Q] \\
  & \cong \mu^{P,Q,R}_\flat \Big( \C_G[\dim G]\boxtimes
  \mu^{Q,R}_\flat \big( \mu^{Q*}[\dim Q]C_1\boxtimes C_2 \big) \Big) [-\dim G/P].
\end{align*}
\end{lem}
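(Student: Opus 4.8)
The plan is to pull both sides back along the principal $P$-bundle $\mu=\mu^{P,Q,R}\colon G\times G/Q\times G/R\to G/P\times G/Q\times G/R$ and compare them there, where everything reduces to the bundle identities. The tools involved are all formal: for a principal $H$-bundle $\nu$ the identity $\nu^*\nu_\flat=[-\dim H]$ on $H$-equivariant objects and the full faithfulness of $\nu^*[\dim H]$ on perverse sheaves (\ref{sec:pre:perverse}.(8)(11)); the functoriality and monoidality of $(-)^*$; the equivariance datum carried by any $G$-equivariant complex; and the arithmetic $\dim G=\dim P+\dim G/P=\dim Q+\dim G/Q$. The computation below will show that both sides are shifts of a single perverse sheaf, so that the isomorphism obtained upstairs descends, functorially, to the asserted one on $G/P\times G/Q\times G/R$ by the full faithfulness just quoted.

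First I would compute the $\mu$-pullback of the left-hand side. Since $\pi_{12}\circ\mu$ factors through $\mu^{P,Q}$ one gets $\mu^*\pi_{12}^*\mu^{P,Q}_\flat(\C_G[\dim G]\boxtimes C_1)\cong(\C_G[\dim G]\boxtimes C_1\boxtimes\C_{G/R})[-\dim P]$, which together with the shift $[-\dim G/P]$ becomes $(\C_G[\dim G]\boxtimes C_1\boxtimes\C_{G/R})[-\dim G]$. For the second factor, note that $\pi_{23}\circ\mu$ is the composite of the projection $G\times G/Q\times G/R\to G\times(G/Q\times G/R)$ with the action map $G\times(G/Q\times G/R)\to G/Q\times G/R$ and is $P$-invariant; hence the $G$-equivariance structure of $A_2:=\mu^{Q,R}_\flat(\C_G[\dim G]\boxtimes C_2)$ provides a canonical isomorphism $\mu^*\pi_{23}^*A_2\cong\C_G\boxtimes A_2$. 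Therefore the $\mu$-pullback of the left-hand side equals
\[
  \C_G[\dim G]\boxtimes\big(\mathrm{pr}_1^*C_1\otimes A_2\big)[-\dim G-\dim G/Q],
\]
where $\mathrm{pr}_1\colon G/Q\times G/R\to G/Q$. On the other hand $\C_G[\dim G]\boxtimes\mu^{Q,R}_\flat(\mu^{Q*}[\dim Q]C_1\boxtimes C_2)$ is $P$-equivariant, so the $\mu$-pullback of the right-hand side is $(\C_G[\dim G]\boxtimes\mu^{Q,R}_\flat(\mu^{Q*}[\dim Q]C_1\boxtimes C_2))[-\dim G]$. Thus the lemma is reduced to exhibiting, in $\D(G/Q\times G/R)$, an isomorphism
\[
  \mathrm{pr}_1^*C_1\otimes A_2\,[-\dim G/Q]\;\cong\;\mu^{Q,R}_\flat\big(\mu^{Q*}[\dim Q]C_1\boxtimes C_2\big).
\]

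This reduced isomorphism I would establish by pulling back once more along the principal $Q$-bundle $\mu^{Q,R}\colon G\times G/R\to G/Q\times G/R$: using $\mu^{Q,R*}\mu^{Q,R}_\flat=[-\dim Q]$ on $Q$-equivariant objects (applied on the right-hand side and to $A_2$), the factorization $\mathrm{pr}_1\circ\mu^{Q,R}=\mu^Q\circ\mathrm{pr}_G$, and the monoidality of $\mu^{Q,R*}$, both sides pull back to the single complex $\mu^{Q*}C_1\boxtimes C_2$; since both are shifts of perverse sheaves, the full faithfulness of $\mu^{Q,R*}[\dim Q]$ yields the reduced isomorphism, functorially in $C_1$ and $C_2$. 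Chaining the reductions backwards then proves the lemma. The one step demanding genuine care (rather than any conceptual difficulty) is the bookkeeping of the numerous shifts, where the relations $\dim G=\dim P+\dim G/P=\dim Q+\dim G/Q$ are invoked repeatedly, together with the verification that each isomorphism produced is natural in $C_1$ and $C_2$ so that the final one is; I expect the bulk of the work to lie there.
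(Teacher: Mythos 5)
Your computation is correct and is exactly the ``direct computation'' the paper leaves unstated: pull back along the principal bundles $\mu^{P,Q,R}$ and $\mu^{Q,R}$, use $\nu^*\nu_\flat=[-\dim H]$ on equivariant perverse sheaves together with the $G$-equivariance of $\mu^{Q,R}_\flat(\C_G[\dim G]\boxtimes C_2)$, and descend by full faithfulness of smooth pullback on perverse sheaves. The only point worth making explicit is that the descent steps require both sides to be shifted perverse sheaves, which for the tensor-product side follows from the standard fact that perversity can be detected after pullback along a smooth surjective morphism with connected fibers --- precisely the pullbacks you have already computed.
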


\begin{proof}
A direct computation.
\end{proof}

\begin{lem}\label{lem:R3}
Let $P,Q,R\subset G$ be parabolic subgroups containing $B$. The
complex
\begin{equation}\label{eqn:R:1}
  \pi_{12}^* T^\bullet(P,Q,\Delta^\epsilon_{w_1})
  \otimes \pi_{23}^* T^{\bullet-\dim G/Q}(Q,R,\Delta^\varepsilon_{w_2})
\end{equation}
is the canonical pure resolution of the perverse sheaf on $G/P\times
G/Q\times G/R$
\begin{equation}\label{eqn:R:2}
  \mu^{P,Q,R}_\flat \Big( \C_G[\dim G] \boxtimes
  \mu^{Q,R}_\flat \big( \mu^{Q*}[\dim Q]\Delta^\epsilon_{w_1} \boxtimes \Delta^\varepsilon_{w_2} \big) \Big).
\end{equation}
\end{lem}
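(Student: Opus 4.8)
The plan is to reduce the statement, via Lemma \ref{lem:R2}, to an iterated application of Proposition \ref{prop:mix}, whose parts (1)--(4) are tailored precisely for this kind of argument.

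First I would rewrite the two complexes being tensored in \eqref{eqn:R:1}. By Proposition \ref{prop:R0b}(3) together with the definition of $T^\bullet(P,Q,-)$, up to a grade shift the complex $T^\bullet(P,Q,\Delta^\epsilon_{w_1})$ is $\mu^{P,Q}_\flat\big(\C_G[\dim G]\boxtimes T^\bullet(Q,\Delta^\epsilon_{w_1})\big)$, where $T^\bullet(Q,\Delta^\epsilon_{w_1})$ is the canonical pure resolution of $\Delta^\epsilon_{w_1}\in\M_P(G/Q)$; likewise, up to a grade shift $T^{\bullet-\dim G/Q}(Q,R,\Delta^\varepsilon_{w_2})$ is $\mu^{Q,R}_\flat\big(\C_G[\dim G]\boxtimes T^\bullet(R,\Delta^\varepsilon_{w_2})\big)[-\dim G/Q]$. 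Substituting these into $\pi_{12}^*(-)\otimes\pi_{23}^*(-)$ and applying Lemma \ref{lem:R2} termwise — using that the isomorphism of that lemma is natural in $C_1,C_2$, hence intertwines the two differentials — I obtain that \eqref{eqn:R:1} is, up to a grade shift, the complex
$$\mu^{P,Q,R}_\flat\Big(\C_G[\dim G]\boxtimes\mu^{Q,R}_\flat\big(\mu^{Q*}[\dim Q]\,T^\bullet(Q,\Delta^\epsilon_{w_1})\boxtimes T^\bullet(R,\Delta^\varepsilon_{w_2})\big)\Big).$$

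Next I would peel off the operations in this expression one at a time, checking at each stage that the operation sends a canonical pure resolution to a canonical pure resolution (up to a grade shift). For the smooth pullback $\mu^{Q*}[\dim Q]$ this is Proposition \ref{prop:mix}(2) together with its ``moreover'' clause; for the two external tensor products by $\boxtimes$ (including the one by the fixed pure object $\C_G[\dim G]$) it is Proposition \ref{prop:mix}(4); and for the two descents $\mu^{Q,R}_\flat$ and $\mu^{P,Q,R}_\flat$ along principal bundles it follows from Proposition \ref{prop:mix}(2) applied to the inverse equivalences $\mu^{Q,R*}[\dim Q]$ and $\mu^{P,Q,R*}[\dim P]$, which are smooth pullbacks — here one uses only that the complexes in sight are equivariant, which holds because the standing hypotheses $w_Pw_1=w_1w_Q$, $w_Qw_2=w_2w_R$ make $\Delta^\epsilon_{w_1}\in\M_P(G/Q)$ and $\Delta^\varepsilon_{w_2}\in\M_Q(G/R)$. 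Since $T^\bullet(Q,\Delta^\epsilon_{w_1})$ and $T^\bullet(R,\Delta^\varepsilon_{w_2})$ are the canonical pure resolutions of $\Delta^\epsilon_{w_1}$ and $\Delta^\varepsilon_{w_2}$, this shows that \eqref{eqn:R:1} is the canonical pure resolution of
$$\mu^{P,Q,R}_\flat\Big(\C_G[\dim G]\boxtimes\mu^{Q,R}_\flat\big(\mu^{Q*}[\dim Q]\Delta^\epsilon_{w_1}\boxtimes\Delta^\varepsilon_{w_2}\big)\Big),$$
which is the perverse sheaf \eqref{eqn:R:2} (it is indeed perverse, being built from perverse sheaves by perverse-$t$-exact functors).

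I expect the obstacles to be bookkeeping rather than conceptual. The most delicate point is upgrading Lemma \ref{lem:R2} from an isomorphism of objects to an isomorphism of complexes: one must verify that it respects the differentials coming from the two weight filtrations, i.e., that it is genuinely functorial in $C_1$ and $C_2$ — this should be visible from the ``direct computation'' that proves that lemma. A close second is the grade-shift and Tate-twist accounting: each smooth pullback and each $\boxtimes$ by $\C_G[\dim G]$ moves weights, hence gradings, by a fixed amount, and these must be shown to sum to the shift $T^\bullet$ versus $T^{\bullet-\dim G/Q}$ recorded in the statement, exactly as in the proof of Proposition \ref{prop:R0b}(3). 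Finally, one should record that $\mu_\flat$ for a principal bundle preserves canonical pure resolutions; this is not literally among the cases listed in Proposition \ref{prop:mix}, but is immediate from the $\mu^*[\dim]$ case by inverting the equivalence on equivariant objects.
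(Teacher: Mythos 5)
Your proposal is correct and follows essentially the paper's own argument, only run in the reverse direction: the paper starts from \eqref{eqn:R:2}, computes its canonical pure resolution inside-out via Proposition \ref{prop:mix}(2)(4) (with the $\mu_\flat$-descent handled by the convention stated before Proposition \ref{prop:R0b}), and then identifies the result with \eqref{eqn:R:1} using Proposition \ref{prop:R0b}(3) and Lemma \ref{lem:R2}, exactly the ingredients you use. Your remarks on the naturality of Lemma \ref{lem:R2} and on the grade-shift bookkeeping are the same points the paper treats implicitly, so there is no substantive gap.
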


\begin{proof}
By Proposition \ref{prop:mix}(2)(4), the canonical pure resolution
of
$$\mu^{Q,R}_\flat (\mu^{Q*}[\dim Q]\Delta^\epsilon_{w_1} \boxtimes \Delta^\varepsilon_{w_2})$$
is
$$\mu^{Q,R}_\flat \Big( \mu^{Q*}[\dim Q] T^\bullet(Q,\Delta^\epsilon_{w_1}) \boxtimes
  T^\bullet(R,\Delta^\varepsilon_{w_2}) \Big).
$$
Thus, the canonical pure resolution of \eqref{eqn:R:2} is
$$\mu^{P,Q,R}_\flat \Big( \C_G[\dim G] \boxtimes
  \mu^{Q,R}_\flat \Big( \mu^{Q*}[\dim Q] T^{\bullet+\dim G/P}(Q,\Delta^\epsilon_{w_1}) \boxtimes
  T^\bullet(R,\Delta^\varepsilon_{w_2}) \Big) \Big) [-\dim G/P]
$$
which by Proposition \ref{prop:R0b}(3) and Lemma \ref{lem:R2} is
equal to \eqref{eqn:R:1}.
\end{proof}

\begin{prop}\label{prop:R1}
Let $P,Q,R\subset G$ be parabolic subgroups containing $B$. Then
$$T^\bullet(P,Q,\Delta^\pm_{w_1}) * T^{\bullet-\dim G/Q}(Q,R,\Delta^\pm_{w_2})
  \simeq T^\bullet(P,R,\Delta^\pm_{w_1w_2})
$$
whenever $\ell(w_1w_2) = \ell(w_1) + \ell(w_2)$.
\end{prop}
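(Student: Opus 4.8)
The plan is to compute the convolution directly from its definition and then appeal to Theorem \ref{thm:mix}. By definition of the convolution product,
\[
  T^\bullet(P,Q,\Delta^\pm_{w_1}) * T^{\bullet-\dim G/Q}(Q,R,\Delta^\pm_{w_2})
  = \pi_{13!}\bigl(\pi_{12}^* T^\bullet(P,Q,\Delta^\pm_{w_1}) \otimes \pi_{23}^* T^{\bullet-\dim G/Q}(Q,R,\Delta^\pm_{w_2})\bigr),
\]
and by Lemma \ref{lem:R3} the complex to which $\pi_{13!}$ is applied is the canonical pure resolution of the perverse sheaf \eqref{eqn:R:2} on $G/P\times G/Q\times G/R$. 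Since $G/Q$ is complete, $\pi_{13}$ is proper, so by Proposition \ref{prop:mix}(1) the image under $\pi_{13!}$ of this canonical pure resolution is a pure resolution of $\pi_{13!}$ of \eqref{eqn:R:2}. Thus everything reduces to the identification
\[
  \pi_{13!}\Bigl(\mu^{P,Q,R}_\flat \bigl( \C_G[\dim G] \boxtimes \mu^{Q,R}_\flat ( \mu^{Q*}[\dim Q]\Delta^\pm_{w_1} \boxtimes \Delta^\pm_{w_2} ) \bigr)\Bigr)
  \;=\; \mu^{P,R}_\flat\bigl(\C_G[\dim G]\boxtimes\Delta^\pm_{w_1w_2}\bigr),
\]
the right-hand side being the mixed perverse sheaf whose canonical pure resolution is $T^\bullet(P,R,\Delta^\pm_{w_1w_2})$; here the hypothesis $\ell(w_1w_2)=\ell(w_1)+\ell(w_2)$ together with $w_Pw_1=w_1w_Q$ and $w_Qw_2=w_2w_R$ gives $w_P(w_1w_2)=w_1w_Qw_2=(w_1w_2)w_R$, and the length hypothesis ensures $w_1w_2\in\W^{P,R}$, so the right-hand side makes sense.

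To prove this identification, unwind the $\mu_\flat$'s using Lemma \ref{lem:R2}: up to the normalizing shifts already built into Lemmas \ref{lem:R2} and \ref{lem:R3}, the left-hand side is the convolution of the standard $G$-equivariant perverse sheaves $\mu^{P,Q}_\flat(\C_G[\dim G]\boxtimes\Delta^\pm_{w_1})$ and $\mu^{Q,R}_\flat(\C_G[\dim G]\boxtimes\Delta^\pm_{w_2})$, which under the correspondence \eqref{eqn:R:orbit} are the standard objects ($!$-extensions of shifted constant sheaves for the sign $+$, $*$-extensions for $-$) on the $G$-orbit closures $\overline{O}_{w_1}\subset G/P\times G/Q$ and $\overline{O}_{w_2}\subset G/Q\times G/R$. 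The point is that when $\ell(w_1w_2)=\ell(w_1)+\ell(w_2)$ the configuration variety
\[
  O' = \{(x,y,z)\in G/P\times G/Q\times G/R \mid (x,y)\in O_{w_1},\ (y,z)\in O_{w_2}\}
  = \pi_{12}^{-1}(O_{w_1})\cap\pi_{23}^{-1}(O_{w_2})
\]
is a single $G$-orbit and $\pi_{13}$ restricts to an isomorphism $O'\xrightarrow{\ \sim\ }O_{w_1w_2}$ — the ``middle'' $y$ being uniquely determined by $(x,z)$ precisely because the lengths add — while $\pi_{13}$ carries $\overline{O'}$ properly onto $\overline{O}_{w_1w_2}$ and is an isomorphism over $O_{w_1w_2}$. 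Proper base change then computes $\pi_{12}^*$ of a standard object as the standard object on $\pi_{12}^{-1}(O_{w_1})$, the tensor product as the standard object on $O'$, and finally $\pi_{13!}$ of the latter as the standard object on $O_{w_1w_2}$; this yields the displayed identification, and in particular shows $\pi_{13!}$ of \eqref{eqn:R:2} is a mixed perverse sheaf.

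With the identification in hand, $\pi_{13!}$ of the canonical pure resolution of \eqref{eqn:R:2} is a pure resolution of $\mu^{P,R}_\flat(\C_G[\dim G]\boxtimes\Delta^\pm_{w_1w_2})$, so by Theorem \ref{thm:mix} its pullback is homotopy equivalent to the pullback of the canonical pure resolution of the same perverse sheaf, namely $T^\bullet(P,R,\Delta^\pm_{w_1w_2})$; this is the asserted equivalence. The $+$ and $-$ cases run entirely in parallel, and in any event the $-$ case follows from the $+$ case by applying the Verdier duality functor and using $DT^\bullet(P,Q,\Delta^\pm_w)=T^\bullet(P,Q,\Delta^\mp_w)$. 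The main obstacle is the geometric claim of the second paragraph: that the configuration variety $O'$ is a single $G$-orbit mapping isomorphically onto $O_{w_1w_2}$ exactly when the lengths add. This is a statement about the Bruhat combinatorics of the double cosets $\W^P\backslash\W/\W^Q$ and requires no further sheaf theory, but it is the place where the length hypothesis genuinely enters.
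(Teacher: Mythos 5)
Your proof is correct and follows essentially the same route as the paper: both arguments reduce, via Lemma \ref{lem:R3}, the properness of $\pi_{13}$ together with Proposition \ref{prop:mix}(1), and the uniqueness Theorem \ref{thm:mix}, to identifying $\pi_{13!}$ of \eqref{eqn:R:2} with $\mu^{P,R}_\flat\big(\C_G[\dim G]\boxtimes\Delta^\pm_{w_1w_2}\big)$, which the paper quotes as the classical length-additivity convolution identity $\pi_{2!}\mu^{Q,R}_\flat\big(\mu^{Q*}[\dim Q]\Delta^\pm_{w_1}\boxtimes\Delta^\pm_{w_2}\big)\cong\Delta^\pm_{w_1w_2}$ and you instead verify directly through the configuration-variety/Bruhat argument (you even check $w_1w_2\in\W^{P,R}$ and $w_P(w_1w_2)=(w_1w_2)w_R$, which the paper leaves implicit). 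The one soft spot is the claim that the $+$ and $-$ cases ``run entirely in parallel'' — the tensor product of two $*$-extensions is not formally the $*$-extension of the intersection — but your fallback of deducing the $-$ case by Verdier duality (via Proposition \ref{prop:mix}(3) and Lemma \ref{lem:R3} applied once more) does go through and is at the same level of detail as the paper's own proof.
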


\begin{proof}
Suppose $\ell(w_1w_2) = \ell(w_1) + \ell(w_2)$. Thus we have
$$\pi_{2!} \mu^{Q,R}_\flat \big( \mu^{Q*}[\dim Q]\Delta^\pm_{w_1} \boxtimes \Delta^\pm_{w_2} \big)
  \cong \Delta^\pm_{w_1w_2},
$$
where $\pi_2: G/Q\times G/R\to G/R$ is the projection onto the
second coordinate. It follows that
\begin{align*}
  & \pi_{13!} \mu^{P,Q,R}_\flat \Big( \C_G[\dim G] \boxtimes
  \mu^{Q,R}_\flat \big( \mu^{Q*}[\dim Q]\Delta^\pm_{w_1} \boxtimes
  \Delta^\pm_{w_2} \big) \Big) \\
  & \cong \mu^{P,R}_\flat \big( \C_G[\dim G] \boxtimes \Delta^\pm_{w_1w_2} \big).
\end{align*}
Then apply Lemma \ref{lem:R3} and Proposition \ref{prop:mix}(1),
Theorem \ref{thm:mix}.
\end{proof}

The next lemma remains true if we change the base field $\C$ to
finite fields. To make the point clear, we write down the Tate twist
terms in its proof.

\begin{lem}\label{lem:R4}
For each simple reflection $s\in\W$, we have
$$\pi_{2!}\mu^{B,B}_\flat \big( \mu^{B*}[\dim B]\Delta^\pm_s \boxtimes \Delta^\mp_s \big) \cong \Delta^\pm_e$$
where $\pi_2: G/B\times G/B\to G/B$ is the projection onto the
second coordinate.
\end{lem}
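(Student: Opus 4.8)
The plan is to localise at the minimal parabolic $P_s\supset B$ attached to $s$, reduce to the rank-one case, and then compute explicitly on $\BP^1\times\BP^1$. Since $X_s=B\dot sB/B$ has closure $\overline X_s=P_s/B\cong\BP^1$ and $\overline{B\dot sB}=P_s$, the perverse sheaves $\Delta^\pm_s$ are supported on $\overline X_s$ and $\mu^{B*}[\dim B]\Delta^\pm_s$ on $P_s$; hence $\mu^{B*}[\dim B]\Delta^\pm_s\boxtimes\Delta^\mp_s$ is supported on $P_s\times\overline X_s$, on which $\mu^{B,B}$ maps to $\overline X_s\times\overline X_s$. As $R_u(P_s)$ acts trivially on $P_s/B$, the computation is controlled by the reductive quotient $L_s=P_s/R_u(P_s)$ of semisimple rank one, the remaining directions contributing only shifts and Tate twists that cancel. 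So I would take $G$ of type $A_1$, $G/B=\BP^1$, $X_e$ the $B$-fixed point $0$, $X_s=\BP^1\setminus\{0\}$. The two signs are moreover exchanged by Verdier duality: $\mu^B$ is smooth and $\mu^{B,B}$ is a principal $B$-bundle, so $D\mu^{B*}[\dim B]=\mu^{B*}[\dim B]D$ and $D\mu^{B,B}_\flat=\mu^{B,B}_\flat D$ up to a Tate twist, and together with $D\Delta^\pm_s=\Delta^\mp_s$, $D\Delta^\pm_e=\Delta^\pm_e$ and properness of $\pi_2$ this converts the ``$+$'' case into the ``$-$'' case. So it suffices to treat the ``$+$'' case.

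I would then rewrite the left hand side. Factoring $\mu^{B,B}=\bar\mu\circ q$ with $q\colon G\times G/B\to G\times^BG/B$ the quotient (a principal $B$-bundle) and $\bar\mu\colon G\times^BG/B\to G/B\times G/B$, $[g,x]\mapsto([g],gx)$, an isomorphism satisfying $\pi_2\circ\bar\mu=m$, $m[g,x]=gx$, and using that $\bar\mu_*$ is $t$-exact so $\mu^{B,B}_\flat=\bar\mu_*\circ q_\flat$, one gets
\[
  \pi_{2!}\mu^{B,B}_\flat\bigl(\mu^{B*}[\dim B]\Delta^+_s\boxtimes\Delta^-_s\bigr)=m_!\widetilde F,\qquad\widetilde F:=q_\flat\bigl(\mu^{B*}[\dim B]\Delta^+_s\boxtimes\Delta^-_s\bigr),
\]
where under $\bar\mu$ the map $m$ is the second projection $\BP^1_z\times\BP^1_y\to\BP^1_y$. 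Using the projection formula, the Cartesian square presenting $G\times^BG/B$ as an associated bundle, and the equalities $(\mu^{B,B})^{-1}(\{0\}\times\BP^1)=B\times G/B$ and $(\mu^{B,B})^{-1}(\text{diagonal})=G\times\{0\}$, one identifies $\widetilde F$: it is the perverse sheaf on $\BP^1_z\times\BP^1_y$ equal to $\C_U[2]$ on the open stratum $U=\{z\neq0,\,y\neq z\}$, extended across the rest of the diagonal by $j_*$ (since $\Delta^-_s=j_{s*}\C_{X_s}[1]$), and then extended across the divisor $\{z=0\}$ by $j_!$ (since $\Delta^+_s=j_{s!}\C_{X_s}[1]$).

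It remains to compute $m_!\widetilde F=\mathrm{pr}_{2!}\widetilde F$ stalkwise. By proper base change its stalk at $y_0$ is the compactly supported cohomology of the fibre $\BP^1_z\times\{y_0\}$ with $\widetilde F$ restricted; since $\widetilde F$ is $j_!$-extended across $z=0$, this is $R\Gamma_c$ of the affine part $(\BP^1_z\setminus\{0\})\times\{y_0\}$ with the restricted sheaf. If $y_0=0$, this slice misses the diagonal, the restriction is the shifted constant sheaf, and $R\Gamma_c$ of it is $\C(-1)$ in degree $0$. If $y_0\neq0$, the slice meets the diagonal $D$ transversally at one point $P$, and restricting the triangle $i_{D!}i_D^!\C\to\C\to j_*\C$ along it shows the restriction of $\widetilde F$ is (up to shift) the cone of the Gysin map $i_{P*}\C[-2](-1)\to\C$; since $R\Gamma_c$ of this map is the isomorphism $H^0_c(P)(-1)\xrightarrow{\ \sim\ }H^2_c$ of the affine line, the stalk vanishes. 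Hence $\mathrm{pr}_{2!}\widetilde F$ is the skyscraper at $0$ with value $\C(-1)$, that is $\Delta^+_e$ up to a Tate twist, as claimed. The only delicate step is the identification of $\widetilde F$ in the preceding paragraph: one must keep track of the order of the $!$- and $*$-extensions, which interact at the single point where the diagonal meets $\{z=0\}$, and of the shifts and twists introduced by $\mu^{B*}[\dim B]$ and by $q_\flat=\pH^{-\dim B}q_*$ — it is also the presence of this truncation $\pH^{-\dim B}$ inside $\mu^{B,B}_\flat$, which does not commute with $\pi_{2!}$, that forces one to work with $\widetilde F$ and $m$ rather than with $(\pi_2\circ\mu^{B,B})_!$ directly.
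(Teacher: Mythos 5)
Your proposal is correct and follows essentially the same route as the paper: the crucial step in both is identifying the descended sheaf as the $!$-then-$*$ extension of the shifted constant sheaf (the paper's $j_{Z_1!}j_{Z_0*}\C_{Z_0}[2]$) supported on $\overline{X}_s\times\overline{X}_s\cong\BP^1\times\BP^1$, with the ``$-$'' case reduced to the ``$+$'' case by Verdier duality and properness of $\pi_2$. The only differences are cosmetic: the paper evaluates $\pi_{2!}$ by pushing a single adjunction triangle on $Z_1$ (a $\C$-bundle over $\overline{X}_s$) and checking restrictions to $X_e$ and $X_s$, whereas you compute stalkwise via proper base change and a Gysin triangle on each fibre slice, and your residual Tate twist disappears once the twist implicit in $\Delta^-_s=D\Delta^+_s$ is carried along (over $\C$ it is invisible anyway), so the statement holds exactly as claimed.
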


\begin{proof}
We put $\Delta = \mu^{B,B}_\flat \big( \mu^{B*}[\dim B]\Delta^+_s
\boxtimes \Delta^-_s \big)$. Then
$$\Delta
  = \mu^{B,B}_\flat \big( \mu^{B*}[\dim B]j_{s!}\C_{X_s}[1] \boxtimes j_{s*}\C_{X_s}[1](1) \big)
  = j_{Z_1!}j_{Z_0*}\C_{Z_0}[2](1)
$$
where $j_{Z_0}: Z_0\to Z_1$, $j_{Z_1}: Z_1\to G/B\times G/B$ are the
inclusions of subvarieties
\begin{align*}
  & Z_0 = \{ ([g_1],[g_1g_2]) \mid g_1\in B\dot{s}B, \, g_2\in B\dot{s}B \} \subset G/B\times G/B, \\
  & Z_1 = \{ ([g_1],[g_1g_2]) \mid g_1\in B\dot{s}B, \, g_2\in \overline{B\dot{s}B} \} \subset G/B\times G/B.
\end{align*}
Set $Y=Z_1\setminus Z_0$ and let $i_Y: Y\to Z_1$ be the inclusion.
Note that via the morphism $\pi_2$, $Z_1$ becomes a $\C$-bundle over
$\overline{X}_s\cong\BP^1$. The functor $\pi_{2!}j_{Z_1!}$
transforms the adjunction triangle
$$i_{Y!}\C_Y \to \C_{Z_1}[2](1) \to j_{Z_0*}\C_{Z_0}[2](1)$$
into an exact triangle
$$j_{s!}\C_{X_s} \to \C_{\overline{X}_s} \to \pi_{2!}\Delta.$$
Applying the functors $j_e^*$, $j_s^*$, one verifies that
$$j_e^*\pi_{2!}\Delta\cong j_e^*\Delta^+_e, \quad j_s^*\pi_{2!}\Delta\cong0.$$
Hence the isomorphism $\pi_{2!}\Delta \cong \Delta^+_e$ follows.

Applying the Verdier duality functor $D$ yields the other
isomorphism.
\end{proof}

\begin{prop}\label{prop:R2}
Let $P,Q\subset G$ be parabolic subgroups containing $B$. Then
$$T^\bullet(P,Q,\Delta^\pm_{w^{-1}}) * T^{\bullet-\dim G/Q}(Q,P,\Delta^\mp_w)
  \simeq T^\bullet(P,P,\Delta^\pm_e).
$$
\end{prop}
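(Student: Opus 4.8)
The plan is to imitate the proof of Proposition \ref{prop:R1}, replacing its single geometric input by a variant adapted to the opposite signs. Recall how that proof went: writing $C$ for the perverse sheaf \eqref{eqn:R:2} on $G/P\times G/Q\times G/P$ obtained by specializing $R=P$, $w_1=w^{-1}$, $w_2=w$, with signs $\epsilon=\pm$, $\varepsilon=\mp$, Lemma \ref{lem:R3} identifies $\pi_{12}^*T^\bullet(P,Q,\Delta^\pm_{w^{-1}})\otimes\pi_{23}^*T^{\bullet-\dim G/Q}(Q,P,\Delta^\mp_w)$ with the canonical pure resolution of $C$. Applying the proper direct image $\pi_{13!}$ and using Proposition \ref{prop:mix}(1), the complex $T^\bullet(P,Q,\Delta^\pm_{w^{-1}})*T^{\bullet-\dim G/Q}(Q,P,\Delta^\mp_w)$ becomes a pure resolution of $\pi_{13!}C$; so by Theorem \ref{thm:mix} it is enough to identify $\pi_{13!}C$ with $\mu^{P,P}_\flat(\C_G[\dim G]\boxtimes\Delta^\pm_e)$, whose canonical pure resolution is by definition $T^\bullet(P,P,\Delta^\pm_e)$. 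Just as in Proposition \ref{prop:R1}, the (proper) base-change computation there reduces this identification to the geometric isomorphism
\begin{equation}
  \pi_{2!}\mu^{Q,P}_\flat\big(\mu^{Q*}[\dim Q]\Delta^\pm_{w^{-1}}\boxtimes\Delta^\mp_w\big)\cong\Delta^\pm_e
  \tag{$\star$}
\end{equation}
on $G/P$ (where $\pi_2: G/Q\times G/P\to G/P$ is the second projection), which is the opposite-signs counterpart of the length-additivity identity appearing in that proof.

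It remains to prove $(\star)$. When $P=Q=B$ and $w$ is a simple reflection $s$, $(\star)$ is precisely Lemma \ref{lem:R4} (note that $\Delta^\pm_e$ is self dual, so the two signs give the same target). I would bootstrap this to arbitrary $w$, still with $P=Q=B$, by induction on $\ell(w)$. By Lemma \ref{lem:R2} the left side of $(\star)$ is the composition of the convolution kernels on $G/B\times G/B$ attached to $\Delta^\pm_{w^{-1}}$ and $\Delta^\mp_w$; hence the operation $(C_1,C_2)\mapsto\pi_{2!}\mu^{B,B}_\flat(\mu^{B*}[\dim B]C_1\boxtimes C_2)$ is associative, with $\Delta^\pm_e$ (the diagonal kernel) as a two-sided unit. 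Picking a simple reflection $s$ with $w=sw'$, $\ell(w)=\ell(w')+1$ — whence also $w^{-1}=w'^{-1}s$ with $\ell(w^{-1})=\ell(w'^{-1})+1$ — the length-additivity identity from the proof of Proposition \ref{prop:R1} writes this composition as $\Delta^\pm_{w'^{-1}}$, $\Delta^\pm_s$, $\Delta^\mp_s$, $\Delta^\mp_{w'}$ composed in order; cancelling the middle $\Delta^\pm_s$, $\Delta^\mp_s$ to $\Delta^\pm_e$ by the base case and then invoking the inductive hypothesis for $w'$ gives $(\star)$ for $w$. Finally, the general parabolic case descends from the Borel case via the principal-bundle formalism: by Proposition \ref{prop:R0b}(3), together with Lemmas \ref{lem:R2} and \ref{lem:R3}, the perverse sheaves and the operation entering $(\star)$ are the $\mu_\flat$-images of the corresponding data on products of copies of $G/B$, and $\mu_\flat$ carries the $G/B$-level composition of kernels to the $G/Q,G/P$-level one.

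The essential geometry is already packaged in Lemma \ref{lem:R4}, so I do not expect a real difficulty there. The bulk of the effort is bookkeeping: tracking the degree shifts and Tate twists through the reductions and through the applications of Lemma \ref{lem:R3} and Proposition \ref{prop:mix}, and, in the descent step, checking that the constraints $w_Pw=ww_Q$ are preserved and that $\mu_\flat$ genuinely intertwines the two levels of convolution. Relative to Proposition \ref{prop:R1}, the one new ingredient is the substitution of the length-additivity identity by Lemma \ref{lem:R4} together with the elementary braid-monoid cancellation above — exactly what is needed to pass from the same-signs statement of Proposition \ref{prop:R1} to the present opposite-signs statement.
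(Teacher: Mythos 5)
Your proposal is correct and is essentially the paper's own argument: the paper likewise reduces, via the mechanism of Proposition \ref{prop:R1} (Lemma \ref{lem:R3}, Proposition \ref{prop:mix}(1), Theorem \ref{thm:mix}), to your key identity $(\star)$, and proves it by choosing a reduced word $w=s_{i_1}\cdots s_{i_t}$ and applying Lemma \ref{lem:R4} repeatedly to the resulting palindromic chain of kernels. The only difference is presentational: what you organize as an induction on $\ell(w)$ using associativity and the same-sign length-additivity identity, followed by a separate descent from $B$ to $P,Q$, the paper carries out in one ``direct computation'' over a principal $B^{2t}$-bundle on $(G/B)^{2t}$, with the projection $\rho: G/B\to G/P$ handling the parabolic descent.
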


\begin{proof}
Choose a reduced word $w=s_{i_1}s_{i_2}\cdots s_{i_t}$. Defines a
principal $B^{2t}$-bundle
\begin{align*}
  \mu: G^{2t} & \; \to \; (G/B)^{2t} \\
  (g_1,g_2,\dots,g_{2t}) & \; \mapsto \; ([g_1],[g_1g_2],\dots,[g_1\cdots g_{2t}]).
\end{align*}
Let $\pi_2: G/Q\times G/P\to G/P$ and $\pi_{2t}: (G/B)^{2t}\to G/B$
be the projections onto the last coordinates and let $\rho: G/B\to
G/P$ be the obvious projection. By a direct computation,
\begin{align*}
  & \pi_{2!}\mu^{Q,P}_\flat \Big( \mu^{Q*}[\dim Q]\Delta^\pm_{w^{-1}} \boxtimes \Delta^\mp_w \Big) \\
  \cong \;& \rho_!\pi_{2t!}\mu_\flat \Big( \mu^{B*}\Delta^\pm_{s_{i_t}} \boxtimes
  \cdots \boxtimes \mu^{B*}\Delta^\pm_{s_{i_1}}
  \boxtimes \mu^{B*}\Delta^\mp_{s_{i_1}} \boxtimes
  \cdots \boxtimes \mu^{B*}\Delta^\mp_{s_{i_t}} [2t\dim B] \Big).
\end{align*}
Applying Lemma \ref{lem:R4} repeatedly, we deduce that the right
hand side is further isomorphic to $\Delta^\pm_e$.

Then, the same argument as the proof of Proposition \ref{prop:R1}
concludes the proposition.
\end{proof}

\begin{prop}\label{prop:R3}
Let $P,Q,R_1,R_2\subset G$ be parabolic subgroups containing $B$.
Assume $R_1\subset R_2$ and let $p: G/R_1\to G/R_2$ be the obvious
projection. We have natural isomorphisms for $C\in\D(G/P\times G/Q)$
and $C_i\in\D(G/Q\times G/R_i)$
\begin{align*}
  (\Id_{G/P}\times p)_!(C*C_1) &= C*(\Id_{G/Q}\times p)_!C_1, \\
  (\Id_{G/P}\times p)^*(C*C_2) &= C*(\Id_{G/Q}\times p)^*C_2.
\end{align*}
\end{prop}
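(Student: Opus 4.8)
The plan is to unwind the definition of convolution and to reduce both identities to proper base change (\ref{sec:pre:perverse}.(13)) and the projection formula (\ref{sec:pre:perverse}.(9)), combined with the functoriality $(fg)_!=f_!g_!$ and $(fg)^*=g^*f^*$ (\ref{sec:pre:perverse}.(12)). Write $D_i=G/R_i$ and put
$$q=\Id_{G/P}\times\Id_{G/Q}\times p\colon\ G/P\times G/Q\times D_1\longrightarrow G/P\times G/Q\times D_2.$$
Let $\pi_{12},\pi_{23},\pi_{13}$ denote the projections of $G/P\times G/Q\times D_1$ onto the indicated pairs of factors, and $\tilde\pi_{12},\tilde\pi_{23},\tilde\pi_{13}$ the corresponding projections of $G/P\times G/Q\times D_2$. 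Everything hinges on two elementary facts. First, the composition identities
$$\tilde\pi_{12}\circ q=\pi_{12},\qquad \tilde\pi_{23}\circ q=(\Id_{G/Q}\times p)\circ\pi_{23},\qquad \tilde\pi_{13}\circ q=(\Id_{G/P}\times p)\circ\pi_{13}.$$
Second, the commutative square with top edge $q$, vertical edges $\pi_{23}$ and $\tilde\pi_{23}$, bottom edge $\Id_{G/Q}\times p$, as well as the one with top edge $q$, vertical edges $\pi_{13}$ and $\tilde\pi_{13}$, bottom edge $\Id_{G/P}\times p$, are both cartesian; indeed in each case the relevant fibre product is at once identified with $G/P\times G/Q\times D_1$.

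For the first identity I would compute, using $(\Id_{G/P}\times p)\circ\pi_{13}=\tilde\pi_{13}\circ q$, functoriality of $(-)_!$, and $\pi_{12}^*C=q^*\tilde\pi_{12}^*C$,
$$(\Id_{G/P}\times p)_!(C*C_1)=(\Id_{G/P}\times p)_!\,\pi_{13!}\big(\pi_{12}^*C\otimes\pi_{23}^*C_1\big)=\tilde\pi_{13!}\,q_!\big(q^*\tilde\pi_{12}^*C\otimes\pi_{23}^*C_1\big).$$
The projection formula pulls $\tilde\pi_{12}^*C$ out of $q_!$, and proper base change along the first cartesian square gives $q_!\,\pi_{23}^*C_1\cong\tilde\pi_{23}^*(\Id_{G/Q}\times p)_!C_1$; so the expression becomes $\tilde\pi_{13!}\big(\tilde\pi_{12}^*C\otimes\tilde\pi_{23}^*(\Id_{G/Q}\times p)_!C_1\big)=C*\big((\Id_{G/Q}\times p)_!C_1\big)$. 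For the second identity I would instead apply proper base change along the second cartesian square, in the form $(\Id_{G/P}\times p)^*\tilde\pi_{13!}=\pi_{13!}\,q^*$, to obtain
$$(\Id_{G/P}\times p)^*(C*C_2)=\pi_{13!}\,q^*\big(\tilde\pi_{12}^*C\otimes\tilde\pi_{23}^*C_2\big),$$
and then use that $q^*$ commutes with $\otimes$, together with $q^*\tilde\pi_{12}^*C=\pi_{12}^*C$ and $q^*\tilde\pi_{23}^*C_2=\pi_{23}^*(\Id_{G/Q}\times p)^*C_2$, to rewrite the right-hand side as $\pi_{13!}\big(\pi_{12}^*C\otimes\pi_{23}^*(\Id_{G/Q}\times p)^*C_2\big)=C*\big((\Id_{G/Q}\times p)^*C_2\big)$.

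I do not expect any genuine obstacle here: the proposition is a purely formal consequence of base change and the projection formula, and it holds for arbitrary complexes. The only points requiring attention are keeping track of the several projection maps and checking that the two auxiliary squares are indeed cartesian --- after which both identities fall out of the formalism already collected in Section \ref{sec:pre:perverse}.
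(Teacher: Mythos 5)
Your computation is correct and is precisely the ``easy computation'' the paper leaves implicit: unwind the convolution, use $(fg)_!=f_!g_!$ along $(\Id_{G/P}\times p)\circ\pi_{13}=\tilde\pi_{13}\circ q$, then the projection formula and base change for the two cartesian squares you identify (both of which are indeed cartesian, as the fibre products reduce to $G/P\times G/Q\times G/R_1$). Nothing further is needed; this matches the intended argument.
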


\begin{proof}
An easy computation.
\end{proof}

\subsection{Categorification theorem}\label{sec:R:cat}

Now we transfer to the notations of Section \ref{sec:mod:Q}. Let
$X=\sqcup_r X_d^r$ be the union of Grassmannian varieties.

First, we enhance the category $\CQ_\bd$ to $\tilde\CQ_\bd$, which
is defined to be the full subcategory of $\D(G/P_\bd\times X)$
consisting of the $G$-equivariant semisimple complexes. Notice the
canonical correspondence between the $G$-orbits of $G/P_\bd\times X$
and the $P_\bd$-orbits of $X$ given by \eqref{eqn:R:orbit}. The key
observation is that the functor
\begin{equation}
  \mu^{P_\bd,\cdot}_\flat (\C_G[\dim G]\boxtimes -): \M_{P_\bd}(X) \to \M_G(G/P_\bd\times X)
\end{equation}
gives rise to a one-to-one correspondence between the (isomorphism
classes of) perverse sheaves from $\CQ_\bd$ and $\tilde\CQ_\bd$. In
particular, it identifies the Grothendieck group of $\tilde\CQ_\bd$
with $Q_\bd$.

All concepts and claims from Section \ref{sec:mod} can be migrated
word by word to a version for $\tilde\CQ_\bd$ in the obvious way.
For example, $\K_\bd, \E^{(n)}_\bd, \F^{(n)}_\bd$ are endofunctors
of $\D(G/P_\bd \times X)$ defined by assembling
\begin{equation}
\begin{array}{lcl}
  \K_{\bd,r} = [2r-d] & : & \D(G/P_\bd\times X_d^r) \to \D(G/P_\bd\times X_d^r), \\
  \E^{(n)}_{\bd,r+n} = p_!p'^*[nr] & : & \D(G/P_\bd\times X_d^{r+n}) \to \D(G/P_\bd\times X_d^r), \\
  \F^{(n)}_{\bd,r} = p'_!p^*[n(d-n-r)] & : & \D(G/P_\bd\times X_d^r) \to \D(G/P_\bd\times X_d^{r+n}),
\end{array}
\end{equation}
where $p,p'$ are the projections
\begin{equation}
\xymatrix@1{
  G/P_\bd\times X_d^r & \ar[l]_{p} G/P_\bd\times X_d^{r,r+n} \ar[r]^-----{p'} & G/P_\bd\times X_d^{r+n}
}
\end{equation}
They satisfy the functor isomorphisms stated in the propositions
from Section \ref{sec:mod:cat}, hence induce the same $U_\A$-module
structure on $Q_\bd$ as the functors $\K, \E^{(n)}, \F^{(n)}$.

\smallskip

Next, we identify the Weyl group $\W$ of $G=GL(W)$ with the
symmetric group $S_d$ of the symbols $\{1,2,\dots,d\}$. For each
composition $\bd=(d_1,d_2,\dots,d_l)$ of $d$ and for each
permutation $\sigma\in S_l$, we let $\sigma$ act on the symbols
$\{1,2,\dots,d\}$ by permuting the blocks
$$\{ d_1+\cdots+d_{i-1}+j \mid 1\le j\le d_i \}_{1\le i\le l}$$
thus yield an element $w(\bd,\sigma)\in\W$, then define a couple of
complexes formed by functors from $\D(G/P_\bd \times X)$ to
$\D(G/P_{\sigma(\bd)} \times X)$
\begin{equation}
  \CR^\bullet_\pm(\bd,\sigma) = D\circ  T^{\bullet-\dim G/P_\bd}
  \big( P_{\sigma(\bd)},P_\bd,\Delta^\mp_{w(\bd,\sigma)} \big) * \circ D.
\end{equation}
They are understood in the standard way as functors of bounded
homotopic categories
\begin{equation}
  \CR^\bullet_\pm(\bd,\sigma): \K^b(\D(G/P_\bd \times X))
  \to \K^b(\D(G/P_{\sigma(\bd)} \times X)).
\end{equation}

\begin{thm}\label{thm:catR}
The functor complexes $\CR^\bullet_\pm(\bd,\sigma)$ satisfy the
followings.
\begin{enumerate}
\setlength{\itemsep}{.3ex}
\item
$\CR^\bullet_\pm(\sigma_2(\bd),\sigma_1) \circ
\CR^\bullet_\pm(\bd,\sigma_2) \simeq
\CR^\bullet_\pm(\bd,\sigma_1\sigma_2)$ if $\ell(\sigma_1\sigma_2) =
\ell(\sigma_1) + \ell(\sigma_2)$.

\item
$\CR^\bullet_\pm(\sigma(\bd),\sigma^{-1}) \circ
\CR^\bullet_\mp(\bd,\sigma) \simeq \Id$ (concentrated at degree
$0$).

\item
$\G_{\sigma(\bd)} \circ \CR^\bullet_\pm(\bd,\sigma) \cong
\CR^\bullet_\pm(\bd,\sigma) \circ \G_\bd$ for $\G\in\{ \K, \E^{(n)},
\F^{(n)} \}$.

\item
$\CR^n_\pm(\bd,\sigma)$ are right adjoint to
$D\CR^n_\pm(\sigma(\bd),\sigma^{-1})D$.

\item
$\CR^n_\pm(\bd,\sigma)$ restrict to functors from $\tilde\CQ_\bd$ to
$\tilde\CQ_{\sigma(\bd)}$.
\end{enumerate}
\end{thm}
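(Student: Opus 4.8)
The plan is to obtain the five assertions as essentially formal consequences of the results already established in Section~\ref{sec:R:R}, together with Proposition~\ref{prop:verdier}; the substance (uniqueness of pure resolutions, Theorem~\ref{thm:mix}, and Propositions~\ref{prop:R1}, \ref{prop:R2}) is in place, and the only real care needed is with the sign conventions (recall $\CR^\bullet_\pm$ is built from $\Delta^\mp$) and with the cohomological reindexing of the complexes $T^\bullet$ and the shifts by $\dim G/P$. First I would record the combinatorial facts about the assignment $\sigma\mapsto w(\bd,\sigma)$: one has $w(\sigma_2(\bd),\sigma_1)\,w(\bd,\sigma_2)=w(\bd,\sigma_1\sigma_2)$ with lengths adding when $\ell(\sigma_1\sigma_2)=\ell(\sigma_1)+\ell(\sigma_2)$, $w(\sigma(\bd),\sigma^{-1})=w(\bd,\sigma)^{-1}$, and $\dim G/P_\bd=\dim G/P_{\sigma(\bd)}$ since $\sigma(\bd)$ is merely a reordering of $\bd$; the last identity is what makes the degree bookkeeping close up.

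For (1), unwinding the definition and using $D^2=\Id$ and the associativity of the convolution product, the composite $\CR^\bullet_\pm(\sigma_2(\bd),\sigma_1)\circ\CR^\bullet_\pm(\bd,\sigma_2)$ is $D\circ(K_1*K_2)*\circ D$ with $K_1,K_2$ the two kernel complexes, and Proposition~\ref{prop:R1} --- applied with $P=P_{\sigma_1\sigma_2(\bd)}$, $Q=P_{\sigma_2(\bd)}$, $R=P_\bd$ --- identifies $K_1*K_2$, up to the reindexing the shift conventions force, with the kernel of $\CR^\bullet_\pm(\bd,\sigma_1\sigma_2)$. Claim (2) is the same argument with Proposition~\ref{prop:R1} replaced by Proposition~\ref{prop:R2}: the combined kernel of $\CR^\bullet_\pm(\sigma(\bd),\sigma^{-1})\circ\CR^\bullet_\mp(\bd,\sigma)$ reduces, after the shifts cancel, to $T^{\bullet-\dim G/P_\bd}(P_\bd,P_\bd,\Delta^\pm_e)$, and by Proposition~\ref{prop:R0b}(3), together with the fact that $\Delta^\pm_e$ is the skyscraper at $eP_\bd$, this is the constant sheaf on the diagonal $\Delta_{G/P_\bd}$ placed in degree $0$; convolution with the latter is the identity functor, and conjugation by $D$ does not change it.

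Claim (3) follows from Proposition~\ref{prop:R3}: the functors $\G\in\{\K,\E^{(n)},\F^{(n)}\}$ on $\D(G/P_\bd\times X)$ are assembled from $(\Id_{G/P_\bd}\times p')_!$, $(\Id_{G/P_\bd}\times p)^*$ and shifts for the projections $p,p'$ as in \eqref{eqn:ef}, while $\CR^\bullet_\pm(\bd,\sigma)=D\circ K^\bullet*\circ D$ with the convolution taken over the $G/P$-factors; Proposition~\ref{prop:R3} says $K^\bullet*-$ commutes with $(\Id\times p')_!$ and $(\Id\times p)^*$, and $D$ intertwines $\K,\E^{(n)},\F^{(n)}$ on the source and the target by Proposition~\ref{prop:verdier}, so the two functors are canonically isomorphic. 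For (4) I would apply termwise the adjunction recorded just before Lemma~\ref{lem:R2}, that $C*$ is left adjoint to $D\circ\tau^*C*\circ D$: writing $\CR^n_\pm(\bd,\sigma)=D\circ\tau^*(\tau^*K^n)*\circ D$ displays it as the right adjoint of $(\tau^*K^n)*$, and Proposition~\ref{prop:R0b}(2), with $w(\sigma(\bd),\sigma^{-1})=w(\bd,\sigma)^{-1}$ and the matching shifts, identifies $\tau^*K^n$ with the kernel of $\CR^n_\pm(\sigma(\bd),\sigma^{-1})$, whence $(\tau^*K^n)*=D\CR^n_\pm(\sigma(\bd),\sigma^{-1})D$.

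Finally, for (5), Proposition~\ref{prop:R0b}(1) gives that each term $K^n$ of the kernel complex of $\CR^\bullet_\pm(\bd,\sigma)$ is, up to a shift, a semisimple $G$-equivariant perverse sheaf on $G/P_{\sigma(\bd)}\times G/P_\bd$; since $D$ preserves $G$-equivariant semisimple complexes it suffices to show that $K^n*-$ carries $\tilde\CQ_\bd$ into $\tilde\CQ_{\sigma(\bd)}$. Reducing to $K^n=\IC(\overline{O_v})[j]$ for a $G$-orbit $O_v$ and choosing a $G$-equivariant resolution $f\colon\tilde Z\to\overline{O_v}$ with $\tilde Z$ complete (so that $\IC(\overline{O_v})$ is a direct summand of $f_!\C_{\tilde Z}[\dim\tilde Z]$), proper base change and the projection formula rewrite $\C_{\tilde Z}[\dim\tilde Z]*C$, up to a shift, as $g_!(g'^*C)$ for the induced maps $g\colon\tilde Z\times X\to G/P_{\sigma(\bd)}\times X$, $g'\colon\tilde Z\times X\to G/P_\bd\times X$; as $\tilde Z$ is complete the map $g$ is proper, so $g_!$ of a $G$-equivariant semisimple complex is again semisimple by \ref{sec:pre:perverse}.(16). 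I expect this last point --- that convolution, conjugated by Verdier duality, respects $G$-equivariant semisimplicity --- to be the only step requiring genuine (if standard) work; everything else is the formal assembly of the propositions of Section~\ref{sec:R:R} and Proposition~\ref{prop:verdier}, with the ever-present subtlety of getting the degree shifts right, which works because permuting the parts of $\bd$ leaves $\dim G/P_\bd$ unchanged.
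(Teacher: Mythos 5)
For parts (1)--(4) your argument is exactly the paper's: the author simply cites Proposition \ref{prop:R1} for (1), Proposition \ref{prop:R2} for (2), Proposition \ref{prop:R3} for (3) and Proposition \ref{prop:R0b}(2) (together with the adjunction stated just before Lemma \ref{lem:R2}) for (4), and the bookkeeping you supply --- the identities $w(\sigma_2(\bd),\sigma_1)w(\bd,\sigma_2)=w(\bd,\sigma_1\sigma_2)$ with additive lengths, $w(\sigma(\bd),\sigma^{-1})=w(\bd,\sigma)^{-1}$, and $\dim G/P_\bd=\dim G/P_{\sigma(\bd)}$ --- is correct and is precisely what makes those citations apply. (Minor point: in (3), conjugation by $D$ turns $\K$ into $\K^{-1}$ rather than $\K$, but both are shifts and commute with the convolution, so nothing is lost.)

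Part (5) is where you depart from the paper, and as written there is a gap. The paper proves (5) from Lemma \ref{lem:R2} and the decomposition theorem: Lemma \ref{lem:R2} exhibits the integrand $\pi_{12}^*K^n\otimes\pi_{23}^*C$ of the convolution as an object obtained by smooth pullback, outer product and descent along principal bundles from semisimple equivariant complexes, hence as a $G$-equivariant semisimple complex on $G/P_{\sigma(\bd)}\times G/P_\bd\times X$ (a variety with finitely many $G$-orbits), and then $\pi_{13!}$ is proper, so the decomposition theorem applies. Your route instead resolves the kernel, writes $\C_{\tilde Z}[\dim\tilde Z]*C\cong g_!(g'^*C)$ up to shift, and then invokes the statement that proper pushforward preserves $G$-equivariant semisimplicity. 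The unjustified step is the semisimplicity of $g'^*C$: pullback along an arbitrary morphism does not preserve semisimple complexes, and $g'=q\times\Id_X$ with $q=\mathrm{pr}_2\circ f:\tilde Z\to G/P_\bd$ is not smooth for free, so without further argument the final appeal to the decomposition theorem does not go through. Moreover the item of \ref{sec:pre:perverse} you cite is stated for a source with finitely many $G$-orbits, which $\tilde Z\times X$ need not have (a $G$-equivariant resolution of an orbit closure can have infinitely many orbits), so even granting semisimplicity of $g'^*C$ that citation needs adjusting. The gap is fixable: since $f$ is chosen $G$-equivariant, $q$ is a $G$-equivariant morphism from a smooth $G$-variety to the homogeneous space $G/P_\bd$, hence is a locally trivial fibration (use local sections of $G\to G/P_\bd$) and in particular a smooth morphism; therefore $g'$ is smooth, a suitable shift of $g'^*$ carries the shifted intersection complexes constituting $C$ to shifted intersection complexes of (closures of) smooth locally closed subvarieties of $\tilde Z\times X$, and the decomposition theorem in the form \ref{sec:pre:perverse}.(13) applied to the proper map $g$ then gives semisimplicity, with $G$-equivariance preserved by \ref{sec:pre:perverse}.(10). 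With that observation inserted your argument for (5) is a legitimate alternative to the paper's; without it, the key semisimplicity assertion is assumed rather than proved.
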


\begin{proof}
(1)(2)(3) follow from Proposition \ref{prop:R1}, \ref{prop:R2},
\ref{prop:R3}, respectively.

(4) follows from Proposition \ref{prop:R0b}(2).

(5) follows from Lemma \ref{lem:R2} and the decomposition theorem.
\end{proof}

\begin{cor}\label{cor:catR}
Let $\K^b(\tilde\CQ_\bd)$ be the bounded homotopic category of
$\tilde\CQ_\bd$. Via the valuation
$$\K^b(\tilde\CQ_\bd)\to Q_\bd, \quad C^\bullet\mapsto\sum_n(-1)^n[C^n],$$
the system of functors
$$\CR^\bullet_+(\bd,\sigma): \K^b(\tilde\CQ_\bd) \to \K^b(\tilde\CQ_{\sigma(\bd)})$$
induce a system of $R$-matrices on the $U_\A$-modules $Q_\bd$.
\end{cor}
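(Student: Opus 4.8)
The plan is to obtain the corollary by passing to Grothendieck groups in Theorem~\ref{thm:catR}, once the valuation map has been made precise. Write $\chi_\bd\colon\K^b(\tilde\CQ_\bd)\to Q_\bd$ for the assignment $C^\bullet\mapsto\sum_n(-1)^n[C^n]$. It is additive for $\oplus$, it sends the cohomological shift of $\K^b$ to multiplication by $-1$ and the internal shift $[1]$ of complexes of sheaves to multiplication by $q^{-1}$ (this is the relation defining $Q_\bd$), and for a distinguished triangle $A^\bullet\to B^\bullet\to C^\bullet\to A^\bullet[1]$ one has $\chi_\bd(A^\bullet)-\chi_\bd(B^\bullet)+\chi_\bd(C^\bullet)=0$ (reduce to $C^\bullet=\mathrm{Cone}(f)$ and use $\mathrm{Cone}(f)^n\cong A^{n+1}\oplus B^n$); in particular $\chi_\bd$ is a homotopy invariant, being zero on $\mathrm{Cone}(\mathrm{id})$. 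Together with the standard identification of $K_0$ of the bounded homotopy category of an additive category with its split Grothendieck group, this shows $\chi_\bd$ identifies the triangulated Grothendieck group of $\K^b(\tilde\CQ_\bd)$ with $Q_\bd$, compatibly with the $\A$-module structures.

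By Theorem~\ref{thm:catR}(5) the functor complexes $\CR^\bullet_+(\bd,\sigma)$ restrict to triangulated functors $\K^b(\tilde\CQ_\bd)\to\K^b(\tilde\CQ_{\sigma(\bd)})$, and, being built from convolution with a fixed complex of kernels and Verdier duality, they commute with the internal shift $[1]$. Hence they descend through $\chi_\bd$ and $\chi_{\sigma(\bd)}$ to $\A$-linear maps
$$R(\bd,\sigma):=R_+(\bd,\sigma)\colon Q_\bd\to Q_{\sigma(\bd)},$$
determined by $R(\bd,\sigma)([C])=\sum_n(-1)^n[\CR^n_+(\bd,\sigma)C]$ for $C\in\tilde\CQ_\bd$ regarded in degree $0$. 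It remains to verify that these maps form a system of $R$-matrices on the $U_\A$-modules $Q_\bd$; via the isomorphism $Q_\bd\cong\Lambda_\bd$ of Section~\ref{sec:mod:ql} this is the same as the notion introduced for $\Lambda_\bd$.

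For $U_\A$-linearity, recall that the endofunctors $\K,\E^{(n)},\F^{(n)}$ of $\tilde\CQ_\bd$ induce on $Q_\bd$ precisely the action of the generators $K,E^{(n)},F^{(n)}$ of $U_\A$ (Section~\ref{sec:mod:cat} in its version for $\tilde\CQ_\bd$); reading Theorem~\ref{thm:catR}(3) on Grothendieck groups then shows $R(\bd,\sigma)$ commutes with these generators, hence with all of $U_\A$. For invertibility, applying Theorem~\ref{thm:catR}(2) in both sign patterns and at both $(\bd,\sigma)$ and $(\sigma(\bd),\sigma^{-1})$ yields $\CR^\bullet_-(\sigma(\bd),\sigma^{-1})\circ\CR^\bullet_+(\bd,\sigma)\simeq\Id$ and $\CR^\bullet_+(\bd,\sigma)\circ\CR^\bullet_-(\sigma(\bd),\sigma^{-1})\simeq\Id$, so on Grothendieck groups $R_+(\bd,\sigma)$ is invertible with inverse $R_-(\sigma(\bd),\sigma^{-1})$. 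Finally, Theorem~\ref{thm:catR}(1) gives, when $\ell(\sigma_1\sigma_2)=\ell(\sigma_1)+\ell(\sigma_2)$, the homotopy equivalence $\CR^\bullet_+(\sigma_2(\bd),\sigma_1)\circ\CR^\bullet_+(\bd,\sigma_2)\simeq\CR^\bullet_+(\bd,\sigma_1\sigma_2)$, which on Grothendieck groups is exactly the braiding relation \eqref{eqn:R:braid}. This establishes the corollary. The argument is a bookkeeping translation of Theorem~\ref{thm:catR}, so I do not expect a genuine obstacle; the only point that needs care is the homotopy invariance of $\chi_\bd$ (equivalently, that $K_0$ of a bounded homotopy category agrees with the split Grothendieck group), together with consistently keeping the internal shift $[1]$ of sheaf complexes — which carries the variable $q$ — apart from the cohomological shift of $\K^b$ — which carries only the sign.
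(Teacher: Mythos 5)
Your proposal is correct and takes essentially the same route as the paper: the entire content of the corollary is that the valuation $\sum_n(-1)^n[C^n]$ depends only on the homotopy equivalence class of $C^\bullet$, after which Theorem \ref{thm:catR}(1)(2)(3)(5), read in the Grothendieck groups, give the braid relation, invertibility and $U_\A$-linearity exactly as you spell out. The only difference is in how homotopy invariance is justified: the paper cites the equivalence $\tilde\CQ_\bd\simeq A^\bullet_\bd\pmof$ of Proposition \ref{prop:tAmof}, whereas you argue directly; note that your step ``vanishing on $\mathrm{Cone}(\mathrm{id})$ implies homotopy invariance'' tacitly uses that a contractible bounded complex over $\tilde\CQ_\bd$ splits into trivial complexes, i.e.\ idempotent-completeness (Krull--Schmidt) of $\tilde\CQ_\bd$ --- which is precisely what the cited equivalence (or the finite-dimensionality of the Ext algebra) supplies, so the gap is only one of explicitness, not of substance.
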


\begin{proof}
It suffices to show the valuation only depends on the homotopy
equivalence class of $C^\bullet$. But this is evident from
Proposition \ref{prop:tAmof} below.
\end{proof}

\begin{exam}\label{exam:catR1}
For the simplest nontrivial case $\bd=(1,1)$, $S_2=\{e,\sigma\}$, it
is not difficult to derive from Example \ref{exam:R:2} and Lemma
\ref{lem:R2} that $\CR^\bullet_+(\bd,\sigma)$ (left column) and
$\CR^\bullet_-(\bd,\sigma)$ (right column) induce the following maps
$$\begin{array}{ll}
  b_{(0,0)} \mapsto -q^2b_{(0,0)}, & \quad b_{(0,0)} \mapsto -q^{-2}b_{(0,0)}, \\
  b_{(1,0)} \mapsto b_{(1,0)} - qb_{(0,1)}, & \quad b_{(1,0)} \mapsto b_{(1,0)} - q^{-1}b_{(0,1)}, \\
  b_{(0,1)} \mapsto -q^2b_{(0,1)}, & \quad b_{(0,1)} \mapsto -q^{-2}b_{(0,1)}, \\
  b_{(1,1)} \mapsto -q^2b_{(1,1)}, & \quad b_{(1,1)} \mapsto -q^{-2}b_{(1,1)}.
  \end{array}
$$
\end{exam}

In what follows we write $1^d = (1,1,\dots,1)$ for the composition
of $d$ consisting of the $1$'s. The category $\CQ_\bd$ is by
definition a full subcategory of $\CQ_{1^d}$, thus $Q_\bd$ is
canonically embedded in $Q_{1^d}$. Observe that the same embedding
is induced by the functor
$$(\rho_\bd\times\Id_X)^*[\dim P_\bd/B]: \tilde\CQ_{\bd}\to\tilde\CQ_{1^d}$$
where $\rho_\bd: G/B\to G/P_\bd$ is the obvious projection. The next
proposition states that $\CR^\bullet_\pm(\bd,\sigma)$ and
$\CR^\bullet_\pm(1^d,w(\bd,\sigma))$ induce the same isomorphisms
$Q_\bd \to Q_{\sigma(\bd)}$.

\begin{prop}\label{prop:catR}
Let $\rho_\bd: G/B\to G/P_\bd$ be the obvious projection. Then
$$(\rho_{\sigma(\bd)}\times\Id_X)^* \circ \CR^\bullet_\pm(\bd,\sigma)
  \simeq \CR^\bullet_\pm(1^d,w(\bd,\sigma)) \circ (\rho_\bd\times\Id_X)^*.
$$
\end{prop}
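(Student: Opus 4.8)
The plan is as follows. Write $w=w(\bd,\sigma)$, abbreviate the parabolics $P=P_\bd$, $P'=P_{\sigma(\bd)}$, and write $\rho\colon G/B\to G/P$, $\rho'\colon G/B\to G/P'$ for the projections. Using $Df^*=f^!D$ together with $\CR^\bullet_\pm(\bd,\sigma)=D\circ T^{\bullet-\dim G/P}(P',P,\Delta^\mp_w)*\circ D$, I would first conjugate away the outer Verdier duality and the inner one, reducing the claim to a homotopy equivalence of functors $\K^b(\D(G/P\times X))\to\K^b(\D(G/B\times X))$,
\[
  (\rho'\times\Id_X)^!\circ T^{\bullet-\dim G/P}(P',P,\Delta^\mp_w)*\ \simeq\ T^{\bullet-\dim G/B}(B,B,\Delta^\mp_w)*\circ(\rho\times\Id_X)^!.
\]
Here one uses that $D$ is an equivalence and that $(\rho'\times\Id_X)^*\circ D=D\circ(\rho'\times\Id_X)^!$, $D\circ(\rho\times\Id_X)^*=(\rho\times\Id_X)^!\circ D$.

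Next I would push the two smooth--proper pullbacks through the convolutions, turning them into operations on the convolution kernels. Exactly as in the proof of Proposition \ref{prop:R3} --- proper base change along the proper projection $\pi_{13}$, the projection formula, and the identity $f^!=f^*[2d](d)$ for $f$ smooth of relative dimension $d$ --- one obtains, on the output side, $(\rho'\times\Id_X)^!(K*C)\cong((\rho'\times\Id_{G/P})^!K)*C$ for $K\in\D(G/P'\times G/P)$; and, on the input side, after realizing $(\rho\times\Id_X)^*$ as convolution with the graph kernel of $\rho$ and invoking associativity of $*$, $K*((\rho\times\Id_X)^!C)\cong((\Id_{G/B}\times\rho)_*K)*C$ up to a grade shift, for $K\in\D(G/B\times G/B)$. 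This reduces the problem to a homotopy equivalence of complexes of kernels on $G/B\times G/P$,
\[
  (\rho'\times\Id_{G/P})^!\,T^\bullet(P',P,\Delta^\mp_w)\ \simeq\ (\Id_{G/B}\times\rho)_*\,T^\bullet(B,B,\Delta^\mp_w),
\]
up to a grade shift.

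To conclude I would exhibit both sides as pure resolutions of a single perverse sheaf $\mathcal T:=\mu^{B,P}_\flat(\C_G[\dim G]\boxtimes\Delta^\mp_w)$ (note $w\in\W^P$, so $\Delta^\mp_w\in\M_B(G/P)$ and everything in sight is defined), and then invoke Theorem \ref{thm:mix}. For the left side: by Proposition \ref{prop:mix}(2), since $\rho'\times\Id_{G/P}$ is smooth, $(\rho'\times\Id_{G/P})^!$ carries the canonical pure resolution $T^\bullet(P',P,\Delta^\mp_w)$ of $\mu^{P',P}_\flat(\C_G[\dim G]\boxtimes\Delta^\mp_w)$ to the canonical pure resolution of the perverse sheaf $(\rho'\times\Id_{G/P})^*[\dim P'/B]\,\mu^{P',P}_\flat(\C_G[\dim G]\boxtimes\Delta^\mp_w)$, which equals $\mathcal T$ --- a direct computation with the descent equivalences, because $\rho'\times\Id_{G/P}$ is the intermediate $P'/B$-bundle in the tower $G\times G/P\xrightarrow{\mu^{B,P}}G/B\times G/P\xrightarrow{\rho'\times\Id}G/P'\times G/P$ of principal bundles. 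For the right side: by Proposition \ref{prop:mix}(1), since $\Id_{G/B}\times\rho$ is proper, $(\Id_{G/B}\times\rho)_*$ carries the canonical pure resolution $T^\bullet(B,B,\Delta^\mp_w)$ of $\mu^{B,B}_\flat(\C_G[\dim G]\boxtimes\Delta^\mp_w)$ to a pure resolution of its direct image, and it remains to check that this direct image equals $\mathcal T$.

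This last verification is the one genuinely non-formal step. Under the orbit correspondence \eqref{eqn:R:orbit}, $\mu^{B,B}_\flat(\C_G[\dim G]\boxtimes\Delta^\mp_w)$ is, up to shift, the extension by zero of a shifted constant sheaf on the $G$-orbit $O_w\subset G/B\times G/B$ of $([B],\dot wB)$, and $\mathcal T$ is, up to the same shift, the analogous object on the $G$-orbit $O'_w\subset G/B\times G/P$ of $([B],\dot wP)$. Since $w\in\W^P$ one has $w\Phi^+_P\subset\Phi^+$, whence the stabilizers $B\cap\dot wB\dot w^{-1}$ and $B\cap\dot wP\dot w^{-1}$ coincide; thus $\Id_{G/B}\times\rho$ restricts to an isomorphism $O_w\xrightarrow{\ \sim\ }O'_w$, and $\overline{O_w}\cap(\Id_{G/B}\times\rho)^{-1}(O'_w)=O_w$. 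Proper base change along the $P/B$-bundle $\Id_{G/B}\times\rho$ then forces the stalks of $(\Id_{G/B}\times\rho)_*\mu^{B,B}_\flat(\C_G[\dim G]\boxtimes\Delta^\mp_w)$ to vanish off $O'_w$ and to agree with those of $\mathcal T$ on $O'_w$, so the two coincide. With both sides of the kernel equivalence matched to pure resolutions of $\mathcal T$ up to a common grade shift, Theorem \ref{thm:mix} --- after pullback to $X$, which is automatic in the reduction-to-finite-fields framework of Section \ref{sec:R:R} and which also trivializes the Tate twists accumulated along the way --- yields the homotopy equivalence of kernels; convolving with the argument and conjugating by $D$, both exact and so well defined on bounded homotopy categories, transports it to the asserted homotopy equivalence of functor complexes. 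I expect the real difficulty to be not these conceptual steps but the careful bookkeeping of shift and Tate-twist normalizations throughout --- in the base-change identities, in the two applications of Proposition \ref{prop:mix}, and in the passage $\CR^\bullet_\pm=D\circ(\text{convolution})\circ D$ --- so that the two pure resolutions really do carry the same grading.
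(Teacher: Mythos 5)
Your route is essentially the paper's: reduce the assertion to a homotopy equivalence between the two complexes of convolution kernels, identify the underlying mixed perverse sheaves, apply Proposition \ref{prop:mix}(1)(2) and Theorem \ref{thm:mix}, and then transport the equivalence back through the convolution formalism (the paper keeps the Verdier dualities in place and writes the kernel identity as $(\Id_{G/B}\times\rho_\bd)_!\,\mu^{B,B}_\flat(\C_G[\dim G]\boxtimes\Delta^\mp_w)=(\rho_{\sigma(\bd)}\times\Id)^*[\dim P_\bd/B]\,\mu^{P_{\sigma(\bd)},P_\bd}_\flat(\C_G[\dim G]\boxtimes\Delta^\mp_w)$, which is exactly your identification of both kernels with $\mathcal{T}=\mu^{B,P_\bd}_\flat(\C_G[\dim G]\boxtimes\Delta^\mp_w)$; the geometric input is recorded there simply as $\rho_{\bd!}\Delta^\pm_w=\Delta^\pm_w$).

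The one step you should repair is your justification of $(\Id_{G/B}\times\rho_\bd)_*\,\mu^{B,B}_\flat(\C_G[\dim G]\boxtimes\Delta^\mp_w)\cong\mathcal{T}$. As written it fails for the minus sign: there $\Delta^-_w$ is a $*$-extension, so both sides have nonvanishing stalks on $\overline{O'_w}\setminus O'_w$, contradicting the vanishing you claim; and in any case two complexes with the same stalks need not be isomorphic, so ``the stalks agree, hence the two coincide'' is not a valid inference. Fortunately you have already proved the fact that makes the identity immediate: since $\Id_{G/B}\times\rho_\bd$ restricts to an isomorphism $\phi: O_w\to O'_w$ (your stabilizer computation), writing $j_{O_w}, j_{O'_w}$ for the locally closed embeddings one has $(\Id\times\rho_\bd)\circ j_{O_w}=j_{O'_w}\circ\phi$, hence $(\Id\times\rho_\bd)_!\,j_{O_w!}=j_{O'_w!}\,\phi_!$ and, $\Id\times\rho_\bd$ being proper, $(\Id\times\rho_\bd)_*\,j_{O_w*}=j_{O'_w*}\,\phi_*$; this settles both signs at once, and is the same content as the paper's one-line observation $\rho_{\bd!}\Delta^\pm_w=\Delta^\pm_w$ combined with commuting the descent functor $\mu_\flat$ past the pushforward. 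With that repaired, and with the shift and Tate-twist bookkeeping you defer carried out as in the paper's displayed chain of isomorphisms (where the $^!$ versus $^*$ discrepancy appears as the explicit $[2\dim P_\bd/B]$), your argument coincides with the paper's proof.
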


\begin{proof}
Put $w=w(\bd,\sigma)$. We have $\rho_{\bd!}\Delta^\pm_w =
\Delta^\pm_w$ ($\Delta^\pm_w$ defined respectively on $G/B$ and
$G/P_\bd$). Hence
\begin{align*}
  & (\Id_{G/B}\times\rho_\bd)_!
  \mu^{B,B}_\flat(\C_G[\dim G]\boxtimes\Delta^\mp_w) \\
  = \;& (\rho_{\sigma(\bd)}\times\Id_{G/P_\bd})^* [\dim P_\bd/B]
  \mu^{P_{\sigma(\bd)},P_\bd}_\flat(\C_G[\dim G]\boxtimes\Delta^\mp_w).
\end{align*}
Hence, by Proposition \ref{prop:mix}(1)(2) and Theorem
\ref{thm:mix},
\begin{align*}
  & (\Id_{G/B}\times\rho_\bd)_!
  T^{\bullet-\dim G/B} (B,B,\Delta^\mp_w) \\
  \simeq \;& (\rho_{\sigma(\bd)}\times\Id_{G/P_\bd})^*
  T^{\bullet-\dim G/P_\bd} (P_{\sigma(\bd)},P_\bd,\Delta^\mp_w).
\end{align*}
It follows that
\begin{align*}
  & \CR^\bullet_\pm(1^d,w(\bd,\sigma)) \circ (\rho_\bd\times\Id_X)^* \\
  = \;& D \pi_{13!} \Big( \pi_{12}^* T^{\bullet-\dim G/B} (B,B,\Delta^\mp_w) \otimes
  \pi_{23}^* (\rho_\bd\times\Id_X)^*[2\dim P_\bd/B] D- \Big) \\
  = \;& D \pi_{13!} \Big( \pi_{12}^* (\Id_{G/B}\times\rho_\bd)_! T^{\bullet-\dim G/B} (B,B,\Delta^\mp_w) \otimes
  \pi_{23}^* [2\dim P_\bd/B] D- \Big) \\
  \simeq \;& D \pi_{13!} \Big( \pi_{12}^* (\rho_{\sigma(\bd)}\times\Id_{G/P_\bd})^*
  T^{\bullet-\dim G/P_\bd} (P_{\sigma(\bd)},P_\bd,\Delta^\mp_w) \otimes
  \pi_{23}^* [2\dim P_\bd/B] D- \Big) \\
  = \;& (\rho_{\sigma(\bd)}\times\Id_X)^* D \pi_{13!}
  \Big( \pi_{12}^* T^{\bullet-\dim G/P_\bd} (P_{\sigma(\bd)},P_\bd,\Delta^\mp_w) \otimes \pi_{23}^* D- \Big) \\
  = \;& (\rho_{\sigma(\bd)}\times\Id_X)^* \circ \CR^\bullet_\pm(\bd,\sigma).
  \tag*\qed
\end{align*}
\renewcommand{\qed}{}
\end{proof}

\begin{exam}
Assume $\bd=(d_1,d_2)$, $S_2=\{e,\sigma\}$ and $d=d_1+d_2$. The
Grassmannian $X_d^0$ is a single point. It follows that, for each
simple reflection $s\in\W$,
$$\CR^\bullet_+(1^d,s)\IC(G/B\times X_d^0) \simeq \IC(G/B\times X_d^0)[-2]$$
where the right hand side is a complex concentrated at degree $-1$.
Clearly, $\ell(w(\bd,\sigma))=d_1d_2$. Therefore, by the above
proposition, the highest weight vector $b_{(0,0)} \in Q_\bd$ is sent
to $(-q^2)^{d_1d_2} \cdot b_{(0,0)}$ by the $R$-matrix induced from
$\CR^\bullet_+(\bd,\sigma)$.
\end{exam}

\begin{rem}
From Example \ref{exam:catR1}, Proposition \ref{prop:catR} and a compatibility
result (whose statement and proof are left to the reader) between the functor
complex $\CR^\bullet$ and the functor Res from Section \ref{sec:mod:res}, it
can be shown that the system of $R$-matrices claimed in Corollary
\ref{cor:catR} are those composed of the isomorphisms
\begin{equation}\label{eqn:R:univ}
  R_{d_1,d_2} = (-q^{\frac32})^{d_1d_2} \cdot \tau \cdot q^{\frac12H\otimes H}
  \sum_{n=0}^\infty q^{n(n-1)/2} (q-q^{-1})^n [n]_q! F^{(n)}\otimes E^{(n)}
\end{equation}
where $\tau: \Lambda_{d_1} \otimes \Lambda_{d_2} \to \Lambda_{d_2}
\otimes \Lambda_{d_1}$ is the transposition, $H$ is formally defined
by $K=q^H$.
\end{rem}

\subsection{Abelian categorification}\label{sec:R:abel}

In this subsection we translate the categorification theorem into an
abelian version.

Define algebra $A^\bullet_\bd$ as in Section \ref{sec:mod:abel}
\begin{equation}
  A^\bullet_\bd = \Ext^\bullet_{\D(G/P_\bd\times X)}(L_\bd,L_\bd)
\end{equation}
where
\begin{equation}
  L_\bd = \oplus_{S\in\CS_\bd} \IC(\overline{S}).
\end{equation}
Here $\CS_\bd$ denotes the set of the $G$-orbits of $G/P_\bd\times
X$.

\smallskip

Let $A^\bullet_\bd\mof$ denote the category of finite-dimensional
graded left $A^\bullet_\bd$-modules and let $A^\bullet_\bd\pmof$
denote the full subcategory consisting of the projectives. We apply
the same arguments as in Section \ref{sec:mod:abel}.

\begin{prop}\label{prop:tAmof}
The obvious functor $\tilde\CQ_{\bd} \to A^\bullet_\bd\pmof$ is an
equivalence of categories. Moreover, the equivalence identifies the
Grothendieck group of $A^\bullet_\bd\mof$ with $\Q(q)\otimes_\A
Q_\bd$.
\end{prop}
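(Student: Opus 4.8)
The plan is to repeat, nearly word for word, the argument of Section \ref{sec:mod:abel} that established Proposition \ref{prop:Amof}. All that genuinely needs checking is that $\tilde\CQ_\bd$ enjoys exactly the structural features of $\CQ_\bd$ used there, and this follows from the correspondence \eqref{eqn:R:orbit} between the $G$-orbits of $G/P_\bd\times X$ and the $P_\bd$-orbits of $X$: this orbit set is finite and equals $\CS_\bd$, so by \ref{sec:pre:perverse}.(16) the $G$-equivariant simple perverse sheaves on $G/P_\bd\times X$ are precisely the $\IC(\overline{S})$, $S\in\CS_\bd$, and $L_\bd$ is the direct sum of all of them up to isomorphism.

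Using \ref{sec:pre:perverse}.(3)(4) exactly as in Section \ref{sec:mod:abel}, I would first record that $A^\bullet_\bd$ is $\Z_{\ge0}$-graded with
$$A^0_\bd=\bigoplus_{S\in\CS_\bd}\Hom_{\D(G/P_\bd\times X)}(\IC(\overline{S}),\IC(\overline{S})),$$
each summand a copy of $\C$; that the units of these summands are the indecomposable idempotents of $A^\bullet_\bd$; that they enumerate the simple graded left $A^\bullet_\bd$-modules $L_S$, $S\in\CS_\bd$; that the graded modules $P_S=\Ext^\bullet_{\D(G/P_\bd\times X)}(L_\bd,\IC(\overline{S}))$ enumerate the indecomposable projectives; and that $\Hom_{A^\bullet_\bd\mof}(\Ext^\bullet_{\D}(L_\bd,C),\Ext^\bullet_{\D}(L_\bd,C'))=\Hom_{\D(G/P_\bd\times X)}(C,C')$ for $C,C'\in\tilde\CQ_\bd$ (semisimplicity of the objects reduces this identity to the case of simple perverse sheaves). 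It follows that $C\mapsto\Ext^\bullet_{\D}(L_\bd,C)$ is fully faithful, takes values in $A^\bullet_\bd\pmof$, and is essentially surjective (any projective is a finite sum of copies of the $P_S$, hence the image of the corresponding sum of $\IC(\overline{S})$'s in $\tilde\CQ_\bd$); this is the asserted equivalence $\tilde\CQ_\bd\simeq A^\bullet_\bd\pmof$, sending $\IC(\overline{S})\mapsto P_S$.

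For the Grothendieck group claim I would argue as follows. Since $A^\bullet_\bd$ is finite-dimensional and $\Z_{\ge0}$-graded with semisimple degree-zero part, every object of $A^\bullet_\bd\mof$ has a finite graded composition series with factors among the shifts of the $L_S$, so by graded Jordan--H\"older $K_0(A^\bullet_\bd\mof)$ is the free $\Q(q)$-module on the classes $[L_S]$, $S\in\CS_\bd$. The composite $\tilde\CQ_\bd\hookrightarrow A^\bullet_\bd\pmof\hookrightarrow A^\bullet_\bd\mof$ induces a $\Q(q)$-linear map $\Q(q)\otimes_\A Q_\bd\to K_0(A^\bullet_\bd\mof)$ (recall from Section \ref{sec:R:cat} that the Grothendieck group of $\tilde\CQ_\bd$ is $Q_\bd$, with $b_S=[\IC(\overline{S})]$), sending $1\otimes b_S\mapsto[P_S]=\sum_{S'}m_{S,S'}(q)\,[L_{S'}]$, where $m_{S,S'}(q)\in\Z_{\ge0}[q,q^{-1}]$ is the graded multiplicity of $L_{S'}$ in $P_S$. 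Because $P_S$ is $\Z_{\ge0}$-graded with degree-zero part $\Ext^0_{\D}(L_\bd,\IC(\overline{S}))=\Hom(\IC(\overline{S}),\IC(\overline{S}))=\C$ sitting in $L_S$, while $\Ext^0_{\D}(\IC(\overline{S'}),\IC(\overline{S}))=0$ for $S'\ne S$, we get $m_{S,S}\in1+q^{-1}\Z_{\ge0}[q^{-1}]$ and $m_{S,S'}\in q^{-1}\Z_{\ge0}[q^{-1}]$ for $S'\ne S$; hence the determinant of the matrix $(m_{S,S'})$ lies in $1+q^{-1}\Z[q^{-1}]$, so the matrix is invertible over $\Q(q)$. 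Therefore the map is an isomorphism of $\Q(q)$-modules of rank $|\CS_\bd|$, identifying $K_0(A^\bullet_\bd\mof)$ with $\Q(q)\otimes_\A Q_\bd$.

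I do not anticipate a real obstacle: the mathematical content is already present in Section \ref{sec:mod:abel}, and the work is only to transport it along \eqref{eqn:R:orbit}. The single point deserving a line of care is the invertibility over $\Q(q)$ of the decomposition matrix $(m_{S,S'})$, which is the same unitriangularity argument used, for instance, in the proof of Proposition \ref{prop:basis}.
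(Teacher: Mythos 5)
Your proposal is correct and follows essentially the paper's own route: the paper's proof of this proposition is simply to repeat the arguments of Section \ref{sec:mod:abel} (which gave Proposition \ref{prop:Amof}) after transporting the orbit data along the correspondence \eqref{eqn:R:orbit}, which is exactly what you do. The only addition is that you spell out the Grothendieck-group identification (invertibility over $\Q(q)$ of the graded Cartan matrix, using the $\Z_{\ge0}$-grading and the description of $A^0_\bd$), a point the paper leaves implicit; your argument there is sound.
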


The main results from the previous subsection are now stated as
follows.

\begin{thm}
The followings define complexes of projective graded
$A^\bullet_{\sigma(\bd)}$-$A^\bullet_\bd$-bimodules, hence complexes
of exact functors from $A^\bullet_\bd\mof$ to
$A^\bullet_{\sigma(\bd)}\mof$
\begin{equation}
  \CR^{\bullet,\bullet}_\pm(\bd,\sigma)
  = \Ext^\bullet_{\D(G/P_{\sigma(\bd)}\times X)}(L_{\sigma(\bd)},\CR^\bullet_\pm(\bd,\sigma)L_\bd).
\end{equation}
They satisfy the homotopy equivalences and isomorphisms
\begin{align*}
  & \CR^{\bullet,\bullet}_\pm(\sigma_2(\bd),\sigma_1) \otimes
    \CR^{\bullet,\bullet}_\pm(\bd,\sigma_2) \simeq
    \CR^{\bullet,\bullet}_\pm(\bd,\sigma_1\sigma_2)
    \text{~if $\ell(\sigma_1\sigma_2) = \ell(\sigma_1) + \ell(\sigma_2)$}, \\
  & \CR^{\bullet,\bullet}_\pm(\sigma(\bd),\sigma^{-1}) \otimes
    \CR^{\bullet,\bullet}_\mp(\bd,\sigma) \simeq
    A^\bullet_\bd
    \text{~(concentrated at degree $0$)}, \\
  & \G^\bullet_{\sigma(\bd)} \otimes \CR^{\bullet,\bullet}_\pm(\bd,\sigma) \cong
    \CR^{\bullet,\bullet}_\pm(\bd,\sigma) \otimes \G^\bullet_\bd
    \text{~for $\G\in\{ \K, \K^{-1}, \E, \F \}$}.
\end{align*}
\end{thm}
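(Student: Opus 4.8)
The argument is the evident analogue, in the setting of the algebras $A^\bullet_\bd$, of the translation already carried out in Section~\ref{sec:mod:abel}; the new inputs are Theorem~\ref{thm:catR} and Proposition~\ref{prop:tAmof}. First I would check that each homological term $\CR^m_\pm(\bd,\sigma)$ of the functor complex meets the three hypotheses that make the bimodule recipe of Section~\ref{sec:mod:abel} apply, now between the two categories $\tilde\CQ_\bd$ and $\tilde\CQ_{\sigma(\bd)}$: it is an additive functor $\tilde\CQ_\bd\to\tilde\CQ_{\sigma(\bd)}$ by Theorem~\ref{thm:catR}(5); it is compatible with the shift functor, being built from $D$ and convolution with a single fixed complex, both of which commute with $[1]$ up to the obvious identification; and it admits a left adjoint, namely $D\CR^m_\pm(\sigma(\bd),\sigma^{-1})D$, by Theorem~\ref{thm:catR}(4). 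As in Section~\ref{sec:mod:abel}, this forces the graded $A^\bullet_{\sigma(\bd)}$-$A^\bullet_\bd$-bimodule $\CR^{\bullet,m}_\pm(\bd,\sigma)=\Ext^\bullet_{\D(G/P_{\sigma(\bd)}\times X)}(L_{\sigma(\bd)},\CR^m_\pm(\bd,\sigma)L_\bd)$ to be projective as a left module, because $\CR^m_\pm(\bd,\sigma)L_\bd$ is a $G$-equivariant semisimple complex, and projective as a right module, because the left adjoint $\G^\flat=D\CR^m_\pm(\sigma(\bd),\sigma^{-1})D$ has $\G^\flat L_{\sigma(\bd)}\in\tilde\CQ_\bd$ and $\Ext^\bullet_\D(L_{\sigma(\bd)},\CR^m_\pm(\bd,\sigma)L_\bd)\cong\Ext^\bullet_\D(\G^\flat L_{\sigma(\bd)},L_\bd)$ as right $A^\bullet_\bd$-modules. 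Being projective on both sides it is flat, so tensoring with it is an exact functor $A^\bullet_\bd\mof\to A^\bullet_{\sigma(\bd)}\mof$; assembling over $m$, with differentials induced by those of $\CR^\bullet_\pm(\bd,\sigma)$, gives the asserted complex of projective bimodules, equivalently a complex of exact functors.

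For the three displayed relations I would transport Theorem~\ref{thm:catR}(1)(2)(3) through the equivalence $\tilde\CQ_\bd\simeq A^\bullet_\bd\pmof$ of Proposition~\ref{prop:tAmof}. The engine is the composition identity $\CR^{\bullet,m}_\pm(\bd,\sigma)\otimes_{A^\bullet_\bd}\Ext^\bullet_\D(L_\bd,C)=\Ext^\bullet_\D(L_{\sigma(\bd)},\CR^m_\pm(\bd,\sigma)C)$ for $C\in\tilde\CQ_\bd$ (the analogue of the identity $\G^\bullet\otimes_{A^\bullet}\Ext^\bullet_\D(L,C)=\Ext^\bullet_\D(L,\G C)$ used in Section~\ref{sec:mod:abel}), which by flatness of $\CR^{\bullet,m}_\pm(\bd,\sigma)$ extends term-by-term to complexes $C^\bullet$ of objects of $\tilde\CQ_\bd$. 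Applying it twice identifies the bimodule complex $\CR^{\bullet,\bullet}_\pm(\sigma_2(\bd),\sigma_1)\otimes_{A^\bullet_{\sigma_2(\bd)}}\CR^{\bullet,\bullet}_\pm(\bd,\sigma_2)$ with $\Ext^\bullet_\D\big(L_{\sigma_1\sigma_2(\bd)},\CR^\bullet_\pm(\sigma_2(\bd),\sigma_1)\CR^\bullet_\pm(\bd,\sigma_2)L_\bd\big)$; evaluating the homotopy equivalence of Theorem~\ref{thm:catR}(1) at $L_\bd$ and then applying the additive functor $\Ext^\bullet_\D(L_{\sigma_1\sigma_2(\bd)},-)$ gives the first relation. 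The second is the same computation from Theorem~\ref{thm:catR}(2), with $\Ext^\bullet_\D(L_\bd,L_\bd)=A^\bullet_\bd$ playing the role of the identity bimodule; the third follows from the functor isomorphism $\G_{\sigma(\bd)}\circ\CR^\bullet_\pm(\bd,\sigma)\cong\CR^\bullet_\pm(\bd,\sigma)\circ\G_\bd$ of Theorem~\ref{thm:catR}(3), the composition identity, and the bimodules $\G^\bullet_\bd=\Ext^\bullet_\D(L_\bd,\G L_\bd)$ of Section~\ref{sec:mod:abel}.

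The point that deserves care, and which I regard as the main obstacle, is that the equivalences and isomorphisms of Theorem~\ref{thm:catR}, valid a priori only at the level of functors on $\tilde\CQ$, must be shown to descend to morphisms of \emph{bimodules} rather than merely of left modules. This rests on the observation that the equivalences of Theorem~\ref{thm:catR}(1)(2)(3) are realized by \emph{natural} transformations of functor complexes---for (1) and (2) these come from convolution with morphisms of complexes on the relevant product varieties, via Propositions~\ref{prop:R1} and~\ref{prop:R2}, and for (3) from Proposition~\ref{prop:R3}---so that naturality with respect to the morphisms $b\colon L_\bd\to L_\bd[n]$ that generate the right action of $A^\bullet_\bd$ makes the induced maps on $\Ext^\bullet_\D(L_{\sigma(\bd)},-)$ commute with that action. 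Granting this, together with the routine extension of the composition identity to complexes by flatness, everything else is a faithful transcription of the treatment of $A^\bullet_\bd$ in Section~\ref{sec:mod:abel}.
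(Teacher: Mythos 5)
Your proposal is correct and takes essentially the same route as the paper, which states this theorem with no separate proof, presenting it as a direct translation of Theorem \ref{thm:catR} (together with Proposition \ref{prop:tAmof}) through the bimodule machinery of Section \ref{sec:mod:abel}. Your extra verification that the homotopy equivalences respect the right $A^\bullet_\bd$-action, via naturality of the kernel-level homotopies coming from Propositions \ref{prop:R1}--\ref{prop:R3}, is a faithful expansion of what the paper leaves implicit rather than a different argument.
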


\begin{cor}
Let $\K^b(A^\bullet_\bd\mof)$ be the bounded homotopic category of
$A^\bullet_\bd\mof$ and identify the Grothendieck group of
$A^\bullet_\bd\mof$ with $\Q(q)\otimes_\A Q_\bd$. Via the valuation
$$\K^b(A^\bullet_\bd\mof)\to \Q(q)\otimes_\A Q_\bd, \quad
  M^{\bullet,\bullet}\mapsto\sum_n(-1)^n[H^n(M^{\bullet,\bullet})],
$$
the system of functors
$$\CR^{\bullet,\bullet}_+(\bd,\sigma): \K^b(A^\bullet_\bd\mof) \to \K^b(A^\bullet_{\sigma(\bd)}\mof)$$
induce a system of $R$-matrices on the $U$-modules $\Q(q)\otimes_\A
Q_\bd$.
\end{cor}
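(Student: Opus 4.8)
The plan is to obtain the corollary by passing to Grothendieck groups in the preceding theorem, using the two standard principles that an exact functor descends to the Grothendieck group and a bounded complex of exact functors descends to an alternating sum. First I would record that the valuation
$$\K^b(A^\bullet_\bd\mof)\to\Q(q)\otimes_\A Q_\bd,\qquad M^{\bullet,\bullet}\mapsto\sum_n(-1)^n[H^n(M^{\bullet,\bullet})]$$
is well defined: since $A^\bullet_\bd\mof$ is abelian, the cohomology objects $H^n(M^{\bullet,\bullet})$ depend up to isomorphism only on the homotopy class of $M^{\bullet,\bullet}$, so the alternating sum is a homotopy invariant. Moreover it is compatible with the valuation of Corollary \ref{cor:catR}: under the equivalence $\tilde\CQ_\bd\simeq A^\bullet_\bd\pmof$ of Proposition \ref{prop:tAmof}, a complex $C^\bullet\in\K^b(\tilde\CQ_\bd)$ goes to the complex of projectives $\Ext^\bullet_\D(L_\bd,C^\bullet)$, and
$$\sum_n(-1)^n[H^n(\Ext^\bullet_\D(L_\bd,C^\bullet))]=\sum_n(-1)^n[\Ext^\bullet_\D(L_\bd,C^n)]=\sum_n(-1)^n[C^n]$$
in $\Q(q)\otimes_\A Q_\bd$, by the very way the Grothendieck groups are identified.

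Next I would use that every term $\CR^{i,\bullet}_\pm(\bd,\sigma)$ of $\CR^{\bullet,\bullet}_\pm(\bd,\sigma)$ is a projective, hence flat, $A^\bullet_{\sigma(\bd)}$-$A^\bullet_\bd$-bimodule, so that tensoring with it is an exact functor. Consequently the complex of functors $\CR^{\bullet,\bullet}_\pm(\bd,\sigma)$ induces a map
$$r_\pm(\bd,\sigma)\colon\Q(q)\otimes_\A Q_\bd\to\Q(q)\otimes_\A Q_{\sigma(\bd)},\qquad [M]\mapsto\sum_i(-1)^i[\CR^{i,\bullet}_\pm(\bd,\sigma)\otimes_{A^\bullet_\bd}M],$$
the right-hand side being the valuation of $\CR^{\bullet,\bullet}_\pm(\bd,\sigma)M$; it is $\Q(q)$-linear, since tensoring with the flat bimodules $\CR^{i,\bullet}_\pm(\bd,\sigma)$ is additive and commutes with the internal grade shift. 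Applying the identity $\G^\bullet\otimes_{A^\bullet}\Ext^\bullet_\D(L,C)=\Ext^\bullet_\D(L,\G C)$ from Section \ref{sec:mod:abel} termwise, the restriction of $\CR^{\bullet,\bullet}_\pm(\bd,\sigma)$ to $A^\bullet_\bd\pmof$ corresponds under Proposition \ref{prop:tAmof} to $\CR^\bullet_\pm(\bd,\sigma)\colon\tilde\CQ_\bd\to\tilde\CQ_{\sigma(\bd)}$; hence $r_+(\bd,\sigma)$ is the $\Q(q)$-linear extension of the $\A$-linear map on $Q_\bd$ induced by $\CR^\bullet_+(\bd,\sigma)$ in Corollary \ref{cor:catR}.

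It then remains to read the three defining properties of a system of $R$-matrices off the theorem. A homotopy equivalence between complexes of flat bimodules, after tensoring with a module, gives homotopy-equivalent complexes and hence equal images under the valuation; an isomorphism of complexes of bimodules does the same a fortiori. Thus the first relation of the theorem yields $r_+(\sigma_2(\bd),\sigma_1)\circ r_+(\bd,\sigma_2)=r_+(\bd,\sigma_1\sigma_2)$ whenever $\ell(\sigma_1\sigma_2)=\ell(\sigma_1)+\ell(\sigma_2)$; the second yields $r_\pm(\sigma(\bd),\sigma^{-1})\circ r_\mp(\bd,\sigma)=\Id$, so each $r_\pm(\bd,\sigma)$ is an isomorphism with inverse $r_\mp(\sigma(\bd),\sigma^{-1})$; and the third yields, for $\G\in\{\K,\K^{-1},\E,\F\}$, that $r_\pm(\bd,\sigma)$ intertwines the actions of the corresponding generators, so it is a homomorphism of $U$-modules (recall that $E,F,K^{\pm1}$ generate $U_q(sl_2)$ over $\Q(q)$). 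Hence $\{r_+(\bd,\sigma)\}$ is a system of $R$-matrices on the $U$-modules $\Q(q)\otimes_\A Q_\bd$, as claimed. The only non-formal ingredient here is the flatness of the bimodules $\CR^{i,\bullet}_\pm(\bd,\sigma)$, which is exactly what makes the passage to Grothendieck groups legitimate; given the compatibility established above one could equally well just invoke Corollary \ref{cor:catR}, since a system of $R$-matrices on $Q_\bd$ extends $\Q(q)$-linearly to one on $\Q(q)\otimes_\A Q_\bd$.
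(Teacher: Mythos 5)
Your argument is correct and coincides with what the paper intends: the paper states this corollary without a separate proof, treating it as immediate from the preceding theorem (together with the identification of Grothendieck groups in Proposition \ref{prop:tAmof} and the same homotopy-invariance observation used for Corollary \ref{cor:catR}), and your write-up simply spells out that standard passage to Grothendieck groups via exactness of the projective bimodule terms. In particular your closing remark — that one could equally base-change the $R$-matrices of Corollary \ref{cor:catR} along $\A\to\Q(q)$ — is exactly the shortcut the paper relies on.
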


\begin{rem}
The same is true if we replace $\K^b(A^\bullet_\bd\mof)$ by the
bounded derived category $\D^b(A^\bullet_\bd\mof)$.
\end{rem}


\begin{thebibliography}{9999999}

\bibitem[BBD82]{BBD82} A. Beilinson, J. Bernstein and P. Deligne,
    Faisceaux pervers, Ast\'erisque 100 (1982).

\bibitem[BGS96]{BGS96} A. Beilinson, V. Ginzburg and W. Soergel,
    Koszul duality patterns in representation theory,
    J. Amer. Math. Soc. 9 (1996), 473--527.

\bibitem[BFK99]{BFK99} J. Bernstein, I. Frenkel and M. Khovanov,
    A categorification of the Temperley-Lieb algebra and Schur quotients
    of $U(sl_2)$ via projective and Zuckerman functors,
    Selecta Math. (N.S.) 5 (1999), 199--241.

\bibitem[Bor84]{Bor84} A. Borel,
    Intersection Cohomology,
    Progress in Math., vol. 50, Birkh\"auser, Boston, 1984.

\bibitem[CK07]{CK07} S. Cautis and J. Kamnitzer,
    Knot homology via derived categories of coherent sheaves I, sl(2) case,
    arXiv:math.AG/0701194.

\bibitem[CR04]{CR04} J. Chuang and R. Rouquier,
    Derived equivalences for symmetric groups and $sl_2$-categorification,
    arXiv:math/0407205.

\bibitem[De80]{De80} P. Deligne,
    La conjecture de Weil II,
    Publ. Math. I.H.E.S. 52 (1980), 137--252.

\bibitem[Deo87]{Deo87} V. V. Deodhar,
    On some geometric aspects of Bruhat orderings II.
    The parabolic analogue of Kazhdan-Lusztig polynomials,
    J. Algebra 111 (1987), 483--506.

\bibitem[FKK98]{FKK98} I. Frenkel, M. Khovanov and A. Kirillov Jr.,
    Kazhdan-Lusztig polynomials and canonical basis,
    Transform. Groups 3 (1998), no. 4, 321--336.

\bibitem[FKS05]{FKS05} I. Frenkel, M. Khovanov and C. Stroppel,
    A categorification of finite-dimensional irreducible representations of quantum $sl(2)$ and their tensor products,
    arXiv: math.QA/0511467.

\bibitem[GMV05]{GMV05} S. Gelfand, R. MacPherson and K. Vilonen,
    Microlocal perverse sheaves,
    arXiv: math.AG/0509440.

\bibitem[KSa97]{KSa97} M. Kashiwara and Y. Saito,
    Geometric construction of crystal bases,
    Duke Math. J., 89 (1997), no. 1, 9--36.

\bibitem[KS90]{KS90} M. Kashiwara and P. Schapira,
    Sheaves on Manifolds,
    Grundlehren der math. Wiss., vol. 292, Springer, 1990.

\bibitem[Kas95]{Kas95} C. Kassel,
    Quantum Groups,
    Graduate Texts in Mathematics, vol. 155, Springer-Verlag, New York, 1995.

\bibitem[KL79]{KL79} D. Kazhdan and G. Lusztig,
    Representations of Coxeter groups and Hecke algebras,
    Invent. Math. 53 (1979), 165--184.

\bibitem[KL80]{KL80} D. Kazhdan and G. Lusztig,
    Schubert Varieties and Poincar\'e duality,
    Proc. Symp. Pure Math. 36, A.M.S., (1980), 185--203.

\bibitem[Kh00]{Kh00} M. Khovanov,
    A categorification of the Jones polynomial,
    Duke Math. J. 101 (2000), 359--426.

\bibitem[Kh03]{Kh03} M. Khovanov,
    Categorifications of the colored Jones polynomial,
    arXiv:math.QA/ 0302060.

\bibitem[KR04]{KR04} M. Khovanov and L. Rozansky,
    Matrix factorizations and link homology,
    arXiv: math.QA/0401268.

\bibitem[KMS07]{KMS07} M. Khovanov, V. Mazorchuk and C. Stroppel,
    A brief review of abelian categorifications,
    arXiv:math/0702746.

\bibitem[KW01]{KW01} R. Kiehl and R. Weisauer,
    Weil Conjectures, Perverse Sheaves and l'adic Fourier Transform,
    Springer-Verlag, Berlin, 2001.

\bibitem[Lu91]{Lu91} G. Lusztig,
    Quivers, perverse sheaves, and quantized enveloping algebras,
    J. Amer. Math. Soc. 4 (1991), no. 2, 365--421.

\bibitem[Lu93]{Lu93} G. Lusztig,
    Introduction to Quantum Groups,
    Progress in Math., vol. 110, Birkh\"auser, Boston, 1993.

\bibitem[Ma03]{Ma03} A. Malkin,
    Tensor product varieties and crystals: the $ADE$ case.
    Duke Math. J. 116 (2003), no. 3, 477--524.

\bibitem[Na94]{Na94} H. Nakajima,
    Instantons on ALE spaces, quiver varieties, and Kac-Moody algebras,
    Duke Math. J. 76 (1994), no. 2, 365--416.

\bibitem[Na01]{Na01} H. Nakajima,
    Quiver varieties and tensor products.
    Invent. Math. 146 (2001), no .2, 399--449.

\bibitem[Ras03]{Ras03} J. A. Rasmussen,
    Khovanov homology and the slice genus,
    arXiv:math.GT/0306378.

\bibitem[RT90]{RT90} N. Yu. Reshetikhin, V. G. Turaev,
    Ribbon graphs and their invariants derived from quantum groups,
    Comm. Math. Phys. 127 (1990), 1--26.

\bibitem[Str05]{Str05} C. Stroppel,
    Categorification of the Temperley-Lieb category, tangles, and cobordisms via projective functors,
    Duke Math. J. 126(3), (2005), 547--596.

\bibitem[Wa04]{Wa04} I. Waschkies,
    The stack of microlocal perverse sheaves,
    Bull. Soc. Math. France 132 (2004), 397--462.

\end{thebibliography}
\end{document}